\documentclass[11pt, reqno]{amsart}
\usepackage[utf8]{inputenc}
\usepackage{amssymb}
\usepackage{bbm}
\usepackage{amsthm}
\usepackage{nccmath}
\usepackage[bookmarks=false]{hyperref}
\usepackage{etoolbox}
\usepackage{array}
\usepackage{xparse}
\usepackage{enumitem}
\usepackage{tikz}
\usepackage{mathtools}
\usepackage[font=scriptsize]{caption}
\hfuzz=3pt 

\setlength{\textwidth}{\paperwidth}
\addtolength{\textwidth}{-6.4cm}
\calclayout


\newtheorem{thm}{Theorem}[section]
\newtheorem{theorem}[thm]{Theorem}
\newtheorem{lem}[thm]{Lemma}
\newtheorem{lemma}[thm]{Lemma}
\newtheorem{prop}[thm]{Proposition}
\newtheorem{cor}[thm]{Corollary}

\theoremstyle{definition}
\newtheorem{definition}[thm]{Definition}

\theoremstyle{remark}
\newtheorem{rem}[thm]{Remark}
\newtheorem{rmk}[thm]{Remark}



\newcommand{\thmref}[1]{Theorem~\ref{#1}}
\newcommand{\corref}[1]{Corollary~\ref{#1}}
\newcommand{\defref}[1]{Definition~\ref{#1}}
\newcommand{\secref}[1]{\S\ref{#1}}
\newcommand{\lemref}[1]{Lemma~\ref{#1}} 
\newcommand{\propref}[1]{Proposition~\ref{#1}}
\newcommand{\remref}[1]{Remark~\ref{#1}}


\newcommand*{\dis}{\displaystyle}
\newcommand*{\q}{\quad}
\newcommand*{\qq}{\qquad}
\newcommand*{\lpar}{ }

\newcommand*{\tx}[1]{\text{#1}}

\newcommand*{\del}{\delta}

\newcommand*{\ep}{\epsilon}

\newcommand*{\suchthat}{\, \middle| \,}

\newcommand*{\Ra}{\Rightarrow}
\newcommand*{\M}{M}
\newcommand*{\Mcal}{\mathcal{M}}

\newcommand*{\Pminus}{P_{-}}
\newcommand*{\Pminusbar}{\myoverline{-3}{0}{P}_{-}}

\newcommand*{\myoverline}[3]{\mkern -#1mu\overline{\mkern#1mu#3\mkern#2mu}\mkern -#2mu}	

\newcommand*{\Zbar}{\myoverline{-3}{0}{\Z}}
\newcommand*{\zbar}{\myoverline{-2}{0}{\z}}

\newcommand*{\Dapbar}{\myoverline{-3}{-1}{D}_\ap}
\newcommand*{\wbar}{\myoverline{0}{-0.5}{\omega}}
\newcommand*{\Fbar}{\myoverline{-3}{0}{F} }

\newcommand*{\ubar}{\myoverline{0}{0}{u}}


\newcommand*{\half}{\frac{1}{2}}

\newcommand*{\Rsp}{\mathbb{R}}
\newcommand*{\Csp}{\mathbb{C}}
\newcommand*{\Nsp}{\mathbb{N}}
\newcommand*{\Zsp}{\mathbb{Z}}

\newcommand*{\Scalsp}{\mathcal{S}}

\newcommand*{\Ltwo}{L^2}

\newcommand*{\Linfty}{L^{\infty}}
\newcommand*{\Hhalf}{\dot{H}^\half}


\newcommand*{\al}{\alpha}
\newcommand*{\ap}{{\alpha'}}

\newcommand*{\be}{\beta}
\newcommand*{\bp}{{\beta'}}

\newcommand*{\xp}{{x'}}
\newcommand*{\yp}{{y'}}
\newcommand*{\zp}{{z'}}


\newcommand*{\eqdef}{\mathrel{\mathop:}=}
\newcommand*{\diff}{\mathop{}\! d}

\newcommand*{\compose}[1]{\circ{#1}}

\newcommand*{\conv}{*}
\newcommand*{\Hil}{\mathbb{H}}
\newcommand*{\Hcal}{\mathcal{H}}
\newcommand*{\Hcaltil}{\widetilde{\mathcal{H}}}
\newcommand*{\Htil}{\Hcaltil}

\newcommand*{\Id}{\mathbb{I}}
\newcommand*{\Imag}{\tx{Im}}
\newcommand*{\Real}{\tx{Re}}
\newcommand*{\Jdel}{J_\delta}

\newcommand*{\grad}{\nabla}
\newcommand*{\Dt}{D_t}

\newcommand*{\Dtep}{\Dt^{\ep}}

\newcommand*{\pt}{\partial_t}

\newcommand*{\px}{\partial_x}
\newcommand*{\py}{\partial_y}

\newcommand*{\pxp}{\partial_\xp}
\newcommand*{\pyp}{\partial_\yp}

\newcommand*{\pap}{\partial_\ap}
\newcommand*{\papabs}{\abs{\pap}}
\newcommand*{\pbp}{\partial_\bp}
\newcommand*{\pal}{\partial_\al}

\newcommand*{\Dap}{D_{\ap}}

\newcommand*{\Dapabs}{\abs{D_{\ap}}}
\newcommand*{\Dapfrac}{\frac{1}{\Zap}\pap}
\newcommand*{\Dapbarfrac}{\frac{1}{\Zapbar}\pap}

\newcommand*{\Dapabsfrac}{\frac{1}{\Zapabs}\pap}


\newcommand*{\Ecal}{\mathcal{E}}

\newcommand*{\EDelta}{E_{\Delta}}

\newcommand*{\Fcal}{\mathcal{F}}


\newcommand*{\Pfrak}{\mathfrak{P}}

\newcommand*{\A}{\mathcal{A}}
\newcommand*{\Aone}{A_1}
\newcommand*{\Bone}{B_1}

\newcommand*{\bvar}{b}

\newcommand*{\bap}{\bvar_\ap}
\newcommand*{\bvarap}{\bap}
\newcommand*{\bstar}{b^*}

\newcommand*{\h}{h}
\newcommand*{\hep}{\h^\ep}

\newcommand*{\hal}{\h_\al}
\newcommand*{\halep}{\hal^\ep}
\newcommand*{\htal}{h_{t\al}}
\newcommand*{\hinv}{\h^{-1}}
\newcommand*{\htil}{\widetilde{\h}}
\newcommand*{\htilap}{\htil_\ap}
\newcommand*{\htilbp}{\htil_\bp}

\newcommand*{\Util}{\widetilde{U}}

\newcommand*{\g}{g}

\newcommand*{\cvar}{c}

\newcommand*{\f}{f}

\newcommand*{\w}{\omega}

\newcommand*{\Psiep}{\Psi^\ep}

\newcommand*{\Psizp}{\Psi_{\zp}}
\newcommand*{\Psizpep}{\Psi_{\zp}^\ep}
\newcommand*{\onePsizp}{\frac{1}{\Psizp}}
\newcommand*{\onePsizpep}{\frac{1}{\Psizpep}}

\newcommand*{\Psit}{\Psi_{t}}
\newcommand*{\Psitep}{\Psit^\ep}

\newcommand*{\z}{z}
\newcommand*{\zep}{\z^\ep}
\newcommand*{\zal}{\z_\al}
\newcommand*{\zalep}{\zal^\ep}
\newcommand*{\zalbar}{\zbar_\al}

\newcommand*{\zt}{\z_t}
\newcommand*{\ztep}{\zt^\ep}

\newcommand*{\ztal}{\z_{t\al}}

\newcommand*{\ztalbar}{\zbar_{t\al}}

\newcommand*{\ztt}{\z_{tt}}

\newcommand*{\zttbar}{\zbar_{tt}}

\newcommand*{\Z}{Z}
\newcommand*{\Zep}{\Z^\ep}
\newcommand*{\Zap}{\Z_{,\ap}}

\newcommand*{\Zapep}{\Zap^\ep}
\newcommand*{\oneZapep}{\frac{1}{\Zapep}}

\newcommand*{\Zapbar}{\Zbar_{,\ap}}
\newcommand*{\Zapepbar}{\Zapbar^\ep}

\newcommand*{\Zapabs}{\abs{\Zap}}

\newcommand*{\Zt}{\Z_t}
\newcommand*{\Ztep}{\Zt^\ep}
\newcommand*{\Ztbar}{\Zbar_t}

\newcommand*{\Ztap}{\Z_{t,\ap}}
\newcommand*{\Ztapep}{\Ztap^\ep}
\newcommand*{\Ztapbar}{\Zbar_{t,\ap}}

\newcommand*{\Ztt}{\Z_{tt}}

\newcommand*{\Zttbar}{\Zbar_{tt}}

\newcommand*{\nobrac}[1]{ #1 }

\DeclarePairedDelimiter{\oldbrac}{\lparen}{\rparen}			
\NewDocumentCommand{\brac}{ s o m }{						
	\IfBooleanT{#1}{
  		\IfValueT{#2}{\oldbrac[#2]{#3}}
		\IfValueF{#2}{\oldbrac{#3}} 
	}
	\IfBooleanF{#1}{
  		\IfValueT{#2}{\PackageError{mypackage}{Incorrect use of brac. Insert star}{}}
		\IfValueF{#2}{\oldbrac*{#3}} 
	}		
}

\DeclarePairedDelimiter\oldcbrac{\lbrace}{\rbrace}				
\NewDocumentCommand{\cbrac}{ s o m }{					
	\IfBooleanT{#1}{
  		\IfValueT{#2}{\oldcbrac[#2]{#3}}
		\IfValueF{#2}{\oldcbrac{#3}} 
	}
	\IfBooleanF{#1}{
  		\IfValueT{#2}{\PackageError{mypackage}{Incorrect use of cbrac. Insert star}{}}
		\IfValueF{#2}{\oldcbrac*{#3}} 
	}		
}

\DeclarePairedDelimiter\oldsqbrac{\lbrack}{\rbrack}				
\NewDocumentCommand{\sqbrac}{ s o m }{					
	\IfBooleanT{#1}{
  		\IfValueT{#2}{\oldsqbrac[#2]{#3}}
		\IfValueF{#2}{\oldsqbrac{#3}} 
	}
	\IfBooleanF{#1}{
  		\IfValueT{#2}{\PackageError{mypackage}{Incorrect use of sqbrac. Insert star}{}}
		\IfValueF{#2}{\oldsqbrac*{#3}} 
	}		
}

\DeclarePairedDelimiter{\oldabs}{\lvert}{\rvert}
\NewDocumentCommand{\abs}{ s o m }{						
	\IfBooleanT{#1}{
  		\IfValueT{#2}{\oldabs[#2]{#3}}
		\IfValueF{#2}{\oldabs{#3}} 
	}
	\IfBooleanF{#1}{
  		\IfValueT{#2}{\PackageError{mypackage}{Incorrect use of abs. Insert star}{}}
		\IfValueF{#2}{\oldabs*{#3}} 
	}		
}

\DeclarePairedDelimiterX{\oldnorm}[1]{\lVert}{\rVert}{#1}
\NewDocumentCommand{\norm}{ s o o m }{					
	\IfValueT{#2} {
		\IfBooleanT{#1}{
  			\IfValueT{#3}{\oldnorm[#2]{#4}_{#3}}
			\IfValueF{#3}{\oldnorm{#4}_{#2}} 
		}
		\IfBooleanF{#1}{
  			\IfValueT{#3}{\PackageError{mypackage}{Incorrect use of norm. Insert star}{}}
			\IfValueF{#3}{\oldnorm*{#4}_{#2}} 
		}
	}
	\IfValueF{#2} {
		\IfBooleanT{#1}{\oldnorm{#4}}	
		\IfBooleanF{#1}{\oldnorm*{#4}}		
	}	
}

\makeatletter
\def\black@#1{%
    \noalign{%
        \ifdim#1>\displaywidth
            \dimen@\prevdepth
            \nointerlineskip
            \vskip-\ht\strutbox@
            \vskip-\dp\strutbox@
            \vbox{\noindent\hbox to \displaywidth{\hbox to#1{\strut@\hfill}}}%
            \prevdepth\dimen@
        \fi
    }%
}
\makeatother

\makeatletter
\setcounter{tocdepth}{2}
\renewcommand{\tocsection}[3]{%
  \indentlabel{\@ifnotempty{#2}{\bfseries\ignorespaces#1 #2\quad}}\bfseries#3}
\renewcommand{\tocsubsection}[3]{%
  \indentlabel{\@ifnotempty{#2}{\ignorespaces#1 #2\quad}}#3}

\newcommand\@dotsep{4.5}
\def\@tocline#1#2#3#4#5#6#7{\relax
  \ifnum #1>\c@tocdepth 
  \else
    \par \addpenalty\@secpenalty\addvspace{#2}%
    \begingroup \hyphenpenalty\@M
    \@ifempty{#4}{%
      \@tempdima\csname r@tocindent\number#1\endcsname\relax
    }{%
      \@tempdima#4\relax
    }%
    \parindent\z@ \leftskip#3\relax \advance\leftskip\@tempdima\relax
    \rightskip\@pnumwidth plus1em \parfillskip-\@pnumwidth
    #5\leavevmode\hskip-\@tempdima{#6}\nobreak
    \leaders\hbox{$\m@th\mkern \@dotsep mu\hbox{.}\mkern \@dotsep mu$}\hfill
    \nobreak
    \hbox to\@pnumwidth{\@tocpagenum{\ifnum#1=1\bfseries\fi#7}}\par
    \nobreak
    \endgroup
  \fi}
\AtBeginDocument{%
\expandafter\renewcommand\csname r@tocindent0\endcsname{0pt}
}
\def\l@subsection{\@tocline{2}{0pt}{2.5pc}{5pc}{}}
\makeatother

\makeatletter
 \def\@testdef #1#2#3{%
   \def\reserved@a{#3}\expandafter \ifx \csname #1@#2\endcsname
  \reserved@a  \else
 \typeout{^^Jlabel #2 changed:^^J%
 \meaning\reserved@a^^J%
 \expandafter\meaning\csname #1@#2\endcsname^^J}%
 \@tempswatrue \fi}
\makeatother

\makeatletter
\newcommand*{\rom}[1]{\expandafter\@slowromancap\romannumeral #1@}
\makeatother

\makeatletter
\patchcmd{\@sect}{\@addpunct.}{}{}{}
\patchcmd{\subsection}{-.5em}{1em}{}{}
\makeatother

\title{Rigidity of Acute Angled Corners for One Phase Muskat Interfaces}
\author[Siddhant Agrawal]{Siddhant Agrawal$^1$}
\address{$^1$ 
Instituto de Ciencias Matem\'aticas, ICMAT, Madrid, Spain}
\email{siddhant.govardhan@icmat.es
}

\author[Neel Patel]{Neel Patel$^2$}
\address{$^2$ 
Instituto de Ciencias Matem\'aticas, ICMAT, Madrid, Spain}
\email{neel.patel@icmat.es}

\author[Sijue Wu]{Sijue Wu $^3$}
\address{$^3$ 
Department of Mathematics, University of Michigan, Ann Arbor, MI, USA}
\email{sijue@umich.edu}

\begin{document}

\begin{abstract}
We consider the one-phase Muskat problem modeling the dynamics of the free boundary of a single fluid in porous media. We prove local well-posedness for fluid interfaces that are general curves and can have singularities. In particular, the free boundary can have acute angle corners or cusps. Moreover, we show that isolated corners/cusps on the interface must be rigid, meaning the angle of the corner is preserved for a finite time, there is no rotation at the tip, the particle at the tip  remains at the tip and the velocity of that particle at the tip points vertically downward.
\end{abstract}

\maketitle

\tableofcontents

\section{Introduction}\label{sec:introduction}

In this paper, we are concerned with the dynamics of the gravity driven interface of a single fluid in porous media. Fluid flow in porous media is governed by Darcy's law, established in \cite{Da56}:
\begin{equation}\label{eq:Darcy}
    \mu(x,t) u(x,t)=-\nabla P(x,t)-(0,\rho(x,t)),  x\in \Omega(t) \subset \mathbb{R}^{2}, t\geq 0
\end{equation}
where $\mu(x,t)$ is the viscosity, $u(x,t)$ is the fluid velocity, $P(x,t)$ is the pressure and $\rho(x,t)$ is the fluid density in the time-dependent fluid domain $\Omega(t)$. Moreover, the fluid flow is incompressible:
\begin{align*}
    \nabla \cdot u(x,t) = 0, \q x\in \Omega(t) \subset \mathbb{R}^{2}, t\geq 0.
\end{align*}
The dynamics of the sharp fluid interface for a single fluid is known as the \textit{one phase} Muskat problem. In the case that there are two viscous fluids occupying the entire plane, it is the \textit{two-phase} problem \cite{Mu34}. Local and global well-posedness, including possibly in critical regularity and allowing for large slopes, as well as dynamics of the interface are well-studied for interfaces that do not allow for corners: see e.g. \cite{AlLa20}, \cite{AlNg20}, \cite{AlNg21a}, \cite{AlNg21b}, \cite{Am04}, \cite{ChGr16}, \cite{CoCo16}, \cite{CoGa17}, \cite{CoCoGa11}, \cite{CoCoGa13}, \cite{CoLa20}, \cite{EsMa11}, \cite{GaGaPa19}, \cite{Ma19}, \cite{NgPa20} and references therein. Breakdown scenarios starting from smooth initial data has also been shown \cite{CaCoFe13}, \cite{CaCo12}.

Recently, some results in the two-phase regime have considered well-posed of interfaces of regularity that allow for obtuse corners, but no results are known for acute corners or cusps. For example, in \cite{Ca19}, corners on the initial graph interface can be considered with the condition that the product of the modulus of the minimum and maximum slopes is bounded by 1. Hence, the angle of any corner must be between $\pi/2$ and $\pi$. In \cite{KeNgXu21}, modulo a smooth function, the initial interface should have slope less than or equal to 1, and hence, the angle of any corner must again be between $\pi/2$ and $\pi$. These obtuse angle corners are shown to smooth out instantly. A more recent self-similar argument \cite{GaGoNg21} in the two-phase regime shows this smoothing explicitly for very small slopes, or in other words, corners of the interface of large angle very close to $\pi$.

On the other hand, interfaces that allow for both obtuse and acute corners and cusps have been extensively studied for Hele-Shaw flows in a finite domain without gravity, specifically when a single fluid is injected into a horizontal Hele-Shaw cell. Here, instead of gravity, the injection force drives the interface dynamics. The phenomena of a corner on the interface remaining rigid for finite time is often called \textit{waiting time} phenomenon in the Hele-Shaw problem. \cite{KiLa95} constructed self-similar solutions for which corners less than $\pi/2$ persist during the waiting time while corners greater than $\pi/2$ smoothen out. For Lipschitz domains with only obtuse angle corners, the boundary is shown to smoothen \cite{ChJeKi07}. The aforementioned waiting time and smoothing result for corners is extended to more general non-tangentially accessible domains \cite{ChKi06}. Then, \cite{Sa10} uses the theory of quadrature domains to full classify the dynamics of corners of any angle and also cusps. Interestingly, without injection, but with two fluids occupying annulus shaped domains in an annulus shaped rigid container, \cite{BaVa14} can also show the waiting time phenomena for acute corners.

The one-phase gravity driven Muskat problem considered in this paper is equivalent to the Hele Shaw problem without gravity in an infinite domain with injection at infinity. More precisely, the one phase Muskat problem above with $\mu = \rho = 1$ is equivalent to the model considered in section 9 of \cite{ChJeKi09} and this equivalence can be seen by considering the one phase Muskat problem in a reference frame moving at constant speed. In \cite{ChJeKi09}, considering initial interfaces with Lipchitz constant less than $1$, they prove  existence of unique global solutions which smoothen out instantaneously. In the one-phase gravity driven Muskat formulation, for initial data of arbitrarily large Lipschitz norm, \cite{DoGaNg21} shows that graph solutions exist in the $L^{\infty}_{t}L^{2}_{x}$ sense with no growth in the slope. While their regularity allows for sharp corners, the results therein do not give any information about the dynamics of the interface at those corners and moreover, cusps can not be considered. To the best of our knowledge, in the infinite domain one-phase regime with either gravity or an injection force at infinity, results on explicit dynamics of acute angle corners or cusps do not exist in the literature.  

In this paper, we consider the dynamics of a fluid interface in the gravity driven one-phase Muskat problem. We use a Riemann mapping formulation of the system to prove a local well-posedness theorem. The novelty of the result is that it allows us to deal with both smooth interfaces and interfaces that can have singularities such as acute angle corners and cusps. Moreover, we prove that acute corners and cusps remain rigid for a finite time. Namely, the angle of the corner or cusp does not change, there is no rotation at the tip of the corner or cusp, the particle at the tip stays at the tip and the velocity of the particle at the tip points vertically downward. The methods in this paper are inspired from the work on interfaces with corner and cusp singularities done in 2D gravity water waves \cite{Ag20}, \cite{KiWu18}, \cite{Wu19}. We would also like to point out a recent result \cite{CoEnGr21} that studies changing angles for the 2D incompressible Euler equations. While it may be possible that the methods in the previously discussed results \cite{ChKi06}, \cite{Sa10} in the finite fluid domain with injection regime could apply in the infinite domain case considered in this paper; however we are using different methods which in particular do not use the maximum principle techniques utilized in the papers on the finite domain problem. Hence our method will have an advantage in settings where maximum principles are not available and it also brings a different perspective to the problem. In addition, we also prove an interesting identity which is a new conservation law for the one phase Muskat equation. This identity is proven via another equivalent formulation that we derive in the paper as part of the existence proof and may be useful in future work.

\smallskip

\noindent\emph{Outline}: The outline of the paper is as follows. In the remainder of Section \ref{sec:introduction}, we derive the Riemann mapping formulation of the interface equation for the one-phase Muskat problem. Then in Section \ref{sec:mainresults}, we explain the main results of this paper. Sections \ref{sec:apriori} and \ref{sec:uniqueness} give the apriori energy estimates. In Section \ref{sec:interestingidentity}, we derive another equivalent formulation of the interface equation and use this new equivalent formulation to prove an interesting exact identity. Finally, in Section \ref{sec:existence}, we first prove existence in Sobolev spaces using the equivalent formulation and then prove the existence of rough solutions.

\medskip

\noindent \textbf{Acknowledgment}: We are very grateful to Inwon Kim for explaining to us the equivalence of the one phase Muskat equation and the Hele Shaw problem with injection at infinity in an infinite domain. Siddhant Agrawal was partially supported by the National Science Foundation under Grant No. DMS-1928930 while participating in a program hosted by MSRI during the Spring 2021 semester. Neel Patel was partially supported by an AMS-Simons Travel Grant, which are administered by the American Mathematical Society with support from the Simons Foundation. Sijue Wu is supported in part by NSF grant DMS-1764112. She was also supported by NSF grant DMS-1928930 through the program on Mathematical problems in fluid dynamics at MSRI during the Spring 2021 semester.

\subsection{Formulation}

In this section, we will derive the Riemann mapping formulation for the fluid interface. We will try to be as consistent as possible with the notation used in \cite{KiWu18}.  The Fourier transform is defined as
\[
\hat{f}(\xi) = \frac{1}{\sqrt{2\pi}}\int e^{-ix\xi}f(x) \diff x
\] 
We will let $\Scalsp(\Rsp)$ will denote the Schwartz space of rapidly decreasing functions and a Fourier multiplier with symbol $a(\xi)$ is the operator $T_a$ defined formally by the relation $\dis \widehat{T_a{f}} = a(\xi)\hat{f}(\xi)$. The operators $\papabs^s $ for $s\in\Rsp$ are defined as the Fourier multipliers with symbol $\abs{\xi}^s$. 
The Sobolev space $H^s(\Rsp)$ for $s\geq 0$  is the space of functions with  $\norm[H^s]{f} = \norm*[\Ltwo(\diff \xi)]{(1+\abs{\xi}^2)^{\frac{s}{2}}\hat{f}(\xi)} < \infty$. The homogenous Sobolev space $\Hhalf(\Rsp)$ is the space of functions modulo constants with  $\norm[\Hhalf]{f} = \norm*[\Ltwo(\diff \xi)]{\abs{\xi}^\half \hat{f}(\xi)} < \infty$. We will use the notation $\norm[\Linfty\cap\Hhalf]{f} = \norm[\Linfty]{f} + \norm[\Hhalf]{f}$. The Poisson kernel is given by
\begin{align}\label{eq:Poissonkernel}
K_\ep(x) = \frac{\ep}{\pi(\ep^2 + x^2)} \qquad \tx{ for } \ep>0
\end{align}

From now on compositions of functions will always be in the spatial variables. We write $f = f(\cdot,t), g = g(\cdot,t), f \compose g(\cdot,t) :=  f(g(\cdot,t),t)$. Define the operator $U_g$ as given by $U_g f = f\compose g$. Observe that $U_f U_g = U_{g\compose f}$. Let $[A,B] := AB - BA$ be the commutator of the operators $A$ and $B$. We will denote the spacial coordinates in $\Omega(t) $ with $z = x+iy$, whereas $\zp = \xp + i\yp$ will denote the coordinates in the lower half plane $\Pminus = \cbrac{(x,y) \in \Rsp^2 \suchthat y<0}$. We will also use the notation $a \lesssim b$ to denote that there exists a universal constant $C>0$ such that $a \leq Cb$. 

We consider the one-phase setting in which a fluid of constant density $\rho$ and constant viscosity $\mu$ occupies a simply connected domain of infinite depth $\Omega(t)$ and the region $\Rsp^{2} \setminus \Omega(t)$ is a vacuum. Expressing the domain in complex coordinates and assuming $\rho = \mu =1$, Darcy's law becomes
\begin{align}\label{eq:Darcycomplex}
u(x,t)=-\grad P(x,t)-i. 
\end{align} 
Let $\Gamma(t)$ be the boundary of $\Omega(t)$. By the continuity of $P(x,t)$, we know that $P(x,t) = 0$ for any $x\in\Gamma(t)$. Hence, if we parameterize the boundary by Lagrangian coordinates
\begin{align*}
    \Gamma(t) = \cbrac{\z(\alpha,t) \  \suchthat \ \alpha\in\Rsp }
\end{align*}
then the velocity on the boundary is given by $u(\z(\alpha,t),t) = \zt(\alpha,t)$. Also note that the gradient of the pressure is orthogonal to $\Gamma(t)$ and can be expressed as
\begin{align*}
    \grad P(\z(\al,t)) = - i \textbf{a} \zal(\al,t)
\end{align*}
where we define
\begin{align}\label{eq:a}
\textbf{a} \eqdef -\frac{1}{\abs{\zal}}\frac{\partial P}{\partial n}.
\end{align}
On the boundary, we obtain the interface equation
\begin{align}\label{eq:DarcyOnePhase}
 \zt(\al,t) = i \textbf{a}  \zal(\al,t) - i.
\end{align}

Let $\Psi(\cdot,t): \Pminus \to  \Omega(t)$ be conformal maps satisfying $\lim_{\z\to \infty} \Psi_\z(\z,t) =1$  and also $\lim_{\z\to \infty} \Psi_t(\z,t) =0$.  With this, the only ambiguity left in the definition of $\Psi$ is that of the choice of translation of the conformal map at $t=0$, which does not play any role in the analysis. Let $\Phi(\cdot,t):\Omega(t) \to \Pminus $ be the inverse of the map $\Psi(\cdot,t)$ and define $\h(\cdot,t):\Rsp \to \Rsp$ as
\begin{align}\label{eq:h}
    \h(\alpha,t) \eqdef \Phi(\z(\alpha,t),t).
\end{align}
For $\ap \in \Rsp$, we define the map $\hinv(\cdot,t): \Rsp \to \Rsp$ as the inverse of $h(\cdot,t)$ and hence
\begin{align*}
    \h(\hinv(\ap,t),t) = \ap.
\end{align*}
Precomposing $\z(\al,t)$ with $\hinv$, we define
\begin{align*}
    Z(\ap,t) & \eqdef z\compose \hinv (\ap,t), \\
    \Zt(\ap,t) &\eqdef \zt\compose \hinv (\ap,t), \\
    \Ztt(\ap,t) &\eqdef \ztt\compose \hinv (\ap,t)
\end{align*}
as the re-parametrizations of the position, velocity and accelerations of the interface under the Riemann mapping. Denote the derivatives in $\ap$ by
\begin{align*}
    \Zap(\ap,t) = \pap \Z(\al,t) \quad \text{  and  } \quad  \Ztap(\ap,t) = \pap \Zt(\al,t).
\end{align*}
Under the above conventions, we have that $\Psi\compose \h(\al,t) = \z(\al,t)$, and hence, it follows that
\begin{align*}
\Psi(\al;t) &= \Psi(\h(\hinv(\al,t);t);t)\\
&= \z(\hinv(\al,t);t)\\
&= \Z(\al,t).
\end{align*}
Thus, by the chain rule,
\begin{align}
\Zap(\ap,t) \eqdef \pap \Z(\ap,t) & = \pap (\z\compose \hinv(\al,t)) \nonumber\\
&= \zal\compose \hinv (\al,t) \cdot \frac{1}{\hal\compose \hinv(\al,t)}. \label{eq:Zaphapzap}
\end{align}

Now precomposing \eqref{eq:DarcyOnePhase} with $\hinv$, we obtain
\begin{align}\label{eq:DarcyHalfPlane}
 \Zt + i = i\A \Zap
\end{align}
where
\begin{align}\label{eq:A}
\textbf{a} \hal = \A\compose \h.
\end{align}
Because the fluid is irrotational, $\grad \times u = 0$ and incompressible, $\grad \cdot u = 0$, we know that the complex conjugate of the velocity $\ubar (x+iy,t)$ satisfies the Cauchy-Riemann equations and is holomorphic in $\Omega(t)$.
Hence $\ubar\compose \Psi$ is also a holomorphic function in $\Pminus$ with boundary value $\Ztbar$. Additionally, $\abs{\ubar\compose \Psi(\zp,t)}\to 0$ as $\z\to \infty$ and hence we obtain that
\begin{align}\label{eq:hilbertZ}
\Ztbar = \Hil \Ztbar
\end{align}
where the Hilbert transform is given by
\begin{align}\label{eq:HilbertP}
\Hil f(\alpha) \eqdef \frac{1}{i \pi} \text{p.v.} \int_{\Rsp} \frac{f(\beta)}{\alpha-\beta} \diff\beta
\end{align}
for a function $f:\Rsp\to\Rsp$. Observe that with this definition of the Hilbert transform we have $\sqrt{-\Delta} = \papabs = i \Hil \pap$ and the Fourier multiplier of $\Hil$ is $- sgn(\xi)$. We define the operators
\begin{align*}
 \Dap = \Dapfrac \qquad \Dapbar = \Dapbarfrac \qquad \Dapabs = \Dapabsfrac \qquad \Dt = \pt + \bvar\pap
\end{align*}
where $\bvar = \h_t\compose \hinv$. One important property of $\Dap$ is that if a function $f$ satisfies $\Hil f = f$ and if it decays at infinity, then we also have that $\Hil(\Dap f) = \Dap f$. Similarly if $\Hil f = - f$, then $\Hil(\Dapbar f) = - \Dapbar f$. Also $\Dt$ is the material derivative in the Riemann mapping coordinate system as we see that $\Dt (f(\cdot,t)\compose \hinv) = (\pt f(\cdot,t)) \compose \hinv$ or equivalently $\pt (F(\cdot,t)\compose \h) = (\Dt F(\cdot,t)) \compose \h$. Hence $\Dt = U_{h}^{-1}\pt U_{h}$ and we have $\Dt \Z = \Zt$ and $\Dt \Zt = \Ztt$. It is important to remember that in our notation $\pt \Z \neq \Zt$ and $\pt \Zt \neq \Ztt$. We also define the variable
\begin{align*}
\w = \frac{\Zap}{\Zapabs}
\end{align*}

Next, we wish to find an expression for the term $\A$ from \eqref{eq:DarcyHalfPlane}. First, define
\begin{align}\label{eq:Aone}
\Aone = \A\abs{\Zap}^{2}
\end{align}
to obtain
\begin{align*}
    \Zap( \Ztbar -i) = -i\Aone.
\end{align*}
Applying $\Id-\Hil$ to both sides, we obtain
\begin{align*}
    (\Id-\Hil)( \Zap\Ztbar)-(\Id-\Hil)(i\Zap) = -i(\Id-\Hil)\Aone.
\end{align*}
Notice that $\Zap$ is the boundary value of the holomorphic function $\Psi_{\zp}$. As $\Psi_{\zp}\to 1$ as $\z\to \infty$, we see that $(\Id-\Hil)\Zap = 1$. Along with \eqref{eq:hilbertZ}, this yields that
\begin{align*}
    -i(\Id-\Hil)\Aone= -i.
\end{align*}
Considering the imaginary components of both sides and using the fact that $\Aone$ is real valued, we obtain
\begin{align*}
    \Aone = 1
\end{align*}
Thus, Darcy's law becomes 
\begin{align}\label{eq:DarcyLawA1}
 \Ztbar - i  = -i \frac{1}{\Zap}
\end{align}

To obtain the quasilinear equation, we differentiate \eqref{eq:DarcyOnePhase} in time:
\begin{align}
     \zttbar(\alpha,t) = -i \textbf{a}  \ztalbar(\alpha,t) - i \textbf{a}_{t} \zalbar(\alpha,t).
\end{align}
Precomposing with $\hinv$, we obtain
\begin{align*}
     \Zttbar = -i \A \Ztapbar - i (\textbf{a}_{t} \hal)\compose \hinv \Zapbar \eqdef  -i \A \Ztapbar - i\mathcal{B}\Zapbar.
\end{align*}
Hence,
\begin{align*}
     \Zap(\Zttbar + i \A \Ztapbar)= -i\mathcal{B}\abs{\Zapbar}^{2}.
\end{align*}
As earlier, let $\Bone = \mathcal{B} \abs{\Zap}^{2}$. We can alternatively express $\Ztt$ by differentiating in time the formula $\zt (\al, t) = u(\z(\al,t),t)$ to obtain
\begin{align*}
    \zttbar = \ubar_{t} \compose \z + (\ubar_{z}\compose \z)z_{t}
\end{align*}
and therefore,
\begin{align*}
    \Zttbar = \ubar_{t}\compose Z + \frac{\Ztapbar\Zt}{\Zap}.
\end{align*}
Substituting into the above equation for $\Ztt$, we obtain
\begin{align*}
    \Zap(\Zttbar + i \A \Ztapbar)= \Zap\ubar_{t}\compose \Z +  \Ztapbar\Zt + i\A \Zap \Ztapbar.
\end{align*}
Thus, an equation for $B_{1}$ is obtained:
\begin{align*}
    -i\Bone = \Zap\ubar_{t}\compose Z +  \Ztapbar\Zt + i\A \Zap \Ztapbar.
\end{align*}
From \eqref{eq:DarcyLawA1}, we have that
\begin{align*}
i\A \Zap  = \frac{i }{\Zapbar} =  \Zt + i
\end{align*}
Hence,
\begin{align*}
    \Bone = i\Zap\ubar_{t}\compose Z -   \Ztapbar + 2 i \Zt\Ztapbar  .
\end{align*}
Using \eqref{eq:DarcyLawA1} we get
\begin{align}\label{eq:Bonehol}
 \Bone = i\Zap\ubar_{t}\compose Z  -i\pap\frac{1}{\Zap} + \frac{2 i}{\Zapbar}\pap\frac{1}{\Zap}
\end{align}
Applying $\Real(\Id-\Hil)$ to the above equation and  and observing that $\Bone$ is real valued gives
\begin{align}\label{eq:Bone}
    \Bone = - 2\Imag\brac{\sqbrac{\frac{1}{\Zapbar}, \Hil}\pap\frac{1}{\Zap}}
\end{align}
Observe that 
\begin{align*}
    \Bone & = \frac{2}{\pi}\Real\cbrac{\int \frac{\frac{1}{\Zapbar}(\ap) - \frac{1}{\Zapbar}(\bp)}{\ap - \bp} \brac{\pbp \frac{1}{\Zap}}(\bp) \diff \bp} \\
    & = -\frac{1}{\pi}\int \frac{\pbp \abs{\frac{1}{\Zap}(\ap) - \frac{1}{\Zap}(\bp)}^2}{\ap - \bp} \diff\bp \\
    & = \frac{1}{\pi}\int \abs{\frac{\frac{1}{\Zap}(\ap) - \frac{1}{\Zap}(\bp)}{\ap - \bp}}^2 \diff \bp
\end{align*}
Hence
\begin{align}\label{eq:Zttbar}
     \Zttbar + i \frac{1}{\Zapabs^2} \Ztapbar= -i \frac{\Bone}{\Zap}.
\end{align}

Let us now derive a formula for $\bvar$. Recall that $\h(\al,t) = \Phi(\z(\al,t),t)$ and so by taking derivatives we get
\begin{align*}
    \h_t = \Phi_t \compose \z + (\Phi_\z \compose \z)\zt  \quad \qquad \hal = (\Phi_\z \compose \z)\zal 
\end{align*}
Hence
\begin{align*}
 \h_t = \Phi_t \compose \z + \frac{\hal}{\zal}\zt 
\end{align*}
Precomposing with $ \hinv$ we obtain 
\begin{align*}
\h_t \compose \hinv = \Phi_t \compose \Z + \frac{\Zt}{\Zap}
\end{align*}
Apply $(\Id - \Hil)$ and take real part, to get
\begin{align*}
\bvar = \Real(\Id-\Hil)\brac{\frac{\Zt}{\Zap}} 
\end{align*}
Using \eqref{eq:DarcyLawA1}, we obtain
\begin{align*}
\bvar = \Real(\Id-\Hil)\brac{\frac{-i}{\Zap} + \frac{i}{|\Zap|^{2}}} = -\Real(\Id-\Hil)\brac{\frac{i}{\Zap}} - \Hil\brac{\frac{i}{|\Zap|^{2}}}.
\end{align*}
Now $\Hil \brac{\frac{1}{\Zap} - 1} = \brac{\frac{1}{\Zap} - 1}$ and $\Hil 1 = 0$. Therefore we have
\begin{align*}
    \Real(\Id-\Hil)\brac{\frac{i}{\Zap}} = \Real \brac{i} = 0
\end{align*}
Hence
\begin{align}\label{eq:b}
    \bvar = - i\Hil\brac{\frac{1}{|\Zap|^{2}}}
\end{align}
Now observe that as $\bvar$ is real valued, we have
\begin{align*}
     \bvarap  = 2\Real\cbrac{-i\Hil\brac{\frac{1}{\Zapbar}\pap\frac{1}{\Zap}}} 
\end{align*}     
Hence we have
\begin{align}\label{eq:bvarap}
\begin{split}
   \bvarap  & = 2\Real\cbrac{i\sqbrac{\frac{1}{\Zapbar},\Hil}\pap\frac{1}{\Zap} -i\Dapbar\frac{1}{\Zap} } \\
   & = \Bone + 2\Real\cbrac{i\Dap\frac{1}{\Zapbar}}
\end{split}
\end{align}

\subsection{The system}

In summary we see that the system is given in the variable $\Zap(\cdot,t)$ by the equations:
\begin{align}\label{eq:main}
\begin{split}
\bvar & = - i\Hil\brac{\frac{1}{|\Zap|^{2}}} \\
\Bone & = - 2\Imag\brac{\sqbrac{\frac{1}{\Zapbar}, \Hil}\pap\frac{1}{\Zap}} \\
(\pt + \bvar\pap)\frac{1}{\Zap} & =  - i\frac{1}{\Zapabs^2}\pap\frac{1}{\Zap} + \frac{\Bone}{\Zap}
\end{split}
\end{align}
along with the condition that its harmonic extension, namely $\Psizp(\cdot + iy) = K_{-y}\conv \Zap$ for all $y<0$, \footnote{here $K_{-y}$ is the Poisson kernel \eqref{eq:Poissonkernel}} is a holomorphic function on $\Pminus$ and satisfies
\begin{align*}
\lim_{z \to \infty} \pt \Psi(z,t) = 0, \quad 
\lim_{c \to \infty} \sup_{\abs{\zp}\geq c}\nobrac{\abs{\Psizp(\zp) - 1}}  = 0 \tx{ and } \quad \Psizp(\zp) \neq 0 \quad \tx{ for all } \zp \in \Pminus 
\end{align*}

We now make a few remarks about the above system. Observe that for such a $\Psizp$  we can uniquely define $\log(\Psizp) : \Pminus \to \Csp$ such that $\log(\Psizp)$ is a continuous function with $\Psizp = \exp\cbrac{\log(\Psizp)}$ and $(\log(\Psizp))(\zp) \to 0$ as $\zp \to \infty$. Also a straightforward computation using the formulae \eqref{eq:bvarap} and \eqref{eq:commutator} shows that the last equation of \eqref{eq:main} implies
\begin{align*}
\pap(\pt + \bvar\pap)\Zbar = -i\pap\frac{1}{\Zap}
\end{align*}
Now using the fact that $\lim_{z \to \infty} \Psi_t(z,t) = 0$ and  
$\lim_{c \to \infty} \sup_{\abs{\zp}\geq c}\nobrac{\abs{\Psizp(\zp) - 1}}  = 0$ implies that 
\begin{align}\label{eq:Darcynew}
(\pt + \bvar\pap)\Zbar = i - i\frac{1}{\Zap}
\end{align}
which is equation \eqref{eq:DarcyLawA1}. Hence we see that one can obtain $\Z(\cdot,t)$ from the system \eqref{eq:main} by the formula 
\begin{align*}
\Z(\ap,t) & = \Z(\ap,0) +  \int_0^t \cbrac{(\pt + \bvar\pap)\Z - \bvar\Zap}(\ap,s) \diff s \\
& = \Z(\ap,0) +  \int_0^t \cbrac{-i + \frac{i}{\Zapbar} - \bvar\Zap}(\ap,s) \diff s
\end{align*}
In particular instead of the variables $\Zap(\cdot,t)$, one can view the system being in variable $\Z(\cdot,t)$. 

One can rewrite the function $\h(\al,t)$ defined in \eqref{eq:h} as the solution to the ODE
\begin{align*}
\frac{\diff h}{\diff t} &= \bvar(\h, t) \\
h(\al,0) &= \al
\end{align*}
where $\bvar$ is given by \eqref{eq:main}. From this we easily see that as long as $\sup_{[0,T]} \norm[\infty]{\bvarap}(t) <\infty$ we can solve this ODE and for any $t\in [0,T]$ we have that $h(\cdot,t)$ is a homeomorphism. Hence it makes sense to talk about the functions $z = \Z\compose \h, \zt = \Zt \compose \h$ which are Lagrangian parametrizations of the interface and the velocity on the boundary. 

We can also derive the original Darcy's law \eqref{eq:Darcycomplex} from the above system. Let $F(\cdot,t):\Pminus \to \Csp$, $\Psi(\cdot, t): \Pminus \to \Csp$ be holomorphic functions continuous on $\Pminus$ such that
\begin{align*}
F(\ap,t) = \Ztbar(\ap,t) \qq \Psi(\ap,t) = \Z(\ap,t)
\end{align*}
with $F(\zp)(\zp,t)\to 0 $ and $\Psi_\zp(\zp,t) \to 1$ as $\zp \to \infty$. As we have already derived \eqref{eq:Darcynew} from \eqref{eq:main}, we see that
\begin{align*}
    (F-i)\Psi_\zp = -i \qq \tx{ on } \partial \Pminus
\end{align*}
As the left hand side is holomorphic and because of the boundary behaviour of $F$ and $\Psizp$, this implies that
\begin{align*}
    F = -\frac{i}{\Psizp} + i\qq \tx{ on }   \Pminus
\end{align*}
Let $\Pfrak(\cdot,t):\Pminus \to \Rsp$ be defined as $\Pfrak(\zp,t) = -\Imag(\zp)$, which implies that $\Pfrak = 0 $ on $\partial\Pminus$, $\grad \Pfrak \to -i$ as $\zp \to \infty$ and that $\Pfrak(\cdot,t)$ is harmonic on $\Pminus$. Letting $u = \Fbar\compose \Psi^{-1}$ and $P = \Pfrak \compose \Psi^{-1}$ we see that
\begin{align*}
    \ubar\compose \Psi = -\frac{1}{\Psizp}(\pxp - i \pyp)\Pfrak + i = ((\px - i\py) P)\compose \Psi + i \qq \tx{ on } \Pminus 
\end{align*}
which is the same as \eqref{eq:Darcycomplex} written on $\Pminus$ and with a conjugate.

We also observe that as we working in conformal coordinates, there is no restriction on the function $\Z(\cdot,t)$ being injective and hence the above system allows for self intersections, similar to the works of \cite{CaCo16}, \cite{CaCoFe12} where splash singularities for the Hele Shaw or water wave equations were studied. However we will not focus on this aspect in this paper. 
\smallskip

\section{Main Results}\label{sec:mainresults}
In this section, we will outline the main results of this paper. Given conformal maps $\Psi(\cdot,t):\Pminus \to \Omega(t)$, we define the energy 
\begin{align}\label{eq:Mcal1}
\Mcal_{1}(t) =   \sup_{y' < 0} \norm[2]{\partial_{\zp}\frac{1}{\Psizp}(\cdot + iy',t)}^{2} + \sup_{y'<0} \norm[2]{\frac{1}{\Psizp}\partial_{\zp}\brac{\frac{1}{\Psizp}\partial_{\zp}\frac{1}{\Psizp}}(\cdot + iy',t)}^{2} 
\end{align}
We will work in this energy class. For smooth enough interfaces, the energy $\Mcal_1(t)$ is equivalent to an energy on the boundary
\begin{align*}
\Mcal_{1}(t) =  \norm[2]{\pap\frac{1}{\Zap}}^2(t)  + \norm[2]{\Dap^2\frac{1}{\Zap}}^2(t)
\end{align*}
Note that this energy is essentially a sum of weighted Sobolev norms with the weights being powers of $\frac{1}{\Zap}$. As mentioned in the introduction, the question we are concerned with is the dynamics of sharp angles (less than $\pi/2$) or cusps on the curve. Heuristically, a corner of angle $\nu \pi$ at $\ap = 0$ is approximately given nearby as
\begin{equation}
Z(\ap)\approx (\ap)^{\nu}, \frac{1}{\Zap} \approx (\ap)^{1-\nu}, \pap  \frac{1}{\Zap} \approx (\ap)^{-\nu}, \text{ etc. }
\end{equation}
Hence, the condition $\Mcal_1(t) < \infty$ means that our interface can have $0<v < \frac{1}{2}$, i.e. acute angles. Similarly, it allows for cusps (corresponding to $\nu = 0$), see \cite{Ag20} for more details. First, we need to define the notion of initial data and solutions to the system \eqref{eq:main}.

\medskip

\noindent\textbf{The initial data:} We assume that the initial conformal map $\Psi(\cdot,0):\Pminus \to \Csp$ satisfies the following:
\begin{enumerate}
\item The conformal map $\Psi(\cdot,0): \Pminus \rightarrow \mathbb{C}$ extends continuously to the boundary with boundary value being $\Psi(\ap,0) = Z(\ap,0)$,
\item We have $\Psizp(\zp,0) \neq 0$ for all $\zp \in \Pminus$ and  $\Psizp(\zp,0)\rightarrow 1$ as $\abs{\zp}\rightarrow \infty$
\item We assume that $\Mcal_{1}(0) < \infty$
\item We also assume that
\begin{align}\label{eq:c0}
c_0 \eqdef \sup_{y'<0} \norm[2]{\frac{1}{\Psizp}(\cdot + iy',0)- 1}  < \infty.
\end{align}
\end{enumerate}

The following lemma is a straightforward consequence of basic complex analysis.
\begin{lemma}
Suppose that $\Psi: \Pminus \times \sqbrac{0,T} \to  \Csp$ be continuous and suppose that for each $t \in [0,T]$, the map $\Psi(\cdot,t)$ is conformal with $\Psi(\zp,t) \neq 0$ for all $(\zp,t) \in \Pminus\times [0,T]$. Also assume that
\begin{align*}
\sup_{\substack{y'<0\\t\in\sqbrac{0,T}}} \norm[H^1]{\frac{1}{\Psizp}(\cdot + iy',t)- 1} < \infty.
\end{align*}
Then $\frac{1}{\Psizp}$ extends to a continuous function from $\Pminus\times[0,T]$ to $\Pminusbar\times[0,T]$. 
\end{lemma}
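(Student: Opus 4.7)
The approach will be to combine a Cauchy-integral argument for joint continuity in the interior with Hardy-space and Sobolev-embedding arguments to pass continuously to the boundary. Set $g(\zp,t) \eqdef \frac{1}{\Psizp(\zp,t)} - 1$; for each $t$, $g(\cdot,t)$ is holomorphic on $\Pminus$, and the hypothesis reads $M \eqdef \sup_{y'<0,\, t\in[0,T]} \norm[H^1(\Rsp)]{g(\cdot+iy',t)} < \infty$.

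First I would establish joint continuity of $\Psizp$, and hence of $g$, on the interior $\Pminus \times [0,T]$ via the Cauchy integral formula $\Psizp(\zp,t) = \frac{1}{2\pi i} \oint_{|\zeta-\zp_0|=r} \frac{\Psi(\zeta,t)}{(\zeta-\zp)^2}\diff\zeta$ for $\zp$ near $\zp_0$ and $r$ small enough that $\overline{B(\zp_0,r)} \subset \Pminus$; uniform continuity of $\Psi$ on the compact set $\partial B(\zp_0,r)\times[0,T]$ does the job, and the assumed nonvanishing of $\Psizp$ transfers continuity to $g$.

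Next I would invoke the one-dimensional Sobolev embedding $H^1(\Rsp) \hookrightarrow C^{0,1/2}(\Rsp)$ to upgrade the hypothesis to a uniform H\"older-$\frac12$ bound on horizontal slices with constant $\lesssim M$. Since $g(\cdot,t)$ is holomorphic on $\Pminus$ with $\Ltwo$-slices uniformly bounded in $y'$, standard Hardy-space theory provides a boundary trace $\tilde g(\cdot,t) \in H^1(\Rsp)$ satisfying the Poisson representation $g(\cdot+iy',t) = K_{-y'}\conv\tilde g(\cdot,t)$, using the kernel from \eqref{eq:Poissonkernel}. Combining the uniform H\"older bound with the scaling identity $\int_\Rsp |s|^{1/2} K_{-y'}(s)\diff s \lesssim |y'|^{1/2}$ then yields
\[
\sup_{\alpha\in\Rsp,\,t\in[0,T]} \abs{g(\alpha+iy',t) - \tilde g(\alpha,t)} \lesssim M\,|y'|^{1/2} \to 0 \quad \text{as } y' \to 0^-.
\]
Because each map $(\alpha, t) \mapsto g(\alpha+iy',t)$ is jointly continuous by the previous step, this uniform convergence forces $\tilde g$ to be jointly continuous on $\Rsp \times [0,T]$.

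Finally, extending $\frac{1}{\Psizp}$ by declaring its value on $\Rsp \times [0,T]$ to be $\tilde g + 1$, joint continuity at a boundary point $(\alpha_0, t_0)$ follows from a triangle inequality: for any sequence $(\alpha_n + iy'_n, t_n) \to (\alpha_0, t_0)$ in $\Pminusbar \times [0,T]$,
\[
\abs{g(\alpha_n + iy'_n, t_n) - \tilde g(\alpha_0, t_0)} \leq \abs{g(\alpha_n + iy'_n, t_n) - \tilde g(\alpha_n, t_n)} + \abs{\tilde g(\alpha_n, t_n) - \tilde g(\alpha_0, t_0)},
\]
where the first term is $\lesssim M |y'_n|^{1/2}\to 0$ and the second vanishes by joint continuity of $\tilde g$. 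The main subtle point will be securing the uniform-in-$t$ rate of boundary convergence; this is exactly why the $H^1$ hypothesis must be uniform in $t$ rather than merely pointwise in $t$.
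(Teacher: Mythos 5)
The paper states this lemma without proof, calling it ``a straightforward consequence of basic complex analysis,'' so there is no written argument in the paper to compare against. Your argument is a correct and complete realization of that omitted proof: the Cauchy integral formula for interior joint continuity of $\Psi_{\zp}$, the Hardy-space Poisson representation $g(\cdot+iy',t)=K_{-y'}\conv\tilde g(\cdot,t)$ for the holomorphic function $g=\onePsizp-1$ with uniformly $L^2$-bounded slices, the $H^1(\Rsp)\hookrightarrow C^{0,1/2}(\Rsp)$ embedding transferring to the trace $\tilde g$, the moment bound $\int_\Rsp |s|^{1/2}K_{-y'}(s)\diff s \lesssim |y'|^{1/2}$ giving uniform-in-$(\ap,t)$ boundary convergence, and the triangle inequality to close. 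The only cosmetic gap is that you should explicitly record that the trace $\tilde g(\cdot,t)$ inherits the $H^1$ bound (via Fatou or weak convergence of the slices) so that the Hölder estimate indeed holds for $\tilde g$ itself and not only for interior slices; you tacitly use this when applying the Hölder modulus under the Poisson integral. With that one line added the proof is airtight.
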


\begin{rmk}\label{rem:oneoverZap}
In the above lemma, as $\onePsizp$ extends continuously to $\Pminusbar\times[0,T]$, we call its boundary value as $\frac{1}{\Zap}$ by abuse of notation (as we will not try to define $\Zap$ on the boundary.) We will think of $\frac{1}{\Zap}$ in this case as a single notation and not as one divided by $\Zap$. Hence from this terminology, the function $\frac{1}{\Zap}$ is a continuous function on $\Rsp \times [0,T]$. 
\end{rmk}

Next, we define the class of solutions that we will consider.
\begin{definition}\label{def:solution}
Let $\Psi(\cdot,t): \Pminus \times \sqbrac{0,T} \to  \Omega(t)$. We say $\Psi$ solves the system \eqref{eq:main} in the class $\mathcal{SA}$ (smoothly approximable solutions) if $\Psi$ satisfies
\begin{enumerate}
\item $\Psi(\cdot,t)$ is a conformal map for each $t \in [0,T]$, $\Psi_{\zp}(\zp,t) \neq 0$ for $(\zp,t) \in \Pminus\times [0,T]$ and  $\Psizp(\zp,t) \rightarrow 1$ as $\abs{\zp}\rightarrow \infty$ for any $t\in\sqbrac{0,T}$,
\item $\Psi$ extends continuously to $\overline{\Pminus}\times \sqbrac{0,T}$ and $\Psi \in C^1(\Pminus\times (0,T))$.
\item $\Psi$ satisfies
\begin{align*}
\sup_{\substack{y'<0\\t\in\sqbrac{0,T}}} \norm[H^1]{\frac{1}{\Psizp}(\cdot + iy',t)- 1} < \infty.
\end{align*}
and hence $\frac{1}{\Psizp}$ extends continuously from $\Pminus\times[0,T]$ to $\Pminusbar\times[0,T]$,
\item and $\frac{1}{\Zap}(\ap,t) = \frac{1}{\Psi}(\ap,t)$ solves \eqref{eq:main} in the sense of distributions.
\end{enumerate}
Moreover there exists a sequence of smooth $Z^{(n)}: \mathbb{R}\times \sqbrac{0,T} \rightarrow \mathbb{C}$ satisfying:
\begin{enumerate}[label=(\alph*)]
    \item for each $n\in \mathbb{N}$,
    \begin{align*}
        \sup_{t\in \sqbrac{0,T}} \cbrac*[\Bigg]{\norm[H^2]{\Zap^{(n)}\brac{\cdot,t}-1} + \norm*[\bigg][H^2]{\frac{1}{\Zap^{(n)}}\brac{\cdot,t}-1}} < \infty,
    \end{align*}
    \item for each $n\in \mathbb{N}$, $\Zap^{(n)}$ solves the system, \eqref{eq:main}
    \item the extension $\Psi^{(n)} = K_{y'}\ast Z^{(n)}$ satisfies that $\Psi^{(n)} \rightarrow \Psi$ uniformly on compact subsets of $\overline{\Pminus} \times \sqbrac{0,T}$,
    \item $\frac{1}{\Zap^{(n)}}(\cdot,0)$ converges to $\frac{1}{\Zap}(\cdot,0)$ in $\dot{H}^{\frac{1}{2}}$, and
    \item 
    \begin{align*}
       \sup_{\substack{n\in\mathbb{N}\\ t\in\sqbrac{0,T}}} \Mcal_{1}^{(n)}(t) +  \sup_{\substack{n\in\mathbb{N}\\ t\in\sqbrac{0,T}}}\norm[2]{\frac{1}{\Zap^{(n)}}\brac{\cdot,t}-1} \leq M < \infty
    \end{align*}
    for some constant $M \geq 0$.
\end{enumerate}
\end{definition}

Note that class $\mathcal{SA}$ defined above only contains smoothly approximable solutions and does not contain all functions for which the energy $\Mcal_1(t)$ is finite.

We can now state the first main result of our paper namely that in this class of functions, we have the well-posedness of the system:
\begin{theorem}\label{thm:mainwellposed}
Let $\Psi(\cdot,0): \Pminus \rightarrow \mathbb{C}$ be an initial data satisfying the conditions mentioned above. Then there exists a $T>0$ depending only on $\Mcal_{1}(0)$ such that there is a unique $\Psi:\Pminus\times \sqbrac{0,T}\rightarrow \mathbb{C}$ solving the system \eqref{eq:main} in the class $\mathcal{SA}$ and moreover,
\begin{align*}
    \sup_{t\in\sqbrac{0,T}} \Mcal_{1}(t) \leq C\brac{\Mcal_{1}(0)}
\end{align*}
for $ C\brac{\Mcal_{1}(0)}$ a constant only depending on $\Mcal_{1}(0)$.
\end{theorem}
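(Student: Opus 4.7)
The plan is to combine three ingredients signaled by the section headings: a priori energy estimates controlling $\Mcal_1(t)$ for smooth solutions (\secref{sec:apriori}), a difference estimate giving uniqueness (\secref{sec:uniqueness}), and existence of smooth solutions from smooth data via the equivalent formulation of \secref{sec:interestingidentity} (\secref{sec:existence}). Given these, Theorem \ref{thm:mainwellposed} reduces to a careful approximation argument that exploits the built-in structure of the class $\mathcal{SA}$.

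First, I would work at the level of a smooth approximation $\Zap^{(n)}$ and derive a differential inequality of the form
\begin{align*}
\frac{d}{dt}\Mcal_1^{(n)}(t) \leq C\brac{\Mcal_1^{(n)}(t)}.
\end{align*}
Using the system \eqref{eq:main}, this means differentiating $\norm[2]{\pap \frac{1}{\Zap}}^2$ and $\norm[2]{\Dap^2 \frac{1}{\Zap}}^2$ in time, applying $\Dt$ via the formula $(\pt+\bvar\pap)\frac{1}{\Zap} = -i\frac{1}{\Zapabs^2}\pap\frac{1}{\Zap} + \frac{\Bone}{\Zap}$, and integrating by parts. The crucial structural facts to use are: (i) $\Aone=1$, so the principal part is $-i\frac{1}{\Zapabs^2}\pap$, which is anti-self-adjoint in the weighted $L^2$ with weight $\Zapabs^2$ modulo lower order; (ii) the positive representation $\Bone = \frac{1}{\pi}\int \bigl|\tfrac{\frac{1}{\Zap}(\ap)-\frac{1}{\Zap}(\bp)}{\ap-\bp}\bigr|^2\diff\bp$, which is bounded by $\norm[\Hhalf]{\frac{1}{\Zap}}^2$ and absorbs commutator terms; (iii) the holomorphicity relations $\Hil\Ztbar = \Ztbar$, $\Hil\brac{\frac{1}{\Zap}-1} = \frac{1}{\Zap}-1$, which turn several would-be top-order terms into lower-order ones via standard commutator identities. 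Gronwall then produces a time $T = T(\Mcal_1(0))>0$ on which $\Mcal_1^{(n)}(t) \leq C(\Mcal_1(0))$ uniformly in $n$.

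For existence, I would start from the given rough initial $\Psi(\cdot,0)$ and construct a sequence $\Psi^{(n)}(\cdot,0)$ of smooth conformal maps (e.g.\ via Poisson mollification $\Zap^{(n)}(\cdot,0) = K_{1/n} \conv \Zap(\cdot,0)$) such that $\frac{1}{\Zap^{(n)}}(\cdot,0) - 1 \in H^2$, $\frac{1}{\Zap^{(n)}}(\cdot,0) \to \frac{1}{\Zap}(\cdot,0)$ in $\Hhalf$, and $\Mcal_1^{(n)}(0) \leq \Mcal_1(0)+o(1)$. For each such smooth initial datum, \secref{sec:existence} provides a smooth solution $\Psi^{(n)}$ on $[0,T_n]$; the a priori bound above upgrades each $T_n$ to the common $T=T(\Mcal_1(0))$ and yields the uniform estimate $\sup_{n,t}\Mcal_1^{(n)}(t) \leq M$. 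Uniform bounds on $\Mcal_1^{(n)}$ and on $\norm[2]{\tfrac{1}{\Zap^{(n)}}-1}$ imply equicontinuity (in space) of $\frac{1}{\Psizp^{(n)}}$ and, via the evolution equation, equicontinuity in time. Arzel\`a--Ascoli plus a diagonal argument produce a subsequential limit $\Psi$ with $\Psi^{(n)}\to \Psi$ uniformly on compact subsets of $\Pminusbar\times[0,T]$; passing to the distributional limit in \eqref{eq:main} — here the quadratic structure of $\Bone$ and $\bvar$ forces $\Hhalf$-convergence rather than mere weak-$L^2$ — yields a solution in the class $\mathcal{SA}$ satisfying the stated energy bound.

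Uniqueness in $\mathcal{SA}$ would come directly from the difference estimate of \secref{sec:uniqueness}: for two solutions $\Psi,\tilde\Psi$ with the same initial data one controls $\norm[\Hhalf]{\frac{1}{\Zap}-\frac{1}{\Ztil_\ap}}^2$ in terms of itself times a constant depending on $\Mcal_1$, so Gronwall gives coincidence. The main obstacle throughout is the degeneracy of the weights $\frac{1}{\Zap}$ at corners and cusps: because $\frac{1}{\Zap}$ may vanish and $\Zap$ may blow up, one cannot freely convert between $\pap$-derivatives and $\Dap$-derivatives, so every estimate must be written in a form where the degenerate weights appear in a balanced way, and all commutator manipulations must preserve the holomorphic/anti-holomorphic splitting used to avoid top-order losses. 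This is what forces the particular choice of energy $\Mcal_1$ and the reliance on the exact identity derived in \secref{sec:interestingidentity}.
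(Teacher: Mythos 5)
Your outline matches the paper's overall strategy (a priori energy estimate for smooth solutions, mollified conformal data solved via the equivalent $g$-formulation, Arzel\`a--Ascoli compactness, and a difference estimate for uniqueness), but there is a genuine gap in the passage to the limit. You write that ``the quadratic structure of $\Bone$ and $\bvar$ forces $\Hhalf$-convergence rather than mere weak-$L^2$,'' but that is an assertion, not a derivation. Arzel\`a--Ascoli together with the uniform $\Mcal_1$-bound gives only uniform convergence on compact subsets of $\Pminusbar\times[0,T]$ and weak-$*$ bounds in $L^\infty_t L^2_x$; neither is sufficient to pass to the limit in the singular-integral term $\Bone^\ep = -2\Imag\sqbrac{\frac{1}{\Zapepbar},\Hil}\pap\frac{1}{\Zapep}$, which sees oscillations that uniform-on-compacts convergence cannot control. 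The paper closes this by showing that the family $\cbrac{\frac{1}{\Zapep}}_\ep$ is Cauchy in $L^\infty_t\Hhalf$, and the only way it gets that is by invoking the difference estimate of \secref{sec:uniqueness} applied to the approximating family (treating the $\ep_1$- and $\ep_2$-solutions as the $A$- and $B$-solutions there), combined with \lemref{lem:uniquenessforexistenceBone} to control the composition $\htil = h_{\ep_2}\circ h_{\ep_1}^{-1}$ that intertwines the two Lagrangian parametrizations. So the ``uniqueness'' machinery is not a separate step after existence: it is an indispensable ingredient inside the existence argument, and your proof cannot close without invoking it there.

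Two smaller corrections. First, describing $-i\frac{1}{\Zapabs^2}\pap$ as ``anti-self-adjoint in the weighted $L^2$'' misses the real mechanism. The point is dissipation, not conservation: after rewriting $\Dt\Dap^2\frac{1}{\Zap} = -i\Dapbar\brac{\frac{1}{\Zap}\Dap^2\frac{1}{\Zap}} + K_1$ and using that $\frac{1}{\Zap}\Dap^2\frac{1}{\Zap}$ is the boundary value of a holomorphic function (so that $i\Hil\pap = \papabs$ acting on it), the leading-order pairing produces the strictly negative term $-2\norm[\Hhalf]{\frac{1}{\Zap}\Dap^2\frac{1}{\Zap}}^2$, which is precisely the integrated piece stored in $\Mcal(t)$. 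Second, the uniqueness energy cannot be the raw difference $\norm[\Hhalf]{\frac{1}{\Zap^{a}}-\frac{1}{\Zap^{b}}}$; it must be $\norm[\Hhalf]{\Delta\brac{\frac{1}{\Zap}}}$ with $\Delta(f) = f_a - \Util(f_b)$, where $\Util$ pulls the second solution back to the Lagrangian coordinates of the first. Without this change of variables the two material derivatives are different operators, a loss of derivative occurs, and the Gronwall scheme does not close at the $\Hhalf$ level. The remainder of your outline — choosing $\Psi(\cdot - i\ep, 0)$ as the mollified data, solving via the $g$-equation, and diagonalizing Arzel\`a--Ascoli limits — is the paper's argument.
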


In particular, the above theorem proves a local well-posedness result for both interfaces that are smooth or can have singularities such as acute angle corners and cusps.

We now define the singular set of the interface at time $t$ as
\begin{align*}
    S(t) = \cbrac{\ap\in\mathbb{R} \ \Big| \  \frac{1}{\Psizp}(\ap,t) = 0}.
\end{align*}
The non-singular set is then defined as $NS(t) = \mathbb{R}\backslash S(t)$. Observe that corners and cusps are indeed included in the singular set. We can now present our result on the rigidity of singularities:

\begin{thm}\label{thm:mainangle}
Let $\Psi$ be a solution in $[0,T]$ as given by Theorem \ref{thm:mainwellposed}. Then
\begin{enumerate}[leftmargin =*, align=left]
\item $S(t) =  \cbrac{h(\al,t) \in \Rsp \suchthat \al \in S(0) }$ for all $t\in[0,T]$.
\item For every fixed $t\in [0,T]$, the  functions  $\brac{\frac{1}{\Psizp}\partial_{\zp}\frac{1}{\Psizp}}(\cdot,t)$ and $\brac{\frac{1}{\Psizp}\overline{\partial_{\zp}\frac{1}{\Psizp}}}(\cdot,t)$ extend to continuous functions on $\Pminusbar$ with 
\begin{align*}
\brac{\frac{1}{\Psizp}\partial_{\zp}\frac{1}{\Psizp}}(\ap,t) = \brac{\frac{1}{\Psizp}\overline{\partial_{\zp}\frac{1}{\Psizp}}} = 0 \quad \forall \  \ap \in S(t)
\end{align*}

\item If $t\in [0,T]$, then we have the following formula
\begin{align}\label{eq:rigididentity}
\frac{\Zap}{\Zapabs}(h(\al,t),t) = \frac{\Zap}{\Zapabs}(\al,0)\exp\cbrac{i\Imag \brac{\int_0^t \brac{\frac{1}{\Psizp}\overline{\partial_{\zp}\frac{1}{\Psizp}}}(h(\al,s),s)\diff s}} \quad \forall \ \al \in NS(0)
\end{align}
Moreover if $\al_{0} \in S(0)$, and if $\{\al_n\}$ is any sequence such that $\al_n \in NS(0)$ for all $n$ with $\al_n \to \al$, then 
\begin{align}\label{eq:rigidlimit}
\lim_{\al_n \to \al_{0}} \frac{\frac{\Zap}{\Zapabs}(h(\al_n,t),t)) }{\frac{\Zap}{\Zapabs}(\al_n,0)} = 1 
\end{align}
\item For all $\ap \in S(t)$, we have $\Zt(\ap,t) = -i$. In particular the velocity of the particle at the tip of a corner or cusp singularity is $-i$. 
\end{enumerate}
\end{thm}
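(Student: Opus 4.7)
The plan is to derive all four statements from the evolution equation $\Dt \frac{1}{\Zap} = -\frac{i}{\Zapabs^2}\pap\frac{1}{\Zap} + \frac{\Bone}{\Zap}$ in \eqref{eq:main}, crucially using that $\Bone$ is real-valued, as witnessed by the manifestly nonnegative integral representation preceding \eqref{eq:Zttbar}. The arguments are cleanest on the smooth approximating sequence $Z^{(n)}$ from \defref{def:solution} for which $\frac{1}{\Zap^{(n)}}$ stays classical; the uniform energy control in \thmref{thm:mainwellposed} together with the local-uniform convergence $\Psi^{(n)}\to\Psi$ on $\Pminusbar\times[0,T]$ then transfers the identities to the limit $\Psi$.

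For part (1), set $f = \frac{1}{\Zap}$. Along the characteristic $t\mapsto h(\al,t)$, the function $F(t) := f(h(\al,t),t)$ satisfies
$F'(t) = -i\abs{F(t)}^2 (\pap f)(h(\al,t),t) + \Bone(h(\al,t),t)\,F(t)$.
The continuity of $g\partial_\zp g$ on $\Pminusbar$ established in part (2) gives $\abs{F(t)(\pap f)(h(\al,t),t)} \leq C$ (since $f\pap f = g\partial_\zp g$ on the boundary), and $\Bone$ is bounded by the energy. So the right-hand side is locally Lipschitz in $F$ near $F=0$ and vanishes there, and uniqueness forces $F(0)=0 \Leftrightarrow F(t)=0$ for all $t$. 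Combined with the bijectivity of $h(\cdot,t)$, this yields $h(S(0),t) = S(t)$. Part (4) is then immediate from Darcy's law \eqref{eq:DarcyLawA1}: on the boundary $\Ztbar = i - i/\Zap$, and at $\ap\in S(t)$ the second term vanishes by \remref{rem:oneoverZap}, so $\Zt(\ap,t) = -i$.

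For part (3), I would divide \eqref{eq:main} by $\frac{1}{\Zap}$ to get
\[
\Dt\log\frac{1}{\Zap} = -\frac{i}{\Zapbar}\pap\frac{1}{\Zap} + \Bone,
\]
and take imaginary parts. Since $\Bone\in\Rsp$, this isolates $\Dt(\arg\Zap)$ as a quantity built from $\frac{1}{\Zapbar}\pap\frac{1}{\Zap}$, whose complex conjugate is precisely the boundary trace $\brac{\frac{1}{\Psizp}\overline{\partial_\zp\frac{1}{\Psizp}}}\big|_{\partial\Pminus}$. Integrating this ODE in $t$ along $s\mapsto h(\al,s)$ and exponentiating yields \eqref{eq:rigididentity} for $\al\in NS(0)$, where $\arg\Zap$ is well-defined along the full characteristic by part (1). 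The limit \eqref{eq:rigidlimit} then follows by dominated convergence: the continuous extension of the integrand from part (2) is uniformly bounded and vanishes on $S(t)$, so the exponent tends to $0$ as $\al_n\to\al_0\in S(0)$, forcing the ratio of the $\frac{\Zap}{\Zapabs}$ values to $1$.

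Part (2) is the main obstacle and the logical starting point. The products $g\partial_\zp g$ and $g\overline{\partial_\zp g}$ (with $g=\frac{1}{\Psizp}$) live at borderline regularity at singularities: for a model corner $g\sim \zp^{1-\nu}$, one has $g\partial_\zp g \sim \zp^{1-2\nu}$ which vanishes at the tip exactly in our admissible range $\nu<\half$, matching acute corners and cusps. I would exploit the identity $g\partial_\zp g = \half\partial_\zp(g^2)$: the energy $\Mcal_1$ gives uniform-in-$y'$ $L^2$ control of $\partial_\zp g$ and of $g\partial_\zp(g\partial_\zp g)$, which combined with holomorphicity of $g\partial_\zp g$ and Hardy/Sobolev embedding on the boundary trace yields the continuous extension of $g\partial_\zp g$ to $\Pminusbar$. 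For the mixed product $g\overline{\partial_\zp g}$ (neither holomorphic nor antiholomorphic), one writes $\overline{\partial_\zp g}(\zp) = \overline{\partial_\zp g(\zp)}$ and leverages $g\in C(\Pminusbar)$ together with the analogous bounds for $\overline{\partial_\zp g}$ to obtain continuous extension to $\Pminusbar$. Once continuity is in hand, the vanishing on $S(t)$ is automatic: $g(\ap,t)=0$ there while the companion factor stays bounded in a neighborhood by the energy, so the continuous product must vanish.
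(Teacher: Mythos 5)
Your proposal follows the same broad outline as the paper (exponential/ODE representation along characteristics for parts (1) and (3), conformal-analytic estimates for part (2), Darcy's law for part (4)), so the comparison is mostly about the details, and one of those details contains a genuine error.

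\textbf{Part (1).} Your ODE $F' = -i\abs{F}^2\pap f + \Bone F$ is correct, but it simplifies: writing $\abs{F}^2\pap f = \bar F\,(f\pap f)$ and using $\bar F (f\pap f)=\Dapbar\frac{1}{\Zap}$, one sees the ODE is \emph{linear} in $F$ with bounded coefficient $\Bone - i\Dapbar\frac{1}{\Zap} = \bvarap - \Dap\Zt$, so the explicit exponential $F(t)=F(0)\exp\int_0^t(\bvarap-\Dap\Zt)$ drops out immediately. This is precisely the paper's formula \eqref{eq:hapzapratio}/\eqref{eq:zapep}. Your invocation of local Lipschitzness and ODE uniqueness works, but obscures this. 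Also, you claim this step needs the \emph{continuity} of $g\partial_\zp g$ from part (2), but only the $L^\infty$ bound $\norm[\infty]{\Dap\frac{1}{\Zap}}\lesssim\Mcal^{1/2}$ is needed, and that is already established independently in the a priori estimates \eqref{eq:DapZInfinity}. The paper's part (1) does not depend on part (2); you have introduced an unnecessary logical dependency.

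\textbf{Part (2).} There is a genuine gap here. You write that once continuity is known, ``the vanishing on $S(t)$ is automatic: $g(\ap,t)=0$ there while the companion factor stays bounded in a neighborhood by the energy, so the continuous product must vanish.'' The companion factor $\partial_\zp g$ does \emph{not} stay bounded near a singular point (for a corner of angle $\nu\pi$ one has $\partial_\zp g\sim \zp^{-\nu}\to\infty$), so this argument collapses. The correct argument is by contradiction and uses the $L^2$ part of the energy (or the geometric area/length argument of \cite{Ag20} that the paper cites): if the continuous extension of $F=g\partial_\zp g=\tfrac12\partial_\zp(g^2)$ satisfied $F(\ap_0)=c\neq 0$ at a singular point, then $g^2(\zp)\approx 2c(\zp-\ap_0)$ near $\ap_0$, so $\pap\tfrac1{\Zap}\sim(\ap-\ap_0)^{-1/2}$, which fails to be locally $L^2$ and contradicts $\Mcal_1(t)<\infty$. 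Your heuristic ($g\partial_\zp g\sim\zp^{1-2\nu}$, vanishing exactly for $\nu<\tfrac12$) already contains this idea, but the argument as written does not carry it out. Separately, the continuity-to-$\Pminusbar$ claim for $g\partial_\zp g$ needs more than ``Hardy/Sobolev embedding'' (the boundary trace lies in $\Hhalf$, which does not embed into $C$); the actual mechanism in the paper, via \cite{Ag20}, is that $\pap(F^2)=2(\pap g)(g\pap F)$ is uniformly $L^1$ in $y'$, giving equicontinuity of $F^2$, plus holomorphicity and uniqueness of nontangential limits to pass to $F$.

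\textbf{Parts (3)--(4).} These match the paper. A side remark: your part (3) derivation (integrate $\Dt\log\tfrac1{\Zap}=-i\Dapbar\tfrac1{\Zap}+\Bone$, take imaginary parts) gives $\Dt(\arg\Zap)=\Real\bigl(\tfrac1{\Zap}\pap\tfrac1{\Zapbar}\bigr)$, whereas \eqref{eq:rigididentity} displays an $\Imag$; this traces to the paper's proof identifying $\tfrac{\Ztap}{\Zap}$ with $\tfrac1{\Psizp}\overline{\partial_{\zp}\tfrac1{\Psizp}}$ when in fact $\tfrac{\Ztap}{\Zap}=i\cdot\tfrac1{\Psizp}\overline{\partial_{\zp}\tfrac1{\Psizp}}$; the conclusion \eqref{eq:rigidlimit} is unaffected since both real and imaginary parts vanish on $S(t)$, but your version appears to be the internally consistent one.
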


The first part of the theorem implies the particle at the tip of an acute angle corner or cusp singularity remains at the tip. The second part of the theorem says that the gradient of the velocity is zero at the tip. From the third part of the above theorem we see that, if there exists an isolated singularity $\al_{0}\in S(0)$ and unit vectors $\beta,\gamma$, such that 
\begin{align*}
\lim_{\al\to \al_0^{-}} \frac{\Zap}{\Zapabs}(\al,0) = \beta \q \text{ and } \lim_{\al\to \al_0^{+}} \frac{\Zap}{\Zapabs}(\al,0) = \gamma 
\end{align*}
then for all $t\in [0,T]$, there is an isolated singularity of the interface at $h(\al_0,t) \in S(t)$ in conformal coordinates and
\begin{align*}
\lim_{\al\to \al_0^{-}} \frac{\Zap}{\Zapabs}(h(\al,t),t) = \beta \q \text{ and } \lim_{\al\to \al_0^{+}} \frac{\Zap}{\Zapabs}(h(\al,t),t) = \gamma.
\end{align*}
Hence, the acute angle of a corner singularity in $S(0)$ is rigid in $\sqbrac{0,T}$. In particular, the tangent directions of the interface up to the corner singularity are also rigid, and therefore, there is no rotation at the tip. The remainder of this paper will be dedicated to the proof of the above stated main results.

\section{A priori estimates}\label{sec:apriori}

In this section we prove our main a priori energy estimate on the energy:

\begin{align}\label{eq:Mt}
\Mcal(t) = \norm[2]{\pap\frac{1}{\Zap}}^2(t)  + \norm[2]{\Dap^2\frac{1}{\Zap}}^2(t) + 2\int_0^t\norm[\Hhalf]{\frac{1}{\Zap}\Dap^2\frac{1}{\Zap}}^2(s) \diff s.
\end{align}
For smooth solutions, we have the following bound on the evolution of $\Mcal(t)$:
\begin{thm}\label{thm:apriori}
Let $T>0$ and let  $\Z(t)$ be a smooth solution to \eqref{eq:main} in $[0,T]$ such that $(\Zap - 1, \frac{1}{\Zap} - 1) \in \Linfty([0,T], H^{s}(\Rsp) \times H^{s}(\Rsp))$ for all $s \geq 1$. Then there exists a universal constant $C>0$ such that
\begin{align*}
\frac{\diff \Mcal}{\diff t} \leq C(\Mcal + \Mcal^3)
\end{align*}
\end{thm}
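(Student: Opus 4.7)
The plan is to differentiate $\Mcal(t)$ in time and establish $\frac{d\Mcal}{dt}\leq C(\Mcal+\Mcal^3)$ by combining an energy identity for $\norm[2]{\pap\frac{1}{\Zap}}^2$ with one for $\norm[2]{\Dap^2\frac{1}{\Zap}}^2$, where the latter produces a dissipative term matching exactly the $+2\norm[\Hhalf]{\frac{1}{\Zap}\Dap^2\frac{1}{\Zap}}^2$ obtained by differentiating the integral in \eqref{eq:Mt}. The starting point is the third equation in \eqref{eq:main}, rewritten as $\Dt\frac{1}{\Zap}=-i\frac{1}{\Zapbar}\Dap\frac{1}{\Zap}+\Bone\frac{1}{\Zap}$, together with the structural fact that $\frac{1}{\Zap}-1$, $\Dap\frac{1}{\Zap}$, $\Dap^2\frac{1}{\Zap}$, and products of such boundary values are holomorphic on $\Pminus$ and vanish at infinity, so any such $F$ satisfies $\Hil F = F$ and $\abs{\pap}F = i\pap F$.

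For both energy quantities I write $\pt=\Dt-\bvar\pap$ and integrate by parts the transport piece, producing a harmless $\int\bvar_\ap\abs{\cdot}^2\,d\ap$ term absorbed into the $C\Mcal$ bound. For $\norm[2]{\pap\frac{1}{\Zap}}^2$, commuting $\pap$ through $\Dt$ using $[\Dt,\pap]=-\bvar_\ap\pap$ and using the equation produces a leading term of the form $-i\frac{\Zap}{\Zapbar}\Dap^2\frac{1}{\Zap}$; since $\abs{\Zap/\Zapbar}=1$ pointwise, pairing in $L^2$ against $\pap\frac{1}{\Zap}$ yields an $\Mcal$-bound directly, with no need to extract dissipation. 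For $\norm[2]{\Dap^2\frac{1}{\Zap}}^2$, applying $\Dap^2$ to the main equation and using $[\Dt,\Dap]=(\Dt\frac{1}{\Zap}-\frac{1}{\Zap}\bvar_\ap)\pap$ twice gives $\Dt\Dap^2\frac{1}{\Zap}=-i\frac{1}{\Zapbar}\Dap^3\frac{1}{\Zap}$ plus lower-order terms involving $\Dap\frac{1}{\Zapbar}$, $\Dap^2\frac{1}{\Zapbar}$, $\Bone$, $\Dap\Bone$, $\Dap^2\Bone$, $\bvar_\ap$, and $\pap\bvar_\ap$.

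The heart of the argument is the dissipation extraction from this principal term. Using $\Dap^3\frac{1}{\Zap}=\frac{1}{\Zap}\pap\Dap^2\frac{1}{\Zap}$, the principal contribution to $\frac{d}{dt}\norm[2]{\Dap^2\frac{1}{\Zap}}^2$ becomes $2\Imag\int\frac{1}{\abs{\Zap}^2}\,\overline{\Dap^2\frac{1}{\Zap}}\,\pap\Dap^2\frac{1}{\Zap}\,d\ap$. Setting $F=\frac{1}{\Zap}\Dap^2\frac{1}{\Zap}$ (which is a boundary value of a holomorphic function on $\Pminus$ vanishing at infinity) and applying $\abs{\pap}F = i\pap F$ rewrites this as exactly $-2\norm[\Hhalf]{\frac{1}{\Zap}\Dap^2\frac{1}{\Zap}}^2$ plus the lower-order piece $-2\Imag\int\frac{1}{\Zapbar}\pap\frac{1}{\Zap}\,\abs{\Dap^2\frac{1}{\Zap}}^2\,d\ap$. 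The dissipative piece cancels precisely the integral-term contribution to $\frac{d\Mcal}{dt}$, leaving only a remainder that must be bounded by $C(\Mcal+\Mcal^3)$.

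The final task is to control all remaining lower-order contributions. For $\Bone$ and its derivatives I would use the nonnegative pointwise representation $\Bone(\ap)=\frac{1}{\pi}\int\abs{\frac{(1/\Zap)(\ap)-(1/\Zap)(\bp)}{\ap-\bp}}^2 d\bp$ together with formulas \eqref{eq:b} and \eqref{eq:bvarap} for $\bvar$ and $\bvar_\ap$, tying all of $\bvar$, $\bvar_\ap$, $\Bone$, $\Dap\Bone$, $\Dap^2\Bone$ back to $\Hhalf$- and $L^2$-norms of $\frac{1}{\Zap}-1$ and $\pap\frac{1}{\Zap}$ controlled by $\Mcal$; $L^p$-boundedness of $\Hil$ and standard commutator estimates (of Coifman--McIntosh--Meyer type) handle the non-local pieces. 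The pointwise identities $\abs{\Zap/\Zapbar}=1$ and one-dimensional Sobolev embedding provide the required $L^\infty$ bounds. The main obstacle will be the top-order terms in the $\Dap^2\frac{1}{\Zap}$ estimate, particularly contributions involving $\Dap^2\Bone$ or $\pap\bvar_\ap$ paired against $\Dap^2\frac{1}{\Zap}$; these must either be integrated by parts to drop a derivative onto $\frac{1}{\Zap}$, or reorganized as $\Hhalf$-type expressions that are absorbable by the dissipation with a small constant.
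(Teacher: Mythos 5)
Your overall skeleton coincides with the paper's proof: differentiate $\Mcal$, treat $\norm[2]{\pap\frac{1}{\Zap}}^2$ by a direct bound with no dissipation, and for $\norm[2]{\Dap^2\frac{1}{\Zap}}^2$ extract, via holomorphicity ($\Hil F = F$, $\papabs F = i\pap F$ for $F = \frac{1}{\Zap}\Dap^2\frac{1}{\Zap}$), exactly the term $-2\norm[\Hhalf]{\frac{1}{\Zap}\Dap^2\frac{1}{\Zap}}^2$ that cancels the derivative of the time integral in \eqref{eq:Mt}; your computation of that cancellation, including the residual piece $-2\Imag\int \frac{1}{\Zapbar}\brac{\pap\frac{1}{\Zap}}\abs{\Dap^2\frac{1}{\Zap}}^2\diff\ap$, is correct and matches the paper's identity \eqref{eq:DtDapsquaredZap}.

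The genuine gap is at the step you yourself flag as the ``main obstacle'': the contribution of the top-order $\Bone$ term, i.e. $\int \overline{\Dap^2\frac{1}{\Zap}}\,\Dap\brac{\frac{1}{\Zap^2}\pap\Bone}\diff\ap$ (equivalently $\Dap^2\Bone$-type terms). You propose to handle it either by integrating by parts onto $\frac{1}{\Zap}$ or by reorganizing it as an $\Hhalf$ expression absorbed into the dissipation with a small constant, and neither route closes. Integration by parts produces $\frac{1}{\Zap}\pap\overline{\Dap^2\frac{1}{\Zap}}$, a full third weighted derivative of $\frac{1}{\Zap}$, which $\Mcal$ does not control, and which exceeds the dissipation quantity by a full (not half) derivative; and there is no spare dissipation to absorb into, because the entire $-2\norm[\Hhalf]{\frac{1}{\Zap}\Dap^2\frac{1}{\Zap}}^2$ is consumed by the exact cancellation with $\frac{\diff}{\diff t}$ of the integral term in \eqref{eq:Mt}, so any leftover $\ep\norm[\Hhalf]{\frac{1}{\Zap}\Dap^2\frac{1}{\Zap}}^2$ is not bounded by $C(\Mcal+\Mcal^3)$ and the stated differential inequality would not follow (at best an integrated bound). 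Likewise, deriving the needed $L^2$ bound for $\Dap\brac{\frac{1}{\Zap^2}\pap\Bone}$ from the commutator formula \eqref{eq:Bone} by ``standard Coifman--McIntosh--Meyer estimates'' alone does not work: differentiating \eqref{eq:Bone} twice produces terms like $\sqbrac{\frac{1}{\Zapbar},\Hil}\pap\brac{\Dapabs^2\frac{1}{\Zap}}$, whose estimation would require either $\pap\frac{1}{\Zap}\in\Linfty$ or $\Dap^2\frac{1}{\Zap}\in\Linfty$, neither of which is controlled by $\Mcal$. The missing idea is the one the paper uses in \eqref{eq:1Zap2papBone} and \eqref{eq:DappapBone}: replace \eqref{eq:Bone} by the holomorphic representation \eqref{eq:Bonehol} of $\Bone$ and apply suitably weighted $(\Id-\Hil)$ projections, so that the boundary values of holomorphic quantities (such as $\frac{1}{\Zap^2}\pap\brac{\Zap\,\ubar_t\compose\Z}$ and $\frac{1}{\Zap^2}\pap^2\frac{1}{\Zap}$) are annihilated or appear only inside commutators $\sqbrac{\frac{1}{\Zapbar},\Hil}$, which cost only $\norm[2]{\pap\frac{1}{\Zap}}$ by Proposition \ref{prop:commutator}; the surviving top-order object is then precisely the controlled combination $\frac{1}{\Zap^2}\pap^2\frac{1}{\Zap}\approx\Dap^2\frac{1}{\Zap}\in\Ltwo$. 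Without this (or an equivalent cancellation mechanism), your proof does not close.
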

The rest of this section is devoted to the proof of this theorem.

\subsection{Quantities controlled by the energy $\Mcal(t)$}

In this subsection, we record some of the terms which are controlled by the energy $\Mcal(t)$. First, we record some frequently used commutator identities. They are easily seen by differentiating
\begin{align}\label{eq:commutator}
 [\pap, \Dt ] &= \bap \pap   \qquad &[\Dapabs, \Dt] &= \Real{(\Dap \Zt)} \Dapabs = \Real{(\Dapbar \Ztbar) \Dapabs} \\   \relax 
 [\Dap, \Dt]  &=  \brac{\Dap \Zt} \Dap   \qquad &  [\Dapbar, \Dt] &= \brac{\Dapbar \Ztbar } \Dapbar 
\end{align}
Using the commutator relation ${[\pap, \Dt ]} = \bap \pap$ we obtain the following formulae
\begin{align}
\Dt \Zapabs & = \Dt e^{\Real\log \Zap} =  \Zapabs \cbrac{\Real(\Dap\Zt) - \bvarap} \label{form:DtZapabs} \\ 
 \Dt \frac{1}{\Zap} & = \frac{-1}{\Zap}(\Dap\Zt - \bvarap) = \frac{1}{\Zap}\cbrac{(\bvarap - \Dap\Zt - \Dapbar \Ztbar) + \Dapbar \Ztbar} \label{form:DtoneoverZap}
\end{align}

We now estimate some of the terms controlled by the energy:
\begin{enumerate}[leftmargin =*, align = left]
\item We have the estimates 
\begin{align*}
\norm[2]{\pap\frac{1}{\Zapabs}} + \norm[2]{\frac{1}{\Zapabs}\pap\w} \lesssim \norm[2]{\pap\frac{1}{\Zap}}
\end{align*}
Proof: We have
\begin{align*}
\frac{\Zap}{\Zapabs}\pap\frac{1}{\Zap} = \w\pap\brac{\frac{\wbar}{\Zapabs}} = \pap \frac{1}{\Zapabs} + \w\Dapabs \wbar
\end{align*}
Observe that $\dis \pap \frac{1}{\Zapabs}$ is real valued and $\dis \w\Dapabs \wbar$ is purely imaginary. From this we obtain the relations
\begin{align} \label{form:RealImagTh}
\Real\brac{\frac{\Zap}{\Zapabs}\pap\frac{1}{\Zap} } = \pap \frac{1}{\Zapabs} \quad \qquad \Imag\brac{\frac{\Zap}{\Zapabs}\pap\frac{1}{\Zap}} = i\brac{ \wbar\Dapabs \w} 
\end{align}
The estimates follow easily from these relations. 
\medskip

\item We have
\begin{align}\label{eq:DapZInfinity}
\norm[\Linfty\cap\Hhalf]{\Dap\frac{1}{\Zap}}^2 \lesssim \norm[2]{\pap\frac{1}{\Zap}}\norm[2]{\Dap^2\frac{1}{\Zap}} \lesssim \Mcal
\end{align}
Proof: Observe that
\begin{align*}
 \pap\brac{\Dap\frac{1}{\Zap}}^2 = 2\brac{\pap\frac{1}{\Zap}}\Dap^2\frac{1}{\Zap}
\end{align*}
As $\Dap\frac{1}{\Zap} \to 0$ at infinity, integrating the above identity gives the $\Linfty$ estimate. For the $\Hhalf$ estimate we observe that $\papabs \Dap\frac{1}{\Zap} = i\pap \Dap\frac{1}{\Zap}$ and hence
\begin{align*}
\norm[\Hhalf]{\Dap\frac{1}{\Zap}}^2 = i\int \brac{\pap\Dap\frac{1}{\Zap}}\Dapbar\frac{1}{\Zapbar} \diff \ap \lesssim  \norm[2]{\pap\frac{1}{\Zap}}\norm[2]{\Dap^2\frac{1}{\Zap}}
\end{align*}
\medskip

\item We have
\begin{align}\label{eq: DalphaHhalf}
\norm[\Hhalf]{\Dapabs\frac{1}{\Zap}} + \norm[\Hhalf]{\Dapbar\frac{1}{\Zap}} \lesssim \norm[\Hhalf]{\Dap\frac{1}{\Zap}} + \norm[2]{\pap\frac{1}{\Zap}}^2 \lesssim \Mcal
\end{align}
Proof: We will only prove the first estimate and the second one is similar. Observe that 
\begin{align*}
\Dapabs\frac{1}{\Zap} = \frac{\w}{\Zap}\pap\frac{1}{\Zap}
\end{align*}
Hence using \propref{prop:Hhalfweight} with $f = \pap\frac{1}{\Zap}$, $w = \frac{1}{\Zap}$ and $h = \w$ we see that
\begin{align*}
\norm[\Hhalf]{\Dapabs\frac{1}{\Zap}} \lesssim \norm[\Hhalf]{\Dap\frac{1}{\Zap}} + \norm[2]{\pap\frac{1}{\Zap}}\brac{\norm*[\bigg][2]{\pap\frac{1}{\Zapabs}} + \norm[2]{\pap\frac{1}{\Zap}}}
\end{align*}
from which the estimate follows. 
\medskip

\item We have 
\begin{align}\label{eq:B1inftyHhalf}
\norm[\Linfty\cap\Hhalf]{\Bone} \lesssim \norm[2]{\pap\frac{1}{\Zap}}^2 \lesssim \Mcal.
\end{align}
and also
 \begin{align}\label{eq:Bap1infty}
 \norm[\infty]{\bvarap} \lesssim \norm[2]{\pap\frac{1}{\Zap}}^2 + \norm[\infty]{\Dap\frac{1}{\Zap}}  \lesssim \Mcal
\end{align}
Proof: The first estimate follows from applying \propref{prop:commutator} to the identity \eqref{eq:Bone}. The second estimate now follows directly from \eqref{eq:bvarap}. 
\medskip

    \item We have
    \begin{align}\label{eq:DapB1}
\norm[2]{\Dapabs\Bone} \lesssim \norm[\infty]{\Dap\frac{1}{\Zap}}\norm[2]{\pap\frac{1}{\Zap}} + \norm[2]{\pap\frac{1}{\Zap}}^{3}.
\end{align}
    Proof: Using Proposition \ref{prop:tripleidentity} on \eqref{eq:Bone} we get
\begin{align*}
\frac{1}{\abs{\Zap}}\pap\Bone &= -2\Imag\brac{\frac{1}{\abs{\Zap}}\pap\sqbrac{\frac{1}{\Zapbar}, \Hil}\pap\frac{1}{\Zap}}\\
&= -2\Imag\brac{    \sqbrac{\frac{1}{\abs{\Zap}}\pap\frac{1}{\Zapbar}, \Hil}\pap\frac{1}{\Zap}    +    \sqbrac{\frac{1}{\Zapbar}, \Hil}\pap\brac{\frac{1}{\abs{\Zap}}\pap\frac{1}{\Zap}}}\\
&\qquad + 2\Imag\brac{\sqbrac{\frac{1}{\abs{\Zap}}, \frac{1}{\Zapbar}; \pap\frac{1}{\Zap}}} \eqdef I_{31} + I_{32} + I_{33}.
\end{align*}
By Proposition \ref{prop:commutator} we see that
\begin{align*}
\norm[2]{I_{31}} + \norm[2]{I_{32}} \lesssim \norm[\infty]{|\Dap|\frac{1}{\Zap}}\norm[2]{\pap\frac{1}{\Zap}} \lesssim \norm[\infty]{\Dap\frac{1}{\Zap}}\norm[2]{\pap\frac{1}{\Zap}} .
\end{align*}
For the last term we use Proposition \ref{prop:triple} to get
\begin{align*}
\norm[2]{I_{33}} \lesssim \norm[2]{\pap\frac{1}{\Zap}}^{3}.
\end{align*}
thereby proving the estimate. 
\medskip

\item We have
\begin{align}\label{eq:1Zap2papBone}
\begin{split}
    \norm[\infty]{\frac{1}{\Zapabs^2}\pap \Bone} & \lesssim \norm[\infty]{\Dap\frac{1}{\Zap}}\norm[2]{\pap\frac{1}{\Zap}}^2 + \norm[2]{\pap\frac{1}{\Zap}}\norm[2]{\Dap^2\frac{1}{\Zap}}  + \norm[2]{\pap\frac{1}{\Zap}}^4 
\end{split}
\end{align}
\medskip
Proof: Observe that $\dis  \frac{1}{\Zapabs^2}\pap\Bone  = \Real \cbrac{\frac{\w^2\wbar^2}{\Zapabs}(\Id - \Hil)\Dapabs\Bone}$ and hence
\begin{align*}
    \norm[\infty]{\frac{1}{\Zapabs^2}\pap \Bone} \leq \norm[\infty]{\frac{\wbar^2}{\Zapabs}(\Id - \Hil)\Dapabs\Bone}
\end{align*}
Now
\[
\frac{\wbar^2}{\Zapabs}(\Id - \Hil)\Dapabs\Bone = (\Id - \Hil)\brac*[\bigg]{\frac{1}{\Zap^2}\pap\Bone} - \sqbrac{\frac{\wbar^2}{\Zapabs},\Hil}\Dapabs\Bone
\]
Using the formula of $\Bone$ from \eqref{eq:Bonehol} we see that  
\begin{align*}
\lpar (\Id - \Hil)\brac*[\bigg]{\frac{1}{\Zap^2}\pap\Bone} & = 2i(\Id - \Hil)\cbrac*[\bigg]{\brac*[\bigg]{\frac{1}{\Zap^2}\pap\frac{1}{\Zapbar}}\pap\frac{1}{\Zap}}\\
&\hspace{20mm}+ 2i(\Id - \Hil)\cbrac*[\bigg]{\frac{1}{\Zapbar}\brac*[\bigg]{\frac{1}{\Zap^2}\pap^2\frac{1}{\Zap}}} \\
& = 2i\sqbrac*[\bigg]{ \frac{1}{\Zap^2}\pap\frac{1}{\Zapbar} ,\Hil}\pap\frac{1}{\Zap} + 2i\sqbrac*[\bigg]{ \frac{1}{\Zapbar},\Hil} \brac*[\bigg]{\frac{1}{\Zap^2}\pap^2\frac{1}{\Zap}}
\end{align*}
Hence from \propref{prop:commutator} we have 
\begin{align*}
\begin{split}
\lpar \norm[\infty]{\frac{\wbar^2}{\Zapabs}(\Id - \Hil)\Dapabs\Bone} & \lesssim \norm[\infty]{\Dap\frac{1}{\Zap}}\norm[2]{\pap\frac{1}{\Zap}}^2 + \norm[2]{\pap\frac{1}{\Zap}}\norm[2]{\frac{1}{\Zap^2}\pap^2\frac{1}{\Zap}} \\
& \quad  + \norm*[\Big][2]{\pap\frac{1}{\Zap}}\norm[2]{\Dapabs\Bone}
\end{split}
\end{align*}
from which the estimate follows.
\medskip

\item  We have
\begin{align}\label{eq:DappapBone}
\begin{split}
 & \norm*[\Bigg][2]{\Dapabs\brac*[\bigg]{\frac{1}{\Zapabs^2} \pap\Bone}} + \norm*[\Bigg][2]{\Dapabs\brac*[\bigg]{\frac{1}{\Zap^2} \pap\Bone}} \\
 &  \lesssim \norm[\infty]{\Dap\frac{1}{\Zap}}\norm[2]{\pap\frac{1}{\Zap}}^3 + \norm[2]{\pap\frac{1}{\Zap}}^2\norm[2]{\Dap^2\frac{1}{\Zap}} + \norm[2]{\pap\frac{1}{\Zap}}^5\\
& \quad   + \norm[\infty]{\Dap\frac{1}{\Zap}}^2\norm[2]{\pap\frac{1}{\Zap}} + \norm[\infty]{\Dap\frac{1}{\Zap}}\norm[2]{\Dap^2\frac{1}{\Zap}}
\end{split}
\end{align}
\medskip
Proof: Observe that $\dis \Dapabs\brac*[\bigg]{\frac{1}{\Zapabs^2}\pap\Bone}  = \Real \cbrac*[\bigg]{\frac{\w^3\wbar^3}{\Zapabs}(\Id - \Hil)\pap\brac*[\bigg]{\frac{1}{\Zapabs^2}\pap\Bone}}$ and hence 
\begin{align*}
\norm[2]{\Dapabs\brac*[\bigg]{\frac{1}{\Zapabs^2}\pap\Bone}} \lesssim \norm[2]{\frac{\wbar^3}{\Zapabs}(\Id - \Hil)\pap\brac*[\bigg]{\frac{1}{\Zapabs^2}\pap\Bone}}
\end{align*}
Now
\begin{align*}
 & \frac{\wbar^3}{\Zapabs}(\Id - \Hil)\pap\brac*[\bigg]{\frac{1}{\Zapabs^2}\pap\Bone} \\
 & = (\Id - \Hil) \cbrac*[\bigg]{\wbar^2\Dap \brac*[\bigg]{\frac{1}{\Zapabs^2}\pap\Bone}} - \sqbrac{ \frac{\wbar^3}{\Zapabs} ,\Hil}\pap \brac*[\bigg]{\frac{1}{\Zapabs^2}\pap\Bone}  \\
 & = (\Id - \Hil) \cbrac*[\bigg]{\Dap \brac*[\bigg]{\frac{1}{\Zap^2}\pap\Bone} - 2\brac*[\bigg]{\frac{\wbar}{\Zapabs^2}\pap\Bone}\brac{\Dap\wbar}}\\
 &\hspace{50mm}- \sqbrac{ \frac{\wbar^3}{\Zapabs} ,\Hil}\pap \brac*[\bigg]{\frac{1}{\Zapabs^2}\pap\Bone}
\end{align*}
Using the formula of $\Bone$ from \eqref{eq:Bonehol} we see that  
\begin{align*}
& (\Id - \Hil) \cbrac*[\bigg]{\Dap \brac*[\bigg]{\frac{1}{\Zap^2}\pap\Bone}} \\
 & = 2i(\Id - \Hil)\cbrac{2\brac{\Dap\frac{1}{\Zap}}\brac{\Dap\frac{1}{\Zapbar}}\pap\frac{1}{\Zap} + \brac{\Dap\frac{1}{\Zap}}\brac{\frac{1}{\Zap^2}\pap^2\frac{1}{\Zapbar}}} \\
 & \quad  + 4i(\Id - \Hil)\cbrac{\brac{\Dap\frac{1}{\Zapbar}}\brac{\frac{1}{\Zap^2}\pap^2\frac{1}{\Zap}}  } + 2i\sqbrac{\frac{1}{\Zapabs^2}, \Hil}\pap\brac{\frac{1}{\Zap^2}\pap^2\frac{1}{\Zap}}
\end{align*}
Hence from the boundedness of Hilbert transform on $\Ltwo$,  \propref{prop:commutator} and \eqref{form:RealImagTh} we have
\begin{align*}
 & \norm*[\Bigg][2]{\Dapabs\brac*[\bigg]{\frac{1}{\Zapabs^2} \pap\Bone}} \\
 &  \lesssim \norm[\infty]{\Dap\frac{1}{\Zap}}\norm[2]{\pap\frac{1}{\Zap}}^3 + \norm[2]{\pap\frac{1}{\Zap}}^2\norm[2]{\Dap^2\frac{1}{\Zap}} + \norm[2]{\pap\frac{1}{\Zap}}^5\\
& \quad   + \norm[\infty]{\Dap\frac{1}{\Zap}}^2\norm[2]{\pap\frac{1}{\Zap}} + \norm[\infty]{\Dap\frac{1}{\Zap}}\norm[2]{\frac{1}{\Zap^2}\pap^2\frac{1}{\Zap}}
\end{align*}
from which the first estimate follows. For the second one we see that
\begin{align*}
\Dapabs\brac*[\bigg]{\frac{1}{\Zap^2} \pap\Bone} & = \Dapabs\brac*[\bigg]{\frac{\wbar^2}{\Zapabs^2} \pap\Bone} \\
& = \wbar^2\Dapabs\brac*[\bigg]{\frac{1}{\Zapabs^2} \pap\Bone} + 2\wbar\brac{\Dapabs\wbar}\frac{1}{\Zapabs^2}\pap\Bone
\end{align*}
Hence we have
\begin{align*}
 \norm*[\Bigg][2]{\Dapabs\brac*[\bigg]{\frac{1}{\Zap^2} \pap\Bone}} \lesssim  \norm*[\Bigg][2]{\Dapabs\brac*[\bigg]{\frac{1}{\Zapabs^2} \pap\Bone}} + \norm[2]{\pap\frac{1}{\Zap}}\norm[\infty]{\frac{1}{\Zapabs^2}\pap\Bone}
\end{align*}
from which the second estimate follows. 
\end{enumerate}

\subsection{Closing the energy estimate}

We now take the time derivative of the energy $\Mcal(t)$ and prove \thmref{thm:apriori}. We first require the following lemma which is direct consequence of Lemma 5.4 in \cite{Ag21}. 

\begin{lem} \label{lem:timederiv}
Let $T>0$ and let $f,\bvar \in C^2([0,T), H^2(\Rsp))$ with $\bvar$ being real valued. Let $\Dt = \pt + \bvar\pap$. Then 
\begin{enumerate}[leftmargin =*, align=left]
\item $\dis \frac{\diff }{\diff t} \int f \diff \ap = \int \Dt f \diff\ap + \int \bvarap f \diff\ap$
\item $\dis \abs*[\Big]{ \frac{d}{dt}\int \abs{f}^2 \diff \ap - 2\Real \int \bar{f} (\Dt f) \diff \ap} \lesssim \min\cbrac{\norm[2]{f}^2 \norm[\infty]{\bvarap},\norm[2]{f}\norm[\infty]{f}\norm[2]{\bvarap}}$
\item $\dis \abs{ \frac{d}{dt}\int (\papabs\bar{f})f \diff\ap- 2\Real \cbrac{\int (\papabs\bar{f})\Dt f \diff \ap}} \lesssim   \norm[\Hhalf]{f}^2 \norm[\infty]{\bvarap}$
\end{enumerate}
\end{lem}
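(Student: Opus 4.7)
The plan is to verify each identity by a direct calculation using the splitting $\pt = \Dt - \bvar \pap$ followed by integration by parts, reducing the substantive content of (3) to a Calderon-type commutator bound that is the cited Lemma~5.4 of \cite{Ag21}. Since $f, \bvar \in C^2([0,T), H^2(\Rsp))$, all boundary terms at $\pm\infty$ vanish and all manipulations below are justified.

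For (1), I would write $\pt f = \Dt f - \bvar\pap f$ and integrate by parts:
\begin{align*}
\frac{d}{dt}\int f \diff\ap = \int \pt f \diff\ap = \int \Dt f \diff\ap - \int \bvar\pap f \diff\ap = \int \Dt f \diff\ap + \int \bvarap f \diff\ap.
\end{align*}
For (2), the same splitting yields $\frac{d}{dt}\int |f|^2\diff\ap = 2\Real\int \bar f\,\Dt f\diff\ap - 2\Real\int\bar f\,\bvar\pap f \diff\ap$. Since $\bvar$ is real, $2\Real(\bar f\,\bvar\pap f) = \bvar\,\pap|f|^2$, and an integration by parts gives the remainder as $-\int \bvarap |f|^2\diff\ap$, which is bounded either by $\norm[\infty]{\bvarap}\norm[2]{f}^2$ or, after Hölder with the $\Linfty$ falling on $f$, by $\norm[2]{\bvarap}\norm[\infty]{f}\norm[2]{f}$; taking the minimum gives the stated bound.

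For (3) the same decomposition, combined with self-adjointness of $\papabs$ and the identity $\overline{\Dt f}=\Dt\bar f$ (using reality of $\bvar$), yields
\begin{align*}
\frac{d}{dt}\int (\papabs\bar f) f\diff\ap - 2\Real\int(\papabs\bar f)\Dt f\diff\ap = -\int\bvar\bigl[(\pap\bar f)(\papabs f) + (\papabs\bar f)(\pap f)\bigr]\diff\ap.
\end{align*}
Using $\papabs = \Hil\pap$ together with the anti-self-adjointness of $\Hil$, the right-hand side can be rewritten as $\int(\pap\bar f)[\Hil,\bvar]\pap f\diff\ap$. The goal is then to bound this by $\norm[\infty]{\bvarap}\norm[\Hhalf]{f}^2$.

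The core technical point — which is the one step I expect to be the main obstacle, were it not already in the literature — is precisely this Calderon-type commutator estimate: one needs the sharp $\Hhalf$-level bound, because the naive estimate $\norm[2]{[\Hil,\bvar]\pap g}\lesssim \norm[\infty]{\bvarap}\norm[2]{g}$ only controls the error by $\norm[\infty]{\bvarap}\norm[2]{\pap f}^2$, which is far too strong. The improvement relies on the fact that $[\Hil,\bvar]$ gains a full derivative, so that $[\Hil,\bvar]\pap : \Hhalf \to \Hhalf$ with operator norm $\lesssim \norm[\infty]{\bvarap}$; by $\Hhalf$–$\dot H^{-1/2}$ duality applied to the pairing above, this gives the desired $\norm[\Hhalf]{f}^2$ bound. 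Since exactly this commutator estimate is recorded as Lemma~5.4 of \cite{Ag21}, the proof of (3) is simply to quote it and assemble the pieces, and the entire lemma follows.
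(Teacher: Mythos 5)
Your proof is correct, and it supplies in full the argument that the paper leaves implicit: the paper does not prove Lemma~\ref{lem:timederiv} at all, it simply cites Lemma~5.4 of \cite{Ag21}, so your direct verification (splitting $\pt = \Dt - \bvar\pap$, integrating by parts for (1)--(2), and reducing (3) to a Calder\'on-type commutator bound) is essentially a reconstruction of that cited result rather than a genuinely different route. Two remarks. First, your attribution is slightly off: the paper invokes \cite{Ag21}, Lemma~5.4 for the entire time-derivative lemma, not specifically for the commutator estimate; but the bound you need, $\norm[\Hhalf]{\sqbrac{\bvar,\Hil}\pap f}\lesssim\norm[\infty]{\bvarap}\norm[\Hhalf]{f}$, is available from the paper's own Proposition~\ref{prop:commutator}: item (1) with $s=a=\half$ gives $\norm[\Hhalf]{\sqbrac{\bvar,\Hil}\papabs f}\lesssim\norm[BMO]{\papabs\bvar}\norm[\Hhalf]{f}\lesssim\norm[\infty]{\bvarap}\norm[\Hhalf]{f}$ (using $\papabs\bvar=i\Hil\bvarap$ and the $\Linfty\to BMO$ boundedness of $\Hil$), and since $\pap=-i\Hil\papabs$ and $\sqbrac{\bvar,\Hil}\Hil=-\Hil\sqbrac{\bvar,\Hil}$ (from $\Hil^2=\Id$), the same bound holds with $\pap$ in place of $\papabs$; combined with the $\Hhalf$--$\dot H^{-1/2}$ duality pairing you describe, this closes (3). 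Second, your error-term identity in (3), $\frac{d}{dt}\int(\papabs\bar f)f\diff\ap-2\Real\int(\papabs\bar f)\Dt f\diff\ap=-\int\bvar\brac{(\pap\bar f)(\papabs f)+(\papabs\bar f)(\pap f)}\diff\ap$, checks out using self-adjointness of $\papabs$ and reality of $\bvar$ (your convention $\papabs=\Hil\pap$ differs harmlessly from the paper's $\papabs=i\Hil\pap$, affecting only inessential factors of $\pm i$), and parts (1) and (2) are routine as you present them, so the lemma is fully proved by your argument.
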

This lemma helps us move the time derivative inside the integral as a material derivative. We will now control the time derivative of the energy.
\smallskip

Using the commutator identity \eqref{eq:commutator} and the equation \eqref{eq:main},
\begin{align*}
& \Dt\pap\frac{1}{\Zap} \\
& = -\bvarap\pap\frac{1}{\Zap} + \pap\Dt\frac{1}{\Zap} \\
& = -\bvarap\pap\frac{1}{\Zap} + \pap\cbrac{- \frac{i}{\Zapabs^2}\pap\frac{1}{\Zap} + \frac{\Bone}{\Zap}} \\
& = -\bvarap\pap\frac{1}{\Zap} -i\brac{\pap\frac{1}{\Zapbar}}\Dap\frac{1}{\Zap} -i\Dapbar\Dap\frac{1}{\Zap} + \Bone\pap\frac{1}{\Zap} + \Dap\Bone
\end{align*}
Hence from \lemref{lem:timederiv} and previously proved estimates we see that
\begingroup
\allowdisplaybreaks
\begin{align*}
& \frac{d}{dt} \norm[2]{\pap\frac{1}{\Zap}}^2 \\
& \lesssim  \norm[2]{\pap\frac{1}{\Zap}}^2\norm[\infty]{\bvarap} + \norm[2]{\pap\frac{1}{\Zap}}\norm[2]{\Dt\pap\frac{1}{\Zap}} \\
& \lesssim  \norm[2]{\pap\frac{1}{\Zap}}^2\norm[\infty]{\bvarap} + \norm[2]{\pap\frac{1}{\Zap}}^2\norm[\infty]{\Dap\frac{1}{\Zap}} + \norm[2]{\pap\frac{1}{\Zap}}\norm[2]{\Dap^2\frac{1}{\Zap}} \\*
& \quad + \norm[2]{\pap\frac{1}{\Zap}}^2\norm[\infty]{\Bone} + \norm[2]{\pap\frac{1}{\Zap}}\norm[2]{\Dap\Bone} \\
& \lesssim \norm[2]{\pap\frac{1}{\Zap}}^4 + \norm[2]{\pap\frac{1}{\Zap}}\norm[2]{\Dap^2\frac{1}{\Zap}} \\
& \lesssim \Mcal + \Mcal^2
\end{align*}
\endgroup

Now from \eqref{eq:DarcyLawA1} and \eqref{eq:main} we have
\begin{align*}
& \Dt\Dap \frac{1}{\Zap} \\
& = \sqbrac{\Dt,\Dap}\frac{1}{\Zap} + \Dap\Dt\frac{1}{\Zap} \\
& = - (\Dap\Zt)\brac{\Dap\frac{1}{\Zap}} + \Dap\cbrac{- \frac{i}{\Zapbar}\Dap\frac{1}{\Zap} + \frac{\Bone}{\Zap}} \\
& =  - 2i \brac{\Dap\frac{1}{\Zapbar}}\brac{\Dap\frac{1}{\Zap}}  - \frac{i}{\Zapbar}\Dap^{2}\frac{1}{\Zap} +\frac{1}{\Zap^2}\pap\Bone  +  \Bone \Dap\frac{1}{\Zap}
\end{align*}
From this we get
\begin{align}\label{eq:DtDapsquaredZap}
\begin{split}
 & \Dt\Dap^{2} \frac{1}{\Zap}\\
& = \sqbrac{\Dt,\Dap}\Dap\frac{1}{\Zap} + \Dap\Dt\Dap\frac{1}{\Zap} \\
& = - (\Dap\Zt)\brac{\Dap^{2}\frac{1}{\Zap}} + \Dap\cbrac{ - 2i \brac{\Dap\frac{1}{\Zapbar}}\brac{\Dap\frac{1}{\Zap}}} \\
& \quad + \Dap\cbrac{ - \frac{i}{\Zapbar}\Dap^{2}\frac{1}{\Zap} +\frac{1}{\Zap^2}\pap\Bone  +  \Bone \Dap\frac{1}{\Zap}} \\
& = -4i\brac*[\bigg]{\Dap\frac{1}{\Zapbar}}\Dap^{2}\frac{1}{\Zap} - 2i\brac*[\bigg]{\Dap^{2}\frac{1}{\Zapbar}}\Dap\frac{1}{\Zap} - i\frac{1}{\Zapbar}\Dap^{3}\frac{1}{\Zap} \\
& \quad + \Dap\brac{\frac{1}{\Zap^{2}}\pap\Bone} + (\Dap\Bone)\Dap\frac{1}{\Zap} + \Bone\Dap^{2}\frac{1}{\Zap} \\
& = -i\Dapbar\brac{\frac{1}{\Zap}\Dap^2\frac{1}{\Zap}} +i\brac*[\bigg]{\Dapbar\frac{1}{\Zap}}\Dap^{2}\frac{1}{\Zap} -4i\brac*[\bigg]{\Dap\frac{1}{\Zapbar}}\Dap^{2}\frac{1}{\Zap}  \\
& \quad  - 2i\brac*[\bigg]{\Dap^{2}\frac{1}{\Zapbar}}\Dap\frac{1}{\Zap} + \Dap\brac{\frac{1}{\Zap^{2}}\pap\Bone} + (\Dap\Bone)\Dap\frac{1}{\Zap} + \Bone\Dap^{2}\frac{1}{\Zap} \\
& =  -i\Dapbar\brac{\frac{1}{\Zap}\Dap^2\frac{1}{\Zap}} + K_1
\end{split}
\end{align}

For the higher order estimate, we have by \lemref{lem:timederiv},
\begin{align}\label{eq:higherorderstart}
\begin{split}
    \abs{\frac{\diff}{\diff t} \norm[2]{\Dap^2\frac{1}{\Zap}}^2 - 2\Real\cbrac{\int\brac{\Dapbar^2\frac{1}{\Zapbar}}\brac{\Dt\Dap^2\frac{1}{\Zap}}} \diff \ap } \lesssim \norm[2]{\Dap^2\frac{1}{\Zap}}^2\norm[\infty]{\bvarap}
\end{split}
\end{align}
Now since
\begin{align*}
\papabs\brac{ \frac{1}{\Zap}\Dap\frac{1}{\Zap}} = i\Hil\pap \brac{\frac{1}{\Zap}\Dap^{2}\frac{1}{\Zap}} = i\pap \brac{ \frac{1}{\Zap}\Dap^{2}\frac{1}{\Zap}} \end{align*}

We see that
\begin{align*}
& 2\Real\cbrac{\int\brac{\Dapbar^2\frac{1}{\Zapbar}}\brac{\Dt\Dap^2\frac{1}{\Zap}} \diff \ap} \\
& = 2\Real\cbrac{\int\brac{\Dapbar^2\frac{1}{\Zapbar}}\brac{ -i\Dapbar\brac{\frac{1}{\Zap}\Dap^2\frac{1}{\Zap}} + K_1 } \diff \ap} \\
& = -2\norm[\Hhalf]{\frac{1}{\Zap}\Dap^{2}\frac{1}{\Zap}}^{2} + 2\Real\cbrac{\int\brac{\Dapbar^2\frac{1}{\Zapbar}} K_1  \diff \ap}
\end{align*}
Hence from \eqref{eq:higherorderstart} and \eqref{eq:DtDapsquaredZap} we see that
\begin{align*}
& \frac{\diff}{\diff t} \norm[2]{\Dap^2\frac{1}{\Zap}}^2 + 2\norm[\Hhalf]{\frac{1}{\Zap}\Dap^{2}\frac{1}{\Zap}}^{2} \\
&  \lesssim \norm[2]{\Dap^2\frac{1}{\Zap}}^2\norm[\infty]{\bvarap} + \abs{\int\brac{\Dapbar^2\frac{1}{\Zapbar}} K_1  \diff \ap} \\
& \lesssim  \norm[2]{\Dap^2\frac{1}{\Zap}}^2\norm[\infty]{\bvarap} + \norm[2]{\Dap^2\frac{1}{\Zap}}\norm[2]{K_1} \\
& \lesssim \norm[2]{\Dap^2\frac{1}{\Zap}}^2\norm[\infty]{\bvarap} + \norm[2]{\Dap^2\frac{1}{\Zap}}^2\norm[\infty]{\Dap\frac{1}{\Zap}} + \norm[2]{\Dap^2\frac{1}{\Zap}}\norm[2]{\Dap\brac{\frac{1}{\Zap^2}\pap\Bone}} \\
& \quad + \norm[2]{\Dap^2\frac{1}{\Zap}}\norm[2]{\Dap\Bone}\norm[\infty]{\Dap\frac{1}{\Zap}} + \norm[2]{\Dap^2\frac{1}{\Zap}}^2\norm[\infty]{\Bone} \\
& \lesssim \Mcal + \Mcal^3
\end{align*}
thereby proving the a priori estimate. 

\section{Uniqueness}\label{sec:uniqueness}

In this section we prove the a priori estimate for the uniqueness of solutions that can be smooth or have singular points. Note that in this paper, we only consider such solutions which are limits of smooth solutions (with suitable conditions) and hence we prove uniqueness only in this class. Therefore to prove uniqueness, we only need to prove an estimate for the difference of two smooth solutions which both approximate the two singular solutions respectively. As a consequence of this a priori estimate, we will also see that the choice of the particular sequence of smooth solutions used to approximate the singular solution is immaterial. 

Let $\Z_a$ and $\Z_b$ be two smooth solutions of \eqref{eq:main}. We denote the two solutions as $A$ and $B$ respectively for simplicity. We will denote the terms and operators for each solution by their subscript $a$ or $b$. We  have the operators
\begin{align*}
(\Dapabs)_a = \frac{1}{\Zapabs_a}\pap \quad (\Dapabs)_b = \frac{1}{\Zapabs_b}\pap \quad \tx{ etc. }
\end{align*} 
Let $h_a, h_b$ be the homeomorphisms from \eqref{eq:h} for the respective solutions and let the material derivatives by given by $(\Dt)_a = U_{h_a}^{-1}\pt U_{h_a}$ and $(\Dt)_b = U_{h_b}^{-1}\pt U_{h_b}$. We define
\begin{align}\label{def:Util}
\htil = h_b \compose h_a^{-1} \quad \tx{ and } \quad \Util = U_{\htil} = U_{h_a}^{-1}U_{h_b}
\end{align}
While taking the difference of the two solutions, we will subtract in Lagrangian coordinates and then bring it to the Riemmanian coordinate system of $A$. The reason we want to subtract in the Lagrangian coordinate system is that in our proof of the energy estimate we mainly use the material derivative, and in the Lagrangian coordinate system the material derivative for both the solutions is given by the same operator $\pt$ and subtracting in Lagrangian coordinates helps us avoid a loss of derivatives. The operator $\Util$ takes a function in the Riemmanian coordinate system of $B$ to the Riemmanian coordinate system of $A$. We define 
\begin{align}\label{def:Delta}
\Delta (f) = f_a - \Util(f_b)
\end{align}
For example $\Delta ({\Zap}) = (\Zap)_a - \Util(\Zap)_b$, where we have written $\Util(f)_b$ instead of $\Util(f_b)$ for easier readability for the term  $\Util(\Zap)_b$. This notation allows us subtract the corresponding quantities of the two solutions in the correct manner, while still using conformal coordinates. Define the operators
\begin{align}\label{eq:Hcal}
\begin{split}
(\Hcal f)(\ap) &= \frac{1}{i\pi} p.v. \int \frac{\htilbp(\bp)}{\htil(\ap) - \htil(\bp)}f(\bp) \diff \bp \\
 (\Hcaltil f)(\ap) &=\frac{1}{i\pi} p.v. \int \frac{1}{\htil(\ap) - \htil(\bp)}f(\bp) \diff \bp
 \end{split}
\end{align}
In this section in addition to these above operators, we will also use the notation $\sqbrac{f_1, f_2;  f_3}$  defined in \eqref{eq:foneftwofthree}. The following lemmas will be useful throughout this section; see \cite{Ag21} for their proofs.

\begin{lem}\label{lem:basicUtil}
Let $\Util$ be defined by \eqref{def:Util} and let $\Hcal, \Hcaltil$ be defined by \eqref{eq:Hcal}. Then
\begin{enumerate}[leftmargin =*, align = left]
\item $(\Dt)_a \Util = \Util (\Dt)_b$
\item $\pap\Util = \htilap\Util\pap \quad$ $\dis \pap\Util^{-1} = \frac{1}{\htilap\compose\htil^{-1}}\Util^{-1}\pap$ and $\dis \htilap = U_{h_a}^{-1}\brac{\frac{(\hal)_b}{(\hal)_a}}$
\item $(\Dt)_a \htil_\ap = -\htil_\ap \Delta(\bvarap)$
\item $\Hcal \Util = \Util \Hil $
\item $\Util\sqbrac{f, \Hil}\pap g = \sqbrac*{(\Util f), \Htil}\pap(\Util g) $
\item $\Util [f_1, f_2;\pap f_3 ] = [(\Util f_1), (\Util f_2) ; \pap(\Util f_3)]_{\htil}$
\end{enumerate}
\end{lem}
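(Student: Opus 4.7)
All six identities reduce to direct computations from the definitions of $\Util$, $(\Dt)_a$, $(\Dt)_b$, $\Hcal$, and $\Htil$. The plan is to dispatch each in turn using the chain rule and a single change of variables, being careful with the composition convention $U_g f = f\compose g$.

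For (1), I would write $(\Dt)_a = U_{h_a}^{-1}\pt U_{h_a}$, $(\Dt)_b = U_{h_b}^{-1}\pt U_{h_b}$, and $\Util = U_{h_a}^{-1}U_{h_b}$, and then both $(\Dt)_a\Util$ and $\Util(\Dt)_b$ visibly telescope to $U_{h_a}^{-1}\pt U_{h_b}$. For (2), applying the chain rule to $(\Util f)(\ap) = f(\htil(\ap))$ yields $\pap\Util = \htilap\,\Util\pap$; the inverse function rule applied to $\Util^{-1} = U_{\htil^{-1}}$, using $(\htil^{-1})'(\ap) = 1/(\htilap\compose\htil^{-1})(\ap)$, produces the analogous formula for $\pap\Util^{-1}$; and differentiating $\htil = h_b\compose h_a^{-1}$ delivers the closed form $\htilap = U_{h_a}^{-1}\brac{(h_b)_\al/(h_a)_\al}$.

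Part (3) is the only item that requires a short calculation. Starting from the formula for $\htilap$ in (2), the outer $U_{h_a}$ in $(\Dt)_a = U_{h_a}^{-1}\pt U_{h_a}$ cancels the $U_{h_a}^{-1}$, leaving $U_{h_a}^{-1}\pt\brac{(h_b)_\al/(h_a)_\al}$. I would expand by the quotient rule and use the identity $(h_j)_{t\al}/(h_j)_\al = U_{h_j}(\bvarap)_j$ for $j = a,b$, which follows by differentiating $(h_j)_t = U_{h_j}\bvar_j$ (the reformulation of $\bvar_j = (h_j)_t\compose h_j^{-1}$) with respect to $\al$. This produces $\htilap\cbrac{\Util(\bvarap)_b - (\bvarap)_a} = -\htilap\,\Delta(\bvarap)$.

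For (4), (5), and (6), the common tool is the change of variables $\bp = \htil(\bp')$ in the principal-value integrals. Substituting $(\Util f)(\bp) = f(\htil(\bp))$ into the integral representation of $\Hcal\Util f$ and applying this change converts the factor $\htilbp\, d\bp$ into $d\bp'$ and produces $\Hil f$ evaluated at $\htil(\ap)$, which is exactly $\Util\Hil f$. The same substitution applied to the integral form of $\sqbrac{f,\Hil}\pap g$ absorbs the Jacobian through $\pap(\Util g)(\bp') = \htilap(\bp')(\pap g)(\htil(\bp'))$ and delivers the commutator on the right of (5) with $\Htil$ in place of $\Hil$; part (6) is the analogous computation for the triple form defined in \eqref{eq:foneftwofthree}, whose kernel acquires $(\htil(\ap) - \htil(\bp'))^2$ in the denominator after the same substitution. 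The main obstacle throughout is pure bookkeeping — tracking which factor in $\Util = U_{h_a}^{-1}U_{h_b}$ is being acted on in (3) and ensuring the sign in $\Delta(\bvarap) = (\bvarap)_a - \Util(\bvarap)_b$ comes out consistent — and no analytic input beyond the chain rule and substitution is needed.
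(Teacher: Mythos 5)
Your proof is correct. The paper does not actually give a proof of this lemma --- it simply cites \cite{Ag21} --- so there is no argument to compare against, but your computations are the only reasonable ones and they check out: the telescoping for (1), the chain rule and inverse function theorem for (2), the quotient-rule calculation with $(\h_j)_{t\al}/(\h_j)_\al = U_{h_j}(\bvarap)_j$ for (3) (with the sign $\Dt_a\htilap = \htilap\{\Util(\bvarap)_b - (\bvarap)_a\} = -\htilap\Delta(\bvarap)$ matching the convention $\Delta f = f_a - \Util f_b$), and the substitution $\bp = \htil(\bp')$ in the singular integrals for (4)--(6), where the Jacobian $\htilbp\,\diff\bp$ either is consumed by the kernel of $\Hcal$ or is repackaged via $\pap(\Util g) = \htilap\,\Util(\pap g)$ into the correct integrand of the $\Htil$-commutator and of the $\htil$-weighted triple bracket. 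No gaps.
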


\begin{lem}\label{lem:Delta}
Let $\Delta$ be defined by \eqref{def:Delta}. Then
\begin{enumerate}[leftmargin =*, align=left]
\item $\dis \Delta (f_1f_2\cdots f_n) = \sum_{i=1}^{n} \cbrac*[\big]{\Util(f_1)_b\cdots \Util(f_{i-1})_b} \Delta (f_{i}) \cbrac*[\big]{(f_{i+1})_a \cdots (f_n)_a} $
\item $\dis \Delta \sqbrac{f,\Hil}\pap g = \sqbrac{\Delta f, \Hil}\pap(g_a) + \sqbrac*[\big]{\Util(f)_b, \Hil - \Htil}\pap(g_a) + \Util\cbrac*[\big]{\sqbrac{f_b,\Hil}\pap\brac*[\big]{\Util^{-1}(\Delta g)} }$
\end{enumerate}
\end{lem}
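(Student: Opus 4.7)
Both parts of the lemma should follow by direct algebraic manipulation from the definition of $\Delta$ and the structural properties of $\Util$ collected in Lemma \ref{lem:basicUtil}. Neither requires any analytic estimate; the content is entirely the bookkeeping of composition operators.

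For part (1), the plan is to use the fact that $\Util$ is a composition operator and therefore a ring homomorphism in the spatial variable, so that $\Util(f_1 f_2 \cdots f_n)_b = \Util(f_1)_b\, \Util(f_2)_b \cdots \Util(f_n)_b$. Then I would write the difference as a telescoping sum:
\begin{align*}
(f_1)_a \cdots (f_n)_a - \Util(f_1)_b \cdots \Util(f_n)_b = \sum_{i=1}^n \Util(f_1)_b \cdots \Util(f_{i-1})_b \, \bigl[(f_i)_a - \Util(f_i)_b\bigr]\,(f_{i+1})_a \cdots (f_n)_a,
\end{align*}
and recognize the middle bracket as $\Delta(f_i)$. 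This is the standard "mixed difference" decomposition and requires no further input.

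For part (2), the starting point is
\begin{align*}
\Delta\bigl([f,\Hil]\pap g\bigr) = [f_a,\Hil]\pap g_a - \Util\bigl([f_b,\Hil]\pap g_b\bigr).
\end{align*}
I would apply Lemma \ref{lem:basicUtil}(5) to the second term to obtain $\Util\bigl([f_b,\Hil]\pap g_b\bigr) = [\Util(f)_b, \Htil]\pap(\Util(g)_b)$, and then substitute $f_a = \Util(f)_b + \Delta f$ and $\Util(g)_b = g_a - \Delta g$. Expanding and regrouping gives
\begin{align*}
\Delta\bigl([f,\Hil]\pap g\bigr) = [\Delta f,\Hil]\pap g_a + [\Util(f)_b,\Hil-\Htil]\pap g_a + [\Util(f)_b,\Htil]\pap(\Delta g).
\end{align*}
Finally, I apply Lemma \ref{lem:basicUtil}(5) in the reverse direction, taking $g \mapsto \Util^{-1}(\Delta g)$, to identify the last term with $\Util\bigl\{[f_b,\Hil]\pap(\Util^{-1}\Delta g)\bigr\}$, which is exactly the remaining piece of the stated formula.

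The only potential obstacle is signs/ordering when moving $\Util$ past commutators and derivatives, but this is entirely controlled by the identities $\Util\pap = \htilap^{-1}\pap\Util$ and $\Hcal\Util = \Util\Hil$ in Lemma \ref{lem:basicUtil}. In short, the proof is a careful but routine verification: expand each side with the definitions, apply the conjugation rule for $\Util$ and $\Hil$ once forward and once backward, and collect.
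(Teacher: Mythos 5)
Your proof is correct and is the natural direct argument: part (1) is the standard telescoping identity combined with the fact that $\Util$ is multiplicative (being a composition operator), and part (2) follows from Lemma \ref{lem:basicUtil}(5) applied once forward to rewrite $\Util[f_b,\Hil]\pap g_b$ and once backward to identify $[\Util(f)_b,\Htil]\pap(\Delta g)$ with $\Util\{[f_b,\Hil]\pap(\Util^{-1}\Delta g)\}$, after substituting $f_a = \Util(f)_b + \Delta f$ and $\Util(g)_b = g_a - \Delta g$. The paper itself does not prove this lemma but defers to \cite{Ag21}; your argument matches what one expects the cited proof to be, and I see no gaps.
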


\begin{lem}\label{lem:quantM}
Let $T>0$ and let $\Z_a(t)$, $\Z_b(t)$ be two smooth solutions in $[0,T]$ to  \eqref{eq:main}, such that for all $s\geq 2$ we have $(\Zap-1,\frac{1}{\Zap} - 1)_i \in \Linfty([0,T], H^{s}(\Rsp)\times H^{s}(\Rsp))$ for both $i=a,b$.  Let $L>0$ be such that 
\begin{align}\label{eq:Mdef}
\sup_{t \in [0,T]}\Mcal_a(t), \sup_{t \in [0,T]}\Mcal_b(t)  \leq L
\end{align}
We write $a\lesssim_L b$ to mean that $a\leq C(L)b$ for some constant $C(L)$ depending only on $L$. Let $f\in \Scalsp(\Rsp)$. With this notation we have the following estimates for all  $t \in [0,T)$
\begin{enumerate}[leftmargin =*, align=left]
\item $\dis \norm*[\Linfty]{\htilap}(t), \norm[\Linfty]{\frac{1}{\htilap}}(t) \lesssim_L 1$ 
\item $\dis \abs*[\bigg]{\frac{\htil(\ap,t) - \htil(\bp,t)}{\ap-\bp}}, \abs*[\bigg]{\frac{\ap-\bp}{\htil(\ap,t) - \htil(\bp,t)}} \lesssim_L 1$ for all $\ap \neq \bp$
\item $\norm*[2]{\Util f} \lesssim_L \norm[2]{f}$ and $\norm*[\Hhalf]{\Util f} \lesssim_L \norm[\Hhalf]{f}$. These estimates are also true for the operator $\Util^{-1}$ instead of $\Util$. 
\item $\norm[2]{\Hcal(f)} \lesssim_L  \norm[2]{f}$, $\norm[\Hhalf]{\Hcal(f)} \lesssim_L \norm[\Hhalf]{f}$ and $\norm*[2]{\Htil(f)} \lesssim_L  \norm[2]{f}$ 
\item $\dis \norm*[\Hhalf]{\htilap}(t), \norm[\Hhalf]{\frac{1}{\htilap}}(t) \lesssim_L 1$ 
\end{enumerate}
\end{lem}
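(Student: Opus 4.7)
The plan is to establish the five items in the given order, since each builds on the previous. The common thread is that the pointwise bound $\norm[\Linfty]{\bvarap}\lesssim_L 1$ from \eqref{eq:Bap1infty}, combined with the flow ODE $\pt h = \bvar(h,t)$, controls the Lagrangian Jacobians $(\hal)_a, (\hal)_b$ and eventually (via a transport equation) their $\Hhalf$-regularity.

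For (1), differentiating the flow ODE in $\al$ gives $\pt \hal = (\bvarap \compose h)\,\hal$, whence
\begin{align*}
\hal(\al,t) = \exp\!\left(\int_0^t \bvarap(h(\al,s),s)\,ds\right)
\end{align*}
for each solution. Since $\norm[\Linfty]{\bvarap}\lesssim_L 1$ for both $\Z_a$ and $\Z_b$, the quantities $(\hal)_a$ and $(\hal)_b$ are uniformly bounded above and below by positive constants depending only on $T$ and $L$, and the representation $\htilap = U_{h_a}^{-1}((\hal)_b/(\hal)_a)$ from \lemref{lem:basicUtil}(2) yields (1). Item (2) is then the mean value theorem applied to $\htil$. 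For (3), the $\Ltwo$-bound on $\Util f = f\compose\htil$ follows from the substitution $\bp=\htil(\ap)$ together with (1); the $\Hhalf$-bound comes from the double-integral expression $\norm[\Hhalf]{\Util f}^2 \simeq \iint |f(\htil(\ap))-f(\htil(\bp))|^2 (\ap-\bp)^{-2}\,d\ap\,d\bp$, changing variables in both coordinates and using (2), and the same argument applies to $\Util^{-1}$. For (4), \lemref{lem:basicUtil}(4) gives $\Hcal = \Util\Hil\Util^{-1}$, so the bounds on $\Hcal$ reduce to (3) and the standard $\Ltwo$- and $\Hhalf$-boundedness of $\Hil$. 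For $\Htil$, the change of variable $s=\htil(\bp)$ in its defining integral produces the identity $\Htil f = \Util\,\Hil\!\left(\frac{1}{\htilap}\,\Util^{-1}f\right)$, whose $\Ltwo$-boundedness is immediate from (1) and (3).

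The main obstacle is item (5), the only part requiring time regularity. The starting point is the transport equation $(\Dt)_a \htilap = -\htilap\,\Delta(\bvarap)$ from \lemref{lem:basicUtil}(3), together with the initial condition $\htilap(\cdot,0)\equiv 1$ (since $h_a(\cdot,0)=h_b(\cdot,0)=\mathrm{id}$). Applying \lemref{lem:timederiv}(3) to $f = \htilap - 1$ and using $\Hhalf$-duality gives
\begin{align*}
\frac{d}{dt}\norm[\Hhalf]{\htilap-1}^2 \lesssim \norm[\Hhalf]{\htilap-1}\,\norm[\Hhalf]{\htilap\,\Delta(\bvarap)} + \norm[\Hhalf]{\htilap-1}^2\,\norm[\Linfty]{\bvarap}.
\end{align*}
The decomposition \eqref{eq:bvarap}, together with \eqref{eq:B1inftyHhalf} and \eqref{eq: DalphaHhalf}, gives $\norm[\Linfty\cap\Hhalf]{\bvarap}\lesssim_L 1$ for both solutions, and combining this with (3) yields the same bound for $\Delta(\bvarap)$. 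Writing $\htilap = 1+(\htilap-1)$ and applying the elementary product estimate $\norm[\Hhalf]{fg}\lesssim\norm[\Linfty]{f}\norm[\Hhalf]{g}+\norm[\Linfty]{g}\norm[\Hhalf]{f}$ yields $\norm[\Hhalf]{\htilap\,\Delta(\bvarap)}\lesssim_L 1 + \norm[\Hhalf]{\htilap-1}$, and Grönwall closes the estimate for $\htilap$. Finally, the bound on $1/\htilap$ in $\Hhalf$ follows from the pointwise inequality $|1/\htilap(\ap) - 1/\htilap(\bp)| \le \norm[\Linfty]{1/\htilap}^2\,|\htilap(\ap) - \htilap(\bp)|$ via the same double-integral formulation, using (1).
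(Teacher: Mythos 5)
Your proposal is essentially correct, and since the paper itself does not prove \lemref{lem:quantM} but defers to \cite{Ag21}, your argument serves as a self-contained substitute; the route you take (the flow identity $\hal(\al,t)=\exp\brac{\int_0^t\bvarap(h(\al,s),s)\diff s}$ together with \eqref{eq:Bap1infty} for (1)--(2), change of variables in the $\Ltwo$ and double-integral $\Hhalf$ formulas of \propref{prop:Hardy} for (3), the conjugation $\Hcal=\Util\,\Hil\,\Util^{-1}$ for (4), and the transport equation $(\Dt)_a\htilap=-\htilap\Delta(\bvarap)$ with \lemref{lem:timederiv}, \eqref{eq:B1inftyHhalf}, \eqref{eq: DalphaHhalf}, \propref{prop:Leibniz} and Gr\"onwall for (5)) is the natural one and closes. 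Two small points deserve fixing. First, your identity for $\Htil$ is not quite right as written: changing variables $s=\htil(\bp)$ gives $\Htil f=\Util\,\Hil\,\Util^{-1}\!\brac{f/\htilap}=\Hcal\brac{f/\htilap}$, i.e.\ the weight $1/\htilap$ must be transported by $\Util^{-1}$ rather than inserted after it; with this correction the $\Ltwo$ bound follows from (1), (3) and (4) exactly as you intend (alternatively, the bound $\norm[2]{\Htil f}\lesssim_L\norm[2]{f}$ is immediate from \propref{prop:Coifman} with $H=\htil$, $m=0$, $k=1$). Second, the constants you obtain in (1), (2) and (5) necessarily depend on $T$ as well as $L$ (through the exponential of $\int_0^T\norm[\infty]{\bvarap}\diff t$ and the Gr\"onwall step); this is harmless for the way the lemma is used, since in \thmref{thm:mainuniqueness} one also assumes $T\leq M$, but it should be stated. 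Finally, applying \lemref{lem:timederiv} to $f=\htilap-1$ requires noting that the smoothness hypotheses $(\Zap-1,\frac{1}{\Zap}-1)_i\in\Linfty([0,T],H^s\times H^s)$ guarantee the needed regularity of $\htilap-1$ and $\bvar_a$, which is routine but worth a sentence.
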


To prove uniqueness of solutions, we consider the energy
\begin{align}\label{eq:uniquenessenergy}
\mathcal{E}(t) = \norm{\Delta \brac{\frac{1}{\Zap}}}_{\Hhalf}^{2}(t) + \norm[\Linfty\cap\Hhalf]{\Delta\brac{\frac{1}{\Zap}}(\cdot, 0)}^{2} + \int_{0}^{t}\norm{\Delta\brac{\Dap \frac{1}{\Zap}}}_{2}^{2}(s)\diff s
\end{align}
With this energy we can state our main a priori estimate for the difference of solutions.
\begin{thm}\label{thm:mainuniqueness}
Let $T>0$ and let $\Z_a(t)$, $\Z_b(t)$ be two smooth solutions in $[0,T]$ to  \eqref{eq:main}, satisfying the conditions of \lemref{lem:quantM} and let $M>0$ be such that
\begin{align*}
     T, \sup_{t \in [0,T]}\Mcal_a(t), \sup_{t \in [0,T]}\Mcal_b(t)   \leq M
\end{align*}
Then there exists a constant $C(M)>0$ depending only on $M$, so that if   $\mathcal{F}(t) = \sup_{s\in [0,t]} \Ecal(s)$, then for all $t \in [0,T]$ we have
\begin{align*}
    \Fcal(t) \leq C(M) \Fcal(0)
\end{align*}
\end{thm}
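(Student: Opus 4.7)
The plan is to prove a differential inequality of the form
\begin{align*}
\frac{d}{dt}\norm[\Hhalf]{\Delta\brac{\frac{1}{\Zap}}}^2 + \norm[2]{\Delta\brac{\Dap\frac{1}{\Zap}}}^2 \;\leq\; C(M)\,\Ecal(t)
\end{align*}
on $[0,T]$ and then close by Gr\"onwall. Set $g \eqdef \Delta\brac{\frac{1}{\Zap}}$. To compute $\frac{d}{dt}\norm[\Hhalf]{g}^2$, I use \lemref{lem:timederiv}(3) with $\Dt=(\Dt)_a$: up to an error $\lesssim \norm[\Hhalf]{g}^2 \norm[\infty]{\bvarap}$ which is controlled by $C(M)\Ecal(t)$ via \eqref{eq:Bap1infty}, the derivative equals $2\Real\!\int (\papabs \bar g)\,(\Dt)_a g\,\diff\ap$. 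Using part (1) of \lemref{lem:basicUtil} together with the equation \eqref{eq:main} for both $a$ and $b$ solutions,
\begin{align*}
(\Dt)_a g \;=\; \Delta\!\brac{-\frac{i}{\Zapbar}\Dap\frac{1}{\Zap}} + \Delta\!\brac{\frac{\Bone}{\Zap}}.
\end{align*}

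The principal contribution is extracted by applying \lemref{lem:Delta}(1) to the first summand, writing it as
\begin{align*}
\Util\!\brac{-\frac{i}{\Zapbar}}_b \Delta\!\brac{\Dap\frac{1}{\Zap}} \;+\; \Delta\!\brac{-\frac{i}{\Zapbar}}\!\brac{\Dap\frac{1}{\Zap}}_a.
\end{align*}
Pairing the leading piece with $\papabs\bar g$ and exploiting that $\frac{1}{\Zap}-1$ is the boundary value of a holomorphic function on $\Pminus$ (so $\Hil$ acts as $-\Id$ on it) together with the identity $\papabs = -i\Hil\pap$, one symmetrises this integral into $-2\int \frac{1}{\Zapbar_a}(\papabs^{1/2}\bar g)\,\papabs^{1/2}\Delta(\Dap\frac{1}{\Zap})\,\diff\ap$ modulo commutators governed by \propref{prop:commutator}. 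This produces the genuine dissipation $-2\norm[2]{\Delta(\Dap\frac{1}{\Zap})}^2$ plus Cauchy--Schwarz errors of the form $C(M)\norm[\Hhalf]{g}\norm[2]{\Delta(\Dap\frac{1}{\Zap})}$, which are absorbed into half of the dissipation by Young's inequality.

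Everything remaining is lower order. The second summand in the split above contains the factor $\Delta(\frac{1}{\Zapbar})=\bar g$ multiplied by $(\Dap\frac{1}{\Zap})_a$, and the latter is uniformly in $\Linfty\cap\Hhalf$ by \eqref{eq:DapZInfinity}, so its pairing against $\papabs\bar g$ is $\lesssim C(M)\norm[\Hhalf]{g}^2$. The term $\Delta(\frac{\Bone}{\Zap})$ is unpacked via the expression \eqref{eq:Bone} and \lemref{lem:Delta}(2); each output is an ordinary commutator $[\,\cdot\,,\Hil]$, a non-holomorphic correction $[\Util(f)_b,\Hil-\Htil]$, or a triple $[f_1,f_2;\pap f_3]$ applied to $\Delta$ quantities. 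All such pieces are estimated using \propref{prop:commutator}, \propref{prop:triple}, the uniform boundedness of $\Util,\Hcal,\Htil$ on $\Ltwo$ and $\Hhalf$ from \lemref{lem:quantM}, and the a priori bounds $\Mcal_a,\Mcal_b\leq M$, and are majorised by $C(M)\Ecal(t)$ or by a Cauchy--Schwarz product already absorbed into the dissipation.

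Combining everything yields the claimed differential inequality. Since the $\Linfty\cap\Hhalf$ piece in $\Ecal$ is time-independent, integrating gives $\Fcal(t)\leq \Fcal(0) + C(M)\int_0^t \Fcal(s)\,\diff s$, and Gr\"onwall concludes $\Fcal(t)\leq C(M)\Fcal(0)$ on $[0,T]$. The main obstacle is the clean extraction of the $\Hhalf$-dissipation: although $(\frac{1}{\Zap})_a$ is the boundary value of a holomorphic function on $\Pminus$, the $b$-contribution is precomposed with the non-conformal diffeomorphism $\htil$ and is \emph{not}, so $g$ itself lacks usable holomorphic structure. The argument must therefore isolate the holomorphic part carried by the $a$-quantities and control every non-holomorphic correction --- packaged as $\Hil-\Htil$ and as $\Util$-commutators --- uniformly by $C(M)$, which is precisely where the machinery assembled in \lemref{lem:basicUtil}--\lemref{lem:quantM} is essential.
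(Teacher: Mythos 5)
Your high-level plan (extract a dissipation $-2\norm[2]{\Delta(\Dap\frac{1}{\Zap})}^2$, absorb half by Young, Gr\"onwall) is the right skeleton and matches the paper's. However, two key mechanisms you invoke do not work as stated, and together they constitute a genuine gap.

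First, the extraction of the dissipation. You apply \lemref{lem:Delta}(1) in the ordering that keeps the weight $\Util\brac{-\frac{i}{\Zapbar}}_{b}$ on the leading piece. That weight is \emph{not} the boundary value of a holomorphic function (it is a holomorphic boundary value precomposed with the non-conformal diffeomorphism $\htil$), so the ``$\Hil$ acts on holomorphic functions'' argument you then appeal to does not apply to it. The paper instead keeps the weight $\brac{\frac{1}{\Zapbar}}_{a}$ on the leading piece (see \eqref{eq:DeltaDtdecomposition}); that ordering is essential. Moreover, your claimed symmetrization into $-2\int \frac{1}{\Zapbar_a}\brac{\papabs^{1/2}\bar g}\,\papabs^{1/2}\Delta\brac{\Dap\frac{1}{\Zap}}\diff\ap$ is not $-2\norm[2]{\Delta\brac{\Dap\frac{1}{\Zap}}}^2$; a bilinear form in two different half-derivatives is not the square of an $\Ltwo$ norm. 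What actually produces the dissipation is the \emph{approximate identity} of \propref{prop:usefultermuniqueness}: $-i\brac{\frac{1}{\Zapbar}}_{a}\overline{\papabs\Delta\brac{\frac{1}{\Zap}}} = -\overline{\Delta\brac{\Dap\frac{1}{\Zap}}} + \mathcal{D}$, with $\norm[2]{\mathcal D}$ controlled. Pairing against $\Delta\brac{\Dap\frac{1}{\Zap}}$ then yields $-2\norm[2]{\Delta\brac{\Dap\frac{1}{\Zap}}}^2$ exactly, plus error. Proving this identity, tracking all the $\htilap$ factors and $\Hil - \Hcal$ corrections, is the heart of the proof and is absent from your outline. (You also have sign slips: under the paper's convention $\papabs = i\Hil\pap$, and $\Hil$ acts as $+\Id$, not $-\Id$, on $\frac{1}{\Zap}-1$.)

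Second, and more seriously, you assert that the term $\Delta\brac{\frac{1}{\Zapbar}}\brac{\Dap\frac{1}{\Zap}}_a$ paired against $\papabs\bar g$ is $\lesssim C(M)\norm[\Hhalf]{g}^2$ because $\brac{\Dap\frac{1}{\Zap}}_a \in \Linfty\cap\Hhalf$. But estimating $\norm*[\big][\Hhalf]{\bar g\,\brac{\Dap\frac{1}{\Zap}}_a}$ by \propref{prop:Leibniz} inevitably produces a factor $\norm[\infty]{g}$, and $\norm[\infty]{\Delta\brac{\frac{1}{\Zap}}}(\cdot,t)$ is \emph{not} controlled by $\norm[\Hhalf]{\Delta\brac{\frac{1}{\Zap}}}(\cdot,t)$. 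This is precisely why the energy $\Ecal(t)$ in \eqref{eq:uniquenessenergy} carries the time-frozen piece $\norm[\Linfty\cap\Hhalf]{\Delta\brac{\frac{1}{\Zap}}(\cdot,0)}^2$, and why the paper proves the propagation formulas \eqref{eq:DeltaZdecompb}--\eqref{eq:DeltaZdecompb2} expressing $\Delta\brac{\frac{1}{\Zap}}(\cdot,t)$ in terms of its value at $t=0$ plus an exponential of $\int_0^t\Delta(\bvarap - \Dap\Zt)$. Without these formulas you have no way to return the $\Linfty$ factor at time $t$ to the energy; your proposed differential inequality $\frac{d}{dt}\norm[\Hhalf]{g}^2 + \norm[2]{\Delta(\Dap\frac{1}{\Zap})}^2 \leq C(M)\Ecal(t)$ therefore does not close as written. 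The correct bound on the error is of the form $C(M)\brac{\sqrt{\Ecal(t)}+\int_0^t\sqrt{\Ecal(s)}\diff s}^2$, which also requires \propref{prop:deltabalpha} to convert $\int_0^t\norm[2]{\Delta\bvarap}+\norm[2]{\Delta(\Dapbar\frac{1}{\Zap})}$ into energy quantities.
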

The rest of this section is devoted to the proof of this theorem. In this section, we will write $a\lesssim_M b$ to mean that $a\leq C(M)b$ for some constant $C(M)$ depending only on $M$, where $M$ is as given in the statement of the above theorem.

\subsection{Some quantities controlled by $\Ecal(t)$}

To control the evolution of $\mathcal{E}(t)$, we first state several important computations. First, we decompose $\Delta\brac{\frac{1}{\Zap}}$ by the following calculation.

\begin{lemma}
We have
\begin{align}\label{eq:DeltaZdecompb}
\begin{split}
& \Delta\brac{\frac{1}{\Zap}}(\ap,t) \\
& = \Delta\brac{\frac{1}{\Zap}}(\ap,0)\exp\cbrac{\int_{0}^{t}\brac{\bap-\Dap\Zt}_{a}(\h_a(\hinv_a(\ap,t),s),s)\diff s}  \\
& \quad + \Util\brac{\frac{1}{\Zap}}_{b}(\ap,t)\brac{\exp\cbrac{\int_{0}^{t}\Delta\brac{\bap-\Dap\Zt}(\h_a(\hinv_a(\ap,t),s),s)\diff s} - 1}. 
\end{split}
\end{align}
Similarly we also have
\begin{align} \label{eq:DeltaZdecompb2}
\begin{split}
& \Delta\brac{\frac{1}{\Zap}}(\ap,t) \\
& = \Delta\brac{\frac{1}{\Zap}}(\ap,0)\exp\cbrac{\int_{0}^{t}\Util\brac{\bap-\Dap\Zt}_{b}(\h_a(\hinv_a(\ap,t),s),s)\diff s} \\
& \quad + \brac{\frac{1}{\Zap}}_{a}(\ap,t)\brac{-\exp\cbrac{-\int_{0}^{t}\Delta\brac{\bap-\Dap\Zt}(\h_a(\hinv_a(\ap,t),s),s)\diff s} + 1}
\end{split}
\end{align}
\end{lemma}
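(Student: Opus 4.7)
The plan is to recognize that both $(1/\Zap)_a$ and $\Util(1/\Zap)_b$ satisfy scalar linear homogeneous first-order ODEs under the material derivative $(\Dt)_a$, solve each explicitly by integrating along the characteristics of $h_a$, and then subtract and factor the two resulting expressions to match the announced identities.

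First, from the identity \eqref{form:DtoneoverZap} one has $(\Dt)_i (1/\Zap)_i = (1/\Zap)_i(\bap-\Dap\Zt)_i$ for each $i\in\{a,b\}$. I would then apply $\Util$ to this identity for $i=b$ and invoke the intertwining $(\Dt)_a\Util = \Util(\Dt)_b$ from \lemref{lem:basicUtil} to obtain
\begin{align*}
(\Dt)_a \Util\brac{\frac{1}{\Zap}}_b \;=\; \Util\brac{\frac{1}{\Zap}}_b \cdot \Util\brac{\bap-\Dap\Zt}_b.
\end{align*}
Fixing $\ap$ and $t$, setting $\tilde\al = h_a^{-1}(\ap,t)$, and using that $h_a(\cdot,0)=h_b(\cdot,0)=\mathrm{id}$ so that $\Util$ is the identity at $t=0$, I would integrate each of these two ODEs along the characteristic $s\mapsto h_a(\tilde\al,s)$, which passes through $\tilde\al$ at time $0$ and through $\ap$ at time $t$, to obtain
\begin{align*}
\brac{\frac{1}{\Zap}}_a(\ap,t) = \brac{\frac{1}{\Zap}}_a(\tilde\al,0)\, E_a, \qquad
\Util\brac{\frac{1}{\Zap}}_b(\ap,t) = \brac{\frac{1}{\Zap}}_b(\tilde\al,0)\, E_b,
\end{align*}
where
\begin{align*}
E_a = \exp\cbrac{\int_0^t (\bap-\Dap\Zt)_a(h_a(\tilde\al,s),s)\diff s}, \qquad E_b = \exp\cbrac{\int_0^t \Util(\bap-\Dap\Zt)_b(h_a(\tilde\al,s),s)\diff s}.
\end{align*}

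Subtracting the two expressions and adding and subtracting $(1/\Zap)_b(\tilde\al,0)\,E_a$ gives
\begin{align*}
\Delta\brac{\frac{1}{\Zap}}(\ap,t) = \Delta\brac{\frac{1}{\Zap}}(\tilde\al,0)\, E_a + \brac{\frac{1}{\Zap}}_b(\tilde\al,0)\,(E_a-E_b).
\end{align*}
Writing $E_a - E_b = E_b(E_a/E_b - 1)$, noting that $E_a/E_b$ collapses to $\exp\cbrac{\int_0^t \Delta(\bap-\Dap\Zt)(h_a(\tilde\al,s),s)\diff s}$, and using $(1/\Zap)_b(\tilde\al,0)\,E_b = \Util(1/\Zap)_b(\ap,t)$, this yields exactly \eqref{eq:DeltaZdecompb}. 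The second decomposition \eqref{eq:DeltaZdecompb2} arises by the same procedure but by adding and subtracting $(1/\Zap)_a(\tilde\al,0)\,E_b$ instead, together with $E_a-E_b = E_a(1 - E_b/E_a)$; this replaces $E_a$ by $E_b$ in the leading factor and produces the correction in the form $(1/\Zap)_a(\ap,t)\brac{1 - \exp\cbrac{-\int_0^t \Delta(\bap-\Dap\Zt)}}$, with the sign absorbed as in the stated formula.

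The calculation is essentially variation-of-constants in the Lagrangian coordinates of solution $a$, and no real analytic obstacle arises; the step requiring care is purely notational, namely that the initial-value factor $\Delta(1/\Zap)(\ap,0)$ is to be interpreted as the value at the Lagrangian label of the particle which sits at $\ap$ at time $t$, i.e.\ at $\tilde\al = h_a^{-1}(\ap,t)$ (which equals its own Eulerian coordinate at time $0$ since $h_a(\cdot,0)=\mathrm{id}$).
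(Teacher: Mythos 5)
Your proof is correct and follows essentially the same route as the paper: you exploit the same scalar transport identity $(\Dt)_i (1/\Zap)_i = (1/\Zap)_i(\bap-\Dap\Zt)_i$ (equivalently, the integral formula \eqref{eq:hapzapratio} in Lagrangian coordinates), integrate along characteristics of $h_a$ using $(\Dt)_a\Util=\Util(\Dt)_b$, and perform the same add-and-subtract rearrangement. You also correctly flag the notational point that, after solving along characteristics, the initial factor must be evaluated at the pulled-back label $h_a^{-1}(\ap,t)$, which is exactly the final substitution $\al = h_a^{-1}(\ap,t)$ that the paper makes.
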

\begin{proof}
The second identity is very similar to the first identity so we will only prove the first one. Since
\begin{align*}
    \frac{d}{\diff t}\frac{\hal}{\zal} = \frac{\htal}{\hal}\frac{\hal}{\zal} - \frac{\hal}{\zal}\frac{\ztal}{\zal} = \frac{\hal}{\zal}\brac{\frac{\htal}{\hal}-\frac{\ztal}{\zal}},
\end{align*}
we have
\begin{align}\label{eq:hapzapratio}
    \frac{\hal}{\zal}(\al,t) &= \frac{\hal}{\zal}(\al,0)\exp\cbrac{\int_{0}^{t}\brac{\frac{\htal}{\hal}-\frac{\ztal}{\zal}}(\al,s)\diff s}.
\end{align}
So
\begingroup
\allowdisplaybreaks
\begin{align*}
& \frac{\hal^{a}}{\zal^{a}}(\al,t) - \frac{\hal^{b}}{\zal^{b}}(\al,t) \\
& = \brac{\frac{\hal^{a}}{\zal^{a}}(\al,0) - \frac{\hal^{b}}{\zal^{b}}(\al,0)} \exp\cbrac{\int_{0}^{t}\brac{\frac{\htal}{\hal}-\frac{\ztal}{\zal}}_{a}(\al,s) \diff s}\\*
& \quad +\frac{\hal^{b}}{\zal^{b}}(\al,0)\brac{ \exp\cbrac{\int_{0}^{t}\brac{\frac{\htal}{\hal}-\frac{\ztal}{\zal}}_{a}(\al,s) ds}-\exp\cbrac{\int_{0}^{t}\brac{\frac{\htal}{\hal}-\frac{\ztal}{\zal}}_{b}(\al,s) \diff s}}\\
& = \brac{\frac{\hal^{a}}{\zal^{a}}(\al,0) - \frac{\hal^{b}}{\zal^{b}}(\al,0)} \exp\cbrac{\int_{0}^{t}\brac{\frac{\htal}{\hal}-\frac{\ztal}{\zal}}_{a}(\al,s) \diff s}\\*
& \quad +\frac{\hal^{b}}{\zal^{b}}(\al,0)\exp\cbrac{\int_{0}^{t}\brac{\frac{\htal}{\hal}-\frac{\ztal}{\zal}}_{b}(\al,s) \diff s} \\* 
& \qquad \cdot \brac{\exp\cbrac{{\int_{0}^{t}\brac{\frac{\htal}{\hal}-\frac{\ztal}{\zal}}_{a}(\al,s) \diff s -\int_0^t\brac{\frac{\htal}{\hal}-\frac{\ztal}{\zal}}_{b}(\al,s) \diff s}}-1}\\
& = \brac{\frac{\hal^{a}}{\zal^{a}}(\al,0) - \frac{\hal^{b}}{\zal^{b}}(\al,0)} \exp\cbrac{\int_{0}^{t}\brac{\frac{\htal}{\hal}-\frac{\ztal}{\zal}}_{a}(\al,s) \diff s} \\*
& \quad +\frac{\hal^{b}}{\zal^{b}}(\al,t)\brac{\exp\cbrac{{\int_{0}^{t}\brac{\frac{\htal}{\hal}-\frac{\ztal}{\zal}}_{a}(\al,s) \diff s - \int_0^t\brac{\frac{\htal}{\hal}-\frac{\ztal}{\zal}}_{b}(\al,s) \diff s} }-1}
\end{align*}
\endgroup
Writing $\al = \hinv_a(\h_a(\al,t),t) = \hinv_a(\h_a(\al,s),s)$, using the definition of $b$ and \eqref{eq:Zaphapzap}, we obtain
\begin{align*}
&\Delta\brac{\frac{1}{\Zap}}(\h_a(\al,t),t)\\ &= \Delta\brac{\frac{1}{\Zap}}(\al,0)\exp\cbrac{\int_{0}^{t}\brac{\bap-\Dap\Zt}_{a}(\h_a(\al,s),s)\diff s} \\
&\hspace{10mm} + \Util\brac{\frac{1}{\Zap}}_{b}(\h_a(\al,t),t)\brac{\exp\cbrac{\int_{0}^{t}\Delta\brac{\bap-\Dap\Zt}(\h_a(\al,s),s)\diff s} - 1}
\end{align*}
Now plugging in $\al = \hinv_a(\ap,t)$ we get the first identity. 
\end{proof}
The terms showing up in \eqref{eq:DeltaZdecompb} and \eqref{eq:DeltaZdecompb2} satisfy nice estimates. By \eqref{eq:Bap1infty} and \eqref{eq:DapZInfinity}, we obtain that
\begin{align}\label{eq:DeltaZLinfinity}
\begin{split}
& \norm[\Linfty(\Rsp, \diff \ap)]{\Delta\brac{\frac{1}{\Zap}}(\ap,0)\exp\cbrac{\int_{0}^{t}\brac{\bap-\Dap\Zt}_{a}(\h_a(\hinv_a(\ap,t),s),s)\diff s}}  \\
& + \norm[\Linfty(\Rsp, \diff \ap)]{\Delta\brac{\frac{1}{\Zap}}(\ap,0)\exp\cbrac{\int_{0}^{t}\Util\brac{\bap-\Dap\Zt}_{b}(\h_a(\hinv_a(\ap,t),s),s)\diff s}} \\
& \lesssim_{M} \norm[\Linfty(\Rsp, \diff\ap)]{\Delta\brac{\frac{1}{\Zap}}(\ap,0)}
\end{split}
\end{align}
Now since $\abs{e^x - 1} \lesssim_{C} \abs{x}$ for  $\abs{x} \leq C$ for a fixed $C > 0$, we have that
\begin{align}
& \norm[\Ltwo(\Rsp,\diff \ap)]{\exp\cbrac{\int_{0}^{t}\Delta\brac{\bap-\Dap\Zt}(\h_a(\hinv_a(\ap,t),s),s)\diff s} - 1} \nonumber \\
& + \norm[\Ltwo(\Rsp,\diff \ap)]{-\exp\cbrac{-\int_{0}^{t}\Delta\brac{\bap-\Dap\Zt}(\h_a(\hinv_a(\ap,t),s),s)\diff s} + 1} \nonumber \\
&\lesssim_{M} \int_{0}^{t}\norm[2]{\Delta\brac{\bap-\Dap\Zt}}(s)\diff s \nonumber\\
&\lesssim_{M} \int_{0}^{t} \cbrac{\norm[2]{\Delta\brac{\bap}}(s) + \norm[2]{\Delta\brac{\Dap\frac{1}{\Zapbar}}}(s)} \diff s \label{eq:DeltaZL2}\\
&\lesssim_{M} \sqrt{\Ecal(t)} + \int_{0}^{t}\sqrt{\Ecal(s)}\diff s \label{eq:DeltaZL2final}
\end{align}
where last line uses the result of Proposition \ref{prop:deltabalpha} which is proven later.

The next two propositions will provide additional useful bounds for the uniqueness proof. The first proposition is

\begin{prop}\label{prop:halphatildebound}
We have the bound
\begin{align*}
\norm[2]{\htilap-1}(t)  \lesssim_{M} \int_{0}^{t} \norm[2]{\Delta\bap}(s)\diff s
\end{align*}
\end{prop}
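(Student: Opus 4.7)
The plan is to derive the claimed $\Ltwo$ bound from the pointwise evolution equation
\[
(\Dt)_a \htilap = -\htilap\,\Delta(\bvarap)
\]
given by \lemref{lem:basicUtil}(3), together with the trivial initial condition $\htilap(\cdot,0) \equiv 1$ coming from the fact that $h_a(\al,0) = h_b(\al,0) = \al$ and hence $\htil(\al,0) = \al$.

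First I would pull the evolution equation back to the Lagrangian frame of the solution $A$. Define $g(\al,t) = \htilap(h_a(\al,t),t)$, so that by the identity $\pt (F\compose h_a) = ((\Dt)_a F)\compose h_a$ we obtain the pointwise ODE
\[
\pt g(\al,t) = -\,g(\al,t)\,\brac{\Delta(\bvarap)\compose h_a}(\al,t), \qquad g(\al,0) = 1.
\]
Integrating in time and rearranging gives
\[
g(\al,t) - 1 = -\int_0^t g(\al,s)\,\brac{\Delta(\bvarap)\compose h_a}(\al,s) \diff s.
\]
Applying Minkowski's inequality in $\al$ and using the bound $\norm[\Linfty]{g}(s) = \norm[\Linfty]{\htilap}(s) \lesssim_M 1$ from \lemref{lem:quantM}(1), I get
\[
\norm[2]{g(\cdot,t) - 1} \lesssim_M \int_0^t \norm[2]{\brac{\Delta(\bvarap)\compose h_a}(\cdot,s)} \diff s.
\]

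The remaining step is to convert both sides from the Lagrangian frame back to the conformal frame of $A$ via the change of variables $\bp = h_a(\al,s)$, whose Jacobian is $(\hal)_a$. By the representation $(\hal)_a(\al,t) = \exp\brac{\int_0^t \bvarap \compose h_a(\al,s) \diff s}$ combined with the bound $\norm[\Linfty]{\bvarap} \lesssim \Mcal_a \lesssim M$ from \eqref{eq:Bap1infty}, both $(\hal)_a$ and $1/(\hal)_a$ are bounded uniformly in $t\in[0,T]$ by constants depending only on $M$ (since $T\leq M$). Hence
\[
\norm[2]{\brac{\Delta \bap}\compose h_a}(s) \lesssim_M \norm[2]{\Delta \bap}(s) \quad \text{and} \quad \norm[2]{\htilap - 1}(t) \lesssim_M \norm[2]{g(\cdot,t) - 1},
\]
and combining the three displays yields the proposition.

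The only real point needing care is the uniform two-sided bound on $(\hal)_a$, but this is immediate from the $\Linfty$ control of $\bvarap$ via the $\Mcal$ energy. Everything else is a direct ODE estimate plus a change of variables, so I don't anticipate any essential obstacle.
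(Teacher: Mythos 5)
Your proof is correct and arrives at the same conclusion, but it takes a genuinely different route through the same starting identity $(\Dt)_a\htilap = -\htilap\,\Delta(\bvarap)$ from \lemref{lem:basicUtil}(3). The paper stays in the conformal frame of $A$: it expands $(\Dt)_a(\htilap-1) = -(\htilap-1)\Delta\bap - \Delta\bap$, invokes \lemref{lem:timederiv}(2) to control the discrepancy between $\frac{d}{dt}\norm[2]{\cdot}^2$ and $2\Real\int\bar f\,\Dt f$ (incurring the extra $\norm[\infty]{\bap}\norm[2]{\htilap-1}^2$ term), and then closes with Gronwall's inequality \lemref{lem:Gronwall}. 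You instead pull the ODE back to the Lagrangian frame where $\Dt$ becomes $\pt$ with no transport correction, integrate it pointwise, absorb the factor $\htilap$ via the $\Linfty$ bound from \lemref{lem:quantM}(1), and then pay a change-of-variables cost to return to the conformal frame, controlled by a two-sided bound on $(\hal)_a$ that you derive from $\norm[\infty]{\bvarap}\lesssim\Mcal$ and \eqref{eq:Bap1infty}. The trade is: the paper needs Gronwall and \lemref{lem:timederiv}, you need a Jacobian bound (easily had) and Minkowski's integral inequality, and you avoid Gronwall entirely since the $\Linfty$ bound on $\htilap$ closes the estimate in one pass. Both are clean; yours is arguably a touch more elementary, though it works with one more coordinate change than strictly necessary.
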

\begin{proof}
We have from \lemref{lem:basicUtil}
\begin{align*}
(\Dt)_a\brac{\htilap-1} = -\brac{\htilap-1}\Delta\brac{\bap} - \Delta\brac{\bap}
\end{align*}
and hence, by Lemma \ref{lem:timederiv}
\begin{align*}
\frac{d}{dt}\norm[2]{\brac{\htilap-1}}^{2} \lesssim \norm[2]{\htilap-1}\norm[2]{\Delta\brac{\bap}}\brac{\norm[\infty]{\htilap-1}+1} + \norm[\infty]{\bap}\norm[2]{\htilap-1}^{2}.
\end{align*}
As a result, by Lemma \ref{lem:quantM} and \eqref{eq:Bap1infty} we obtain that
\begin{align}\label{eq:htilapforpropbap}
\frac{d}{dt}\norm[2]{\brac{\htilap-1}} \lesssim_M  \norm[2]{\Delta\brac{\bap}} + \norm[2]{\htilap-1}
\end{align}
and hence 
\begin{align*}
\norm[2]{\brac{\htilap-1}}(t) \lesssim_{M}  \int_{0}^{t}\norm[2]{\Delta\brac{\bap}}(s) \diff s + \int_{0}^{t}\norm[2]{\htilap-1}(s)\diff s
\end{align*}
The proposition now follows from Gronwall's inequality Lemma \ref{lem:Gronwall}.
\end{proof}

Next, we also have the following estimate:

\begin{prop}\label{prop:deltabalpha}
We have the bound
\begin{align*}
\int_{0}^{t}\cbrac{\norm{\Delta\brac{\Dapbar \frac{1}{\Zap}}}_{2}(s) + \norm{\Delta \bap}_{2}(s)} \diff s \lesssim_{M} \sqrt{\Ecal(t)} + \int_{0}^{t}\sqrt{\Ecal(s)}\diff s.
\end{align*}
Using Proposition \ref{prop:halphatildebound}, we additionally obtain that
\begin{align*}
\norm[2]{\htilap-1}(t)  \lesssim_{M} \sqrt{\Ecal(t)} + \int_{0}^{t} \sqrt{\Ecal(s)}\diff s
\end{align*}
\end{prop}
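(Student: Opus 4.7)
The plan is to bound $\norm[2]{\Delta\bap}(s)$ and $\norm[2]{\Delta(\Dapbar\frac{1}{\Zap})}(s)$ pointwise in $s$ by $\sqrt{\Ecal(s)} + \norm[2]{\Delta(\Dap\frac{1}{\Zap})}(s)$ plus a time-integral of the same two quantities, and then close with Gronwall. Cauchy--Schwarz and the definition of $\Ecal$ give $\int_0^t \norm[2]{\Delta(\Dap\frac{1}{\Zap})}(s)\,\diff s \leq \sqrt{T\,\Ecal(t)} \lesssim_M \sqrt{\Ecal(t)}$ for free. For the reduction, \eqref{eq:bvarap} combined with $\Dap\frac{1}{\Zapbar} = \overline{\Dapbar\frac{1}{\Zap}}$ gives $\bap = \Bone + 2\Imag(\Dapbar\frac{1}{\Zap})$, while $\Zt = -i + i/\Zapbar$ from \eqref{eq:DarcyLawA1} gives $\Delta(\Dap\Zt) = i\,\overline{\Delta(\Dapbar\frac{1}{\Zap})}$. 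Hence it suffices to control $\norm[2]{\Delta\Bone}$ and $\norm[2]{\Delta(\Dapbar\frac{1}{\Zap})}$.

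The key new estimate is for $\Delta(\Dapbar\frac{1}{\Zap})$. The naive factorization $\Dapbar\frac{1}{\Zap} = \w^2\Dap\frac{1}{\Zap}$ fails because $\Delta(\w^2)$ contains factors of $\Zap$ that blow up at corners. Instead I apply Lemma~\ref{lem:Delta}(1) separately to $\Dapbar\frac{1}{\Zap} = \frac{1}{\Zapbar}\pap\frac{1}{\Zap}$ and $\Dap\frac{1}{\Zap} = \frac{1}{\Zap}\pap\frac{1}{\Zap}$ and eliminate the common term $\Util(\pap\frac{1}{\Zap})_b$ via the dimensionless ratio $\Util(\w^2)_b = \Util(\frac{1}{\Zapbar})_b/\Util(\frac{1}{\Zap})_b$, obtaining
\begin{align*}
\Delta\brac{\Dapbar\frac{1}{\Zap}} = \brac{\overline{\Delta\brac{\frac{1}{\Zap}}} - \Util(\w^2)_b\,\Delta\brac{\frac{1}{\Zap}}}\brac{\pap\frac{1}{\Zap}}_a + \Util(\w^2)_b\,\Delta\brac{\Dap\frac{1}{\Zap}}.
\end{align*}
For the first piece I use \eqref{eq:DeltaZdecompb2} to split $\Delta(\frac{1}{\Zap}) = T_1 + (\frac{1}{\Zap})_a R$, where $\norm[\Linfty]{T_1}(t) \lesssim_M \norm[\Linfty]{\Delta(\frac{1}{\Zap})(\cdot,0)} \leq \sqrt{\Ecal(t)}$ by \eqref{eq:DeltaZLinfinity} and $\norm[2]{R}(t) \lesssim_M \int_0^t\norm[2]{\Delta(\bap - \Dap\Zt)}(s)\,\diff s$ by \eqref{eq:DeltaZL2}. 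The decisive cancellation is that $(\frac{1}{\Zap})_a(\pap\frac{1}{\Zap})_a = (\Dap\frac{1}{\Zap})_a$ and $\overline{(\frac{1}{\Zap})_a}(\pap\frac{1}{\Zap})_a = (\Dapbar\frac{1}{\Zap})_a$, both lying in $\Linfty$ by \eqref{eq:DapZInfinity}, so the $R$-contribution is a product of an $\Ltwo$ and an $\Linfty$ function and hence in $\Ltwo$. Together with $|\Util(\w^2)_b| = 1$ this yields
\begin{align*}
\norm[2]{\Delta\brac{\Dapbar\frac{1}{\Zap}}}(s) \lesssim_M \sqrt{\Ecal(s)} + \norm[2]{\Delta\brac{\Dap\frac{1}{\Zap}}}(s) + \int_0^s\cbrac{\norm[2]{\Delta\bap} + \norm[2]{\Delta\brac{\Dapbar\frac{1}{\Zap}}}}(\sigma)\,\diff\sigma.
\end{align*}

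For $\Delta\Bone$ I apply Lemma~\ref{lem:Delta}(2) to $\Bone = -2\Imag[\frac{1}{\Zapbar},\Hil]\pap\frac{1}{\Zap}$. The two ``frozen'' commutators $[\Delta(\frac{1}{\Zapbar}),\Hil]\pap(\frac{1}{\Zap})_a$ and $\Util\cbrac{[(\frac{1}{\Zapbar})_b,\Hil]\pap\Util^{-1}\Delta(\frac{1}{\Zap})}$ are bounded by $\sqrt{\Ecal(s)}$ via standard commutator estimates (Proposition~\ref{prop:commutator}), with Lemma~\ref{lem:quantM} providing the $\Util$-transport bounds. The kernel-difference term $[\Util(\frac{1}{\Zapbar})_b,\Hil - \Htil]\pap(\frac{1}{\Zap})_a$ contributes a factor $\norm[2]{\htilap - 1}(s)$ which by Proposition~\ref{prop:halphatildebound} is bounded by $\int_0^s\norm[2]{\Delta\bap}(\sigma)\,\diff\sigma$. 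Setting $G(s) = \norm[2]{\Delta\bap}(s) + \norm[2]{\Delta(\Dapbar\frac{1}{\Zap})}(s)$, integrating over $[0,t]$, swapping the iterated integral via $\int_0^t\int_0^s G(\sigma)\,\diff\sigma\,\diff s = \int_0^t (t-\sigma) G(\sigma)\,\diff\sigma \lesssim_M \int_0^t G(\sigma)\,\diff\sigma$, and applying Gronwall produces $\int_0^t G(s)\,\diff s \lesssim_M \sqrt{\Ecal(t)} + \int_0^t\sqrt{\Ecal(s)}\,\diff s$; the additional bound for $\norm[2]{\htilap - 1}$ then follows from Proposition~\ref{prop:halphatildebound}. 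The main obstacle is the pointwise estimate above: both $\Delta(\frac{1}{\Zap})$ (only in $\Hhalf$ plus initial $\Linfty$) and $\pap\frac{1}{\Zap}$ (only in $\Ltwo$) are borderline for an $\Ltwo$ product, and the specific structure of \eqref{eq:DeltaZdecompb2} --- isolating an $\Linfty$ part $T_1$ from a remainder carrying a $(\frac{1}{\Zap})_a$ factor vanishing at corners --- is exactly what pairs with $(\pap\frac{1}{\Zap})_a$ to produce the bounded quantities $(\Dap\frac{1}{\Zap})_a$ and $(\Dapbar\frac{1}{\Zap})_a$.
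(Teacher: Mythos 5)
Your proposal is correct and follows essentially the same route as the paper: the same treatment of $\Delta\Bone$ via Lemma \ref{lem:Delta}, Propositions \ref{prop:commutator}, \ref{prop:HilHtilcaldiff} and \ref{prop:halphatildebound}, the same use of the exponential decompositions \eqref{eq:DeltaZdecompb}--\eqref{eq:DeltaZdecompb2} with the $\Linfty$/$\Ltwo$ splitting and the same Gronwall closure. The only difference is cosmetic: where you eliminate $\Util\brac{\pap\frac{1}{\Zap}}_b$ through the unimodular factor $\Util(\w^2)_b$ and pair the remainder of \eqref{eq:DeltaZdecompb2} with $\brac{\pap\frac{1}{\Zap}}_a$, the paper applies the product rule of Lemma \ref{lem:Delta} to $\Dap\frac{1}{\Zapbar}$ and uses $\abs{\brac{\frac{1}{\Zap}}_a}=\abs{\brac{\frac{1}{\Zapbar}}_a}$ to surface $\overline{\Delta\brac{\Dap\frac{1}{\Zap}}}$ plus products of exactly the same type, i.e.\ the same mechanism of absorbing the vanishing weight into $\Dap\frac{1}{\Zap}$.
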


\begin{proof}
By \eqref{eq:bvarap},
\begin{align}\label{eq:deltabetadecomp}
\Delta\bap = \Delta \Bone + 2\Real\cbrac{\Delta \brac{ i\Dap\frac{1}{\Zapbar}}}.
\end{align}
Using \eqref{eq:Bone}, the first term in \eqref{eq:deltabetadecomp} is decomposed using Lemma \ref{lem:Delta}:
\begin{align*}
\Delta \Bone &= -2 \Imag \cbrac{\sqbrac{\Delta\brac{\frac{1}{\Zapbar}}, \Hil}\pap\brac{\frac{1}{\Zap}}_{a} + \sqbrac{\Util\brac{\frac{1}{\Zapbar}}_{b}, \Hil-\Htil}\pap\brac{\frac{1}{\Zap}}_{a}}\\
&\hspace{50mm} -2 \Imag \cbrac{\Util\cbrac{\sqbrac{\brac{\frac{1}{\Zapbar}}_{b},\Hil}\pap\brac{\Util^{-1}\brac{\Delta\frac{1}{\Zap}}}}}.
\end{align*}
We now bound each term. First, using Proposition \ref{prop:commutator} and Sobolev embedding,
\begin{align*}
\norm[2]{\sqbrac{\Delta\brac{\frac{1}{\Zapbar}}, \Hil}\pap\brac{\frac{1}{\Zap}}_{a}} &\lesssim_{M} \norm[\Hhalf]{\Delta\brac{\frac{1}{\Zap}}}
\end{align*}
The second term is bounded using Proposition \ref{prop:HilHtilcaldiff} and using $\pap\Util = \htilap\Util\pap$ from \lemref{lem:basicUtil}
\begin{align*}
\norm[2]{\sqbrac{\Util\brac{\frac{1}{\Zapbar}}_{b}, \Hil-\Htil}\pap\brac{\frac{1}{\Zap}}_{a}}&\lesssim_{M} \norm[2]{\htilap-1}.
\end{align*}
Finally, the third term bounded using Lemma \ref{lem:quantM}, Proposition \ref{prop:commutator} and Sobolev embedding,
\begin{align*}
\norm[2]{\Util\cbrac{\sqbrac{\brac{\frac{1}{\Zapbar}}_{b},\Hil}\pap\brac{\Util^{-1}\brac{\Delta\frac{1}{\Zap}}}}}&\lesssim_{M} \norm[\Hhalf]{\Delta\brac{\frac{1}{\Zap}}}.
\end{align*}
Hence,
\begin{align}\label{eq:DeltaBonebound}
\norm[2]{\Delta\Bone} \lesssim_{M} \norm[\Hhalf]{\Delta\brac{\frac{1}{\Zap}}} + \norm[2]{\htilap-1}.
\end{align}
The next term in \eqref{eq:deltabetadecomp} is the real part of
\begin{align}\label{eq:deltabapsecondterm}
\Delta\brac{i\Dap\frac{1}{\Zapbar}} &= i\Delta\brac{\frac{1}{\Zap}}\Util\brac{\pap\nobrac{\frac{1}{\Zapbar}}}_{b} + i\brac{\frac{1}{\Zap}}_{a}\Delta\brac{\pap\frac{1}{\Zapbar}}.
\end{align}
Using \eqref{eq:DeltaZdecompb}, \eqref{eq:DeltaZLinfinity} and \eqref{eq:DeltaZL2}, we have
\begin{multline}\label{eq:sampledeltaZ}
\norm[2]{\Delta\brac{\frac{1}{\Zap}}\Util\brac{\pap\nobrac{\frac{1}{\Zapbar}}}_{b}}\\ \lesssim_{M}\int_{0}^{t}\brac{\norm[2]{\Delta\brac{\bap}}(s) + \norm[2]{\Delta\brac{\Dap\frac{1}{\Zapbar}}}(s)}\diff s + \norm[\infty]{\Delta\brac{\frac{1}{\Zap}}(\cdot,0)} 
\end{multline}
Next, notice that
\begin{align*}
\norm[2]{\brac{\frac{1}{\Zap}}_{a}\Delta\brac{\pap\frac{1}{\Zapbar}}} &= \norm[2]{\brac{\frac{1}{\Zapbar}}_{a}\Delta\brac{\pap\frac{1}{\Zapbar}}}.
\end{align*}
We expand this term
\begin{align*}
\brac{\frac{1}{\Zapbar}}_{a}\Delta\brac{\pap\frac{1}{\Zapbar}}
&= \Delta\brac{\Dapbar\frac{1}{\Zapbar}} - \Delta\brac{\frac{1}{\Zapbar}}\Util\brac{\pap\frac{1}{\Zapbar}}_{b}.
\end{align*}
Hence,
\begin{align}\label{eq:sampledeltaZ2}
\norm[2]{\brac{\frac{1}{\Zap}}_{a}\Delta\brac{\pap\frac{1}{\Zapbar}}} &\leq \norm[2]{\Delta\brac{\Dap\frac{1}{\Zap}}} + \norm[2]{\Delta\brac{\frac{1}{\Zapbar}}\Util\brac{\pap\frac{1}{\Zapbar}}_{b}}.
\end{align}
Above, the first term appears in $ \diff \Ecal/\diff t$ and the second term above can be controlled similarly to \eqref{eq:sampledeltaZ}. Combining the estimates \eqref{eq:sampledeltaZ} and \eqref{eq:sampledeltaZ2}, we obtain a bound on \eqref{eq:deltabapsecondterm}.

In summary, setting
\begin{align*}
G(t) = \int_{0}^{t}\brac{\norm[2]{\Delta\brac{\bap}}(s) + \norm[2]{\Delta\brac{\Dap\frac{1}{\Zapbar}}}(s)}\diff s,
\end{align*}
and using the bounds on \eqref{eq:DeltaBonebound}, \eqref{eq:deltabapsecondterm} from above and Proposition \ref{prop:halphatildebound}, we have that
\begin{align*}
G'(t) &\lesssim_{M} \norm[2]{\Delta\brac{\Dap\frac{1}{\Zap}}}+ \norm[\infty]{\Delta\brac{\frac{1}{\Zap}}(\cdot,0)}  + \norm[\Hhalf]{\Delta\brac{\frac{1}{\Zap}}} + G(t).
\end{align*}
Integrating in time and applying the Gronwall's inequality of Lemma \ref{lem:Gronwall}, we conclude the proof.
\end{proof}

The following proposition will also be quite useful in closing the energy estimate for $\Ecal(t)$. 

\begin{prop}\label{prop:usefultermuniqueness}
We have the decomposition
\begin{align*}
-i\brac{\frac{1}{\Zapbar}}_{a}\overline{\papabs\Delta\brac{\frac{1}{\Zap}}} =  - \overline{\Delta \brac{\Dap\nobrac{\frac{1}{\Zap}}}} + \mathcal{D}
\end{align*}
where
\begin{align*}
\norm[2]{\mathcal{D}} \lesssim_{\M}  \sqrt{\Ecal(t)} + \int_{0}^{t}\sqrt{\Ecal(s)}\diff s.
\end{align*}
Similarly, we also have 
\begin{align*}
\norm[2]{\Util\brac{\frac{1}{\Zapbar}}_{b} \papabs\Delta\brac{\frac{1}{\Zap}}} \lesssim_{\M} \norm[2]{\Delta \brac{\Dap\nobrac{\frac{1}{\Zap}}}} +  \sqrt{\Ecal(t)} + \int_{0}^{t}\sqrt{\Ecal(s)}\diff s.
\end{align*}
\end{prop}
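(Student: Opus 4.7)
The plan is to exploit the near-antiholomorphicity of $\bar g := \Delta\brac{\frac{1}{\Zapbar}}$ on $\Pminus$. The direct summand $\brac{\frac{1}{\Zapbar}}_a - 1$ is an antiholomorphic boundary value, so $\Hil\brac{\frac{1}{\Zapbar}}_a = -\brac{\frac{1}{\Zapbar}}_a + 1$; and by Lemma \ref{lem:basicUtil}(iv) applied to $\brac{\frac{1}{\Zapbar}}_b - 1$,
\begin{align*}
\Hil\Util\brac{\frac{1}{\Zapbar}}_b = \Util\Hil\brac{\frac{1}{\Zapbar}}_b + (\Hil-\Hcal)\Util\brac{\frac{1}{\Zapbar}}_b = -\Util\brac{\frac{1}{\Zapbar}}_b + 1 + (\Hil-\Hcal)\Util\brac{\frac{1}{\Zapbar}}_b.
\end{align*}
Combined with $\papabs = i\Hil\pap$, this gives $\papabs\bar g = -i\pap\bar g - i\pap(\Hil-\Hcal)\Util\brac{\frac{1}{\Zapbar}}_b$. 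Multiplying by $-i\brac{\frac{1}{\Zapbar}}_a$, I would extract the main term using $\Dapbar\brac{\frac{1}{\Zapbar}}_a = \overline{\Delta\brac{\Dap\frac{1}{\Zap}}} + \Util\Dapbar\brac{\frac{1}{\Zapbar}}_b$ (this follows from the definition of $\Delta$ together with $\overline{\Util f_b} = \Util\bar f_b$), and then invoke $\Util\pap = \frac{1}{\htilap}\pap\Util$ to match $\Util\Dapbar\brac{\frac{1}{\Zapbar}}_b$ against the term $\brac{\frac{1}{\Zapbar}}_a\pap\Util\brac{\frac{1}{\Zapbar}}_b$. After cancellation the identity isolates $-\overline{\Delta\brac{\Dap\frac{1}{\Zap}}}$ as the main term, so $\mathcal{D}$ becomes the sum of $\bar g\,\pap\Util\brac{\frac{1}{\Zapbar}}_b$, $(1 - 1/\htilap)\Util\brac{\frac{1}{\Zapbar}}_b\,\pap\Util\brac{\frac{1}{\Zapbar}}_b$, and the commutator remainder $-\brac{\frac{1}{\Zapbar}}_a\pap(\Hil-\Hcal)\Util\brac{\frac{1}{\Zapbar}}_b$.

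I would then bound each piece of $\mathcal{D}$ in $\Ltwo$. For $\bar g\,\pap\Util\brac{\frac{1}{\Zapbar}}_b$, I split $\bar g$ via \eqref{eq:DeltaZdecompb} into an $\Linfty$ initial-data piece (paired with $\pap\Util\brac{\frac{1}{\Zapbar}}_b \in \Ltwo$) and a piece of the form $\Util\brac{\frac{1}{\Zapbar}}_b(\exp-1)$, which I pair by regrouping $\Util\brac{\frac{1}{\Zapbar}}_b\pap\Util\brac{\frac{1}{\Zapbar}}_b = \htilap\,\Util\Dapbar\brac{\frac{1}{\Zapbar}}_b \in \Linfty$ (using $\Dapbar\brac{\frac{1}{\Zapbar}}_b = \overline{\Dap\brac{\frac{1}{\Zap}}_b}\in\Linfty$ by \eqref{eq:DapZInfinity}) against $(\exp-1)\in\Ltwo$ from \eqref{eq:DeltaZL2final}. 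The second piece is estimated via $\|1-1/\htilap\|_{\Ltwo}\lesssim_M \sqrt{\Ecal(t)}+\int_0^t\sqrt{\Ecal(s)}\,\diff s$ (from Propositions \ref{prop:halphatildebound} and \ref{prop:deltabalpha}), paired with the same $\Linfty$ factor. For the commutator remainder I would apply the identity $\pap(\Hil-\Hcal)\Util f = [\Hil,\htilap-1]\Util\pap f + \htilap(\Hil-\Hcal)\Util\pap f$ together with Propositions \ref{prop:commutator} and \ref{prop:HilHtilcaldiff} to reduce the estimate again to $\|\htilap-1\|_{\Ltwo}$.

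For the second inequality I plan a parallel expansion with $g$ (now near-holomorphic, so $\Hil g = g - (\Hil-\Hcal)\Util\brac{\frac{1}{\Zap}}_b$) and $\Util\brac{\frac{1}{\Zapbar}}_b$ as prefactor. The analogous algebra on $\Util\brac{\frac{1}{\Zapbar}}_b\,\pap g$ produces the main term $\htilap\,\Util(\w_b^2)\,\Delta\brac{\Dap\frac{1}{\Zap}}$, which is $\Linfty\cdot\Ltwo$ and accounts for the $\norm[2]{\Delta\brac{\Dap\frac{1}{\Zap}}}$ summand on the right-hand side. The new error $\bar g\,\pap\brac{\frac{1}{\Zap}}_a$ coming from the change of prefactor is controlled by splitting $\bar g$ as above and regrouping its non-initial-data component as $\Dapbar\brac{\frac{1}{\Zap}}_a\cdot\overline{(\exp-1)} = \w_a^2\Dap\brac{\frac{1}{\Zap}}_a\cdot\overline{(\exp-1)} \in \Linfty\cdot\Ltwo$. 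A further error $\Dap\brac{\frac{1}{\Zap}}_a\cdot\brac{\w_a^2 - \htilap\Util(\w_b^2)}$ splits into a $(1-\htilap)$-piece (handled as before) plus $\Dap\brac{\frac{1}{\Zap}}_a\,\Delta(\w^2)$; using the algebraic identity $\Delta(\w^2) = \brac{\bar g\,\Util\brac{\frac{1}{\Zap}}_b - g\,\Util\brac{\frac{1}{\Zapbar}}_b}\big/\brac{\brac{\frac{1}{\Zap}}_a\Util\brac{\frac{1}{\Zap}}_b}$ this reduces to $\pap\brac{\frac{1}{\Zap}}_a\cdot\brac{\bar g - g\,\Util(\w_b^2)}$, controlled by the same splittings of $g$ and $\bar g$.

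The main obstacle will be careful bookkeeping: since $\pap\frac{1}{\Zap}$, $\htilap - 1$, and the ``$\exp - 1$'' components of $g$ and $\bar g$ are only in $\Ltwo$, every product in the error must contain at least one $\Linfty$ factor. The recurring trick is to absorb a $\frac{1}{\Zap}$ or $\frac{1}{\Zapbar}$ factor into an adjacent $\pap$ to produce $\Dap\frac{1}{\Zap}$ or $\Dapbar\frac{1}{\Zap} = \w^2\Dap\frac{1}{\Zap}$, both of which lie in $\Linfty\cap\Hhalf$ by \eqref{eq:DapZInfinity}.
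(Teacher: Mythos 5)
The algebraic decomposition you propose for $\mathcal{D}$ is correct — I checked the near\mbox{-}antiholomorphicity identity $\Hil\bar g = -\bar g - (\Hil-\Hcal)\Util\brac{\frac{1}{\Zapbar}}_b$, the use of $\pap\Util = \htilap\Util\pap$ and $(\Dapbar\frac{1}{\Zapbar})_a = \overline{\Delta\brac{\Dap\frac{1}{\Zap}}} + \Util(\Dapbar\frac{1}{\Zapbar})_b$, and the resulting three\mbox{-}piece form of $\mathcal{D}$, and all of it checks out. Your treatment of the first piece $\bar g\,\pap\Util\brac{\frac{1}{\Zapbar}}_b$ (splitting $\bar g$ via \eqref{eq:DeltaZdecompb}, pairing $\Linfty$ with $\Ltwo$, regrouping to form $\htilap\Util(\Dapbar\frac{1}{\Zapbar})_b\in\Linfty$) and the second piece $(\htilap - 1)\Util(\Dapbar\frac{1}{\Zapbar})_b$ is sound and parallels what the paper does with $\tilde D_{11}$, $D_{22}$, $D_{123}$. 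The route is genuinely different from the paper's (which keeps $\Hil\pap$ together and commutes via $\Hcal$ directly on $\Util(\pap\frac{1}{\Zap})_b$), but the two approaches face the same underlying estimate.

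The gap is in the third piece, $-(\frac{1}{\Zapbar})_a\,\pap(\Hil-\Hcal)\Util\brac{\frac{1}{\Zapbar}}_b$. The identity $\pap(\Hil-\Hcal)\Util f = [\Hil,\htilap - 1]\Util\pap f + \htilap(\Hil-\Hcal)\Util\pap f$ is correct, but after applying it with $f = \brac{\frac{1}{\Zapbar}}_b$ neither resulting term is bounded by $\norm[2]{\htilap - 1}$ with the tools you cite. For $\htilap(\Hil - \Hcal)\Util(\pap\frac{1}{\Zapbar})_b$: Proposition \ref{prop:HilHtilcaldiff}(3) reads $\norm[2]{(\Hil-\Hcal)g}\lesssim\norm[2]{\htilap-1}\norm[\infty]{g}$, and here $g = \Util(\pap\frac{1}{\Zapbar})_b$ is only in $\Ltwo$, not $\Linfty$; plain $\Ltwo$ boundedness of $\Hil$ and $\Hcal$ gives an $O(1)$ bound, not smallness. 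For $[\Hil,\htilap - 1]\Util(\pap\frac{1}{\Zapbar})_b$: to get smallness from Proposition \ref{prop:commutator} you would need either $\norm[2]{\pap(\htilap-1)} = \norm[2]{\htil_{\ap\ap}}$ (not controlled anywhere in the scheme) or the argument $\Util(\pap\frac{1}{\Zapbar})_b$ in $\Lone$ or of the form $\pap g$ with $g\in\Linfty$ (neither available). The identity merely shuffles $\pap$ from $\Util f$ to $\Util\pap f$; it does not remove the problem, which is that the outer weight $\brac{\frac{1}{\Zapbar}}_a$ has not yet been commuted past the operator. What makes this piece small is precisely that weight — heuristically $\brac{\frac{1}{\Zapbar}}_a\pap \sim (\Dapbar)_a$ is the tame derivative — and to exploit it one must split $\brac{\frac{1}{\Zapbar}}_a = \Delta\brac{\frac{1}{\Zapbar}} + \Util\brac{\frac{1}{\Zapbar}}_b$ and commute each summand inside $\Hil - \Hcal$ and $\Hil$ so that the inner product becomes $\Util(\Dapbar\frac{1}{\Zapbar})_b\in\Linfty$ (then Proposition \ref{prop:HilHtilcaldiff}(3) applies) or so that a commutator $[\,\cdot\,,\Hil-\Hcal]$ or $[\,\cdot\,,\Hil]$ with a differentiable weight appears (then Propositions \ref{prop:HilHtilcaldiff}(5) and \ref{prop:commutator}(8) apply). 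That is exactly what the paper's splittings $D_{121},\ldots,D_{124}$ and $D_{21},\ldots,D_{24}$ accomplish, and your outline omits it. The same weight\mbox{-}absorption step is needed, in more elaborate form, at several points in your sketch of the second inequality, so the plan as written cannot close.
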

\begin{proof}
We will first prove the first estimate and the second one will follow from the first. Note that $\Hil\pap\frac{1}{\Zap} = \pap\frac{1}{\Zap}$ and hence, we have by Lemma \ref{lem:basicUtil},
\begin{align*}
 & \papabs\Delta\brac{\frac{1}{\Zap}} \nonumber \\
 &= i\Hil\pap\Delta\brac{\frac{1}{\Zap}}  \nonumber\\
&= i\Hil\pap\cbrac{\brac{\frac{1}{\Zap}}_{a} - \Util\brac{\frac{1}{\Zap}}_{b}}  \nonumber\\
&= i\brac{\pap\frac{1}{\Zap}}_{a} - i\Hil\cbrac{\htilap\Util\brac{\pap\nobrac{\frac{1}{\Zap}}}_{b}} \nonumber\\
&= i\brac{\pap\frac{1}{\Zap}}_{a} - i\Hil\cbrac{\Util\brac{\pap\nobrac{\frac{1}{\Zap}}}_{b}}  - i\Hil\cbrac{\brac{\htilap-1}\Util\brac{\pap\nobrac{\frac{1}{\Zap}}}_{b}}.
\end{align*}

Hence using the above decomposition we have,
\begin{align*}
& i\brac{\frac{1}{\Zap}}_{a}{\papabs\Delta\brac{\frac{1}{\Zap}}} \\
&= - \cbrac{\brac{\pap\frac{1}{\Zap}}_{a} - \Hil\brac{\Util\brac{\pap\nobrac{\frac{1}{\Zap}}}_{b}}}\brac{\frac{1}{\Zap}}_{a} \\ 
& \quad +  {\Hil\brac{\brac{\htilap-1}\Util\brac{\pap\nobrac{\frac{1}{\Zap}}}_{b}}}\brac{\frac{1}{\Zap}}_{a}  \\
&\eqdef {D_{1}} +  {D_{2}}.
\end{align*}
For $D_{1} \eqdef D_{11} + D_{12}$, we decompose further:
\begin{align*}
D_{11} = -\brac{\Dap\frac{1}{\Zap}}_{a} + \brac{\frac{1}{\Zap}}_{a}\Hcal\cbrac{\Util\brac{\pap\nobrac{\frac{1}{\Zap}}}_{b}}
\end{align*}
 and
 \begin{align*}
D_{12} = \brac{\frac{1}{\Zap}}_{a}(\Hil-\Hcal)\cbrac{\Util\brac{\pap\nobrac{\frac{1}{\Zap}}}_{b}}.
\end{align*}
First, by Lemma \ref{lem:basicUtil}, we have 
\begin{align*}
D_{11} &= -\brac{\Dap\frac{1}{\Zap}}_{a} + \brac{\frac{1}{\Zap}}_{a}\Util\brac{\pap\nobrac{\frac{1}{\Zap}}}_{b}\\
&=  -\brac{\Dap\frac{1}{\Zap}}_{a} + \Util\brac{\frac{1}{\Zap}}_{b}\Util\brac{\pap\nobrac{\frac{1}{\Zap}}}_{b} + \Delta\brac{\frac{1}{\Zap}}\Util\brac{\pap\nobrac{\frac{1}{\Zap}}}_{b}\\
& = -\Delta\brac{\Dap\nobrac{\frac{1}{\Zap}}} + \Delta\brac{\frac{1}{\Zap}}\Util\brac{\pap\nobrac{\frac{1}{\Zap}}}_{b} \\
&\eqdef -\Delta\brac{\Dap\nobrac{\frac{1}{\Zap}}} + \tilde{D}_{11}.
\end{align*}
Hence we see that $\overline{\mathcal{D}} = \tilde{D}_{11} + D_{12} + D_2$ and we need to control each of these terms.

\textbf{Step 1:} We now control $\tilde{D}_{11}$. The term $\Delta\brac{\frac{1}{\Zap}}$ in $\tilde{D}_{11}$ is expanded using \eqref{eq:DeltaZdecompb}. For simplicity of notation, we will use the convention
\begin{align*}
\mathcal{T}(\ap,t) = \exp\cbrac{\int_{0}^{t}\brac{\bap-\Dap\Zt}_{a}(\h_a(\hinv_a(\ap,t),s),s)\diff s}.
\end{align*}
From the first term in \eqref{eq:DeltaZdecompb}, plugging into $\tilde{D}_{11}$ and using \eqref{eq:DeltaZLinfinity}, we obtain the estimate
\begin{align*}
& \norm[\Ltwo(\Rsp, \diff \ap)]{\Delta\brac{\frac{1}{\Zap}}(\ap,0)\mathcal{T}(\ap,t)\Util\brac{\pap\frac{1}{\Zap}}_{b}(\ap,t)}  \lesssim_{M} \norm[\infty]{\Delta\brac{\frac{1}{\Zap}}(\cdot,0)}  \lesssim_M \sqrt{\Ecal(t)}
\end{align*}
The second term from \eqref{eq:DeltaZdecompb} placed into $\tilde{D}_{11}$ gives using \eqref{eq:DeltaZL2final}
\begin{align*}
 \norm[\Ltwo(\Rsp,\diff\ap)]{\Util\brac{\frac{1}{\Zap}}_{b}(\ap,t)\brac{\mathcal{T}(\ap,t) - 1}\Util\brac{\pap\frac{1}{\Zap}}_{b}(\ap,t) }
& \lesssim_{\M} \sqrt{\Ecal(t)} + \int_{0}^{t}\sqrt{\Ecal(s)}\diff s
\end{align*} 
Hence combining we have
\begin{align*}
\norm[2]{\tilde{D}_{11}} \lesssim_{M} \sqrt{\Ecal(t)} + \int_{0}^{t}\sqrt{\Ecal(s)}\diff s
\end{align*}

\textbf{Step 2:} We now control $D_{12}$. We can decompose $D_{12} = D_{121}+D_{122}+D_{123} + D_{124}$ where
\begin{align*}
D_{121} &= \brac{\Hil-\Hcal}\cbrac{\Delta\brac{\frac{1}{\Zap}}\Util\brac{\pap\nobrac{\frac{1}{\Zap}}}_{b}}
\end{align*}
\begin{align*}
D_{122} &= \sqbrac{\Delta\brac{\frac{1}{\Zap}}, \Hil-\Hcal}\Util\brac{\pap\nobrac{\frac{1}{\Zap}}}_{b}
\end{align*}
\begin{align*}
D_{123} &=  (\Hil-\Hcal)\cbrac{\Util\brac{\frac{1}{\Zap}}_{b}\Util\brac{\pap\nobrac{\frac{1}{\Zap}}}_{b}}
\end{align*}
and
\begin{align*}
D_{124} &=  \sqbrac{\Util\brac{\frac{1}{\Zap}}_{b},\Hil-\Hcal}\Util\brac{\pap\nobrac{\frac{1}{\Zap}}}_{b}.
\end{align*}
Using Proposition \ref{prop:HilHtilcaldiff} and the bound on \eqref{eq:sampledeltaZ} with Proposition \ref{prop:deltabalpha},
\begin{align*}
\norm[2]{D_{121}} \lesssim_{M} \sqrt{\Ecal(t)} 
+ \int_{0}^{t}\sqrt{\Ecal(s)}\diff s + \norm{\Delta\brac{\frac{1}{\Zap}}(\cdot,0)}_{\infty} \lesssim_{M} \sqrt{\Ecal(t)} 
+ \int_{0}^{t}\sqrt{\Ecal(s)}\diff s.
\end{align*}
Next, by Proposition \ref{prop:HilHtilcaldiff},
\begin{align*}
\norm[2]{D_{122}} &\lesssim_{M} \norm[2]{\pap\nobrac{\Delta\brac{\frac{1}{\Zap}}}} \norm[2]{\htilap-1}\\
&\lesssim_{M} \brac{\norm[2]{\brac{\pap\frac{1}{\Zap}}_{a}}+\norm[2]{\pap\Util\brac{\frac{1}{\Zap}}_{b}}} \norm[2]{\htilap-1} \\
&\lesssim_{M} \sqrt{\Ecal(t)}+ \int_{0}^{t}\sqrt{\Ecal(s)}\diff s
\end{align*}
where we have used Proposition \ref{prop:deltabalpha} and Lemma \ref{lem:basicUtil} with Lemma \ref{lem:quantM}.

Using Proposition \ref{prop:HilHtilcaldiff} and Proposition \ref{prop:deltabalpha},
\begin{align*}
\norm[2]{D_{123}} &\leq \norm[2]{(\Hil-\Hcal)\cbrac{\Util\brac{\Dap\nobrac{\frac{1}{\Zap}}}_{b}}}\\
&\lesssim_{M} \norm[\infty]{\Util\brac{\Dap\nobrac{\frac{1}{\Zap}}}_{b}}\norm[2]{\htilap-1}\\
&\lesssim_{\M} \sqrt{\Ecal(t)}+ \int_{0}^{t}\sqrt{\Ecal(s)}\diff s.
\end{align*}
By Proposition \ref{prop:HilHtilcaldiff} and then Lemma \ref{lem:basicUtil}, Lemma \ref{lem:quantM} and Proposition \ref{prop:deltabalpha}, we get
\begin{align*}
\norm[2]{D_{124}}
&\lesssim_{M} \norm[2]{\pap\Util\brac{\nobrac{\frac{1}{\Zap}}}_{b}}\norm[2]{\Util\brac{\pap\nobrac{\frac{1}{\Zap}}}_{b}}\norm[2]{\htilap-1}\\
&\lesssim_{M}\sqrt{\Ecal(t)}+ \int_{0}^{t}\sqrt{\Ecal(s)}\diff s.
\end{align*}

\textbf{Step 3:} We now control $D_{2}$. We decompose $D_{2} = D_{21} + D_{22} + D_{23} + D_{24}$:
\begin{align*}
D_{21} &= \sqbrac{\Util \brac{\frac{1}{\Zap}}_{b}, \Hil}\cbrac{\brac{\htilap-1}\Util\brac{\pap\nobrac{\frac{1}{\Zap}}}_{b}} 
\end{align*}
\begin{align*}
D_{22} &= \Hil\cbrac{\brac{\htilap-1}\Util\brac{\Dap\nobrac{\frac{1}{\Zap}}}_{b}},
\end{align*}
\begin{align*}
    D_{23} &=  \sqbrac{\Delta\brac{\frac{1}{\Zap}},\Hil}\cbrac{\brac{\htilap-1}\Util\brac{\pap\nobrac{\frac{1}{\Zap}}}_{b}}
\end{align*}
and
\begin{align*}
    D_{24} &=  \Hil\cbrac{\brac{\htilap-1}\Util\brac{\pap\nobrac{\frac{1}{\Zap}}}_{b}\Delta\brac{\frac{1}{\Zap}}}  
\end{align*}
First, by the final estimate of Proposition \ref{prop:commutator} and then using Lemma \ref{lem:basicUtil}, Lemma \ref{lem:quantM} and Proposition \ref{prop:deltabalpha},
\begin{align*}
\norm[2]{D_{21}} &\lesssim_{M} \norm[2]{\pap\Util \brac{\frac{1}{\Zap}}_{b}} \norm[2]{\htilap-1}\norm[2]{\Util \brac{\pap\nobrac{\frac{1}{\Zap}}}_{b}}\\
&\lesssim_{M} \sqrt{\Ecal(t)}+ \int_{0}^{t}\sqrt{\Ecal(s)}\diff s.
\end{align*}
Next, for $D_{22}$, by Proposition \ref{prop:deltabalpha},
\begin{align*}
\norm[2]{D_{22}} &\lesssim_{\M} \sqrt{\Ecal(t)}+ \int_{0}^{t}\sqrt{\Ecal(s)}\diff s.
\end{align*}
Next, for $D_{23}$, using the final inequality of Proposition \ref{prop:commutator} and then Lemmas \ref{lem:basicUtil} and \ref{lem:quantM} and Proposition \ref{prop:deltabalpha}, we get
\begin{align*}
\norm[2]{D_{23}}&\lesssim_{M} \norm[2]{\pap\Delta\brac{\frac{1}{\Zapbar}}}\norm[2]{\htilap-1}\norm[2]{\Util\brac{\pap\nobrac{\frac{1}{\Zap}}}_{b}}\\
&\lesssim_{M} \sqrt{\Ecal(t)}+ \int_{0}^{t}\sqrt{\Ecal(s)}\diff s.
\end{align*}
Using Lemma \ref{lem:quantM}, and then \eqref{eq:sampledeltaZ} and \propref{prop:deltabalpha} we get
\begin{align*}
\norm[2]{D_{24}} & \lesssim_{M} \norm[2]{\Delta\brac{\frac{1}{\Zap}}\Util\brac{\pap\frac{1}{\Zap}}_{b}}\\
&\lesssim_{\M}\norm[\infty]{\Delta\brac{\frac{1}{\Zap}}(\cdot,0)} + \sqrt{\Ecal(t)} + \int_{0}^{t}\sqrt{\Ecal(s)}\diff s \\
&\lesssim_{\M}  \sqrt{\Ecal(t)} + \int_{0}^{t}\sqrt{\Ecal(s)}\diff s
\end{align*}
This concludes the proof of the first part of the proposition.

\textbf{Step 4:} We now prove the second part of the proposition. From \eqref{eq:DeltaZdecompb2} we obtain
\begin{align*}
& \Util\brac{\frac{1}{\Zapbar}}_{b}\papabs\Delta\brac{\frac{1}{\Zap}} \\
& = -\Delta\brac{\frac{1}{\Zapbar}}\papabs\Delta\brac{\frac{1}{\Zap}} + \brac{\frac{1}{\Zapbar}}_{a} \papabs\Delta\brac{\frac{1}{\Zap}} \\
& = -\Delta\brac{\frac{1}{\Zapbar}}(\ap,0)\exp\cbrac{\int_{0}^{t}\overline{\Util\brac{\bap-\Dap\Zt}}_{b}(\h_a(\hinv_a(\ap,t),s),s)\diff s}\papabs\Delta\brac{\frac{1}{\Zap}} \\
& \quad + \brac{\frac{1}{\Zapbar}}_{a}(\ap,t)\brac{\exp\cbrac{-\int_{0}^{t}\overline{\Delta\brac{\bap-\Dap\Zt}}(\h_a(\hinv_a(\ap,t),s),s)\diff s} - 1}\papabs\Delta\brac{\frac{1}{\Zap}} \\
& \quad + \brac{\frac{1}{\Zapbar}}_{a} \papabs\Delta\brac{\frac{1}{\Zap}}
\end{align*}
Hence we see that
\begin{align*}
& \norm[2]{\Util\brac{\frac{1}{\Zapbar}}_{b}\papabs\Delta\brac{\frac{1}{\Zap}}} \\
& \lesssim_{M} \norm[\infty]{\Delta\brac{\frac{1}{\Zap}}(\cdot,0)}\norm[2]{\papabs\Delta\brac{\frac{1}{\Zap}}} + \norm[2]{\brac{\frac{1}{\Zapbar}}_{a} \papabs\Delta\brac{\frac{1}{\Zap}}} \\
& \lesssim_{M} \norm[\infty]{\Delta\brac{\frac{1}{\Zap}}(\cdot,0)} + \norm[2]{\brac{\frac{1}{\Zap}}_{a} \papabs\Delta\brac{\frac{1}{\Zap}}} \\
& \lesssim_{M} \norm[2]{\Delta \brac{\Dap\nobrac{\frac{1}{\Zap}}}} +  \sqrt{\Ecal(t)} + \int_{0}^{t}\sqrt{\Ecal(s)}\diff s
\end{align*}
Hence we have completed the proof of the proposition. 
\end{proof}

\subsection{Controlling the energy estimate}

We will now consider the evolution of the energy $\Ecal(t)$. Differentiating in time, we need to bound
\begin{align*}
\frac{\diff \Ecal}{\diff t} = \frac{d}{\diff t} \norm[\Hhalf]{\Delta\brac{\frac{1}{\Zap}}}^2 + \norm{\Delta\brac{\Dap \frac{1}{\Zap}}}_{2}^{2}.
\end{align*}
By \lemref{lem:timederiv}  and \lemref{lem:basicUtil},
\begin{align*}
 &\abs{\frac{d}{\diff t} \norm[\Hhalf]{\Delta\brac{\frac{1}{\Zap}}}^2 - 2\Real \cbrac{\int (\overline{\papabs\Delta\brac{\frac{1}{\Zap}}})\Delta\brac{\Dt\frac{1}{\Zap}}\diff \ap}}\\
 &\hspace{70mm} \lesssim \norm{\Delta \brac{\frac{1}{\Zap}}}_{\Hhalf}^{2}\norm{(\bap)_a}_{\infty} \\
 &\hspace{70mm} \lesssim_M \Ecal(t)
 \end{align*}
Thus, it remains to bound the quantity
\begin{align}\label{eq:uniquenessexpression}
2\Real \cbrac{\int (\overline{\papabs\Delta\brac{\frac{1}{\Zap}}})\Delta\brac{\Dt\frac{1}{\Zap}}\diff \ap}.
\end{align}

Now by \eqref{eq:main},
\begin{align}
 & \Delta\brac{\Dt\frac{1}{\Zap}}  \nonumber \\
 & = \Delta\brac{-i\frac{1}{\Zapbar}\Dap\frac{1}{\Zap} + \frac{\Bone}{\Zap}} \nonumber\\
& = - i\brac{\frac{1}{\Zapbar}}_{a}\Delta\brac{\Dap\nobrac{\frac{1}{\Zap}}} -i \Delta\brac{\frac{1}{\Zapbar}}\Util\brac{\Dap\frac{1}{\Zap}}_{b}  + \Delta \brac{\frac{\Bone}{\Zap}}. \label{eq:DeltaDtdecomposition}
\end{align}

\textbf{Step 1:} Plugging in the first term of \eqref{eq:DeltaDtdecomposition} in \eqref{eq:uniquenessexpression}, we can use Proposition \ref{prop:usefultermuniqueness}.
\begin{align}\label{eq:uniqueterm1}
\begin{split}
2\Real \cbrac{- i\int (\overline{\papabs\Delta\brac{\frac{1}{\Zap}}})\brac{\frac{1}{\Zapbar}}_{a}\Delta\brac{\Dap\nobrac{\frac{1}{\Zap}}}\diff \ap}\\
= -2\norm[2]{\Delta\brac{\Dap\nobrac{\frac{1}{\Zap}}}}^{2} + 2\Real\cbrac{\int (\mathcal{D})\Delta\brac{\Dap\frac{1}{\Zap}}\diff \ap}.
\end{split}
\end{align}
where we observe that
\begin{align*}
\abs{2\Real\cbrac{\int (\mathcal{D})\Delta\brac{\Dap\frac{1}{\Zap}}\diff \ap}}  \lesssim \norm[2]{\mathcal{D}}\norm[2]{\Delta\brac{\Dap\frac{1}{\Zap}}}
\end{align*}

\textbf{Step 2:} The middle term from \eqref{eq:DeltaDtdecomposition} placed into \eqref{eq:uniquenessexpression} and using \eqref{eq:DeltaZdecompb2} gives the terms
\begin{align*}
 \int -i(\overline{\papabs\Delta\brac{\frac{1}{\Zap}}})\Delta\brac{\frac{1}{\Zapbar}} \Util\brac{\Dap\frac{1}{\Zap}}_{b}\diff \ap  = \nobrac{G_{1} + G_{2}}
\end{align*}
where
\begin{align*}
G_{1} &=  -i\int (\overline{\papabs\Delta\brac{\frac{1}{\Zap}}})\Delta\brac{\frac{1}{\Zapbar}}\brac{\ap,0}\\
&\hspace{20mm}\cdot\nobrac{\exp\cbrac{\int_{0}^{t}\overline{\Util\brac{\bap-\Dap\Zt}}_{b}(\h_a(\hinv_a(\ap,t),s),s)\diff s}} \Util\brac{\Dap \frac{1}{\Zap}}_{b}\diff \ap    
\end{align*}
and
\begin{align*}
G_{2} &= i\int \overline{\papabs\Delta\brac{\frac{1}{\Zap}}}\brac{\frac{1}{\Zapbar}}_{a}\\
&\hspace{10mm}\cdot\brac{\exp\cbrac{-\int_{0}^{t}\Delta\brac{\bap-\Dap\Zt}(\h_a(\hinv_a(\ap,t),s),s)\diff s} - 1}\Util\brac{\Dap \frac{1}{\Zap}}_{b}\diff \ap
\end{align*}
Now  by \eqref{eq:DeltaZLinfinity} and Proposition \ref{prop:usefultermuniqueness}
\begin{align}
\abs{G_{1}} & \lesssim_{\M} \norm[2]{\Util\brac{\frac{1}{\Zap}}_{b}\overline{\papabs\Delta\brac{\frac{1}{\Zap}}}}\norm[\infty]{\Delta\brac{\frac{1}{\Zap}}\brac{\cdot,0}}\norm[2]{\Util\brac{\pap \frac{1}{\Zap}}_{b}}\nonumber\\
& \lesssim_{\M} \brac{\norm[2]{\Delta\brac{\Dap\nobrac{\frac{1}{\Zap}}}} +  \sqrt{\Ecal(t)} + \int_{0}^{t}\sqrt{\Ecal(s)}\diff s}\sqrt{\Ecal(t)} \label{eq:uniqueterm5}
\end{align}
Similarly by \eqref{eq:DeltaZL2final} and Proposition \ref{prop:usefultermuniqueness}
\begin{align}
\abs{G_{2}} & \lesssim_{\M} \brac{\norm[2]{\Delta\brac{\Dap\nobrac{\frac{1}{\Zap}}}} + \norm[2]{\mathcal{D}}} \brac{\sqrt{\Ecal(t)} + \int_{0}^{t} \sqrt{\Ecal(s)} \diff s} \label{eq:uniqueterm6}.
\end{align}

\textbf{Step 3:} The final term of \eqref{eq:DeltaDtdecomposition} is
\begin{align*}
\Delta\brac{\frac{\Bone}{\Zap}} = \Delta\brac{\Bone}\brac{\frac{1}{\Zap}}_{a} + \Delta\brac{\frac{1}{\Zap}}\Util\brac{\Bone}_{b}.
\end{align*}
This term in the expression \eqref{eq:uniquenessexpression} gives using \eqref{eq:DeltaZdecompb2}
\begin{align*}
& \int \brac*[\Big]{\overline{\papabs\Delta\brac{\frac{1}{\Zap}}}}\Delta\brac{\frac{\Bone}{\Zap}}\diff \ap \\
& = \int \brac*[\Big]{\overline{\papabs\Delta\brac{\frac{1}{\Zap}}}}\Delta\brac{\Bone}\brac{\frac{1}{\Zap}}_{a}\diff \ap +  \int \brac*[\Big]{\overline{\papabs\Delta\brac{\frac{1}{\Zap}}}}\Delta\brac{\frac{1}{\Zap}}\Util\brac{\Bone}_{b}\diff \ap\\
& = F_{1} + F_{2} + F_{3}
\end{align*}
where
\begin{align*}
F_{1} &= \int \brac*[\Big]{\overline{\papabs\Delta\brac{\frac{1}{\Zap}}}}\Delta\brac{\Bone}\brac{\frac{1}{\Zap}}_{a}\diff \ap
\end{align*}
\begin{align*}
F_{2} &=  \int (\overline{\papabs\Delta\brac{\frac{1}{\Zap}}})\brac{\frac{1}{\Zap}}_{a}\Util\brac{\Bone}_{b}\\
&\hspace{20mm}\cdot\brac{-\exp\cbrac{-\int_{0}^{t}\Delta\brac{\bap-\Dap\Zt}(\h_a(\hinv_a(\ap,t),s),s)\diff s} + 1}\diff \ap
\end{align*}
and
\begin{align*}
F_{3} &= \int (\overline{\papabs\Delta\brac{\frac{1}{\Zap}}})\Delta\brac{\frac{1}{\Zap}}(\ap,0) \Util\brac{\Bone}_{b}\\
&\hspace{20mm}\cdot\nobrac{\exp\cbrac{\int_{0}^{t}\Util\brac{\bap-\Dap\Zt}_{b}(\h_a(\hinv_a(\ap,t),s),s)\diff s}}\diff\ap
\end{align*}
For $F_{1}$, we use  \eqref{eq:DeltaBonebound}, \propref{prop:deltabalpha} and  \propref{prop:usefultermuniqueness} to get
\begin{align}
\abs{F_{1}} &\leq \norm[2]{\brac{\frac{1}{\Zap}}_{a}\papabs\Delta\brac{\frac{1}{\Zap}}}\norm[2]{\Delta\brac{\Bone}} \nonumber\\
&\lesssim_{M}  \brac{\norm[2]{\Delta\brac{\Dap\nobrac{\frac{1}{\Zap}}}} + \norm[2]{\mathcal{D}}}\brac{\sqrt{\Ecal(t)}+\int_{0}^{t}\sqrt{\Ecal(s)}\diff s} \label{eq:uniqueterm2}
\end{align}
For $F_{2}$ we have by \eqref{eq:B1inftyHhalf}, \eqref{eq:DeltaZL2final} and  \propref{prop:usefultermuniqueness}
\begin{align}
\abs{F_{2}} & \lesssim_{M} \norm[2]{\brac{\frac{1}{\Zap}}_{a}\papabs\Delta\brac{\frac{1}{\Zap}}}\nonumber\\
&\hspace{30mm}\cdot\norm[\Ltwo(\Rsp,\diff\ap)]{-\exp\cbrac{-\int_{0}^{t}\Delta\brac{\bap-\Dap\Zt}(\h_a(\hinv_a(\ap,t),s),s)\diff s} + 1} \nonumber\\
&\lesssim_{\M} \brac{\norm[2]{\Delta\brac{\Dap\nobrac{\frac{1}{\Zap}}}} + \norm[2]{\mathcal{D}}}\brac{\sqrt{\Ecal(t)}+\int_{0}^{t}\sqrt{\Ecal(s)}\diff s}\label{eq:uniqueterm4}
\end{align}
For $F_{3}$, we move half derivative to the latter terms to obtain
\begin{align*}
\abs{F_{3}}
&\leq \norm[\Hhalf]{\Delta\brac{\frac{1}{\Zap}}}\\
&\hspace{10mm}\cdot\norm[\Hhalf]{\Delta\brac{\frac{1}{\Zap}}(\cdot,0)\nobrac{\exp\cbrac{\int_{0}^{t}\Util\brac{\bap-\Dap\Zt}_{b}(\h_a(\hinv_a(\cdot,t),s),s)\diff s}} \Util\brac{\Bone}_{b}}
\end{align*}
We observe that 
\begin{align*}
\norm[\Linfty\cap\Hhalf]{\Delta\brac{\frac{1}{\Zap}}(\cdot,0)} \leq \sqrt{\Ecal(t)}
\end{align*}
From \propref{prop:chainrule} and \lemref{lem:quantM} we have
\begin{align*}
\norm[\Linfty\cap\Hhalf]{\exp\cbrac{\int_{0}^{t}\Util\brac{\bap-\Dap\Zt}_{b}(\h_a(\hinv_a(\cdot,t),s),s)\diff s}} \lesssim_{M} 1
\end{align*}
Similarly from \lemref{lem:quantM} we have
\begin{align*}
\norm[\Linfty\cap\Hhalf]{\Util(\Bone)_b} \lesssim_{M} 1
\end{align*}
Hence by repeated application of \propref{prop:Leibniz} we obtain
\begin{align}\label{eq:uniqueterm7}
\abs{F_{3}} \lesssim_{M} \norm[\Hhalf]{\Delta\brac{\frac{1}{\Zap}}}\sqrt{\Ecal(t)} \lesssim_{M} \Ecal(t)
\end{align}

\textbf{Step 4:} Hence, in total, combining the bounds \eqref{eq:uniqueterm1},
\eqref{eq:uniqueterm5},
\eqref{eq:uniqueterm6},
\eqref{eq:uniqueterm2}, \eqref{eq:uniqueterm4},
\eqref{eq:uniqueterm7} and then the bound on $\mathcal{D}$ from Proposition \ref{prop:usefultermuniqueness}, we have
\begin{align*}
\frac{\diff \Ecal}{\diff t} &= -\norm{\Delta\brac{\Dap \frac{1}{\Zap}}}_{2}^{2} + \mathcal{N}
\end{align*}
where
\begin{align}
\abs{\mathcal{N}} &\lesssim_{M}  \brac{\sqrt{\Ecal(t)} + \int_{0}^{t} \sqrt{\Ecal(s)} \diff s}^{2} + \norm[2]{\Delta\brac{\Dap\nobrac{\frac{1}{\Zap}}}} \brac{\sqrt{\Ecal(t)} + \int_{0}^{t} \sqrt{\Ecal(s)} \diff s} \label{eq:Nunique}
\end{align}
Let the implicit constant given by $\lesssim_{M}$ in \eqref{eq:Nunique} be given by $C(M)$. For the second term above, we apply Young's inequality for products:
\begin{align*}
& C(M)\norm[2]{\Delta\brac{\Dap\nobrac{\frac{1}{\Zap}}}} \brac{\sqrt{\Ecal(t)} + \int_{0}^{t} \sqrt{\Ecal(s)} \diff s}\\
& \leq
\frac{1}{2}C(M)^{2}\brac{\sqrt{\Ecal(t)} + \int_{0}^{t} \sqrt{\Ecal(s)} \diff s}^{2} + \half \norm[2]{\Delta\brac{\Dap\nobrac{\frac{1}{\Zap}}}}^{2}.
\end{align*}
Hence by defining $\mathcal{F}(t) = \sup_{s\in [0,t]} \Ecal(s)$ we obtain
\begin{align*}
\frac{\diff \Ecal}{\diff t} &\lesssim_{\M} \brac{\sqrt{\Ecal(t)} + \int_{0}^{t} \sqrt{\Ecal(s)} \diff s}^{2}\lesssim_{M} \mathcal{F}(t)
\end{align*}
and then integrating the above in time and taking the supremum in time, we have
\begin{align*}
\mathcal{F}(t) \lesssim_{M} \Ecal(0) + \int_{0}^{t} \mathcal{F}(s) \diff s.
\end{align*}
Finally, applying Gronwall's inequality,
\begin{align*}
    \mathcal{F}(t) \lesssim_{M} \mathcal{F}(0).
\end{align*}
thereby concluding the proof of \thmref{thm:mainuniqueness}

We now prove an additional estimate that will not be useful until the existence proof in Section \ref{sec:existence}.

\begin{lemma}\label{lem:uniquenessforexistenceBone}
Say $\ell:  \mathbb{R}\rightarrow \mathbb{R}$ is a diffeomorphism with $\ell-\ap$ in $H^1$. Then, we have for any $f\in \dot{H}^{1}$,
\begin{align}\label{eq:ellineq}
\norm*[\big][\dot{H}^{\frac{1}{2}}]{f\circ \ell - f}
&\lesssim \norm[2]{\pap f}\norm*[\big][2]{\ell-\ap}^{\frac{1}{4}}\norm*[\big][2]{\ell_\ap-1}^{\frac{1}{4}}\nonumber \\ & \quad + C\brac*[\big]{\norm*[\big][\infty]{\brac*[\big]{\ell^{-1}}_{\ap}}, \norm*[\big][\infty]{\ell_\ap}}\norm*[\big][2]{\ell_\ap-1}\norm[2]{\pap f}
\end{align}
In particular, assuming the hypothesis of Theorem \ref{thm:mainuniqueness}, then we have that $\htil(\cdot,t): \mathbb{R}\rightarrow \mathbb{R}$ is a diffeomorphism with $\htil(\cdot,t)-\ap$ in $H^1$. Moreover,
\begin{align*}
\sup_{t\in\sqbrac{0,T}}\norm*[\big][2]{\htil-\ap}(t) \leq C(M)
\end{align*}
where $M$ is the same constant as in Theorem \ref{thm:mainuniqueness}. Hence the quantity $\norm*[\big][\dot{H}^{\frac{1}{2}}]{f\circ \ell - f}$ satisfies the inequality \eqref{eq:ellineq} with $\ell = \htil$.
\end{lemma}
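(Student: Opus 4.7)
The lemma has two parts: the universal inequality \eqref{eq:ellineq} and its verification for $\ell = \htil$. For the first, the plan is to use the interpolation $\norm[\Hhalf]{g}^2 \lesssim \norm[2]{g} \norm[2]{\pap g}$ applied to $g = f \circ \ell - f$, together with the one-dimensional Gagliardo--Nirenberg inequality $\norm[\infty]{\ell - \ap} \lesssim \norm[2]{\ell - \ap}^{1/2} \norm[2]{\ell_\ap - 1}^{1/2}$. For the second part, the plan is an ODE argument for $\htil - \ap$ based on the transport-like equation $(\Dt)_a(\htil - \ap) = -\Delta(\bvar)$.

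\textbf{Proof of \eqref{eq:ellineq}.} Set $g = f\circ \ell - f$. I would bound $\norm[2]{g}$ by rewriting $g(\ap) = \int_\ap^{\ell(\ap)} f'(s)\diff s$, applying Cauchy--Schwarz pointwise to obtain $|g(\ap)|^2 \leq |\ell(\ap) - \ap|\int_\ap^{\ell(\ap)} |f'|^2 \diff s$, and using Fubini to count how often each $s$ lies between $\ap$ and $\ell(\ap)$; this produces $\norm[2]{g} \lesssim \norm[\infty]{\ell - \ap}\norm[2]{f'}$. Gagliardo--Nirenberg converts the $L^\infty$ factor into the product $\norm[2]{\ell - \ap}^{1/2}\norm[2]{\ell_\ap - 1}^{1/2}$, so after square-rooting the interpolation inequality the $1/4$-powers of the first term of \eqref{eq:ellineq} emerge. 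For $\norm[2]{\pap g}$, I expand via the chain rule as $\pap g = (f'\circ\ell)(\ell_\ap - 1) + (f'\circ\ell - f')$ and bound each piece by the change of variables $\beta = \ell(\ap)$: the $(f'\circ\ell - f')$ contribution gives $\lesssim (1 + \norm[\infty]{(\ell^{-1})_\ap}^{1/2})\norm[2]{f'}$, which combined with the $L^2$ bound above through the interpolation produces the first term of \eqref{eq:ellineq}. The second term with full factor $\norm[2]{\ell_\ap - 1}$ is obtained by treating the $(f'\circ\ell)(\ell_\ap - 1)$ piece by a separate estimate: transforming the dual pairing $\int (f'\circ\ell)(\ell_\ap - 1)\psi \diff \ap$ under $\beta = \ell(\ap)$ identifies it with an integral against $f'$ of factor $(1 - (\ell^{-1})_\ap)(\psi\circ\ell^{-1})$, and Cauchy--Schwarz then isolates $\norm[2]{\ell_\ap - 1}$ at full power with a constant depending only on $\norm[\infty]{\ell_\ap}$ and $\norm[\infty]{(\ell^{-1})_\ap}$.

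\textbf{Application to $\htil$.} Under the convention $h_a(\al, 0) = h_b(\al, 0) = \al$, we have $\htil(\ap, 0) = \ap$, so $\htil - \ap$ vanishes initially. From the identity $\htil \circ h_a = h_b$ and the definitions of the material derivatives, a direct computation shows $(\Dt)_a \htil = \Util(\bvar)_b$, and hence $(\Dt)_a(\htil - \ap) = \Util(\bvar)_b - \bvar_a = -\Delta(\bvar)$. Applying \lemref{lem:timederiv}(2) together with the $L^\infty$ bound on $\bvarap$ from \eqref{eq:Bap1infty} yields
\[
\frac{\diff}{\diff t}\norm[2]{\htil - \ap} \lesssim_M \norm[2]{\Delta(\bvar)} + \norm[2]{\htil - \ap}.
\]
Integrating, using the bound on $\int_0^t \norm[2]{\Delta \bvar}(s)\diff s$ from \propref{prop:deltabalpha} together with the uniform bound $\Fcal(t) \leq C(M)\Fcal(0) \leq C(M)$ from \thmref{thm:mainuniqueness}, and applying Gronwall's inequality produces $\norm[2]{\htil - \ap}(t) \leq C(M)$. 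Combined with \propref{prop:halphatildebound} (controlling $\norm[2]{\htilap - 1}$) and the $L^\infty$ bounds of \lemref{lem:quantM}, all hypotheses required to apply \eqref{eq:ellineq} with $\ell = \htil$ are verified.

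\textbf{Main obstacle.} The subtle point in proving \eqref{eq:ellineq} is producing the two-term decomposition with an absolute constant in the first term while confining the $L^\infty$-dependent constant to the second. A direct interpolation-only argument collapses the estimate into a single-term bound in which the $L^\infty$ constants appear multiplying $\norm[2]{\ell - \ap}^{1/4}\norm[2]{\ell_\ap - 1}^{1/4}$; the improved decomposition therefore requires treating the $(f'\circ\ell)(\ell_\ap - 1)$ contribution by a separate duality/change-of-variable estimate, independent of the Gagliardo--Nirenberg step, so that $\norm[2]{\ell_\ap - 1}$ appears at full power without $L^\infty$ coupling to $\norm[2]{\ell - \ap}$.
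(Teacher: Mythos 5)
The paper does not prove \eqref{eq:ellineq} at all: it simply cites Lemma~6.1 of \cite{Wu19} for that inequality. Your proposal attempts a direct proof, which is a genuinely different route, but the direct argument as sketched has a gap. The crux is the piece $(f'\circ\ell)(\ell_\ap-1)$ of $\pap(f\circ\ell-f)$, and the bound you claim for it does not close. After the substitution $\beta=\ell(\ap)$, the dual pairing $\int (f'\circ\ell)(\ell_\ap-1)\psi\,d\ap$ becomes $\int f'(\beta)\bigl(1-(\ell^{-1})_\beta\bigr)\psi(\ell^{-1}(\beta))\,d\beta$, and Cauchy--Schwarz then leaves you with $\norm[2]{f'}$ times $\norm*[2]{(1-(\ell^{-1})_\beta)(\psi\circ\ell^{-1})}$. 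Writing that second factor out, one finds $\bigl(\int |\psi|^2\,(\ell_\ap-1)^2/\ell_\ap\,d\ap\bigr)^{1/2}$, which is still a product of the two $L^2$ functions $\psi$ and $\ell_\ap-1$. No amount of $L^\infty$ control of $\ell_\ap$, $(\ell^{-1})_\ap$ lets you isolate $\norm[2]{\ell_\ap-1}$ at full power against a generic $\psi\in L^2$; you only get $\norm[\infty]{\ell_\ap-1}$. A second, lesser issue is that even if $\norm[2]{\pap(f\circ\ell-f)}$ were under control, the interpolation $\norm[\Hhalf]{g}^2\lesssim\norm[2]{g}\norm[2]{\pap g}$ would inject the $L^\infty$-dependent constant from the $\norm[2]{f'\circ\ell-f'}$ piece into the \emph{first} term of \eqref{eq:ellineq}, whereas the statement places the $L^\infty$-dependent constant only on the second term and keeps an absolute constant on the first. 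The two-term structure therefore genuinely requires a different organization from a single interpolation, and the ``separate duality estimate'' you propose to achieve it does not work; this is precisely the subtlety that \cite{Wu19} resolves.

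For the application to $\htil$, your transport identity $(\Dt)_a(\htil-\ap)=\Util(\bvar)_b-\bvar_a=-\Delta(\bvar)$ is exactly the one in the paper, and the ODE strategy with \lemref{lem:timederiv} is the same. However, you then invoke \propref{prop:deltabalpha} to bound $\int_0^t\norm[2]{\Delta\bvar}(s)\,ds$; that proposition controls $\int_0^t\norm[2]{\Delta\bap}(s)\,ds$ — the $\ap$-derivative of $\bvar$, not $\bvar$ itself — so it is not the right tool. Moreover, your additional appeal to $\Fcal(t)\le C(M)\Fcal(0)\le C(M)$ implicitly assumes $\Ecal(0)\le C(M)$, which is not among the hypotheses of \thmref{thm:mainuniqueness}: the two solutions need not start close. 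That assumption would make the constant in the conclusion depend on $\Ecal(0)$, contradicting the stated bound $\norm*[2]{\htil-\ap}(t)\le C(M)$ with $M$ alone. The paper avoids this by bounding the forcing term crudely: both $\norm[2]{\bvar_a}$ and $\norm[2]{\Util(\bvar)_b}$ are individually controlled by $C(M)$ (each equals $\norm*[2]{|\Zap|^{-2}-1}$ up to a composition with $\htil$, whose Jacobian is $L^\infty$-bounded by \lemref{lem:quantM}), so $\norm[2]{\Delta\bvar}\le C(M)$ directly, and Gronwall with $\htil(\cdot,0)=\mathrm{id}$ finishes the argument without any reference to $\Ecal(0)$.
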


\begin{proof}
The inequality \eqref{eq:ellineq} is a restatement of Lemma 6.1 in \cite{Wu19}.
Next, we show that $\htil(\cdot,t)-\ap$ is in $L^2$. At $t=0$, $\htil(\ap,0)-\ap = 0$.
\begin{align*}
\brac{\Dt}_{a}\brac{\htil-\ap} = \brac{\partial_t h_b}\circ h_a^{-1} -\bvar_a = (b_b)\compose h_b\compose h_a^{-1} - \bvar_a.
\end{align*}
Since $\bap\in L^\infty$,
\begin{align*}
    \frac{d}{dt}\norm*[\big][2]{\htil-\ap}^{2} &\lesssim \norm[\infty]{(\bap)_a}\norm*[\big][2]{\htil-\ap}^{2} + \norm*[\big][2]{\htil-\ap}\norm[2]{(b_b)\compose h_b\compose h_a^{-1} - \bvar_a} \\
    & \leq C(M) \brac{\norm*[\big][2]{\htil-\ap}^{2} + \norm*[\big][2]{\htil-\ap}}
\end{align*}
Further, we have $\htil(\cdot,t)-\ap$ in $\dot{H}^1$ by Proposition \ref{prop:deltabalpha}. This concludes the proof.
\end{proof}

\section{An equivalent system and an interesting identity}\label{sec:interestingidentity}

In this section we will reformulate system \eqref{eq:main} into an equivalent system \eqref{eq:Dtg}, which will be useful to prove existence of solutions in Sobolev spaces (which is done in the next section). As a consequence of this equivalent formulation, we will also derive a new identity \eqref{eq:consHhalfg} which we think is quite interesting.

The new system is given in the real valued variable $g(\cdot,t)$ with the equations being:
\begin{align}\label{eq:Dtg}
\begin{split}
c & =  e^{-i\Hil \g}\\
\bstar & = - i\Hil\brac{c^2} \\
(\pt + \bstar\pap) \g & =  - c^2\papabs \g     
\end{split}
\end{align}
To get system \eqref{eq:Dtg} from system \eqref{eq:main} we use the following transformation
\begin{align}\label{eq:systemonetwo}
\g  = \Imag(\log( \Zap)) 
\end{align}
and the following to get system \eqref{eq:main} from system \eqref{eq:Dtg}
\begin{align}\label{eq:systemtwoone}
\Zap = e^{i (\Id + \Hil)\g}
\end{align}
We now show that the two systems are equivalent.
\begin{lem}\label{lem:systemequiv}
Let $s\geq 2$ and $T > 0$. Then $\Z(\cdot,t)$ solves \eqref{eq:main} with $(\Zap -1, \frac{1}{\Zap} -1) \in \Linfty([0,T], H^{s}(\Rsp)\times H^{s}(\Rsp))$ if and only if $g(\cdot,t)$ solves \eqref{eq:Dtg} along with $\g \in \Linfty([0,T], H^{s}(\Rsp))$, where the transformations between them are given by \eqref{eq:systemonetwo} and  \eqref{eq:systemtwoone}
\end{lem}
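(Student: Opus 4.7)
The plan is to verify, in turn, that the transformations \eqref{eq:systemonetwo}--\eqref{eq:systemtwoone} are inverse to each other under the standing holomorphicity and decay hypotheses, that they preserve the Sobolev class, and that they intertwine the two systems. Under the hypotheses of \eqref{eq:main} the map $\Psi_{\zp}$ is holomorphic, nowhere zero on $\Pminus$, and tends to $1$ at infinity, so $\log \Psi_{\zp}$ has a unique holomorphic branch vanishing at infinity, and its boundary value $\log \Zap$ satisfies $\Hil \log \Zap = \log \Zap$. Since $\Hil = -iH$ for the usual real-to-real Hilbert transform $H$, it sends real-valued functions to purely imaginary ones; writing $\log \Zap = u + ig$ with $u, g$ real, the identity $\Hil \log \Zap = \log \Zap$ forces $u = -\tilde{H}g$ with $\tilde{H} \eqdef -i\Hil$ (so $\tilde{H} = -H$ on real inputs and $\tilde{H}^{2} = -\Id$), whence $\log \Zap = i(\Id + \Hil)g$ and \eqref{eq:systemtwoone} holds. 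Conversely, starting from real $g$, the identity $(\Id - \Hil)(\Id + \Hil) = 0$ guarantees that $i(\Id + \Hil)g$ is the boundary value of a holomorphic function on $\Pminus$ vanishing at infinity, so exponentiating yields a nowhere-zero $\Psi_{\zp}$ with $\Psi_{\zp} \to 1$, recovering the hypotheses of \eqref{eq:main}.

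For the Sobolev regularity, given $\Zap - 1, \frac{1}{\Zap} - 1 \in \Linfty([0,T], H^{s})$ with $s \geq 2$, Sobolev embedding controls $\Zap$ and $\frac{1}{\Zap}$ uniformly in $\Linfty$, so standard Moser-type composition estimates give $\log \Zap \in \Linfty([0,T], H^{s})$ and hence $g = \Imag \log \Zap$ lies in the same class. Conversely, boundedness of $\Hil$ on $H^{s}$ combined with the Moser estimate for $e^{\cdot} - 1$ yields both $\Zap - 1$ and $\frac{1}{\Zap} - 1 = e^{-i(\Id + \Hil)g} - 1$ in $\Linfty([0,T], H^{s})$. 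Writing $\Hil g = i\tilde{H}g$, the explicit form $\Zap = e^{ig - \tilde{H}g}$ gives $|\Zap| = e^{-\tilde{H}g}$, so $c = e^{-i\Hil g} = e^{\tilde{H}g} = |\Zap|^{-1}$; hence $c^{2} = 1/|\Zap|^{2}$ and $\bstar = -i\Hil(c^{2}) = -i\Hil(1/|\Zap|^{2}) = \bvar$.

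For the evolution equations, multiplying the third equation of \eqref{eq:main} by $\Zap$ and using $\Zap \pap \frac{1}{\Zap} = -\pap \log \Zap$ and $\Zap/|\Zap|^{2} = 1/\Zapbar$ rewrites it as $-\Dt \log \Zap = ic^{2} \pap \log \Zap + \Bone$. Substituting $\log \Zap = ig - \tilde{H}g$ and applying $[\Dt, \tilde{H}]\phi = [\bvar, \tilde{H}]\pap \phi$ (and the analogous identity for $\Hil$), together with $\tilde{H}\pap = -\papabs$ (equivalently $\Hil \pap = -i\papabs$), the left-hand side becomes $-i\Dt g + \tilde{H}\Dt g + [\bvar, \tilde{H}]\pap g$ and the right-hand side becomes $-c^{2}\pap g + ic^{2}\papabs g + \Bone$. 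Since $g$ and $\Bone$ are real, matching imaginary parts immediately yields $\Dt g = -c^{2}\papabs g$, which is the evolution equation of \eqref{eq:Dtg}. Matching real parts gives the identity $\Bone = -[\tilde{H}, c^{2}]\papabs g + [\bvar, \tilde{H}]\pap g$, which is automatically consistent with the formula in \eqref{eq:main}: using holomorphicity of $\frac{1}{\Zap}-1$ one reduces $\left[\frac{1}{\Zapbar},\Hil\right]\pap \frac{1}{\Zap}$ to $(\Id - \Hil)\!\left(\frac{1}{\Zapbar}\pap \frac{1}{\Zap}\right)$, and a direct translation through $\pap \frac{1}{\Zap} = -i\pap(\Id + \Hil)g \cdot \frac{1}{\Zap}$ combined with the Tricomi-type identity for $\tilde{H}$ and the relation $\bvar = \tilde{H}(c^{2})$ recovers the same expression. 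The reverse implication runs this computation in reverse, defining $\Bone$ from the real part identity and exponentiating.

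\textbf{Main obstacle.} The heart of the argument is the real-versus-imaginary bookkeeping in the evolution-equation step; the key cancellations rely on $\tilde{H}\pap g = -\papabs g$ and $\bvar = \tilde{H}(c^{2})$. Once the imaginary-part equation for $g$ is extracted, matching the real part to the original $\Bone$ formula of \eqref{eq:main} amounts to an application of the Tricomi identity for the real Hilbert transform. The Sobolev-regularity transfer is routine given the $\Linfty$ bounds on $\Zap$ and $1/\Zap$, and the bijectivity of the transformations reduces to classical harmonic-analysis facts about holomorphic functions on the half-plane.
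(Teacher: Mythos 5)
Your proposal is correct and takes essentially the same route as the paper's proof: the logarithmic change of variables $\log \Zap = i(\Id+\Hil)g$ via the conjugate-function relation, the identifications $c = 1/\abs{\Zap}$ and $\bstar = \bvar$, extraction of the $g$-equation from the imaginary part, and the observation that the real part reduces (by holomorphicity of $\frac{1}{\Zap}-1$) to the commutator identity $\Bone = -[\tilde H, c^2]\papabs g + [\bvar,\tilde H]\pap g$, which is precisely the paper's "intermediate computation" in the converse direction. The only spots where the paper is more explicit are the $L^2$ bound on $g$ itself (obtained from $\log\abs{\Zap}\in L^2$ and the conjugate relation rather than a blanket Moser estimate, since $\log$ is not single-valued on the annulus containing the range of $\Zap$), the explicit verification of the $\Bone$ identity you leave as a sketch, and the remark that $\lim_{z\to\infty}\pt\Psi = 0$ is a normalization choice in the reverse direction — all routine.
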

\begin{proof}
Step 1: We first assume that $\Zap(\cdot,t)$ solves \eqref{eq:main} and prove that $\g(\cdot,t)$ solves \eqref{eq:Dtg}. 
\begin{enumerate}
\item If $(\Zap -1, \frac{1}{\Zap} -1) \in \Linfty([0,T], H^{s}\times H^{s})$ for $s\geq 2$, then for any $t\in[0,T]$ we see that $\norm[\infty]{\Zap}(t) + \norm[\infty]{\frac{1}{\Zap}}(t) + \norm[2]{\abs{\Zap} - 1}(t) \leq M < \infty $
for some $M>0$. Now as $\Zap -1 \in H^{s}(\Rsp)$ we observe that $\Psizp$ extends continuously to $\Pminusbar$ and hence $\log(\Psizp)$ also extends continuously to the boundary. Hence it makes sense to talk about the function $\log(\Zap)$.  
Observe that if $C_1>0$ then
\begin{align*}
c_1 \abs{z} \leq \abs{e^z -1} \leq c_2 \abs{z} \qq \tx{ for all } z\in \Rsp, \abs{z} \leq C_1
\end{align*}
for some $c_1,c_2 >0$ depending only on $C_1$. Now as $\abs{\Zap} = e^{\Real(\log(\Zap))}$ we see that $\Real(\log(\Zap)) \in \Ltwo$. Hence we see that $\Imag(\log(\Zap)) \in \Ltwo$ and hence $\g \in \Ltwo$ and $\Zap = e^{i(\Id + \Hil)\g}$. Now using \eqref{eq:systemonetwo} and the formula $\pap g = \Imag\brac{\frac{1}{\Zap}\pap\Zap}$ we see that $\g \in \Linfty([0,T], H^{s}(\Rsp))$.

\item  As $\Zap = e^{i(\Id + \Hil)\g}$ we can then observe from \eqref{eq:systemonetwo} that $\cvar  = e^{-i\Hil \g} = \frac{1}{\Zapabs}$ and $\bstar =  -i\Hil\brac{\frac{1}{\Zapabs^2}} = \bvar$. Hence we can now identify $\bstar$ with $\bvar$ and the operator $(\pt + \bstar\pap) $ is the same as the operator $\Dt = (\pt + \bvar\pap)$. 

\item Let us define $f = \Real(\log(\Zap))$ and so $\Zap = e^{f + ig}$. Note that both $f, g \to 0$ as $\abs{\ap} \to \infty$ (as $\Zap \to 1$ as $\abs{\ap} \to \infty$). We now see that $\Hil(\f + i\g) = \f + i\g$ and hence
\begin{align*}
    \Hil(\f) = i\g \quad \tx{ and } \quad \Hil(\g) = -if
\end{align*}
Therefore
\begin{align*}
    \pap\frac{1}{\Zapabs} = \pap e^{-f} = -\frac{1}{\Zapabs} \f_\ap = -\frac{1}{\Zapabs}\papabs g.
\end{align*}
Now we have
\begin{align*}
    \Dt (\f + i\g) & = -\Dt \log\brac{\frac{1}{\Zap}} = - \Zap\Dt\frac{1}{\Zap} 
\end{align*}
Using the main equation \eqref{eq:main},
\begin{align}\label{eq:f+igeq}
    \Dt(\f + i\g) = -\Zap\cbrac{-\frac{i}{\Zapabs^2}\pap\frac{1}{\Zap} + \frac{\Bone}{\Zap}}
\end{align}
From \eqref{eq:Bone} we see that $\Bone$ is real valued and hence taking imaginary part to the equation \eqref{eq:f+igeq} we obtain from \eqref{form:RealImagTh}
\begin{align*}
    \Dt\g = \frac{1}{\Zapabs}\Real\cbrac{\frac{\Zap}{\Zapabs}\pap\frac{1}{\Zap}} =  \frac{1}{\Zapabs}\pap\frac{1}{\Zapabs}
\end{align*}
and similarly by taking the real part we get from \eqref{form:RealImagTh}
\begin{align*}
    \Dt \f = -\Bone - \frac{1}{\Zapabs}\Imag\cbrac{\frac{\Zap}{\Zapabs}\pap\frac{1}{\Zap}} = -\Bone + \frac{1}{\Zapabs^2}g_{\ap}
\end{align*}
Hence we get the equation \eqref{eq:Dtg}
\begin{align}\label{eq:Dtgduplicate}
    \Dt \g + \frac{1}{\Zapabs^2}\papabs \g = 0
\end{align}
and also 
\begin{align}\label{eq:Dtf}
    \Dt \f + \frac{1}{\Zapabs^2}\papabs \f = -\Bone
\end{align}
\end{enumerate}
Step 2: We now assume that $g(\cdot,t)$ solves \eqref{eq:Dtg} and prove that $\Zap(\cdot,t)$ solves $\eqref{eq:main}$. We will again use the notation $f = i\Hil g$.  
\begin{enumerate}
\item Observe that if $C_1>0$ then 
\begin{align*}
 \abs{e^z -1} \leq c_2 \abs{z} \qq \tx{ for all } z\in \Csp, \abs{z} \leq C_1
\end{align*}
where $c_2$ depends only on $C_1$. Hence via a similar calculation from step 1 and using \eqref{eq:systemtwoone} we see that if $\g \in \Linfty([0,T], H^{s}(\Rsp))$ then $(\Zap -1, \frac{1}{\Zap} -1) \in \Linfty([0,T], H^{s}(\Rsp)\times H^{s}(\Rsp))$. We also observe that in this case we have $\log(\Psizp) = K_{-y} \conv (i(\Id + \Hil)\g)$ and hence $\log(\Psizp)$ is well defined. Hence we easily obtain 
\begin{align*}
\lim_{c \to \infty} \sup_{\abs{\zp}\geq c}\nobrac{\abs{\Psizp(\zp) - 1}}  = 0
\qquad \tx{ and } \quad \Psizp(\zp) \neq 0 \quad \tx{ for all } \zp \in \Pminus 
\end{align*}
Note that the condition $\lim_{z\to \infty}\pt\Psi(z,t) = 0$ is a choice that we can make and is not a priori implied by the system \eqref{eq:Dtg}. Hence we simply make this choice.  

\item We again have $\cvar  = e^{-i\Hil \g} = \frac{1}{\Zapabs}$ and $\bstar = \bvar = -i\Hil\brac{\frac{1}{\Zapabs^2}}$. Hence $(\pt + \bstar\pap) = (\pt + \bvar\pap) = \Dt$. 

\item As an intermediate computation, we compute the following
\begingroup
\allowdisplaybreaks
\begin{align*}
& -i\cbrac{\sqbrac{\bvar,\Hil}\pap g + \sqbrac{\frac{1}{\Zapabs^2},\Hil}\papabs g} \\
& = -\Real\cbrac{\sqbrac{\bvar,\Hil}\pap (f + ig) + \sqbrac{\frac{1}{\Zapabs^2},\Hil}\papabs (f + ig)} \\
& = -\Real(\Id - \Hil)\cbrac{-i\Hil\brac{\frac{1}{\Zapabs^2}}\pap(f + ig) + \frac{i}{\Zapabs^2}\pap(f + ig)} \\
& = -\Real(\Id - \Hil)\cbrac{i\pap(f + ig)\cbrac{\frac{1}{\Zapabs^2} - \Hil\brac{\frac{1}{\Zapabs^2}}}} \\
& = -\Real(\Id - \Hil)\cbrac{i\pap(f + ig)\cbrac{\frac{2}{\Zapabs^2} - (\Id + \Hil)\brac{\frac{1}{\Zapabs^2}}}} \\
& = -2\Real(\Id - \Hil)\cbrac{i\pap(f + ig)\frac{1}{\Zapabs^2}} \\
& = 2\Real(\Id - \Hil)\cbrac{\frac{i}{\Zapbar}\pap\frac{1}{\Zap}} \\
& = -2\Imag(\Id - \Hil)\cbrac{\frac{1}{\Zapbar}\pap\frac{1}{\Zap}} \\
& = \Bone
\end{align*}
\endgroup

\item We can now derive the main equation of \eqref{eq:main}. We see that
\begingroup
\allowdisplaybreaks
\begin{align*}
& \Dt \frac{1}{\Zap} \\
& = \Dt e^{-i(\Id + \Hil)g} \\
& = -\frac{i}{\Zap}\Dt(\Id + \Hil)g \\
& = -\frac{i}{\Zap}\cbrac{\sqbrac{\bvar,\Hil}\pap g - (\Id + \Hil)\cbrac{\frac{1}{\Zapabs^2}\papabs g}} \\
& = -\frac{i}{\Zap}\cbrac{\sqbrac{\bvar,\Hil}\pap g + \sqbrac{\frac{1}{\Zapabs^2}, \Hil}\papabs g - \frac{1}{\Zapabs^2}\pap(f + ig)} \\
& = -\frac{i}{\Zapabs^2}\pap\frac{1}{\Zap} - \frac{i}{\Zap}\cbrac{\sqbrac{\bvar,\Hil}\pap g + \sqbrac{\frac{1}{\Zapabs^2}, \Hil}\papabs g} \\
& = -\frac{i}{\Zapabs^2}\pap\frac{1}{\Zap} + \frac{\Bone}{\Zap}
\end{align*}
\endgroup
\end{enumerate}
This proves the equivalence between the two systems. 

\end{proof}

We again note the equations we derived for $g$ and $f$ in the above derivation:
\begin{align*}
    \Dt \g + \frac{1}{\Zapabs^2}\papabs \g = 0
\end{align*}
and also 
\begin{align*}
    \Dt \f + \frac{1}{\Zapabs^2}\papabs \f = -\Bone
\end{align*}
As a direct consequence, we see that for regular enough solutions we have from \lemref{lem:papabs} and $\Bone \geq 0$ that
\begin{align*}
    \inf_{\ap \in \Rsp} g (\ap,0) \leq \inf_{\ap \in \Rsp} g (\ap,t) \leq \sup_{\ap \in \Rsp} g (\ap,t) \leq \sup_{\ap \in \Rsp} g (\ap,0)
\end{align*}
and
\begin{align*}
    \sup_{\ap \in \Rsp} f (\ap,t) \leq \sup_{\ap \in \Rsp} f (\ap,0)
\end{align*}
Recall that $g$ is the angle of the interface and hence is related to the slope of the interface by the relation $\tan(g) = \grad h$, if $h$ is the height function. Similarly $f$ is related to the Taylor sign condition by the relation $- \frac{\partial P}{\partial n} = \frac{1}{\Zapabs} = e^{-f}$.  These estimates are well known (see \cite{AlMeSm20} and its references) and this gives another proof of them. We now proceed to derive a new identity.

Now from the structure of the above equations, it is very natural to consider the energy
\begin{align*}
\int \Zapabs^2 F^2 \diff \ap
\end{align*}
where $F$ is some function. We now see from \eqref{form:DtZapabs} and \lemref{lem:timederiv} that
\begin{align*}
\frac{\diff}{\diff t} \int \Zapabs^2 F^2 \diff \ap 
& = \int \Zapabs^2 F^2\cbrac{2\Real(\Dap\Zt) - \bvarap} \diff \ap + 2\int \Zapabs^2 F \Dt F \diff \ap
\end{align*}
Now using \eqref{eq:DarcyLawA1} we see that $\Dap\Zt = i\Dap\frac{1}{\Zapbar} $. Hence from \eqref{eq:bvarap} we get
\begin{align*}
\frac{\diff}{\diff t} \int \Zapabs^2 F^2 \diff \ap 
& = - \int \Zapabs^2 F^2\Bone \diff \ap + 2\int \Zapabs^2 F \Dt F \diff \ap
\end{align*}
Therefore
\begin{align*}
 \frac{\diff}{\diff t} \int \Zapabs^2 F^2 \diff \ap + \int \Zapabs^2 F^2\Bone \diff \ap =  2\int \Zapabs^2 F \Dt F \diff \ap
\end{align*}
Now putting $F = \g$ in the above identity, we obtain the following interesting identity:
\begin{align}\label{eq:consHhalfg}
\frac{\diff}{\diff t} \int \Zapabs^2 \g^2 \diff \ap + \int \Zapabs^2 \g^2\Bone \diff \ap + 2 \norm[\Hhalf]{g}^2 = 0.
\end{align}
Note that by integrating the above identity in time we get
\begin{align*}
\int \Zapabs^2(\cdot,T)g^2(\cdot,T) \diff \ap + \int_0^T \int \Zapabs^2 g^2 \Bone \diff\ap\diff s &+ 2 \int_0^T \norm[\Hhalf]{g}^2(s) \diff s \\
&\hspace{10mm}= \int \Zapabs^2(\cdot,0)g^2(\cdot,0) \diff \ap
\end{align*}

Observe that if the interface has a corner of angle $\nu \pi$, then $\Zap (\ap) \sim (\ap)^{\nu-1}$ and hence the right hand side is finite only if  $\nu > \half$, i.e. the angle is bigger than $\pi/2$. In this case we see that $\int_0^{\infty} \norm[\Hhalf]{g}^2(s) \diff s < \infty$ and hence for a.e. $t \in [0,\infty)$, the function $g(\cdot,t) \in \Hhalf$. As $g$ represents the angle of the interface, this means that for almost every $t \in [0,\infty)$, the interface cannot have a sharp corner. Note that instantaneous smoothing of angles bigger than $\pi/2$ has already been proven in the work \cite{ChJeKi09}, however the above identity gives a different perspective on this phenomenon and could be useful for other purposes as well. 

\begin{rem}
While working on this problem, we were informed (via private communication) that Thomas Alazard independently and almost simultaneously obtained a very similar though not identical identity to \eqref{eq:consHhalfg}. We find it remarkable that that there are different versions of this identity and this gives hope that
there are more undiscovered conservation laws in this problem. 
\end{rem}

\section{Existence and Rigidity}\label{sec:existence}

This section is dedicated to completing the proof of Theorem \ref{thm:mainwellposed} and Theorem \ref{thm:mainangle}. We first show the existence of solutions in Sobolev spaces and then using these, we construct our solutions that can have singular points. Finally, we outline the proof of rigidity of interface corners.

\subsection{Existence in Sobolev space}

We begin by proving existence of solutions in Sobolev spaces using the equivalent formulation \eqref{eq:Dtg}. First, we need to mollify the formulation. To do so, let $\phi$ be a smooth bump function satisfying $\phi(\ap) \geq 0$ for all $\ap\in\Rsp$ and $\int \phi(\ap) \diff\ap = 1$.  For $s>0$ let $\phi_s(\ap) = \frac{1}{s}\phi\brac*[\big]{\frac{\ap}{s}}$. Consider the smoothing operator $\Jdel$ defined via $\Jdel(f) = f * \phi_\delta$ for $\del>0$ and $\Jdel(f) = f$ for $\del =0$. The following lemma is from \cite{Ag21}
\begin{lemma}\label{lem:commutatordel}
Let $f,g \in \Scalsp(\Rsp)$ and let $0 < \del,\del_1,\del_2 \leq 1$. Then we have
\begin{enumerate}
\item $\norm[2]{J_{\del_1}(f) - J_{\del_2}f} \lesssim \max\{\del_1^s,\del_2^s\}\norm[H^s]{f}$ \quad for $0< s\leq 1$
\item $\norm[2]{\sqbrac{\Jdel,f}\pap g} \lesssim \del\norm[H^2]{f}\norm[2]{g_\ap}$ 
\item $\norm[2]{\sqbrac{\Jdel,f}\pap g} \lesssim \norm[\infty]{f_\ap}\norm[2]{g}$ 
\item $\norm[2]{\papabs^\half\sqbrac{\Jdel,f}\pap g} \lesssim \norm[\infty]{f_\ap}\norm[\Hhalf]{g}$
\end{enumerate}
where the constants in the estimates are independent of $\del$.
\end{lemma}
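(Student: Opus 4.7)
The plan is to use Plancherel for (1) and explicit kernel manipulations for the commutator estimates (2)--(4). Throughout, $\Jdel$ is convolution with $\phi_\del$, whose Fourier multiplier $m_\del(\xi) = \sqrt{2\pi}\,\hat\phi(\del \xi)$ is smooth, satisfies $m_\del(0) = 1$, and has derivatives bounded uniformly in $\del \leq 1$. For (1), the mean value theorem yields
\[
\abs{m_{\del_1}(\xi) - m_{\del_2}(\xi)} \lesssim \min\cbrac{1,\ \abs{\xi}\max(\del_1,\del_2)} \lesssim \abs{\xi}^s \max(\del_1,\del_2)^s
\]
for $0 < s \leq 1$, and Plancherel's theorem immediately gives the estimate.

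For (2) and (3), write the commutator in integral form,
\[
\sqbrac{\Jdel, f}\pap g(\ap) = \int \phi_\del(\ap - \bp)\brac{f(\bp) - f(\ap)}\pbp g(\bp) \diff \bp.
\]
For (2), change variables $\bp = \ap - \del\gamma$ and use the representation $f(\ap - \del\gamma) - f(\ap) = -\del\gamma \int_0^1 f'(\ap - \del\gamma t) \diff t$; the factor $\del$ is exposed, and Minkowski's inequality combined with the Sobolev embedding $\norm[\infty]{f_\ap} \lesssim \norm[H^2]{f}$ closes the estimate. For (3), integrate by parts in $\bp$ to obtain
\[
\sqbrac{\Jdel, f}\pap g = \int \phi'_\del(\ap - \bp)\brac{f(\bp) - f(\ap)} g(\bp)\diff \bp - \Jdel(f_\ap g).
\]
Young's inequality bounds the second piece by $\norm[\infty]{f_\ap}\norm[2]{g}$. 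For the first piece, use $\abs{f(\bp) - f(\ap)} \leq \norm[\infty]{f_\ap}\abs{\ap - \bp}$ and note that the positive kernel $\abs{\phi'_\del(\ap-\bp)}\abs{\ap - \bp}$ has both $L^1_\bp$- and $L^1_\ap$-norms bounded by $\int\abs{\gamma\phi'(\gamma)}\diff \gamma$ independently of $\del$; Schur's test finishes the proof.

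The fourth bound is the principal obstacle, because the non-local half-derivative cannot be distributed pointwise across the convolution kernel, and a naive approach would cost a full derivative on $g$. The plan is to use the seminorm characterization
\[
\norm[\Hhalf]{h}^2 \sim \iint \frac{\abs{h(\ap_1) - h(\ap_2)}^2}{\abs{\ap_1 - \ap_2}^2} \diff \ap_1 \diff \ap_2
\]
applied to $h = \sqbrac{\Jdel, f}\pap g$, and exploit the cancellation intrinsic to the commutator structure: when one splits the difference $h(\ap_1) - h(\ap_2)$ according to whether the smoothing kernel $\phi_\del(\ap_i - \bp)$ or the factor $f(\bp) - f(\ap_i)$ is differenced, the former produces a difference quotient of $\phi_\del$ times the usual commutator (handled as in (3)), while the latter combines with the mean value theorem to extract a factor $f_\ap$ in $L^\infty$ times a kernel bounded uniformly in $\del$. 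The remaining object reduces, by arguments parallel to the $\Hhalf$-boundedness of $\sqbrac{f, \Hil}\pap$ for $f \in W^{1,\infty}$ recorded in \propref{prop:commutator}, to $\norm[\infty]{f_\ap}\norm[\Hhalf]{g}$ with implicit constant independent of $\del$; the $\del$-uniformity is preserved because every kernel estimate invoked uses only Schwartz-class properties of $\phi$.
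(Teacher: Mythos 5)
The paper does not prove this lemma; it cites \cite{Ag21}. I will therefore assess your proposal on its own merits.

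Parts (1)--(3) are correct. For (1), your Plancherel argument works (one may also argue via $\abs{m_\del(\xi)-1}\lesssim\min(1,\del\abs{\xi})$ and the triangle inequality, which is a minor cosmetic variant). For (2), the change of variables exposing the factor $\del$ and Minkowski's inequality are standard and correct. For (3), the integration by parts producing $\Jdel(f_\ap g)$ plus a kernel term, and the $\del$-uniform $L^1$ bound on $\abs{\phi_\del'(\gamma)}\abs{\gamma}$, are exactly right.

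Part (4), however, is only a sketch, and the decomposition you indicate does not close. Write $h(\ap)=\int\phi_\del(\ap-\bp)\brac{f(\bp)-f(\ap)}g'(\bp)\diff\bp$; the split you describe of $h(\ap_1)-h(\ap_2)$ produces the term
\begin{align*}
\brac{f(\ap_2)-f(\ap_1)}\int\phi_\del(\ap_2-\bp)g'(\bp)\diff\bp=\brac{f(\ap_2)-f(\ap_1)}\,\Jdel(g')(\ap_2).
\end{align*}
Plugging into the Gagliardo seminorm, the $\ap_1$-integral of $\abs{f(\ap_2)-f(\ap_1)}^2/\abs{\ap_1-\ap_2}^2$ is controlled only by $\norm[2]{f'}^2$ via Hardy's inequality (Proposition \ref{prop:Hardy}), and what remains is $\norm[2]{\Jdel g'}^2\lesssim\norm[2]{g'}^2$; using $\abs{f(\ap_2)-f(\ap_1)}\leq\norm[\infty]{f_\ap}\abs{\ap_1-\ap_2}$ instead kills the kernel entirely and leaves a divergent $\ap_1$-integral. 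Either way you land on $\norm[2]{f'}\norm[2]{g'}$ or worse, not $\norm[\infty]{f_\ap}\norm[\Hhalf]{g}$. The same obstruction reappears if one first integrates by parts as in (3), because $\norm[\Hhalf]{\Jdel(f_\ap g)}$ is not uniformly bounded by $\norm[\infty]{f_\ap}\norm[\Hhalf]{g}$ either; Leibniz in $\Hhalf$ (Proposition \ref{prop:Leibniz}) requires control of $\norm[\Hhalf]{f_\ap}$ or $\norm[\infty]{g}$, neither of which you have.

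The clean route is to exploit that $\papabs^{1/2}$ is a Fourier multiplier, hence commutes exactly with $\Jdel$, and to move it past $f$ by a commutator. Writing $\papabs^{1/2}(FG)=F\,\papabs^{1/2}G-\sqbrac{F,\papabs^{1/2}}G$ twice gives
\begin{align*}
\papabs^{1/2}\sqbrac{\Jdel,f}\pap g \;=\; \sqbrac{\Jdel,f}\pap\brac{\papabs^{1/2}g}\;-\;\Jdel\sqbrac{f,\papabs^{1/2}}\pap g\;+\;\sqbrac{f,\papabs^{1/2}}\pap\Jdel g.
\end{align*}
The first term is handled by your part (3) applied with $g$ replaced by $\papabs^{1/2}g$. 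For the other two, use $\pap F=-i\Hil\papabs^{1/2}\papabs^{1/2}F$ to put them in the form $\sqbrac{f,\papabs^{1/2}}\papabs^{1/2}(\cdot)$, apply estimate (4) of Proposition \ref{prop:commutator}, and observe $\norm[BMO]{\papabs f}=\norm[BMO]{\Hil f_\ap}\lesssim\norm[\infty]{f_\ap}$ by the $L^\infty\to BMO$ boundedness of the Hilbert transform; the factor $\Jdel$ is harmless since it has operator norm at most one on $L^2$ and $\Hhalf$. This yields $\norm[\infty]{f_\ap}\norm[\Hhalf]{g}$ uniformly in $\del$, which is what (4) asserts. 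You should replace your sketch with an argument of this type.
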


Consider the following system in the variable $\g^{\del,\ep}$

\begin{align}\label{eq:approxeqg}
    \pt {g^{\del,\ep}} -\ep\Delta{g^{\del,\ep}} = \phi_{\del}\conv\brac*[\Bigg]{-\bvar^{\del,\ep}\pap{g^{\del,\ep}} - \frac{1}{\abs{\Zap^{\del,\ep}}^2}\papabs{g^{\del,\ep}} }
\end{align}
where 
\begin{align*}
    \Zap^{\del,\ep} = e^{i(\Id + \Hil)g^{\del,\ep}}, \quad \bvar^{\del,\ep} = -i\Hil\brac{\frac{1}{\abs{Z^{\del,\ep}}^{2}}}
\end{align*}
and $\phi_{\delta}$ is a smoothing mollifier. For convenience, we will denote $f^{\del,\ep} = i\Hil g^{\del,\ep}$. Defining the energy
\begin{align*}
    E_n(t) = \norm*[\big][H^n]{g^{\delta,\ep}}(t)^{2}
\end{align*}
we have the following two estimates on $E_{n}(t)$:
\begin{prop}
Let $n\geq 2$ and let $g^{\delta,\ep}(\cdot,t)$ be a smooth solution to \eqref{eq:approxeqg}. Then for $\ep>0$ and $0\leq \del \leq 1$, we have 
\begin{align}\label{eq:Enbound}
    \frac{\diff}{\diff t} E_n(t) \leq C_{\ep}(E_2(t)) E_n(t)
\end{align}
and for $0 \leq \ep \leq 1$ and $\del = 0$, we have 
\begin{align}\label{eq:Enboundnoepsilon}
    \frac{\diff}{\diff t} E_n(t) \leq C(E_2(t)) E_n(t).
\end{align}
\end{prop}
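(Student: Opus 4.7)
The plan is to take the $L^2$ pairing of $\pap^k g^{\del,\ep}$ with $\pap^k$ applied to equation \eqref{eq:approxeqg}, for each $0\le k \le n$, and to sum over $k$. This produces three groups of contributions: a dissipative piece $-2\ep\norm[2]{\pap^{k+1} g^{\del,\ep}}^2$ with a favourable sign; a transport piece from $\Jdel(-b^{\del,\ep}\pap g^{\del,\ep})$; and a principal piece from $\Jdel(-c^2\papabs g^{\del,\ep})$, writing $c = 1/|\Zap^{\del,\ep}|$. The main tools are the self-adjointness of $\Jdel$ (together with its commutation with $\pap$), the commutator bounds of \lemref{lem:commutatordel}, one-dimensional Kato--Ponce inequalities, and Moser-type estimates that control Sobolev norms of $c$ and $b^{\del,\ep}$ by Sobolev norms of $g^{\del,\ep}$ via the representations $c = e^{-i\Hil g^{\del,\ep}}$ and $b^{\del,\ep} = -i\Hil(c^2)$.

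For the transport piece, I move $\Jdel$ onto the left factor by self-adjointness, integrate the top $\pap$ by parts, and bound the resulting $\Jdel$--commutator using \lemref{lem:commutatordel}(3), which costs only $\norm[\infty]{b^{\del,\ep}_\ap}$ and no derivative loss. The residual commutator $[\pap^k, b^{\del,\ep}]\pap g^{\del,\ep}$ is handled by Kato--Ponce, and all prefactors are controlled by $E_2$ through $1$D Sobolev embedding. For the principal piece, the essential cancellation is
\[
-\int (\pap^k g^{\del,\ep})\, c^2\, \papabs \pap^k g^{\del,\ep}\, d\ap = -\norm[\Hhalf]{c\,\pap^k g^{\del,\ep}}^2 + \int (c\,\pap^k g^{\del,\ep})\, [\papabs, c] \pap^k g^{\del,\ep}\, d\ap,
\]
whose first term is nonpositive (and can be discarded), and whose commutator $[\papabs, c]$ is an order-zero operator with norm bounded by $\norm[\infty]{c_\ap}$, yielding a contribution of the form $C(E_2)E_n$.

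The remaining top-order commutator $[\pap^k, c^2]\papabs g^{\del,\ep}$ is estimated in $L^2$ by Kato--Ponce. Moser estimates applied to $c^2 = e^{-2i\Hil g^{\del,\ep}}$ bound $\norm[H^k]{c^2}$ by a polynomial in $\sqrt{E_n}$ with coefficients depending on $E_2$, while the one-dimensional embedding $H^s \hookrightarrow L^\infty$ for $s > 1/2$ dominates $\norm[\infty]{\pap(c^2)}$ and $\norm[\infty]{\papabs g^{\del,\ep}}$ by $\sqrt{E_2}$. Assembled together, these ingredients show that the full principal contribution at each level $k \le n$ is bounded by $C(E_2)E_n$.

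Finally, in Case 2 ($\del = 0$) one has $\Jdel = \Id$ and no mollifier commutators arise, so the estimates above already give $C(E_2)E_n$ with $C$ independent of $\ep$; the $\ep$-dissipation is simply discarded. In Case 1 ($\ep > 0$, $\del \in [0,1]$), the $(\Jdel - \Id)$ errors in the principal term generate top-order remainders of schematic form $\del \cdot C(E_2)\norm[2]{\pap^{k+1} g^{\del,\ep}}^2$, which I absorb into the $\ep$-dissipation via a Young-type inequality $AB \le \ep A^2 + (4\ep)^{-1}B^2$, yielding the $\ep$-dependent bound $C_\ep(E_2)E_n$. The main obstacle is the derivative bookkeeping in Case 2: one must verify that no commutator estimate secretly requires an $(n+1)$-st derivative of $g^{\del,\ep}$, which is guaranteed only by the precise form of \lemref{lem:commutatordel}(3) (no derivative loss) and the $-c^2\papabs$ cancellation together.
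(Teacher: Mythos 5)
Your handling of the main case $\del=0$ (any $\ep\geq 0$) is essentially the paper's own argument: the same extraction of the nonpositive weighted term $-\norm[\Hhalf]{c\,\pap^k g^{\del,\ep}}^2$, the same order-zero commutator $[\papabs,c]$ controlled by $\norm[\infty]{c_\ap}$ through the Hilbert-transform commutator bounds of \propref{prop:commutator}, and Kato--Ponce/Moser estimates where the paper invokes \lemref{lem:interpolationmult}; that part is sound and gives \eqref{eq:Enboundnoepsilon}.

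The gap is in Case 1 ($\ep>0$, $0\leq\del\leq 1$). You reduce the mollifier error at top order to a remainder of the schematic form $\del\,C(E_2)\norm[2]{\pap^{n+1}g^{\del,\ep}}^2$ and propose to absorb it into the dissipation $-2\ep\norm[2]{\pap^{n+1}g^{\del,\ep}}^2$ ``via Young.'' But that remainder is already quadratic in the uncontrolled norm $\norm[2]{\pap^{n+1}g^{\del,\ep}}$: Young's inequality applies to a product $AB$ and cannot lower the power of a term that is a constant times $A^2$, so absorption requires $\del\,C(E_2)\leq\ep$, which is not available since $\del$ ranges over $[0,1]$ independently of $\ep$, and $\norm[2]{\pap^{n+1}g^{\del,\ep}}^2$ is not controlled by $E_n$. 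As written the step fails whenever $\del\,C(E_2)>\ep$, so \eqref{eq:Enbound} does not follow. The repair is to give up the $O(\del)$ smallness (which is exactly what produced the quadratic remainder) and estimate only linearly in the top norm: since $\Jdel$ is bounded on $\Ltwo$ uniformly in $\del$ and commutes with $\pap$, one has $\abs*{\int \pap^n g^{\del,\ep}\,\pap^n\Jdel\brac{b^{\del,\ep}\pap g^{\del,\ep}+c^2\papabs g^{\del,\ep}}\diff\ap}\leq C(E_2)\norm[2]{\pap^n g^{\del,\ep}}\brac{\norm[2]{\pap^{n+1}g^{\del,\ep}}+\norm[H^n]{g^{\del,\ep}}}$, and now Young with weight $\ep$ absorbs the genuinely top-order piece into the Laplacian and leaves $C(E_2)\ep^{-1}E_n$. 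This cruder route is what the paper means when it says that for $\del,\ep>0$ the linear Laplacian term controls the evolution independently of $\del$; in that regime no cancellation and no $\Jdel$-commutators are needed at all.
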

\begin{proof}
 We will only consider the case $\epsilon = 0$ and $\delta = 0$, and the estimates \eqref{eq:Enbound} and \eqref{eq:Enboundnoepsilon} will follow easily from simple modifications of the argument. For simplicity of notation, we will simply drop the superscripts, e.g. $g^{\del,\ep} = g$, $\Zap^{\del,\ep} = \Zap$, $f^{\del,\ep} = f$. When $\delta > 0$ and $\epsilon > 0$, the linear Laplacian term with coefficient $\epsilon$ controls the evolution independent of $\delta >0$. In the case $\delta = 0$, $\epsilon \geq 0$, we can apply the argument in the case $\epsilon = 0$ and $\delta = 0$ to control the energy independent of $\epsilon$ since $\epsilon > 0$ only gives a smoothing term. In the following, the symbol $\lesssim_{2}$ will be used if the implicit constant depends only on $E_{2}(t)$.
 
 Differentiating the highest order energy term in time,
\begin{align*}
\frac{1}{2}\frac{d}{dt}\norm[\dot{H}^n]{g}(t)^{2} =  \int (\pap^{n}g) \pap^{n}\brac{-\bvar \pap g- \frac{1}{\abs{\Zap}^2}\papabs g} \diff \ap
\end{align*}
First, we look at the term
\begin{align*}
\int (\pap^{n}g)\pap^{n}\brac{\frac{1}{\Zapabs^{2}}\papabs g }\diff \ap 
&=  \sum_{m=0}^{n} {\binom{n}{m}}\int (\pap^{n}g)\brac{\pap^{m}\frac{1}{\Zapabs^{2}}}\pap^{n-m}\papabs g \diff \ap.
\end{align*}
For $m=0$, the term is
\begin{align}
\int \frac{1}{\Zapabs^{2}}(\pap^{n}g) \papabs\pap^{n} g \diff \ap 
& =  \int \frac{1}{\Zapabs}(\pap^{n}g)\papabs\brac{\frac{1}{\Zapabs}\pap^{n} g} \diff \ap \label{eq:mzeroterm} \\ 
& \quad + \int \frac{1}{\Zapabs}(\pap^{n} g)  \sqbrac{\frac{1}{\Zapabs},\papabs}\pap^{n}g \diff \ap \nonumber
\end{align}
We extract the term
\begin{align*}
    \int \frac{1}{\Zapabs}\pap^{n}g\papabs\brac{\frac{1}{\Zapabs}\pap^{n} g} \diff \ap & = \norm[\Hhalf]{\frac{1}{\Zapabs}\pap^{n}g}^{2}
\end{align*}
For the other term, note that
\begin{align*}
\sqbrac{\frac{1}{\Zapabs},\papabs}\pap^n g &= i\brac{\sqbrac{\frac{1}{\Zapabs},\Hil}\pap^{n+1}g - \Hil\cbrac{\brac{\pap\frac{1}{\Zapabs}}\pap^n g}}
\end{align*}
By Proposition \ref{prop:commutator},
\begin{align*}
\norm[2]{\sqbrac{\Hil,\frac{1}{\Zapabs}}\pap^{n+1} g} \lesssim \norm[\infty]{\pap \frac{1}{\Zap}}\norm[2]{\pap^n g}
\end{align*}
and 
\begin{align*}
\norm[2]{\Hil\cbrac{\brac{\pap\frac{1}{\Zap}}\pap^n g}} \lesssim \norm[\infty]{\pap \frac{1}{\Zap}}\norm[2]{\pap^{n} g}.
\end{align*}
For the above norms of $\pap\frac{1}{\Zap}$ and the remaining terms, bounds on $1/\abs{\Zap}^{2}$ are needed.
\begin{align*}
\norm[\infty]{\frac{1}{\Zapabs}} \leq \norm[\infty]{\frac{1}{\abs{\Zap}}- 1} + 1 = \norm[\infty]{e^{-f}- 1} + 1
\end{align*}
Now, 
\begin{align*}
\norm[\infty]{e^{-f}-1} &\leq \sum_{j=1}^{\infty} \frac{1}{j!} \norm[\infty]{f}^{j} \lesssim \sum_{j=1}^{\infty} \frac{1}{j!} \norm[H^{1}]{f}^{j} = \sum_{j=1}^{\infty} \frac{1}{j!} \norm[H^{1}]{g}^{j}.
\end{align*}
Similarly,
\begin{align*}
\norm[\infty]{\pap\frac{1}{\Zap}} = \norm[\infty]{\pap e^{-i(\Id + \Hil)g}} = \norm[\infty]{ e^{-i(\Id + \Hil)g} \pap\brac{g + \Hil g}} \lesssim \norm[\infty]{e^{-f}}\norm[H^1]{\pap g}.
\end{align*}
Hence, we have 
\begin{align*}
    \norm[\infty]{\frac{1}{\Zapabs}} + \norm[\infty]{\pap\frac{1}{\Zap}} \leq C(E_{2}(t))
\end{align*}
and thus,
\begin{align*}
   \abs{\int \frac{1}{\Zapabs}(\pap^{n} g)  \sqbrac{\frac{1}{\Zapabs},\papabs}\pap^{n}g \diff \ap} \lesssim_{2} \norm[2]{\pap^n g}^{2}.
\end{align*}
For $1 \leq m \leq  n$ we see from \lemref{lem:interpolationmult} that
\begin{align*}
& \abs{\int (\pap^{n}g)\brac{\pap^{m}\frac{1}{\Zapabs^{2}}}\pap^{n-m}\papabs g \diff \ap} \\
& \lesssim \norm[H^n]{g}\norm[2]{\brac{\pap^{m}\frac{1}{\Zapabs^{2}}}\pap^{n-m}\papabs g} \\
& \lesssim \norm[H^n]{g}\cbrac{ \sum_{\substack{j_{1}+\ldots+j_{k}=m \\ j_1, \cdots, j_k \geq 1}} \norm[2]{\frac{1}{\Zapabs^{2}}\pap^{j_{1}}f\cdots\pap^{j_{k}}f \pap^{n-m+1}f }} \\
& \lesssim_{2} \norm[H^n]{g}\cbrac{ \sum_{\substack{j_{1}+\ldots+j_{k}=m \\ j_1, \cdots, j_k \geq 1}} \norm[2]{\pap^{j_{1}}f\cdots\pap^{j_{k}}f \pap^{n-m+1}f }} \\
& \lesssim_{2} \norm[H^n]{g}^2
\end{align*}

Thus, in total,
\begin{align*}
-\int (\pap^{n}g)\pap^{n}\brac{\frac{1}{\Zapabs^{2}}\papabs g }\diff \ap &\leq -\norm[\Hhalf]{\frac{1}{\Zapabs}\pap^n g}^{2} + C(E_{2}(t))E_{n}(t)
\end{align*}
Next, we consider the term
\begin{align*}
   \int (\pap^{n}g) \pap^{n}\brac{\bvar \pap g} \diff \ap =  \sum_{k=0}^{n} {\binom{n}{k}}  \int (\pap^{n}g) (\pap^{k}\bvar) (\pap^{n-k+1} g) \diff \ap.
\end{align*}
First, consider $k=0$. Then, integrating by parts, we obtain
\begin{align*}
 \abs{\int (\pap^{n}g) \bvar (\pap^{n+1} g) \diff \ap}  \lesssim \norm[\infty]{\bvarap}\norm[2]{\pap^n g}^2 \lesssim_{2}\norm[H^n]{g}^{2}
\end{align*}
For the remaining terms, we bound
\begin{align*}
\norm[H^m]{\bvar} = \norm[H^m]{\frac{1}{\abs{\Zap}^{2}} - 1} \lesssim_{2} \norm[H^m]{g}
\end{align*}
and apply similar arguments as earlier. For the evolution of the lower order energy term, $\norm[2]{g}$, one can apply the above techniques with $n=0$ and it is easily controlled.
Hence, combining the above terms, we obtain both estimates \eqref{eq:Enbound} and \eqref{eq:Enboundnoepsilon}.
\end{proof}

Next, we have the difference energy:
\begin{align*}
    \EDelta(t) = \norm[H^1]{{g^{\del,\ep}} - {g^{\del',\ep'}}}^2(t)
\end{align*}
Using the shorthand $g = g^{\del,\ep}$ and $g' = g^{\del', \ep'}$and $\mathcal{N}(g)$ to denote the nonlinear right hand side of \eqref{eq:approxeqg} except the mollifier, we have the difference equation
\begin{align*}
    \partial_t  (g-g') = (\ep-\ep')\Delta g + \ep' \Delta(g-g') + (\phi_{\del}-\phi_{\del'}) \ast \brac{\mathcal{N}(g)} + \phi_{\del'} \ast \brac{\mathcal{N}(g)- \mathcal{N}(g')}  
\end{align*}
Let us now consider the time evolution of $\EDelta(t)$. 

\begin{prop}\label{prop:H1Deltaprop1}
Let $\delta,\delta' \geq 0$ and $\ep,\ep' \geq 0$. Say $g^{\del,\ep}, g^{\del',\ep'}$ are solutions of \eqref{eq:approxeqg} such that
\begin{align*}
    \sup_{t\in\sqbrac{0,T}} \norm*[\big][H^2]{g^{\del,\ep}}(t) + \norm*[\big][H^2]{g^{\del',\ep'}}(t) \leq C_{2} < \infty.
\end{align*}
For $t\in \sqbrac{0,T}$ if $\delta, \delta' \geq 0$ and $\ep = \ep' >0$, then we have 
\begin{align}
   \frac{d}{dt}\EDelta(t) \leq  \frac{C(C_{2})}{\ep}\EDelta(t) + C(C_{2})\max\cbrac{\del^{\frac{1}{2}},\del'^{\frac{1}{2}}}
\end{align}
and if $\delta = \delta' = 0$ and $\ep, \ep' \geq 0$, then we have
\begin{align*}
\frac{d}{dt}E_{\Delta}(t) \leq C(C_{2})\EDelta(t) + C(C_{2})\abs{\ep-\ep'}
\end{align*}
for a constant $C(C_2)$ depending on $C_{2}$.
\end{prop}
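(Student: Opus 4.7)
The plan is to differentiate $\EDelta(t) = \norm[2]{g^{\del,\ep} - g^{\del',\ep'}}^2 + \norm[2]{\pap(g^{\del,\ep} - g^{\del',\ep'})}^2$ in time and substitute the difference equation for $\pt(g^{\del,\ep}-g^{\del',\ep'})$ displayed immediately above the proposition. Writing $w = g^{\del,\ep} - g^{\del',\ep'}$ and $\mathcal{N}(g) = -\bvar\pap g - \frac{1}{|\Zap|^2}\papabs g$, the contributions split into four kinds of terms: the remainder $(\ep-\ep')\Delta g^{\del,\ep}$, the parabolic piece $\ep'\Delta w$, the mollifier discrepancy $(\phi_\del - \phi_{\del'})\conv\mathcal{N}(g^{\del,\ep})$, and the nonlinearity difference $\phi_{\del'}\conv(\mathcal{N}(g^{\del,\ep}) - \mathcal{N}(g^{\del',\ep'}))$. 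The crucial expansion of the latter is
\begin{align*}
\mathcal{N}(g^{\del,\ep}) - \mathcal{N}(g^{\del',\ep'}) &= -\bvar^{\del,\ep}\pap w - (\bvar^{\del,\ep}-\bvar^{\del',\ep'})\pap g^{\del',\ep'} \\
&\quad - \frac{1}{|\Zap^{\del,\ep}|^2}\papabs w - \brac*[\bigg]{\frac{1}{|\Zap^{\del,\ep}|^2}-\frac{1}{|\Zap^{\del',\ep'}|^2}}\papabs g^{\del',\ep'}.
\end{align*}
Using $1/|\Zap|^2 = e^{-2i\Hil g}$ and $\bvar = -i\Hil(1/|\Zap|^2)$ together with the uniform $H^2$ control by $C_2$, the coefficient differences $\bvar^{\del,\ep}-\bvar^{\del',\ep'}$ and $1/|\Zap^{\del,\ep}|^2 - 1/|\Zap^{\del',\ep'}|^2$ are Lipschitz in $w$ in $L^2$ (with Sobolev-type control of their derivatives) with constants depending only on $C_2$.

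For Case 1 ($\ep = \ep' > 0$, arbitrary $\del,\del'$), integration by parts on the parabolic term gives
\begin{align*}
2\ep\int(\Id - \pap^2)w\cdot \Delta w\,\diff\ap = -2\ep\norm[2]{\pap w}^2 - 2\ep\norm[2]{\pap^2 w}^2.
\end{align*}
Convolution with $\phi_{\del'}$ is a Fourier multiplier of symbol at most one in modulus, so it introduces no new derivative loss. The only problematic contributions are the second-derivative-in-$w$ terms produced when $\pap$ falls on $\bvar^{\del,\ep}\pap w$ or $\frac{1}{|\Zap^{\del,\ep}|^2}\papabs w$ and is paired with $\pap w$; I would absorb $\norm[2]{\pap^2 w}$ into the parabolic dissipation by Young's inequality $ab\leq \ep a^2 + \frac{1}{4\ep}b^2$, producing the $1/\ep$ prefactor in the stated bound. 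The mollifier discrepancy is handled by \lemref{lem:commutatordel}(1) with $s=\half$: since $\mathcal{N}(g^{\del,\ep})$ is controlled in $H^1\subset\Hhalf$ by a constant depending only on $C_2$, pairing with $w$ and $\pap w$ in $\Ltwo$ gives a $C(C_2)\max\{\del^{\half},\del'^{\half}\}$ contribution.

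For Case 2 ($\del=\del'=0$, $\ep,\ep'\geq 0$) there is no mollifier, and in general no parabolic dissipation to invoke. Instead I would use the intrinsic dissipation of $-\frac{1}{|\Zap|^2}\papabs$ as in \thmref{thm:apriori}: after pairing $\pap w$ with $-\pap\brac{\frac{1}{|\Zap^{\del,\ep}|^2}\papabs w}$ and rewriting $\pap\papabs = \papabs\pap$, the leading piece becomes
\begin{align*}
-\norm*[\Bigg][\Hhalf]{\frac{1}{|\Zap^{\del,\ep}|}\pap w}^2\ \leq\ 0,
\end{align*}
with the lower-order remainders controlled via $\norm[\infty]{\pap(1/|\Zap^{\del,\ep}|^2)}\lesssim_2 1$ and the commutator estimates of \propref{prop:commutator}. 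The transport contribution $-\bvar^{\del,\ep}\pap^2 w$ collapses after one integration by parts to $-\half\int \bap^{\del,\ep}(\pap w)^2\diff\ap$, bounded by $\norm[\infty]{\bap^{\del,\ep}}\cdot\norm[2]{\pap w}^2$. The terms involving $(\bvar^{\del,\ep}-\bvar^{\del',\ep'})\pap g^{\del',\ep'}$ and $(1/|\Zap^{\del,\ep}|^2-1/|\Zap^{\del',\ep'}|^2)\papabs g^{\del',\ep'}$ contribute a clean $C(C_2)\EDelta$ via the Lipschitz estimates above. Finally, the remainder $(\ep-\ep')\Delta g^{\del,\ep}$ is handled by integration by parts and Cauchy--Schwarz, together with the observation that $|\ep-\ep'|^2\leq |\ep-\ep'|$ since $\ep,\ep' \leq 1$, producing a $C(C_2)|\ep-\ep'| + C(C_2)\EDelta$ contribution.

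The main obstacle is controlling the second-derivative-in-$w$ terms in the $\pap w$ slot of the energy without losing a derivative. Case 1 addresses this bluntly by absorbing into the $\ep$-dissipation at the cost of the $1/\ep$ factor; Case 2 requires the quasilinear structure to be uncorrupted by mollification, so that the sign of $1/|\Zap|^2$ supplies the needed positive-definiteness to absorb the bad term into the $\Hhalf$ square on the right-hand side.
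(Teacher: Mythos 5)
Your proposal is correct and follows essentially the same route as the paper: in Case 1 absorbing the second-derivative-in-$w$ contributions into the $\ep$-Laplacian dissipation via Young's inequality (producing the $1/\ep$ factor), handling the mollifier discrepancy with \lemref{lem:commutatordel}(1) at $s=\tfrac12$, and in Case 2 extracting the intrinsic dissipation $-\norm[\Hhalf]{\tfrac{1}{|\Zap|}\pap w}^2$ from the $-\tfrac{1}{|\Zap|^2}\papabs w$ term exactly as in the $E_n$ estimate, with the transport term collapsing to $-\tfrac12\int b_\ap(\pap w)^2$ after one integration by parts and the coefficient differences controlled by the $H^1$-Lipschitz estimate. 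One small misstep worth flagging: the $(\ep-\ep')\Delta g^{\del,\ep}$ remainder needs no integration by parts and no squaring trick — since $\norm[2]{\Delta(g-g')}\leq\norm[H^2]{g}+\norm[H^2]{g'}\leq C_2$ uniformly, Cauchy--Schwarz gives $\abs{\int\Delta(g-g')\,(\ep-\ep')\Delta g}\lesssim_{C_2}|\ep-\ep'|$ directly, and the observation $|\ep-\ep'|^2\leq|\ep-\ep'|$ is an unnecessary detour (your proposed split would in fact leave a stray $\norm[2]{\Delta w}^2$ that is not controlled by $\EDelta$). Everything else matches the paper's argument.
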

\begin{proof}
We will use the following notation: If $a \leq C(C_2)b$ for a constant $C(C_2)$ depending on $C_2$, we will write $a \lesssim_{2} b$. First note that it is easy to see that
\begin{align*}
\norm*[\big][H^1]{b^{\delta,\ep} - b^{\delta',\ep'}} + \norm[H^1]{\frac{1}{\abs*[\big]{\Zap^{\delta,\ep}}^2} - \frac{1}{\abs*[\big]{\Zap^{\delta',\ep'}}^2} } \lesssim_{2} \norm[H^1]{g - g'}
\end{align*}

Now consider any $\del,\del' \geq 0$ and $\ep,\ep' > 0$.   We now control the time derivative of the highest order term in $\EDelta(t)$
\begin{align*}
\frac{1}{2}\frac{d}{dt}\norm[\dot{H}^1]{g - g'}^2 
& =  -\int \Delta(g - g')\cbrac{(\ep - \ep')\Delta g + \ep'\Delta(g-g')}\diff \ap \\
& \quad -\int \Delta(g - g')\cbrac{(\phi_{\delta} - \phi_{\delta'})\conv \mathcal{N}(g) + \phi_{\delta'}\conv(\mathcal{N}(g) - \mathcal{N}(g'))}\diff \ap 
\end{align*} 
We see that
\begin{align*}
  -\int \Delta\brac{ g -  g'} \brac{\ep' \Delta (g-g')} &= -\ep' \norm[\dot{H}^{2}]{g-g'}
\end{align*}
Also,
\begin{align*}
\abs{\int \Delta\brac{ g - g'}\brac{\ep-\ep'}\Delta g \diff \ap} \lesssim_{2} \abs{\ep-\ep'}.
\end{align*}
Meanwhile it is easy to see that,
\begin{align*}
&\abs{\int \Delta\brac{ g -  g'} \brac{ \phi_{\del'}\conv \brac{\mathcal{N}(g) - \mathcal{N}(g')}} \diff \ap }\\
& \lesssim_2 \norm[\dot{H}^2]{g-g'}\norm[L^1]{\phi_{\del'}}\norm[H^1]{g - g'}\\
& \leq \frac{C(C_{2})}{\ep'}\EDelta + \frac{\ep'}{4} \norm[\dot{H}^2]{g-g'}
\end{align*}
and hence the higher order term is absorbed by the linear decay term. Next by Lemma \ref{lem:commutatordel},
\begin{align*}
\abs{\int \Delta\brac{g -  g'} \brac{(\phi_{\del}-\phi_{\del'}) \ast \brac{\mathcal{N}(g)}} \diff \ap}
&\lesssim_{2} \norm[H^{\frac{1}{2}}]{\mathcal{N}(g)} \max\cbrac{\del^{\frac{1}{2}},\del'^{\frac{1}{2}}}\\
&\lesssim_{2}\max\cbrac{\del^{\frac{1}{2}},\del'^{\frac{1}{2}}}.
\end{align*}
Carrying out a similar calculation for the time derivative of $\norm[2]{g - g'}^2$ we obtain
\begin{align}\label{eq:deltaestepdel}
    \frac{d}{dt}\EDelta \leq -\frac{\ep'}{2}\norm[\dot{H}^{2}]{g-g'}^{2} + \frac{C(C_{2})}{\ep'}\EDelta + C(C_{2}) \brac{\abs{\ep-\ep'} + \max\cbrac{\del^{\frac{1}{2}},\del'^{\frac{1}{2}}}}.
\end{align}
Hence, fixing $\ep = \ep' > 0$, we conclude the proof of the first estimate.

Now we assume that $\delta = \delta' = 0$. We have for $\ep, \ep' \geq 0$,
\begin{align*}
\half\frac{d}{dt} \norm[\dot{H}^1]{g - g'}^2 = -\int \Delta(g - g')\cbrac{(\ep - \ep')\Delta g + \ep'\Delta(g - g') + (\mathcal{N}(g) - \mathcal{N}(g'))} \diff \ap
\end{align*}
The first and second terms are easily controlled and so we now focus on the third term. Observe that
\begin{align*}
\mathcal{N}(g) - \mathcal{N}(g') 
& = -(\bvar^{\ep} - \bvar^{\ep'})\pap g^\ep - \bvar^{\ep'}\pap(g^{\ep} - g^{\ep'}) -\brac{\frac{1}{\abs*[\big]{\Zap^{\ep}}^2} - \frac{1}{\abs*[\big]{\Zap^{\ep'}}^2}}\papabs g^{\ep} \\
& \quad - \frac{1}{\abs*[\big]{\Zap^{\ep'}}^2}\papabs(g^{\ep} - g^{\ep'})
\end{align*}
The terms corresponding to the first and third term are easily controlled. For the second term we observe that by doing integration by parts 
\begin{align*}
\abs{\int \Delta(g - g')\bvar^{\ep'}\pap(g^{\ep} - g^{\ep'})\diff \ap } \lesssim \norm*[\big][\infty]{\bvar^{\ep'}_{\ap}}\norm[H^1]{g - g'}^2 \lesssim_{2} \EDelta
\end{align*}
For the last term we see that
\begin{align*}
\int \Delta(g - g') \frac{1}{\abs*[\big]{\Zap^{\ep'}}^2}\papabs(g^{\ep} - g^{\ep'}) \diff \ap & = - \int \pap(g - g') \brac{\pap\frac{1}{\abs*[\big]{\Zap^{\ep'}}^2}}\papabs(g^{\ep} - g^{\ep'}) \diff \ap \\
& \quad - \int \pap(g - g') \nobrac{\frac{1}{\abs*[\big]{\Zap^{\ep'}}^2}}\pap\papabs(g^{\ep} - g^{\ep'}) \diff \ap
\end{align*}
The first integral is easily controlled and for the second, we see that this is exactly similar to the term \eqref{eq:mzeroterm} we see in the $E_n(t)$ energy estimates and is estimated similarly
\begin{align*}
& - \int \pap(g - g') \nobrac{\frac{1}{\abs*[\big]{\Zap^{\ep'}}^2}}\pap\papabs(g^{\ep} - g^{\ep'}) \diff \ap\\ 
& \leq  -\norm[\Hhalf]{\frac{1}{\abs*[\big]{\Zap^{\epsilon'}}}\pap\brac*{g^{\ep} - g^{\ep'}}}^{2} + C\norm[\infty]{\pap\frac{1}{\Zap^{\epsilon}}}\norm[2]{\pap\brac{g-g'}}^{2}
\end{align*}
where $C>0$ is a universal constant. Hence this term is also controlled. Now the lower order terms are also easily controlled and hence we obtain
\begin{align*}
\frac{d}{dt} E_{\Delta} \leq  C(C_{2})\abs{\ep-\ep'}  + C(C_{2})E_{\Delta}.
\end{align*}
\end{proof}

Hence, by  \propref{prop:H1Deltaprop1}, we first see that the sequence $g^{\del,\ep}$ is Cauchy in $H^{1}$ for $\delta \geq 0$ (keeping $\ep>0$ fixed). Hence, taking the limit, we can consider $\delta = 0$, $\epsilon \geq 0$. Then, we see that $g^{0,\ep}$ is Cauchy in $H^{1}$ for $\ep \geq 0$ and we take the limit. We get the following well-posedness theorem in Sobolev spaces:

\begin{thm}\label{thm:existencesobolev}
We have the following
\begin{enumerate}
\item Let $s\geq 2$ and let $\g(0) \in H^{s}(\Rsp)$. Then there exists a time $T>0$ so that the initial value problem to \eqref{eq:Dtg} has a unique solution $\g \in \Linfty([0,T], H^{s}(\Rsp))$. Moreover if $T_{max}$ is the maximum time of existence then either $T_{max}=\infty$ or $T_{max} < \infty$ with
\begin{align*}
\sup_{t\in [0,T_{max})}\nobrac{ \norm[H^{2}]{\g(\cdot,t)}} = \infty
\end{align*}
\item If $\g_1(t)$ and $\g_2(t)$ are two solutions of \eqref{eq:Dtg} in $[0,T]$ with
\begin{align*}
\sup_{t\in [0,T)} \cbrac{\norm[H^{2}]{\g_1(\cdot,t)}  + \norm[H^{2}]{\g_2(\cdot,t)}  } = M <\infty
\end{align*}
Then there is constant $C(M,T)>0$ depending only on $M$ and $T$ such that
\begin{align*}
\sup_{t\in [0,T)}\cbrac{ \norm[H^{1}]{\g_1(\cdot,t) - \g_2(\cdot,t)}} \leq  C(M,T)\cbrac{\norm[H^{1}]{\g_1(\cdot,0) - \g_2(\cdot,0)}}
\end{align*}
\end{enumerate}
\end{thm}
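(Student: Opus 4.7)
The plan is to build solutions of \eqref{eq:Dtg} as iterated limits of the doubly-regularized system \eqref{eq:approxeqg}, using the two a priori bounds stated just before the theorem in tandem with the Cauchy-in-$H^1$ estimates of Proposition~\ref{prop:H1Deltaprop1}. For fixed $\delta>0$ and $\epsilon>0$, the regularized equation \eqref{eq:approxeqg} has the form $\partial_t g^{\delta,\epsilon} - \epsilon\Delta g^{\delta,\epsilon} = \phi_\delta * \mathcal{N}(g^{\delta,\epsilon})$, where the mollifier sits outside the nonlinearity and the linear part is dissipative. Standard ODE/parabolic theory in $H^s$, $s\geq 2$, produces a unique smooth local solution; using \eqref{eq:Enbound} together with a continuation argument extends it as long as $E_2$ stays bounded. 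Since \eqref{eq:Enbound} depends on $\epsilon$, at this stage the lifespan is $\epsilon$-dependent, but it is uniform in $\delta$.

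Next I would send $\delta\to 0$ with $\epsilon>0$ fixed. The first bound of Proposition~\ref{prop:H1Deltaprop1} reads
\begin{align*}
\frac{d}{dt}\EDelta(t) \leq \frac{C(C_2)}{\epsilon}\EDelta(t) + C(C_2)\max\{\delta^{1/2},\delta'^{1/2}\},
\end{align*}
so Gr\"onwall shows that $\{g^{\delta,\epsilon}\}_{\delta>0}$ is Cauchy in $L^\infty([0,T],H^1)$. Coupled with the uniform-in-$\delta$ $H^s$ bound and weak-$*$ compactness, this produces a limit $g^{0,\epsilon}\in L^\infty([0,T],H^s)$ which solves \eqref{eq:approxeqg} with $\delta=0$ (the mollifier disappears, the $\epsilon\Delta$ term persists). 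Crucially, once $\delta=0$ the improved estimate \eqref{eq:Enboundnoepsilon} applies, and its constant $C(E_2)$ is \emph{independent of $\epsilon\in[0,1]$}; hence the lifespan of $g^{0,\epsilon}$ can be taken uniform in $\epsilon$, depending only on $E_2(0)$.

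The third limit $\epsilon\to 0$ uses the second bound of Proposition~\ref{prop:H1Deltaprop1},
\begin{align*}
\frac{d}{dt}\EDelta(t) \leq C(C_2)\EDelta(t) + C(C_2)|\epsilon-\epsilon'|,
\end{align*}
which, by Gr\"onwall, makes $\{g^{0,\epsilon}\}_{\epsilon>0}$ Cauchy in $H^1$. Interpolating the $H^1$-convergence against the uniform $H^s$ bound upgrades convergence to every $H^{s'}$ with $s'<s$, and the weak-$*$ limit lies in $L^\infty([0,T],H^s)$; passing to the limit in the equation gives a solution of \eqref{eq:Dtg}. The blow-up criterion follows by a standard continuation argument: \eqref{eq:Enboundnoepsilon} shows that as long as $\|g(\cdot,t)\|_{H^2}$ stays bounded one can re-start the construction, so a finite maximal time forces blow-up of the $H^2$ norm. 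Finally, statement (2) is exactly the $\delta=\delta'=\epsilon=\epsilon'=0$ case of Proposition~\ref{prop:H1Deltaprop1} followed by Gr\"onwall, which in particular yields uniqueness at the level $s=2$.

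The main technical obstacle is ensuring that the lifespan in step two is independent of $\epsilon$: the estimate \eqref{eq:Enbound} alone is insufficient because its constant blows up as $\epsilon\to 0$, and the whole construction hinges on the sharper estimate \eqref{eq:Enboundnoepsilon}, valid precisely when $\delta=0$. This is why the order of the two limits matters (first $\delta\to 0$, then $\epsilon\to 0$) and why the $H^1$ Cauchy estimate must be adapted in two different regimes, as is done in Proposition~\ref{prop:H1Deltaprop1}.
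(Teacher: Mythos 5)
Your proposal follows exactly the same scheme the paper uses: solve the doubly-regularized system \eqref{eq:approxeqg}, apply \eqref{eq:Enbound} for a $\delta$-uniform (but $\epsilon$-dependent) lifespan, use the first Cauchy estimate of Proposition~\ref{prop:H1Deltaprop1} to send $\delta\to 0$ at fixed $\epsilon$, then upgrade to the $\epsilon$-independent energy bound \eqref{eq:Enboundnoepsilon} once $\delta=0$ so the lifespan becomes uniform, and finally send $\epsilon\to 0$ via the second Cauchy estimate, with the blow-up criterion and statement (2) falling out as you describe. This matches the paper's construction and the order of limits it prescribes; your elaboration of the continuation argument and the interpolation step is the standard fleshing-out of what the paper leaves implicit.
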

By the equivalence of the two formulations given in Lemma \eqref{lem:systemequiv}, we obtain 
\begin{cor}\label{cor:existencesobolev}
We have the following
\begin{enumerate}
\item Let $s\geq 2$ and let the initial data $\Z(\cdot,0)$ satisfy $(\Zap-1,\frac{1}{\Zap} - 1)(0) \in H^{s}(\Rsp)\times H^{s}(\Rsp)$. Then there exists a time $T>0$ so that the initial value problem to \eqref{eq:main} has a unique solution $\Z(\cdot, t)$ satisfying $(\Zap-1,\frac{1}{\Zap} - 1) \in \Linfty([0,T], H^{s}(\Rsp)\times H^{s}(\Rsp))$. Moreover if $T_{max}$ is the maximum time of existence then either $T_{max}=\infty$ or $T_{max} < \infty$ with
\begin{align*}
\sup_{t\in [0,T_{max})}\cbrac{ \norm[H^{2}]{\Zap - 1}(t) + \norm[H^{2}]{\frac{1}{\Zap} - 1 }(t)} = \infty
\end{align*}
\item Let $\Z^1(\cdot,t)$ and $\Z^2(\cdot,t)$ be two solutions of \eqref{eq:main} in $[0,T]$ with 
\begin{align*}
\sup_{t\in [0,T_{max})}\cbrac{ \norm[H^{2}]{\nobrac{\Zap^i - 1}}(t) + \norm[H^{2}]{\nobrac{\frac{1}{\Zap^i} - 1}}(t) } \leq M <\infty
\end{align*}
for both $i=1,2$ for some $M>0$. Then there is constant $C(M,T)>0$ depending only on $M$ and $T$ such that
\begin{align*}
& \sup_{t\in [0,T)}\cbrac{ \norm[H^{1}]{\Zap^1 - \Zap^2}(t) + \norm[H^{1}]{\frac{1}{\Zap^1} - \frac{1}{\Zap^2}}(t)} \\
& \leq  C(M,T)\cbrac{ \norm[H^{1}]{\Zap^1 - \Zap^2}(0) + \norm[H^{1}]{\frac{1}{\Zap^1} - \frac{1}{\Zap^2}}(0)}
\end{align*}
\end{enumerate}
\end{cor}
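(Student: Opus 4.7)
The corollary is essentially a translation of \thmref{thm:existencesobolev} through the change of variables $g = \Imag(\log \Zap)$ that was set up in \lemref{lem:systemequiv}. So my plan is to convert initial data, apply the theorem for $g$, and then convert back, with the bulk of the work being the verification that each direction of the transformation respects the regularity classes used here ($H^s$ for existence, $H^2$ for blow-up, $H^1$ for stability).

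\textbf{Existence and uniqueness.} Given $\Z(\cdot,0)$ with $(\Zap-1, \frac{1}{\Zap}-1)(0) \in H^s \times H^s$ for $s \geq 2$, Step 1 of the proof of \lemref{lem:systemequiv} gives $g(\cdot,0) \eqdef \Imag(\log \Zap(\cdot,0)) \in H^s(\Rsp)$. Applying \thmref{thm:existencesobolev}(1) yields a time $T>0$ and a unique $g \in \Linfty([0,T], H^s(\Rsp))$ solving \eqref{eq:Dtg}. Now set $\Zap(\cdot,t) \eqdef e^{i(\Id+\Hil)g(\cdot,t)}$; by Step 2 of the proof of \lemref{lem:systemequiv}, this $\Zap$ satisfies $(\Zap-1, \frac{1}{\Zap}-1) \in \Linfty([0,T], H^s \times H^s)$ and solves \eqref{eq:main}, and $\Z(\cdot,t)$ is then recovered by the integral formula
\begin{align*}
\Z(\ap,t) = \Z(\ap,0) + \int_0^t \cbrac{-i + \frac{i}{\Zapbar} - \bvar \Zap}(\ap,s) \diff s
\end{align*}
from the introduction. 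For uniqueness, if $\Z^1, \Z^2$ are two solutions in the stated class, the same transformation produces two $H^s$-solutions $g^1, g^2$ of \eqref{eq:Dtg} with the same initial data; \thmref{thm:existencesobolev}(1) forces $g^1 = g^2$, hence $\Zap^1 = \Zap^2$.

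\textbf{Blow-up and stability.} For the blow-up criterion, the key point is that $\norm[H^2]{g(\cdot,t)}$ and $\norm[H^2]{\Zap-1}(t) + \norm[H^2]{\frac{1}{\Zap}-1}(t)$ control each other when either is finite: one direction uses $\pap g = \Imag(\frac{1}{\Zap}\pap \Zap)$ together with $\Linfty$ bounds on $\Zap, \frac{1}{\Zap}$ obtained from Sobolev embedding, and the other direction uses the series expansion of $e^{\pm i(\Id+\Hil)g}-1$ combined with Moser-type product estimates. So a blow-up of one quantity forces the other, and the criterion from \thmref{thm:existencesobolev}(1) transfers. For the stability estimate in $H^1$, write
\begin{align*}
\Zap^1 - \Zap^2 \ = \ e^{i(\Id+\Hil)g^1} - e^{i(\Id+\Hil)g^2} \ = \ i\brac*{\int_0^1 e^{i(\Id+\Hil)(sg^1 + (1-s)g^2)} \diff s} (\Id+\Hil)(g^1-g^2),
\end{align*}
and similarly for $\frac{1}{\Zap^1} - \frac{1}{\Zap^2}$ with the opposite sign in the exponent. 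Since $\Hil$ is bounded on $H^1$ and the integrand is bounded in $H^1$ by $C(M)$ using the $H^2$ bound on $g^1, g^2$ (via \thmref{thm:existencesobolev}(2) applied through the $H^2 \leftrightarrow H^2$ equivalence above), a standard $H^1$ product estimate yields
\begin{align*}
\norm[H^1]{\Zap^1 - \Zap^2}(t) + \norm*[H^1]{\frac{1}{\Zap^1} - \frac{1}{\Zap^2}}(t) \ \leq \ C(M)\norm[H^1]{g^1 - g^2}(t),
\end{align*}
and conversely $\norm[H^1]{g^1-g^2}(0) \leq C(M)(\norm[H^1]{\Zap^1-\Zap^2}(0) + \norm[H^1]{\frac{1}{\Zap^1}-\frac{1}{\Zap^2}}(0))$ by an analogous argument on $g = \Imag(\log\Zap)$. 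Combining with \thmref{thm:existencesobolev}(2) closes the estimate.

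\textbf{Main obstacle.} Both transformations $\Zap \mapsto g$ and $g \mapsto \Zap$ are nonlinear and nonlocal (through $\Hil$), so the only nontrivial bookkeeping is confirming that the $H^s$, $H^2$, and $H^1$ bounds really do transfer with constants that depend only on the stated norms, not on higher regularity. These verifications are product-rule computations of the kind already done in the proof of \lemref{lem:systemequiv}, together with boundedness of $\Hil$ on $H^k$, so no new a priori estimate is needed.
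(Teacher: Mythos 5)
Your proposal is correct and follows essentially the same route as the paper: the paper derives the corollary in a single sentence ("By the equivalence of the two formulations given in Lemma \ref{lem:systemequiv}, we obtain\ldots") and your proof is simply the elaboration of that citation, transferring initial data, existence, uniqueness, the $H^2$ blow-up criterion, and the $H^1$ stability bound through the change of variables $g=\Imag(\log\Zap)$, $\Zap = e^{i(\Id+\Hil)g}$ using the same ingredients already established in the proof of Lemma \ref{lem:systemequiv} (boundedness of $\Hil$ on $H^k$, Sobolev embedding, and Moser-type product estimates for compositions with the exponential). No new estimates are introduced beyond what the lemma already provides, and none are needed.
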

 
\subsection{Existence of rough solutions}

We can now prove Theorem \ref{thm:mainwellposed} in this section. First, we need to state some useful lemmas and notation that will be used in the proof.

For functions $g, g_n: X \to \Csp$, we write  ``$g_n \Ra g$ on $X$" to mean that $g_n$ converges to $g$ uniformly on compact subsets of $X$. The following two lemmas are from \cite{Wu19} and follow from the Arzela-Ascoli Theorem.
\begin{lemma}\label{lem:Arzela1}
Say $1 < p \leq \infty$ and $\cbrac{f_{n}}$ is a sequence of smooth functions on $\mathbb{R}\times \sqbrac{0,T}$. If there exists a single constant $C > 0$ such that
\begin{align*}
    \sup_{\sqbrac{0,T}}\norm[\infty]{f_n(t)} + \sup_{\sqbrac{0,T}}\norm[p]{\partial_x f_n(t)} + \sup_{\sqbrac{0,T}}\norm[\infty]{\partial_t f_n(t)} \leq C
\end{align*}
for every $n\in\mathbb{N}$, then there exists a continuous and bounded function $f$ on $\mathbb{R}\times \sqbrac{0,T}$ and a subsequence $f_{n_{j}} \Ra f$  on $\mathbb{R}\times \sqbrac{0,T}$.
\end{lemma}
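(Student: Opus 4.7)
The plan is to invoke the Arzelà--Ascoli theorem on an exhausting family of compact rectangles and then extract a diagonal subsequence. The three hypotheses are tailored to give exactly what Arzelà--Ascoli needs: uniform boundedness, uniform equicontinuity in $x$, and uniform equicontinuity in $t$.

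First I would establish a joint modulus of continuity for the family $\cbrac{f_n}$ that is independent of $n$. The bound $\sup_{[0,T]}\norm[\infty]{\pt f_n} \leq C$ gives uniform Lipschitz continuity in time:
\begin{align*}
\abs{f_n(x,t) - f_n(x,s)} \leq C\abs{t-s} \qquad \forall \ x\in\Rsp,\ t,s\in[0,T].
\end{align*}
For the spatial direction, in the case $p=\infty$ the bound on $\px f_n$ is again a Lipschitz bound. In the case $1 < p < \infty$, the fundamental theorem of calculus combined with H\"older's inequality yields
\begin{align*}
\abs{f_n(x,t)-f_n(y,t)} \leq \int_{\min(x,y)}^{\max(x,y)} \abs{\px f_n(\xi,t)}\diff \xi \leq C\abs{x-y}^{1 - \frac{1}{p}},
\end{align*}
so the family is uniformly H\"older with exponent $1-\frac{1}{p} > 0$ in $x$, uniformly in $t \in [0,T]$ and in $n$. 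Adding the two estimates produces a common modulus of continuity on every compact rectangle $\sqbrac{-R,R}\times\sqbrac{0,T}$.

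Next I would apply Arzel\`a--Ascoli on the compact set $\sqbrac{-R,R}\times\sqbrac{0,T}$: by the uniform pointwise bound $\norm[\infty]{f_n} \leq C$ and the equicontinuity just established, a subsequence converges uniformly on that rectangle to a continuous limit. A Cantor diagonal argument applied along the exhaustion $\sqbrac{-k,k}\times\sqbrac{0,T}$, $k\in\Nsp$, then produces a single subsequence $\cbrac{f_{n_j}}$ which converges uniformly on every compact subset of $\Rsp\times\sqbrac{0,T}$, i.e. $f_{n_j} \Ra f$ on $\Rsp\times\sqbrac{0,T}$ for some continuous function $f$. The bound $\norm[\infty]{f_n}\leq C$ passes to the pointwise limit, so $f$ is bounded by $C$.

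There is no real obstacle here; the only point worth being careful about is that the spatial H\"older estimate must hold with a constant independent of $t$, but this is automatic from the hypothesis $\sup_{[0,T]}\norm[p]{\px f_n(t)} \leq C$. The lemma is a direct packaging of Arzel\`a--Ascoli plus diagonal extraction.
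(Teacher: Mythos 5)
Your proof is correct and uses exactly the approach the paper points to: the paper cites \cite{Wu19} and remarks the lemma follows from the Arzel\`a--Ascoli theorem, which is precisely what you carry out via the uniform time-Lipschitz bound, the uniform spatial H\"older-$(1-\frac{1}{p})$ bound from H\"older's inequality, and a diagonal extraction along an exhaustion by compact rectangles.
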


\begin{lemma}\label{lem:Arzela2}
Suppose $f_n \Ra f$ on $\mathbb{R}\times \sqbrac{0,T}$ and there is a single constant $C > 0$ such that $\norm[L^\infty\brac{\mathbb{R}\times \sqbrac{0,T}}]{f_n} \leq C$ for all $n\in\mathbb{N}$. Then, $K_{y'} \ast f_n \Ra K_{y'} \ast f$ on $\Pminusbar\times \sqbrac{0,T}$.
\end{lemma}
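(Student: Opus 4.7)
The plan is to fix an arbitrary compact set $K\subset \Pminusbar\times[0,T]$, observe that $K$ is contained in some box of the form $\{\abs{x}\leq R,\ -R\leq y'\leq 0\}\times[0,T]$, and show that $K_{y'}\ast f_n - K_{y'}\ast f\to 0$ uniformly on $K$. The boundary slice $y'=0$ is handled directly by hypothesis, since there $K_{y'}\ast f_n$ reduces to $f_n$, so the real work is to estimate on the interior $y'<0$ and verify that the estimates remain uniform as $y'\to 0^-$.

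For $y'<0$ I would split the convolution around $s=x$ at scale $M>0$ and write
\begin{align*}
\abs{K_{y'}\ast(f_n-f)(x,t)} & \leq \int_{\abs{x-s}\leq M} K_{y'}(x-s)\abs{(f_n-f)(s,t)}\diff s \\
& \quad + \int_{\abs{x-s}>M} K_{y'}(x-s)\cdot 2C\diff s,
\end{align*}
using the uniform bound $\norm[\infty]{f_n-f}\leq 2C$. Since $K_{y'}(\cdot)=\abs{y'}/(\pi({y'}^2+\cdot^{\,2}))$ is a probability kernel on $\Rsp$, the first integral is controlled by $\sup\{\abs{(f_n-f)(s,\tau)} : \abs{s}\leq R+M,\ \tau\in[0,T]\}$, which tends to $0$ as $n\to\infty$ by the uniform-on-compacts hypothesis applied to $[-R-M,R+M]\times[0,T]$. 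An explicit computation gives that the second integral equals $\tfrac{4C}{\pi}\arctan(\abs{y'}/M)\leq \tfrac{4CR}{\pi M}$, uniformly in $(x,y',t)\in K$.

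Given $\epsilon>0$, the argument concludes by first choosing $M$ large enough that $\tfrac{4CR}{\pi M}<\epsilon/2$, and then using the uniform convergence hypothesis to pick $N$ with the near-field term below $\epsilon/2$ for all $n\geq N$. The same combined bound also holds at $y'=0$ (where the far-field term vanishes identically), so uniform convergence extends all the way up to the boundary. There is no serious obstacle here; the only point requiring care is that the Poisson-kernel tail estimate remains uniform as $y'\to 0^-$, which it does because the concentration of the kernel near zero is precisely matched to the region in $\Rsp\times[0,T]$ where $f_n\to f$ uniformly.
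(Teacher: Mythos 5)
Your proof is correct. Note that the paper itself does not prove this lemma at all: it is quoted from \cite{Wu19}, with the remark that the two lemmas ``follow from the Arzela--Ascoli Theorem'' (a remark that is really pertinent to Lemma \ref{lem:Arzela1}, the compactness/extraction statement). What you have supplied is the standard direct approximate-identity argument: split the Poisson convolution at scale $M$, control the near field by the uniform convergence of $f_n$ on the enlarged compact set $[-R-M,R+M]\times[0,T]$ together with $\int K_{y'}\,ds=1$, and control the far field by the uniform bound $\|f_n-f\|_\infty\le 2C$ against the explicit tail $\tfrac{2}{\pi}\arctan(|y'|/M)\le \tfrac{2R}{\pi M}$, then choose $M$ first and $n$ second; the boundary slice $y'=0$ is the hypothesis itself, and your tail bound degenerates favorably as $y'\to 0^-$, so the estimate is genuinely uniform up to the boundary. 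This is exactly where the uniform $L^\infty$ hypothesis enters (without it the tail cannot be controlled, and uniform convergence on compacts of $\Rsp\times[0,T]$ alone would not suffice), and your argument is elementary and self-contained, requiring no equicontinuity or compactness machinery. In short: correct, complete modulo routine details, and it fills in a proof the paper delegates to a citation rather than reproducing.
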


Next, we prove a lemma that gives us bounds that will be used to prove uniform bounds on various terms in the existence argument.

\begin{lemma}\label{lem:universalF}
Let $Z(\cdot,t)$ be a smooth solution to \eqref{eq:main} in $[0,T]$. Define 
\begin{align*}
    J & = 1 + T + \norm[2]{\frac{1}{\Zap}(\cdot,0) - 1} + \sup_{t \in [0,T]}\Mcal(t) \\
    L & = 1 + T + \norm[\infty]{\Zap(\cdot,0)} + \norm[2]{\frac{1}{\Zap}(\cdot,0) - 1} + \sup_{t \in [0,T]}\Mcal(t)
\end{align*}
Then there exists a universal function $F:[0,\infty) \to [0,\infty)$ so that we have for all $t \in [0,T]$
\begin{align*}
\norm[H^1]{\frac{1}{\Zap} - 1}(t) \leq F(J) 
\end{align*}
and 
\begin{align*}
    \norm[H^2]{\Zap - 1}(t) + \norm[H^2]{\frac{1}{\Zap} - 1} \leq F(L)
\end{align*}
\end{lemma}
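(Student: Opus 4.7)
The plan is to establish the two inequalities separately, the first bound using only quantities expressible in terms of $\frac{1}{\Zap}$ (so the resulting constant depends only on $J$), and the second bound adding pointwise control of $\Zap$. For the $H^1$ estimate the energy $\Mcal(t)$ already controls $\norm[2]{\pap\frac{1}{\Zap}}$, so only $\norm[2]{\frac{1}{\Zap}-1}$ remains. Applying \lemref{lem:timederiv} to $\norm[2]{\frac{1}{\Zap}-1}^2$ and rewriting the main equation as
\[
\Dt\frac{1}{\Zap} = -\frac{i}{\Zapbar}\Dap\frac{1}{\Zap} + \frac{\Bone}{\Zap}
\]
gives $\norm[2]{\Dt\frac{1}{\Zap}} \lesssim \norm[\infty]{\frac{1}{\Zap}}^2\Mcal^{1/2} + \norm[\infty]{\frac{1}{\Zap}}\norm[2]{\Bone}$. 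Since $\Bone\geq 0$ we have $\norm[2]{\Bone}^2 \leq \norm[\infty]{\Bone}\norm[1]{\Bone}$ with $\norm[1]{\Bone} \lesssim \norm[\Hhalf]{\frac{1}{\Zap}}^2 \lesssim \norm[2]{\frac{1}{\Zap}-1}\Mcal^{1/2}$, and Sobolev embedding bounds $\norm[\infty]{\frac{1}{\Zap}-1}^2 \lesssim \norm[2]{\frac{1}{\Zap}-1}\Mcal^{1/2}$. Combined with $\norm[\infty]{\bvarap} \lesssim \Mcal$ from \eqref{eq:Bap1infty}, this yields the differential inequality
\[
\frac{d}{dt}\norm[2]{\frac{1}{\Zap}-1} \leq C(\Mcal)\brac{1 + \norm[2]{\frac{1}{\Zap}-1}},
\]
whose Gronwall solution gives the first bound in terms of $J$.

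For the $H^2$ bound the new ingredient is pointwise control of $\Zap$. Using $\Dt\Zap = -\Zap^2\Dt\frac{1}{\Zap}$ together with the equation above, one obtains $\abs{\Dt\Zap} \leq \abs{\Zap}\brac{\abs{\Dap\frac{1}{\Zap}} + \Bone}$. Along the Lagrangian characteristics $\al\mapsto \h(\al,t)$,
\[
\frac{d}{dt}\log\abs{\Zap(\h(\al,t),t)} \leq \norm[\infty]{\Dap\tfrac{1}{\Zap}}(t) + \norm[\infty]{\Bone}(t) \lesssim \Mcal(t)^{1/2} + \Mcal(t)
\]
by \eqref{eq:DapZInfinity} and \eqref{eq:B1inftyHhalf}, so Gronwall gives $\norm[\infty]{\Zap}(t) \leq \norm[\infty]{\Zap(\cdot,0)}\exp(C(\Mcal)T)$, which is controlled by $L$. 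Then $\norm[2]{\Zap-1} \leq \norm[\infty]{\Zap}\norm[2]{\frac{1}{\Zap}-1}$ and $\pap\Zap = -\Zap^2\pap\frac{1}{\Zap}$ handle the $\Ltwo$ and $\dot H^1$ parts of $\Zap-1$, while the algebraic identity
\[
\pap^2\frac{1}{\Zap} = \Zap^2\Dap^2\frac{1}{\Zap} - \Zap\brac{\pap\frac{1}{\Zap}}^2,
\]
together with $\norm[\infty]{\pap\frac{1}{\Zap}} \leq \norm[\infty]{\Zap}\norm[\infty]{\Dap\frac{1}{\Zap}}$, reduces $\norm[2]{\pap^2\frac{1}{\Zap}}$ to $\norm[2]{\Dap^2\frac{1}{\Zap}} \leq \Mcal^{1/2}$, $\norm[2]{\pap\frac{1}{\Zap}} \leq \Mcal^{1/2}$, and powers of $\norm[\infty]{\Zap}$; differentiating $\pap\Zap = -\Zap^2\pap\frac{1}{\Zap}$ once more then controls $\norm[2]{\pap^2\Zap}$ analogously.

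The main subtle point is the self-coupling in the first estimate: the bound on $\norm[2]{\frac{1}{\Zap}-1}$ uses $\norm[\infty]{\frac{1}{\Zap}}$, which itself depends on $\norm[2]{\frac{1}{\Zap}-1}$ via Sobolev. This is absorbed by the Gronwall structure, but crucially requires the rewriting $\frac{1}{\Zapabs^2}\pap\frac{1}{\Zap} = \frac{1}{\Zapbar}\Dap\frac{1}{\Zap}$ so that only $\norm[\infty]{\frac{1}{\Zap}}$, and never $\norm[\infty]{\Zap}$, appears in the first estimate, keeping that bound free of $\norm[\infty]{\Zap(\cdot,0)}$ and thus only dependent on $J$.
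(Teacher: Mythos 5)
Your proof is correct and follows essentially the same route as the paper: a Gronwall argument via \lemref{lem:timederiv} for $\norm[2]{\frac{1}{\Zap}-1}$, then a characteristic ODE for $\norm[\infty]{\Zap}$ (equivalently \eqref{eq:hapzapratio}), then algebraic identities to trade $\pap^2\frac{1}{\Zap}$, $\pap^2\Zap$ for the weighted quantities that $\Mcal$ controls. The one genuine difference is your bound on $\norm[2]{\Bone}$: you exploit positivity of $\Bone$ and interpolate $\norm[2]{\Bone}^2 \le \norm[\infty]{\Bone}\norm[1]{\Bone}$ with $\norm[1]{\Bone} = 2\norm[\Hhalf]{\frac{1}{\Zap}}^2$, whereas the paper simply applies \propref{prop:commutator} to the formula \eqref{eq:Bone} to get $\norm[2]{\Bone}\lesssim\norm[\infty]{\frac{1}{\Zap}}\norm[2]{\pap\frac{1}{\Zap}}$ directly, without needing the sign of $\Bone$; both are fine and feed Gronwall in the same way. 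One small quibble: the ``subtle point'' you highlight at the end is a red herring. Writing $\frac{1}{\Zapabs^2}\pap\frac{1}{\Zap}$ as $\frac{1}{\Zapbar}\Dap\frac{1}{\Zap}$ is not a ``rewriting'' with content---the two expressions are literally identical and both manifestly involve only $\frac{1}{\Zap}$ and $\pap\frac{1}{\Zap}$, so there is never any danger of $\norm[\infty]{\Zap}$ sneaking into the $H^1$ estimate regardless of which form you use. The observation that the $H^1$ bound depends only on $J$ (not on $\norm[\infty]{\Zap(\cdot,0)}$) is correct, but it follows automatically; nothing needs to be engineered for it.
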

\begin{proof}
For simplicity we will write $A \lesssim_J B$ whenever $A \leq F(J) B$ and similar meaning for $A \lesssim_L B$. We first observe from \eqref{eq:hapzapratio} that for any $t \in [0,T]$
\begin{align*}
\norm[\infty]{\Zap}(t) & \leq \norm[\infty]{\Zap}(0)\exp\cbrac{\int_0^t \brac{\norm[\infty]{\Dap\Zt}(s) + \norm[\infty]{\bvarap}(s)}\diff s} \lesssim_L 1
\end{align*}
Now if we let 
\begin{align*}
f(t) = \norm[2]{\frac{1}{\Zap} - 1}^2(t) + 1
\end{align*}
then we see from \lemref{lem:timederiv}, \eqref{eq:main} and \propref{prop:commutator} that
\begin{align*}
\frac{\diff f(t)}{\diff t} & \lesssim \norm[\infty]{\bvarap}f(t) + f(t)^{\half} \norm[2]{\Dt\frac{1}{\Zap}} \\
& \lesssim \norm[\infty]{\bvarap}f(t) + f(t)^{\half}\norm[\infty]{\frac{1}{\Zap}}^2\norm[2]{\pap\frac{1}{\Zap}} \\
& \lesssim \norm[\infty]{\bvarap}f(t) + f(t)^{\half}\norm[2]{\pap\frac{1}{\Zap}}\brac{1 + \norm[2]{\frac{1}{\Zap} - 1}\norm[2]{\pap\frac{1}{\Zap}}} \\
& \lesssim_J f(t)
\end{align*}
Hence we see that $f(t) \lesssim_J 1$ for all $t \in [0,T]$, thereby proving the first estimate. Now using the bound for $\norm[\infty]{\Zap}(t)$ we see that for all $t \in [0,T]$
\begin{align*}
\norm[2]{\frac{1}{\Zap} - 1}(t) + \norm[2]{\Zap - 1}(t) \lesssim_L 1
\end{align*}
Now from the definition of $\Mcal(t)$ and the bound \eqref{eq:DapZInfinity} one can also easily see that
\begin{align*}
\norm[2]{\pap\frac{1}{\Zap}} + \norm[2]{\frac{1}{\Zap^2}\pap\Zap} + \norm[2]{\frac{1}{\Zap^2}\pap^2\frac{1}{\Zap}} + \norm[2]{\frac{1}{\Zap^4}\pap^2\Zap} \lesssim \Mcal
\end{align*}
Hence using the bound for $\norm[\infty]{\Zap}$ we see that
\begin{align*}
\norm[H^1]{\pap\frac{1}{\Zap}} + \norm[H^1]{\pap\Zap} \lesssim_L 1
\end{align*}
thereby proving the lemma. 
\end{proof}

\begin{proof}[Proof of \thmref{thm:mainwellposed}]

We will now present the remaining proof of Theorem \ref{thm:mainwellposed}. The proof is divided into 5 steps. The choice of data mollification and the proof for uniform convergence on compact subsets of various quantities from Step 1 - Step 4 are similar to \cite{Wu19}. Then, in Step 5, we show the solution satisfies the system in the sense of Definition \ref{def:solution}. In particular, we perform the calculations necessary to show $\frac{1}{\Zap}$ satisfies \eqref{eq:main} in the sense of distributions.

First, we define, for $0< \epsilon \leq 1$, the mollified data
\begin{align*}
\Z^\ep (\ap,0) = \Psi(\ap-\ep i,0),\quad\Psi^\ep(\zp,0) = \Psi(\zp - \ep i,0), \quad  \h^\ep(\al,0) = \al 
\end{align*}
We will also define $\bvar^\ep = h_t^\ep\compose(\h^\ep)^{-1}$. With this we can define the material derivative $\Dtep = \pt + \bvar^\ep\pap$ and $\Dap^\ep = \frac{1}{\Zapep}\pap$. At this point it is also useful to note that at time $t = 0$ for any smooth solution to \eqref{eq:main}, we have  $\Mcal_1(0) = \Mcal(0)$. 

Now  for each $\ep>0$, the initial data is smooth and $\brac{\Zapep(\cdot,0) - 1, \frac{1}{\Zapep}(\cdot,0) -1 } \in H^s(\Rsp)\times H^s(\Rsp)$ for all $s \geq 2$. Hence from \corref{cor:existencesobolev}, we see that there exists a time $T_\ep$ depending on $\ep$, so that we have a smooth solution $\Zep(\cdot,t)$ to \eqref{eq:main} in the time interval $[0,T_\ep]$. Now from \eqref{eq:c0} and \eqref{eq:Mcal1} we observe that for any $0<\ep \leq 1$, we have that $c_0^{\ep} \leq c_0$ and $ \Mcal^{\ep}(0) = \Mcal_1^\ep(0) \leq \Mcal_1(0)$. Hence using the main a priori estimate \thmref{thm:apriori}, the control of the $H^2$ norm by $\Mcal$ in \lemref{lem:universalF} and the $H^2$ blow up criterion of \corref{cor:existencesobolev}, we see that in fact the solution $\Zep(\cdot,t)$ exists on a time interval $[0,T]$ independent of $\ep$ and the time $T$ depends only on $\Mcal_1(0)$. Moreover in this time interval $[0,T]$ we have for all $0<\ep \leq 1$
\begin{align*}
    \Mcal_1^\ep(t) \leq \Mcal^\ep(t)  \leq C(\Mcal_1(0))
\end{align*}
In the following steps, we will first prove uniform bounds on appropriate terms. Then, we use these uniform bounds with Lemma \ref{lem:Arzela1} and Lemma \ref{lem:Arzela2} to obtain uniform convergence arguments for the relevant terms to the system \eqref{eq:main}. Finally, this allows us to prove the existence of a solution in the sense of Definition \ref{def:solution}.

\textbf{Step 1:} First, we prove that both $\bvar^\ep$ and $\bvar_{t}^{\ep}$ are uniformly bounded in $L^{\infty}$ by the quantities $\Mcal_1(0)$ and $c_0$. Throughout this step, we will use the bounds presented in Lemma \ref{lem:universalF} and the convention that $C(c_0+\Mcal_{1}(0))$ is a constant depending only on $c_0+\Mcal_{1}(0)$.

First, note that, by Lemma \ref{lem:universalF},
\begin{align*}
\norm[\infty]{\bvar^{\ep}} &= \norm[\infty]{\Hil\sqbrac{\frac{1}{\abs*[\big]{\Zapep}^{2}}}} = \norm[\infty]{\Hil\sqbrac{\frac{1}{\abs*[\big]{\Zapep}^{2}}-1}}\leq \norm[{H}^{1}]{\frac{1}{\abs*[\big]{\Zapep}^{2}}-1}
\leq C(c_0 + \Mcal_{1}(0)).
\end{align*}
Next,
\begin{align*}
    \norm[\infty]{\bvar^{\ep}_{t}} &\lesssim \norm[\infty]{\Dtep\bvar^{\ep}} + \norm[\infty]{\bvar^\ep \bap^{\ep}}
\end{align*}
Analogous bounds to \eqref{eq:Bap1infty} and the above estimate on $\bvar^\ep$ give $\norm[\infty]{\bvar^\ep \bap^{\ep}} \leq C(c_0+\Mcal_{1}(0))$. Next,
\begin{align}\label{eq:Dtepbepdecomp}
\norm[\infty]{\Dtep\bvar^{\ep}} &\lesssim \norm[2]{\Dtep\bvar^{\ep}} + \norm[2]{\pap\Dtep\bvar^{\ep}}
\end{align}
The first term in \eqref{eq:Dtepbepdecomp} is
\begin{align*}
\norm[2]{\Dtep\bvar^{\ep}} &\lesssim \norm[2]{\sqbrac{\Dtep, \Hil}\frac{1}{\abs*[\big]{\Zapep}^{2}}} + \norm[2]{\frac{1}{\Zapepbar}\Dtep \frac{1}{\Zapep}}
\end{align*}
Computing the commutator and then by Proposition \ref{prop:commutator}, we have
\begin{align*}
\norm[2]{\sqbrac{\Dtep, \Hil}\frac{1}{\abs*[\big]{\Zapep}^{2}}} \lesssim \norm[2]{\sqbrac{\bvar^\ep, \Hil}\brac{\pap\frac{1}{\abs*[\big]{\Zapep}^2}}} \leq C(c_0 + \Mcal_{1}(0))
\end{align*}
Next, by Proposition \ref{prop:commutator}, $\norm[2]{\Bone^\ep} \lesssim \norm[\infty]{\frac{1}{\Zapep}}\norm[2]{\pap\frac{1}{\Zapep}}$
and hence
\begin{align*}
    \norm[2]{\frac{1}{\Zapepbar}\Dtep \frac{1}{\Zapep}} &\leq \norm[2]{\frac{1}{\abs*[\big]{\Zapep}^{3}}\pap \frac{1}{\Zapep}} + \norm[2]{\frac{1}{\abs*[\big]{\Zapep}^{2}}\Bone^\ep}\leq C(c_0 + \Mcal_{1}(0))
\end{align*}
The second term in \eqref{eq:Dtepbepdecomp} is
\begin{align*}
  \norm[2]{\pap\Dtep \bvar^{\ep}} &\lesssim   \norm[2]{\pap\sqbrac{\Dtep, \Hil}\frac{1}{\abs*[\big]{\Zapep}^{2}}} + \norm[2]{\pap\brac{\frac{1}{\Zapepbar}\Dt^\ep \frac{1}{\Zapep}}}
\end{align*}
By Proposition \ref{prop:commutator} and the uniform bound on $\bap^\ep$ analogous to \eqref{eq:Bap1infty}, we have
\begin{align*}
\norm[2]{\pap\sqbrac{\Dt^\ep, \Hil}\frac{1}{\abs*[\big]{\Zapep}^{2}}} &\lesssim \norm[2]{\pap\sqbrac{\bvar^\ep, \Hil}\frac{1}{\Zapepbar}\pap\frac{1}{\Zapep}} \leq C(c_0 + \Mcal_{1}(0)).
\end{align*}
Next,
\begin{align*}
\norm[2]{\pap\brac{\frac{1}{\Zapepbar}\Dtep \frac{1}{\Zapep}}} &\lesssim \norm[2]{\pap\brac{\frac{1}{\abs*[\big]{\Zapep}^{2}\Zapepbar}\pap \frac{1}{\Zapep}}} + \norm[2]{\pap\brac{\frac{1}{\abs*[\big]{\Zapep}^{2}}\Bone^\ep}}
\end{align*}
By analogous bound to \eqref{eq:DapZInfinity},
\begin{align*}
     \norm[2]{\pap\brac{\frac{1}{\abs*[\big]{\Zapep}^{2}\Zapepbar}\pap \frac{1}{\Zapep}}}  &\lesssim  \norm[2]{\frac{1}{\Zapep}\Dap^\ep \brac{\Dap^\ep \frac{1}{\Zapep}}} + \norm[2]{\frac{1}{\Zapep}\brac{\pap\frac{1}{\Zapep}}\brac{{\Dap^\ep}\frac{1}{\Zapep}}}\\
     &\leq C(c_0 + \Mcal_1(0))
\end{align*}
Finally by analogous bound to \eqref{eq:B1inftyHhalf} and \eqref{eq:DapB1} we get
\begin{align*}
\norm[2]{\pap\brac{\frac{1}{\abs*[\big]{\Zapep}^{2}}\Bone^\ep}} &\lesssim \norm[2]{\frac{1}{\Zapep}\Bone^\ep\pap\frac{1}{\Zapep}} + \norm[2]{\frac{1}{\abs*[\big]{\Zapep}^{2}}\pap\Bone^\ep} \leq C(c_0 + \Mcal_1(0))
\end{align*}
Hence,
\begin{align}\label{eq:bvarepbounds}
 \sup_{t\in\sqbrac{0,T}} \cbrac{ \norm[\infty]{\bvar^\ep}(t) + \norm[\infty]{\bvar^\ep_t}(t) + \norm[\infty]{\bap^\ep}(t)}\leq C(c_0+ \Mcal_1(0))
\end{align}

\textbf{Step 2:} In this step, we will prove uniform bounds on $\hep$, $\halep$ and $\hep_t$. First, since $\bvar^\ep$ satisfies the bound \eqref{eq:bvarepbounds} and $\bvar^\ep = \hep_t\circ (\hep)^{-1}$, we obtain that $\norm[\infty]{\hep_t}\leq C(c_0+ \Mcal_1(0))$. Next,
\begin{align*}
\norm[\infty]{\hep(\al,t) - \al} = \norm[\infty]{\int_{0}^{t}\hep_t(\al,s) \diff s} \lesssim \int_{0}^{t}\norm[\infty]{\hep_t}(s)\diff s \leq C(c_0 + \Mcal_1(0)).
\end{align*}
Finally, differentiating we obtain $\frac{d}{dt}\halep = \bap^\ep (\hep,t) \halep$ with $\halep (\al,0)= 1$. Thus, solving the initial value problem in time,
\begin{align*}
 e^{-t\sup_{s\in\sqbrac{0,t}}\norm[\infty]{\bap^\ep}(s)} \leq \halep(\al,t)\leq e^{t\sup_{s\in\sqbrac{0,t}}\norm[\infty]{\bap^\ep}(s)} .
\end{align*}
Using this above two-sided bound and by the bound on $\bap^\ep$ from \eqref{eq:bvarepbounds}, we have
 \begin{align}\label{eq:halepbound}
 0 < c_{1} \leq \frac{\hep(\al,t)-\hep(\beta,t)}{\al-\beta}\leq c_{2} < \infty \q \forall \al,\beta\in\mathbb{R} \tx{ with } \alpha \neq \beta \tx{ and } t\in\sqbrac{0,T}
 \end{align}
with $c_1, c_2$ depending only on $c_0$ and $\Mcal_1(0)$, and moreover
 \begin{align}\label{eq:hepbounds}
   \sup_{t\in\sqbrac{0,T}} \cbrac{  \norm[\infty]{\hep(\al,t) - \al}(t) +   \norm[\infty]{\hep_t}(t) + \norm[\infty]{\halep}(t) }\leq  C(c_0+ \Mcal_1(0)).
 \end{align}
 
\textbf{Step 3:} In this step, we will prove uniform bounds on $\frac{\halep}{\zalep}$ and $\zt$. From \eqref{eq:main} and \lemref{lem:universalF} we find that
\begin{align*}
\norm[\infty]{\Dt^\ep\frac{1}{\Zapep}}   \lesssim \norm[\infty]{\frac{1}{\Zapep}}\norm[\infty]{\frac{1}{\Zapep}\pap\frac{1}{\Zapep}} + \norm[\infty]{\frac{1}{\Zapep}}\norm[\infty]{\Bone^{\ep}} \leq C(c_0 + \Mcal_1(0))
\end{align*}
Also from \lemref{lem:universalF}, we clearly have
\begin{align*}
\norm[\infty]{\frac{1}{\Zapep}} + \norm[2]{\pap\frac{1}{\Zapep}} \leq C(c_0 + \Mcal_1(0))
\end{align*}
Now using the fact that $\frac{\halep}{\zalep} = \frac{1}{\Zapep}\compose \hep$ and \eqref{eq:hepbounds}, we see that
\begin{align*}
\sup_{t \in [0,T]} \cbrac{\norm[\infty]{\frac{\halep}{\zalep}}(t) + \norm[2]{\pal\frac{\halep}{\zalep}}(t) + \norm[\infty]{\pt \frac{\halep}{\zalep}}(t) }\leq C(c_0 + \Mcal_1(0))
\end{align*}
Now from Darcy's law \eqref{eq:DarcyLawA1} and the equivalence of it with the system \eqref{eq:main} we see that
\begin{align*}
\sup_{t \in [0,T]} \cbrac{\norm[\infty]{\Ztep}(t) + \norm[2]{\pap\Ztep}(t) + \norm[\infty]{\Dt\Ztep}(t)} \leq C(c_0 + \Mcal_1(0))
\end{align*}
and hence using $\ztep = \Ztep \compose \hep$ we get
\begin{align*}
\sup_{t \in [0,T]} \cbrac{\norm[\infty]{\ztep}(t) + \norm[2]{\pap\ztep}(t) + \norm[\infty]{\pt\ztep}(t)} \leq C(c_0 + \Mcal_1(0))
\end{align*}
We now have control for all of the necessary terms.

\textbf{Step 4:} From the above controlled quantities and using \lemref{lem:Arzela1}, \lemref{lem:Arzela2} we get convergence of several quantities by taking a sub-sequence $\ep_j \to 0$. By following the same proof as in \cite{Wu19} and writing $\ep_j = \ep $ as abuse of notation we obtain:
\begin{enumerate}[label=(\alph*)]
\item There exists a continuous function $h:\Rsp\times[0,T] \to \Rsp$ so that $h(\cdot,t):\Rsp \to \Rsp$ is a homeomorphism and
\begin{align*}
h^\ep \Rightarrow h \quad \tx{ and } \quad (h^\ep)^{-1} \Rightarrow h^{-1} \qq \tx{ on } \Rsp\times[0,T]
\end{align*}
Moreover there exists constants $c_1, c_2 > 0$ depending only on $c_0$ and $\Mcal_1(0) $ so that
\begin{align*}
0< c_1 \leq \frac{h(\al,t) - h(\be,t)}{\al - \be} \leq c_2 < \infty \qq \tx{ for all }  \al,\be \in \Rsp \tx{ with }\al \neq \be \tx{ and } t \in [0,T]
\end{align*}

\item There exists a continuous function $b:\Rsp\times[0,T] \to \Rsp$ such that
\begin{align*}
    \bvar^\ep \Ra \bvar \qq \tx{ on } \Rsp\times[0,T]
\end{align*}
Now as $\bvar^\ep \compose \hep = \hep_t$, we see that the function $h$ defined above is in fact continuously differentiable in $t$ satisfying 
\begin{align*}
    \hep_t \Ra h_t \qq \tx{ on } \Rsp\times[0,T]
\end{align*}
and we have $b\compose h = h_t $. 

\item There exists a continuous function $z:\Rsp\times[0,T] \to \Csp$ such that $z$ is continuously differentiable with respect to $t$, with $\zt$ being continuous and bounded function on $\Rsp\times[0,T]$ satisfying
\begin{align*}
\zep \Ra \z \qq \ztep \Ra \zt \qq \tx{ on } \Rsp\times[0,T]
\end{align*}

\item If $\Z(\ap,t) = \z(\hinv(\ap,t),t) $ and $\Zt(\ap,t) = \zt(\hinv(\ap,t),t) $, then observe that $\Z$ and $\Zt$ are continuous functions on $\Rsp\times[0,T]$ with $\Zt$ being bounded. We also have
\begin{align*}
\Zep \Ra \Z \qq \Ztep \Ra \Zt \qq \tx{ on } \Rsp\times[0,T]
\end{align*}

\item There exists a continuous function $\Psi:\Pminusbar \times [0,T] \to \Csp$ such that $\Psi(\cdot,t)$ is conformal on $\Pminus$ and $\onePsizp$ extends continuously to $\Pminusbar\times [0,T]$. Its boundary value is given by $\Z(\ap,t) = \Psi(\ap,t)$ and we also have
\begin{align*}
\Psiep \Ra \Psi \qq \onePsizpep \Ra \onePsizp \qq \tx{on } \Pminusbar \times[0,T]\\
\Psitep \Ra \Psit \qq  \Psizpep \Ra \Psizp  \qq \tx{on } \Pminus \times[0,T]
\end{align*}
As $\onePsizp$ extends continuously to $\Pminusbar\times[0,T]$, we call its boundary value as $\frac{1}{\Zap}$ by abuse of notation (as mentioned in \remref{rem:oneoverZap}).   With this we also have
\begin{align*}
    \frac{1}{\Zapep} \Ra \frac{1}{\Zap} \qq \tx{ on } \Rsp\times[0,T]
\end{align*}
The proof also shows that $\frac{\Psit}{\Psizp}$ extends continuously to $\Pminusbar\times[0,T]$ and we have
\begin{align*}
\frac{\Psitep}{\Psizpep} \Ra \frac{\Psit}{\Psizp} \qq \tx{ on } \Pminusbar\times[0,T]
\end{align*}
\end{enumerate}

\textbf{Step 5:} We now show that the function $\Psi$ obtained above actually solves the system according to  \defref{def:solution}. From the fact that $\sup_{t \in [0,T]}\Mcal_1^\ep(t) \leq C(\Mcal_1(0))$ for all $0<\ep \leq 1$, we see that $\sup_{t \in [0,T]}\Mcal_1(t) \leq C(\Mcal_1(0))$. Similarly from the convergence of $\Psiep$ to $\Psi$ and \lemref{lem:universalF} we also see that 
\begin{align*}
\sup_{\substack{y'<0\\t\in\sqbrac{0,T}}} \norm[H^1]{\frac{1}{\Psizp}(\cdot + iy',t)- 1} \leq C(c_0 + \Mcal_1(0))
\end{align*}
and hence 
\begin{align}\label{eq:boundZap}
\sup_{t \in [0,T]}\norm[H^1]{\frac{1}{\Zap}(\cdot,t) - 1} \leq C(c_0 + \Mcal_1(0))
\end{align}

From the construction, it is easy to see that most of the other conditions in the definition are automatically satisfied and the only thing left to show is that $\frac{1}{\Zap}$ satisfies the system \eqref{eq:main} in the sense of distributions. Note from the bound \eqref{eq:boundZap}, the definitions of $\bvar$ and $\Bone$ make sense. Now observe that for $\phi$, a smooth, compactly supported function in $\mathbb{R}\times \sqbrac{0,T}$, we have
\begin{align*}
& \int_{\mathbb{R}\times\sqbrac{0,T}} \phi \brac{\frac{1}{\abs*[\big]{\Zapep}^2}\pap\frac{1}{\Zapep}-\frac{1}{\Zapabs^2}\pap\frac{1}{\Zap}} \\
& = \int_{\mathbb{R}\times\sqbrac{0,T}} \phi \frac{1}{\abs*[\big]{\Zapep}^2}\brac{\pap\frac{1}{\Zapep}-\pap\frac{1}{\Zap}} 
+ \int_{\mathbb{R}\times\sqbrac{0,T}} \phi \brac{\frac{1}{\abs*[\big]{\Zapep}^2}- \frac{1}{\Zapabs^2}}\pap\frac{1}{\Zap}
\end{align*}

The second term above vanishes as $\ep\rightarrow 0$ because we have $\sup_{\sqbrac{0,T}}\norm[2]{\pap\frac{1}{\Zap}} < \infty$ and $\frac{1}{\Zapep}$ converges uniformly to $\frac{1}{\Zap}$ on compact subsets of $\mathbb{R}\times\sqbrac{0,T}$. For the first term, we integrate by parts in space
\begin{align*}
& \int_{\mathbb{R}\times\sqbrac{0,T}} \phi \frac{1}{\abs*[\big]{\Zapep}^2}\brac{\pap\frac{1}{\Zapep}-\pap\frac{1}{\Zap}} \\ 
& = -\int_{\mathbb{R}\times\sqbrac{0,T}} \cbrac{\phi \brac{\overline{\Dap^\ep}\frac{1}{\Zapep} + \Dap^\ep\frac{1}{\Zapepbar}}\brac{\frac{1}{\Zapep}-\frac{1}{\Zap}} + (\pap\phi) \frac{1}{\abs*[\big]{\Zapep}^2}\brac{\frac{1}{\Zapep}-\frac{1}{\Zap}}}
\end{align*}
Since $\Dap^\ep \frac{1}{\Zapep}$ and $\frac{1}{\Zapep}$ are uniformly bounded and $\frac{1}{\Zapep}$ converges uniformly to $\frac{1}{\Zap}$ on compact subsets of $\mathbb{R}\times\sqbrac{0,T}$, this term vanishes as $\ep\rightarrow 0$.

Next, the weak convergence of the term $\bvar^\ep\pap\frac{1}{\Zapep}$ to $\bvar\pap\frac{1}{\Zap}$ is similar to the above terms because we know $\bvar^\ep \Ra \bvar$ on $\mathbb{R}\times \sqbrac{0,T}$.

The final difference to consider is
\begin{align}\label{eq:weakBoneterm}
    \int_{\mathbb{R}\times\sqbrac{0,T}} \phi \brac{\frac{\Bone^\ep}{\Zapep}-\frac{\Bone}{\Zap}} = 
    \int_{\mathbb{R}\times\sqbrac{0,T}} \phi\Bone \brac{\frac{1}{\Zapep}-\frac{1}{\Zap}}  + 
    \int_{\mathbb{R}\times\sqbrac{0,T}} \phi \frac{1}{\Zapep} \brac{\Bone^\ep-\Bone} 
\end{align}
The first term converges as in the arguments above. The second term requires a different argument. Let $0< \ep_1, \ep_2 \leq 1$ and consider the $\ep_1$ and $\ep_2$ solutions as solutions $A$ and $B$ as used in \secref{sec:uniqueness}. Then we see that
\begin{align*}
\frac{1}{\Zap^{\ep_1}} - \frac{1}{\Zap^{\ep_2}} = \Delta\brac{\frac{1}{\Zap}} + \brac{\frac{1}{\Zap^{\ep_2}}\compose \htil -  \frac{1}{\Zap^{\ep_2}}}
\end{align*}
Hence we see that for $t \in [0,T]$ we have from \thmref{thm:mainuniqueness}, \lemref{lem:uniquenessforexistenceBone} and Proposition \ref{prop:deltabalpha}
\begin{align*}
\norm[\Hhalf]{\frac{1}{\Zap^{\ep_1}} - \frac{1}{\Zap^{\ep_2}}}(t) 
& \leq \norm[\Hhalf]{\Delta(\frac{1}{\Zap})}(t) + \norm[\Hhalf]{\frac{1}{\Zap^{\ep_2}}\compose \htil -  \frac{1}{\Zap^{\ep_2}}}(t) \\
& \lesssim_{\Mcal_1(0)}  \norm[\Linfty\cap\Hhalf]{\Delta(\frac{1}{\Zap})}(0) + \norm*[\big][2]{\htilap-1}^{\frac{1}{4}}(t) + \norm*[\big][2]{\htilap-1}(t) \\
& \lesssim_{\Mcal_1(0)} \norm[\Linfty\cap\Hhalf]{\Delta(\frac{1}{\Zap})}(0) + \norm[\Linfty\cap\Hhalf]{\Delta(\frac{1}{\Zap})}^{\frac{1}{4}}(0) \\
& \lesssim_{\Mcal_1(0)} \norm[H^1]{\Delta(\frac{1}{\Zap} - 1)}(0) + \norm[H^1]{\Delta(\frac{1}{\Zap} - 1)}^{\frac{1}{4}}(0) 
\end{align*}

Hence for any $t \in [0,T]$, by Proposition \ref{prop:commutator} and Sobolev embedding,
\begin{align*}
\norm[2]{\Bone^{\ep_1} - \Bone^{\ep_2}}(t) 
& \lesssim \norm[2]{\sqbrac{\frac{1}{\Zap^{\ep_1}} - \frac{1}{\Zap^{\ep_2}}, \Hil}\pap\frac{1}{\Zap^{\ep_1}}} + \norm[2]{\sqbrac{\frac{1}{\Zap^{\ep_2}},\Hil}\pap\brac{\frac{1}{\Zap^{\ep_1}} - \frac{1}{\Zap^{\ep_2}}}} \\
& \lesssim \norm[\Hhalf]{\frac{1}{\Zap^{\ep_1}} - \frac{1}{\Zap^{\ep_2}}}\brac{\norm[2]{\pap\frac{1}{\Zap^{\ep_1}}} + \norm[2]{\pap\frac{1}{\Zap^{\ep_2}}}} \\
& \lesssim_{\Mcal_1(0)} \norm[H^1]{\Delta(\frac{1}{\Zap} - 1)}(0) + \norm[H^1]{\Delta(\frac{1}{\Zap} - 1)}^{\frac{1}{4}}(0) 
\end{align*}
This implies that $\Bone^{\ep}(t)$ forms a Cauchy sequence and hence converges in $L^{2}$. However, we know that they converge in distribution to $\Bone(t)$. Hence we see that 
\begin{align*}
\sup_{t \in [0,T]} \norm[2]{\Bone^{\ep} - \Bone}(t) \to 0 \qq \tx{ as } \ep \to 0
\end{align*}
Hence, the second term in \eqref{eq:weakBoneterm}  vanishes as $\epsilon\rightarrow 0$. Collecting all of the above terms, we have that
\begin{align*}
(\pt + \bvar^\ep\pap)\frac{1}{\Zapep} & =  - i\frac{1}{\abs*[\big]{\Zapep}^2}\pap\frac{1}{\Zapep} + \frac{\Bone^\ep}{\Zapep}
\end{align*}
converges weakly to \eqref{eq:main}.

The uniqueness of the solution in the class $\mathcal{SA}$ follows from \thmref{thm:mainuniqueness} and \lemref{lem:uniquenessforexistenceBone}. Hence the theorem is proved.

\end{proof}

\subsection{Rigidity}

In this section, we give a brief outline of the proof of Theorem \ref{thm:mainangle}, mirroring the method in \cite{Ag20}. The proof is essentially identical because the proof in \cite{Ag20} relies on the energies and not so much on the details of the system being solved.  We omit the computational or technical details and modify the energies where appropriate. First, analogously to \eqref{eq:hapzapratio}, we have
\begin{align}\label{eq:zapep}
 \frac{h_\al^\ep}{z_\al^\ep} (\al,t) =  \frac{h_\al^\ep}{z_\al^\ep} (\al,0) \exp\cbrac{\int_0^t \brac{ \frac{h_{t\al}^\ep}{h_\al^\ep}  -  \frac{z_{t\al}^\ep}{z_\al^\ep} }(\al,s) \diff s}.
\end{align}
As $ \frac{h_{t\al}^\ep}{h_\al^\ep} \compose (h^{\ep})^{-1} = \bvarap^\ep$ and $\frac{z_{t\al}^\ep}{z_\al^\ep} \compose (h^{\ep})^{-1} = \Dap^\ep \Zt^\ep$, we see from \eqref{eq:bvarepbounds} and \eqref{eq:DapZInfinity} that the integrand in \eqref{eq:zapep} is bounded in $L^{\infty}$. Moreover,  $\frac{\halep}{\zalep} = \oneZapep\compose \hep$, $\hep(\al,0) = \al$ and $\oneZapep(\ap,t) = \onePsizpep(\ap,t)$, so letting $\ep \rightarrow 0$, gives
\begin{align*}
c_1\abs{\onePsizp(\al,0)} \leq \abs{\onePsizp(\h(\al,t),t)} \leq c_2\abs{\onePsizp(\al,0)}
\end{align*}
for some constants $c_1, c_2 > 0$ depending only on $\Mcal_1(0)$. Thus, for $t\in\sqbrac{0,T}$, the singular set is propagated,  $S(t) = \cbrac{h(\al,t) \in \Rsp \suchthat \al \in S(0) }$.

Next, since solutions satisfy
\begin{align*}
\sup_{\substack{y'<0\\t\in\sqbrac{0,T}}} \norm[H^1]{\frac{1}{\Psizp}(\cdot + iy',t)- 1} < \infty,
\end{align*} and since the energy $\Mcal_{1}(t) < \infty$, it follows as in the argument in \cite{Ag20} that for each $t\in [0,T]$ 
\begin{align}\label{eq:rigidLinfinitybound}
\sup_{\yp<0}\norm[\Linfty(\Rsp,\diff \xp)]{\frac{1}{\Psizp}\partial_{\zp}\frac{1}{\Psizp}(\xp+i\yp,t)} \leq C(\Mcal_1(t)).
\end{align}
Hence $\frac{1}{\Psizp}\partial_{\zp}\frac{1}{\Psizp}(\cdot,t)$ is a bounded holomorphic function and following the argument directly in \cite{Ag20}, we obtain that $\frac{1}{\Psizp}\partial_{\zp}\frac{1}{\Psizp}(\cdot,t)$ extends continuously to $\Pminusbar$. Moreover, \eqref{eq:rigidLinfinitybound} implies that around any singular point $\ap\in S(t)$, we have for any $\delta > 0$
\begin{align*}
\lim_{\yp \to 0^-}\int_{\al'-\delta}^{\al'+\delta}\abs{\Psizp}^2(s + iy',t)\diff s = \infty 
\end{align*}
Hence following the same argument as in \cite{Ag20} we see that $\frac{1}{\Psizp}\partial_{\zp}\frac{1}{\Psizp}(\ap,t) = 0$ at any singular point $\ap\in S(t)$. The proof for $\frac{1}{\Psizp}\overline{\partial_{\zp}\frac{1}{\Psizp}}(\ap,t)$ is similar.

The identity \eqref{eq:rigididentity} for non-singular points follows from \eqref{eq:zapep} by again using $\frac{\halep}{\zalep} = \oneZapep\compose \hep$, $\hep(\al,0) = \al$, $\oneZapep(\ap,t) = \onePsizpep(\ap,t)$ and $\frac{\Ztapep}{\Zapep}(\ap,t) = \brac{\frac{1}{\Psizpep}\overline{\partial_{z}\frac{1}{\Psizpep}}}(\ap,t)$ and taking limits $\ep\rightarrow 0$ by using the convergence given by the existence proof and the dominated convergence theorem. For any singular point $\al_{0} \in S(0)$, taking the limit along non-singular points $\al_{n}\in NS(0)$, since we know the continuity of $\frac{1}{\Psizp}\overline{\partial_{\zp}\frac{1}{\Psizp}}(\zp,t)$ on $\Pminusbar$, we can apply the dominated convergence theorem to obtain
\begin{align}\label{eq:NSidentity2}
\lim_{\alpha_{n} \rightarrow \alpha_{0}}\frac{\frac{\Zap}{\Zapabs}(h(\al_{n},t),t)}{\frac{\Zap}{\Zapabs}(\al_{n},0)} = \exp\cbrac{i\Imag \brac{\int_0^t \brac{\frac{1}{\Psizp}\overline{\partial_{\zp}\frac{1}{\Psizp}}}(h(\al_{0},s),s)\diff s}}.
\end{align}
Since $\frac{1}{\Psizp}\overline{\partial_{\zp}\frac{1}{\Psizp}}(\ap,t) = 0$ for all $\ap \in S(t)$ and the singular set $S(0)$ propagates along the flow $h$, we have $\brac{\frac{1}{\Psizp}\overline{\partial_{\zp}\frac{1}{\Psizp}}}(h(\al_{0},s),s)=0$  in \eqref{eq:NSidentity2} and we obtain \eqref{eq:rigidlimit}.

Finally, we can show the velocity of the particle at the tip of the corner or cusp is $-i$. We have $\frac{1}{\Zapep}$ and $\Ztep$ satisfying \eqref{eq:DarcyLawA1} by the discussion after \eqref{eq:main}. Now by the existence argument, $\frac{1}{\Zapep}$ and $\Ztep$ converge uniformly on compact subsets of $\mathbb{R}\times\sqbrac{0,T}$. Hence, $\frac{1}{\Zap}$ and $\Zt$ satisfy \eqref{eq:DarcyLawA1}. Since at singular points, $\frac{1}{\Zap} = 0$, we have that $\Zt = -i$ which completes the proof.

\section{Appendix}
In this appendix, we state several useful estimates that are used in this paper. Several of them are exact statements or slight modifications from \cite{Ag20a} and some we must give new proofs here.

\begin{lem}\label{lem:papabs}
For $f \in \Scalsp$ we have
\begin{align*}
    (\papabs f )(\ap) = \frac{1}{\pi}\int \frac{f(\ap) - f(\bp)}{(\ap - \bp)^2}\diff \bp
\end{align*}
\end{lem}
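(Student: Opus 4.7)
The plan is to use the identity $\papabs = i\Hil\pap$ already noted by the authors in the discussion around \eqref{eq:HilbertP}, combined with an integration by parts that moves the derivative from $f$ onto the kernel $1/(\ap-\bp)$. Specifically, using the definition of the Hilbert transform,
\begin{align*}
(\papabs f)(\ap) = i\,\Hil(\pap f)(\ap) = \frac{1}{\pi}\,\mathrm{p.v.}\!\int \frac{f'(\bp)}{\ap - \bp}\,\diff\bp,
\end{align*}
so the goal reduces to showing that this principal value equals the one in the claim.

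The key algebraic identity to exploit is
\begin{align*}
\frac{f'(\bp)}{\ap - \bp} = \partial_\bp\!\brac*[\bigg]{\frac{f(\bp) - f(\ap)}{\ap - \bp}} - \frac{f(\bp) - f(\ap)}{(\ap - \bp)^{2}},
\end{align*}
which is checked by differentiating the first term on the right using the product rule. The advantage of subtracting $f(\ap)$ inside the bracket is that the quantity $\frac{f(\bp) - f(\ap)}{\ap - \bp}$ is bounded as $\bp \to \ap$ (with limit $-f'(\ap)$), which is what makes the boundary terms cancel in the step below.

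I would then integrate this identity over the symmetric domain $\bp \in (-R,\ap - \ep)\cup(\ap + \ep, R)$ and pass to the limits $R \to \infty$ and $\ep \to 0^{+}$. Since $f \in \Scalsp(\Rsp)$, the contributions at $\bp = \pm R$ vanish in the limit $R \to \infty$. The two contributions at $\bp = \ap \pm \ep$ become, respectively,
\begin{align*}
\frac{f(\ap - \ep) - f(\ap)}{\ep} \to -f'(\ap), \qquad \frac{f(\ap + \ep) - f(\ap)}{\ep} \to +f'(\ap),
\end{align*}
which cancel each other. Therefore the total derivative term contributes nothing, and we conclude
\begin{align*}
\mathrm{p.v.}\!\int \frac{f'(\bp)}{\ap-\bp}\,\diff\bp = \mathrm{p.v.}\!\int \frac{f(\ap) - f(\bp)}{(\ap-\bp)^{2}}\,\diff\bp,
\end{align*}
which upon multiplying by $1/\pi$ gives the stated formula.

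The main (and essentially only) technical point is the cancellation of the two boundary contributions at $\ap \pm \ep$; this is why the integral on the right must be interpreted in the principal value sense even though the numerator $f(\ap) - f(\bp)$ vanishes linearly at $\bp = \ap$. Everything else is routine for $f \in \Scalsp(\Rsp)$.
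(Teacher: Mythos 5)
Your proof is correct. The paper's own argument starts from $\papabs f = i\pap(\Hil f)$, uses $\Hil(1)=0$ to rewrite $\Hil f$ as $-\frac{1}{i\pi}\int\frac{f(\ap)-f(\bp)}{\ap-\bp}\diff\bp$, and then differentiates this in $\ap$ under the integral sign; the term $\frac{f'(\ap)}{\ap-\bp}$ produced by the $\ap$-differentiation vanishes in the principal-value sense by oddness of the kernel. You instead write $\papabs f = i\Hil(\pap f)$, keep the derivative on $f$, and perform an explicit integration by parts in $\bp$; the same cancellation then appears as the vanishing of the boundary terms at $\bp = \ap\pm\ep$. These two derivations are essentially transposes of the same computation — both shuttle one derivative between $f$ and the Cauchy kernel, and both rely on the same antisymmetric cancellation — so the content is the same. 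What your route buys is a slightly more explicit justification: differentiating a principal-value integral in the free variable, as the paper does, is a step that needs a small argument, whereas your integration by parts on the truncated domain $(-R,\ap-\ep)\cup(\ap+\ep,R)$ with $f\in\Scalsp$ is completely elementary, and you also correctly point out that the right-hand side integral must still be interpreted in the principal-value sense (since $\frac{f(\ap)-f(\bp)}{(\ap-\bp)^2}\sim \frac{f'(\ap)}{\ap-\bp}$ near $\bp=\ap$), a point the paper's statement leaves implicit.
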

\begin{proof}
As $\papabs = i\Hil \pap$ and as $\Hil(1) = 0$, we have
\begin{align*}
    (\papabs f)(\ap) & = i\pap\brac{\frac{1}{i\pi} \int \frac{1}{\ap - \bp}f(\bp) \diff \bp} \\
    & = -\frac{1}{\pi}\pap \int \frac{f(\ap) - f(\bp)}{\ap - \bp} \diff \bp \\
    & = \frac{1}{\pi}\int \frac{f(\ap) - f(\bp)}{(\ap - \bp)^2}\diff \bp
\end{align*}
\end{proof}

For functions $f_1,f_2,f_3:\Rsp \to \Csp$ we define the function $\sqbrac{f_1,f_2 ; f_3}:\Rsp \to \Csp$ as 
\begin{align}\label{eq:foneftwofthree}
\sqbrac{f_1, f_2;  f_3}(\ap) = \frac{1}{i\pi} \int \brac{\frac{f_1(\ap) - f_1(\bp)}{\ap - \bp}}\brac{\frac{f_2(\ap) - f_2(\bp)}{\ap-\bp}} f_3(\bp) \diff \bp
\end{align}

\begin{prop}\label{prop:tripleidentity}
Let $f,g,h \in \mathcal{S}(\Rsp)$. Then we have the following identities
\begin{enumerate}
\item $h\pap[f,\Hil]\pap g = \sqbrac{h\pap f,\Hil}\pap g + \sqbrac{f, \Hil}\pap\brac{h\pap g} - \sqbrac{h, f ; \pap g} $ 

\item $\Dt [f,\Hil]\pap g = \sqbrac{\Dt f, \Hil}\pap g + \sqbrac{f, \Hil}\pap(\Dt g) - \sqbrac{\bvar, f; \pap g} $  
\end{enumerate}
\end{prop}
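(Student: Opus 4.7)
Both identities are algebraic identities on the Cauchy-type kernel, to be verified by writing out the integrals and matching terms. The crucial observation is that for the kernel
\[
K(\ap,\bp) \eqdef \frac{f(\ap) - f(\bp)}{\ap - \bp},
\]
we have the symmetry $\pap K(\ap,\bp) + \pbp K(\ap,\bp) = \dfrac{f'(\ap) - f'(\bp)}{\ap - \bp}$, and separately
\[
\pap K = \frac{f'(\ap)}{\ap - \bp} - \frac{f(\ap) - f(\bp)}{(\ap-\bp)^2}, \qquad -\pbp K = \frac{f'(\bp)}{\ap-\bp} - \frac{f(\ap) - f(\bp)}{(\ap-\bp)^2}.
\]
This decomposition is the only tool I will use.

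\textbf{Part (1).} I would start by writing
\[
h(\ap)\,\pap[f,\Hil]\pap g(\ap) = \frac{h(\ap)}{i\pi}\int \pap K(\ap,\bp)\,\pbp g(\bp)\,\diff\bp,
\]
and then expand the three terms on the right-hand side of (1) as explicit singular integrals, using integration by parts in $\bp$ on the piece $\sqbrac{f,\Hil}\pap(h\pap g)$ to turn the derivative of $(h\pap g)$ onto the kernel via $-\pbp K$. Once this is done, the first term of $\sqbrac{h\pap f, \Hil}\pap g$ contributes $h(\ap)f'(\ap)/(\ap-\bp)$, and splitting $h(\ap)-h(\bp)$ in $\sqbrac{h,f;\pap g}$ produces the piece $-h(\ap)(f(\ap)-f(\bp))/(\ap-\bp)^2$. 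Together these reassemble exactly $h(\ap)\pap K(\ap,\bp)$, which gives the LHS. The remaining pieces — the ones with $h(\bp)f'(\bp)/(\ap-\bp)$ coming from $\sqbrac{h\pap f,\Hil}\pap g$ and from $\sqbrac{f,\Hil}\pap(h\pap g)$, and the ones with $h(\bp)(f(\ap)-f(\bp))/(\ap-\bp)^2$ coming from $\sqbrac{f,\Hil}\pap(h\pap g)$ and from $\sqbrac{h,f;\pap g}$ — cancel pairwise. This is the heart of the proof and the only place requiring bookkeeping; everything else is mechanical.

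\textbf{Part (2).} I would split $\Dt = \pt + \bvar\pap$ and handle the two pieces separately. For $\pt[f,\Hil]\pap g$, I differentiate under the integral sign: since $[f,\Hil]\pap g(\ap) = \frac{1}{i\pi}\int K(\ap,\bp)\pbp g(\bp)\,\diff\bp$ and both $f$ and $g$ depend on $t$, the product rule gives $\sqbrac{f_t,\Hil}\pap g + \sqbrac{f,\Hil}\pap g_t$. For $\bvar\pap[f,\Hil]\pap g$, I apply part (1) with the auxiliary function $h=\bvar$ to obtain
\[
\bvar\pap[f,\Hil]\pap g = \sqbrac{\bvar\pap f,\Hil}\pap g + \sqbrac{f,\Hil}\pap(\bvar\pap g) - \sqbrac{\bvar,f;\pap g}.
\]
Adding the two contributions and using the additivity $\sqbrac{f_t,\Hil}\pap g + \sqbrac{\bvar f_\ap,\Hil}\pap g = \sqbrac{\Dt f,\Hil}\pap g$ together with the identity $\pap(\Dt g) = \pap g_t + \pap(\bvar\pap g)$ (which in turn gives $\sqbrac{f,\Hil}\pap g_t + \sqbrac{f,\Hil}\pap(\bvar\pap g) = \sqbrac{f,\Hil}\pap(\Dt g)$) yields the claim.

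\textbf{Main obstacle.} There is no real obstacle — the proof is a careful combinatorial matching of integrand terms in (1), followed by the routine linearity-and-additivity argument in (2). The only point to be careful about is the integration by parts used to expand $\sqbrac{f,\Hil}\pap(h\pap g)$ in part (1), which must use the exact formula for $-\pbp K$ so that the $h(\bp)f'(\bp)/(\ap-\bp)$ and $h(\bp)(f(\ap)-f(\bp))/(\ap-\bp)^2$ terms appear with the correct signs to cancel against the corresponding terms from the other two pieces of the right-hand side.
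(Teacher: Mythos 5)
Your proof is correct. The paper does not carry out this computation itself — it simply cites the appendix of \cite{Ag20a} — and your argument is a self-contained verification along the standard lines: write each commutator as an integral against the difference-quotient kernel $K(\ap,\bp)=\frac{f(\ap)-f(\bp)}{\ap-\bp}$, integrate by parts in $\bp$ on $\sqbrac{f,\Hil}\pap\brac{h\pap g}$ so that $-\pbp K$ appears, and match terms. Your bookkeeping is exactly right: the $h(\bp)f'(\bp)/(\ap-\bp)$ pieces from $\sqbrac{h\pap f,\Hil}\pap g$ and from the integrated-by-parts term cancel, the $h(\bp)(f(\ap)-f(\bp))/(\ap-\bp)^2$ pieces cancel against the corresponding piece of $\sqbrac{h,f;\pap g}$, and the surviving pieces reassemble $h(\ap)\pap K$, which is the left-hand side; part (2) then follows from part (1) with $h=\bvar$ together with differentiation under the integral sign and the linearity $\sqbrac{f_t+\bvar f_\ap,\Hil}\pap g=\sqbrac{\Dt f,\Hil}\pap g$, $\pap g_t+\pap(\bvar\pap g)=\pap(\Dt g)$. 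One small point of care for the write-up: the individual intermediate pieces such as $\frac{h(\ap)f'(\ap)}{\ap-\bp}\pbp g(\bp)$ are only principal-value integrable, whereas each of the three commutator terms in the identity has a bounded difference-quotient kernel; so either carry the p.v.\ through the intermediate splitting or perform the cancellations at the level of the combined kernels before separating them. This is routine, but it is the only place where the formal term-matching needs justification.
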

\begin{proof}
Both of these identities are proven in the Appendix section of \cite{Ag20a}.
\end{proof}

\begin{prop}\label{prop:Coifman}
Let $H \in C^1(\Rsp),A_i \in C^1(\Rsp) $ for $i=1,\cdots m$ and let $\delta>0$ be such that 
\begin{align*}
\delta \leq \abs{\frac{H(x)-H(y)}{x-y} } \leq \frac{1}{\delta} \quad \tx{ for all } x\neq y
\end{align*}
Let $0\leq k \leq m+1$ and define
\begin{align*}
T(A,f)(x) & = p.v. \int \frac{\Pi_{i=1}^{m}(A_i(x) - A_i(y))}{(x-y)^{m+1-k}(H(x)-H(y))^k }f(y)\diff y
\end{align*}
then we have the estimates
\begin{enumerate}
\item $\norm[2]{T(A,f)} \leq C(\norm[\infty]{H'},\delta) \norm[\infty]{A_1'}\cdots\norm[\infty]{A_m'}\norm[2]{f}$ 
\item $\norm[2]{T(A,f)} \leq C(\norm[\infty]{H'},\delta) \norm[2]{A_1'}\norm[\infty]{A_2'}\cdots\norm[\infty]{A_m'}\norm[\infty]{f}$
\end{enumerate}

\end{prop}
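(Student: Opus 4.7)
My plan is to reduce this to the classical Coifman--McIntosh--Meyer theorem on multilinear Calder\'on commutators by absorbing the bi-Lipschitz factor $H$ into an analytic functional calculus. The starting point is to rewrite the kernel as
\[
K(x,y) \;=\; \frac{\prod_{i=1}^m (A_i(x)-A_i(y))}{(x-y)^{m+1}}\cdot \Phi\!\left(\frac{H(x)-H(y)}{x-y}\right),\qquad \Phi(w)=w^{-k}.
\]
Since $H\in C^1$ and $\abs{(H(x)-H(y))/(x-y)}\in[\delta,1/\delta]$ uniformly, the continuity of $H'$ forces a constant sign, so (possibly replacing $H$ by $-H$) the argument of $\Phi$ ranges in the compact interval $I=[\delta,\|H'\|_\infty]\subset[\delta,1/\delta]$, on which $\Phi(w)=w^{-k}$ is analytic.

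Next, I would cover $I$ by finitely many open disks $D(w_j,r_j)$ with $r_j<w_j$, and choose a smooth partition of unity $\{\chi_j\}$ subordinate to this cover, so that $\Phi=\sum_j \chi_j \Phi$ with each $\chi_j\Phi$ admitting an absolutely convergent Taylor series $\sum_n c^{(j)}_n (w-w_j)^n$ for $w\in I$, the coefficients decaying geometrically in $n$. Substituting $w=(H(x)-H(y))/(x-y)$ produces
\[
\Phi\!\left(\frac{H(x)-H(y)}{x-y}\right)\;=\;\sum_{j,n} c^{(j)}_n \left(\frac{\tilde H_j(x)-\tilde H_j(y)}{x-y}\right)^n,
\]
where $\tilde H_j(u):=H(u)-w_j u$ is Lipschitz with $\|\tilde H_j'\|_\infty\le \|H'\|_\infty+|w_j|\le C(\|H'\|_\infty,\delta)$. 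Inserting this expansion into $T(A,f)$ expresses it as an absolutely convergent sum of multilinear Calder\'on commutator operators
\[
T_{j,n}(A,\tilde H_j;f)(x)\;=\;\mathrm{p.v.}\!\int \frac{\prod_{i=1}^m(A_i(x)-A_i(y))\,(\tilde H_j(x)-\tilde H_j(y))^n}{(x-y)^{m+n+1}}\,f(y)\,\mathrm dy .
\]

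The key input is then the Coifman--McIntosh--Meyer theorem, in the sharp form giving $\|T_{j,n}\|_{L^2\to L^2}\le P(m+n)\prod_i\|A_i'\|_\infty \|\tilde H_j'\|_\infty^n$ with polynomial growth in $m+n$ (due to David). Combined with the geometric decay of the Taylor coefficients $c^{(j)}_n$ for $w\in I$ strictly inside the disk of convergence, the double sum in $j,n$ converges, yielding estimate~(1) with constant depending only on $\|H'\|_\infty$ and $\delta$. For estimate~(2), I would use the variant of the Coifman--McIntosh--Meyer bound in which one of the Lipschitz symbols, here $A_1$, is placed in $L^2$ via its derivative while $f$ is placed in $L^\infty$ (obtainable by duality from the $L^2\to L^2$ bound applied to the adjoint kernel, whose $A_1$-slot plays the dual role); applying this term-by-term to the same expansion gives~(2).

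The main obstacle is verifying that the polynomial-in-$n$ norm growth of the classical Calder\'on commutator estimates is indeed compatible with the geometric decay of the Taylor coefficients. This is where the crucial fact $r_j<w_j$ (strict containment of the disk of analyticity inside the domain of $\Phi$) must be exploited to make the Taylor coefficients $|c^{(j)}_n|\le M \rho^n$ for some $\rho<1/\|\tilde H_j'\|_\infty$ after rescaling. Once this geometric-versus-polynomial comparison is done, both estimates follow cleanly.
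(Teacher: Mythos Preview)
The paper does not actually prove this proposition; it cites Proposition~9.2 of \cite{Ag20a}. Your proposal is to supply a direct proof via the classical reduction to Calder\'on commutators, which is the standard argument behind such results. The overall strategy is sound, but your execution has a real gap in the partition-of-unity step.

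First, a smooth cutoff $\chi_j$ is not analytic, so $\chi_j\Phi$ does \emph{not} admit a convergent Taylor series; your claim to the contrary is simply false. Second, and more seriously, even if you drop the cutoffs and try to use the Taylor expansion of $\Phi(w)=w^{-k}$ around each $w_j$ on its own, the convergence fails for centers $w_j$ near $\delta$. The Taylor coefficients at $w_j$ decay like $w_j^{-n}$ (the singularity of $\Phi$ is at $0$, so the radius is $w_j$), while the commutator bound for $T_{j,n}$ involves $\|\tilde H_j'\|_\infty^n$ with $\tilde H_j'=H'-w_j$. Since $H'$ ranges over all of $[\delta,1/\delta]$ regardless of $j$, one has $\|\tilde H_j'\|_\infty\ge 1/\delta-w_j$, which for $w_j\approx\delta$ is of order $1/\delta$; this is much larger than $w_j$, so the series $\sum_n P(m+n)(\|\tilde H_j'\|_\infty/w_j)^n$ diverges. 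A partition of unity in $w$ cannot help here because $\|\tilde H_j'\|_\infty$ is a global quantity in the spatial variable, not something you can localize by restricting the difference quotient.

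The fix is to dispense with the partition of unity entirely and expand around a \emph{single} well-chosen center, namely $c=(\delta+1/\delta)/2$. Then $\tilde H=H-c\,\mathrm{id}$ satisfies $\|\tilde H'\|_\infty\le(1/\delta-\delta)/2<c$, so $\|\tilde H'\|_\infty/c\le(1-\delta^2)/(1+\delta^2)<1$, and the geometric ratio genuinely beats the polynomial growth $P(m+n)\lvert\binom{-k}{n}\rvert$. With this single modification your argument for both (1) and (2) goes through.
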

\begin{proof}
Both estimates follow as special case of Proposition 9.2 in \cite{Ag20a}.
\end{proof}

\begin{prop}\label{prop:Hardy}
Let $f \in \Scalsp(\Rsp)$. Then we have
\begin{enumerate}

\item $\norm[\infty]{f} \lesssim \norm[H^s]{f}$ if $s>\frac{1}{2}$ and for $s=\half$ we have $\norm[BMO]{f} \lesssim \norm[\Hhalf]{f}$

\item $
\begin{aligned}[t]
\int \abs{\frac{f(\ap) - f(\bp)}{\ap - \bp}}^2 \diff \bp \lesssim \norm[2]{f'}^2 
\end{aligned}
$

\item $
\begin{aligned}[t]
\norm[\Ltwo(\Rsp, \diff \ap)]{\sup_{\bp}\abs{\frac{f(\ap) - f(\bp)}{\ap - \bp}}} \lesssim \norm[2]{f'}
\end{aligned}
$

\item $
\begin{aligned}[t]
\norm[\Hhalf]{f}^2 = \frac{1}{2\pi}\int\!\! \!\int \abs{\frac{f(\ap) - f(\bp)}{\ap - \bp}}^2 \diff \bp \diff\ap
\end{aligned}
$

\item $
\begin{aligned}[t]
\norm[\Ltwo(\Rsp^2, \diff\ap\diff\bp)]{\pbp\brac{\frac{f(\ap) - f(\bp)}{\ap - \bp}} } \lesssim \norm[\Hhalf]{f'}
\end{aligned}
$
\end{enumerate}
\end{prop}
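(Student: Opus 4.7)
The statement is a collection of standard one-dimensional Littlewood--Paley / Hardy / Gagliardo estimates, so the plan is mostly to assemble well-known arguments, with one item requiring a slightly more careful computation. Before diving in, I would note the common thread: each inequality can be reduced to either a Fourier-side computation or to the fundamental-theorem-of-calculus representation
\begin{equation*}
\frac{f(\ap) - f(\bp)}{\ap - \bp} \;=\; \int_0^1 f'\bigl(\bp + t(\ap - \bp)\bigr)\,\diff t,
\end{equation*}
combined with Minkowski's integral inequality and a linear change of variables. Items (1) and (4) I would treat purely by Fourier methods: for (1), the bound $\|f\|_\infty \lesssim \|f\|_{H^s}$ with $s>1/2$ is immediate from Fourier inversion and Cauchy--Schwarz on $(1+|\xi|^2)^{-s/2}\in L^2$, while $\|f\|_{BMO}\lesssim \|f\|_{\Hhalf}$ is the classical embedding proven via a dyadic John--Nirenberg / Littlewood--Paley argument (or by citing a standard reference). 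For (4), one writes $\|f\|_{\Hhalf}^2=\int|\xi||\hat f(\xi)|^2\diff\xi$, uses $|\xi|=c\int (1-\cos(\xi h))/h^2\,\diff h$, and Plancherel in one variable; this is the standard identification of the fractional Sobolev norm with the Gagliardo seminorm.

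For (2), I would apply the FTC representation above, then Minkowski's integral inequality in the $t$-variable:
\begin{equation*}
\left\|\frac{f(\ap)-f(\cdot)}{\ap-\cdot}\right\|_{L^2(\diff \bp)} \;\leq\; \int_0^1 \left\|f'\bigl(\bp + t(\ap-\bp)\bigr)\right\|_{L^2(\diff\bp)}\,\diff t,
\end{equation*}
and the linear change of variables $\bp\mapsto \bp+t(\ap-\bp)$ contributes the Jacobian factor $(1-t)^{-1/2}$, which is integrable on $[0,1]$, yielding (2). For (3), the pointwise bound
\begin{equation*}
\left|\frac{f(\ap)-f(\bp)}{\ap-\bp}\right| \;\leq\; \frac{1}{|\ap-\bp|}\int_{\min(\ap,\bp)}^{\max(\ap,\bp)}|f'(s)|\,\diff s \;\leq\; 2\,(Mf')(\ap)
\end{equation*}
holds uniformly in $\bp$ (with $M$ the Hardy--Littlewood maximal operator), so taking the sup in $\bp$ and then the $L^2$ norm in $\ap$ reduces (3) to the $L^2$-boundedness of $M$.

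The main obstacle, and the only item requiring genuine computation, is (5). The plan there is to carry out the pointwise identity
\begin{equation*}
\pbp\!\left(\frac{f(\ap)-f(\bp)}{\ap-\bp}\right) \;=\; \frac{f(\ap)-f(\bp)-f'(\bp)(\ap-\bp)}{(\ap-\bp)^2} \;=\; \frac{1}{\ap-\bp}\int_0^1 \bigl[f'(\bp+t(\ap-\bp))-f'(\bp)\bigr]\,\diff t,
\end{equation*}
which comes from applying the FTC twice (or equivalently recognizing the second-order Taylor remainder). Then Minkowski's integral inequality in $t$ gives
\begin{equation*}
\left\|\pbp\!\left(\tfrac{f(\ap)-f(\bp)}{\ap-\bp}\right)\right\|_{L^2(\diff\ap\diff\bp)} \;\leq\; \int_0^1 \left\|\frac{f'(\bp+t(\ap-\bp))-f'(\bp)}{\ap-\bp}\right\|_{L^2(\diff\ap\diff\bp)}\,\diff t,
\end{equation*}
and for each fixed $t\in(0,1)$ the change of variable $\gamma=\bp+t(\ap-\bp)$ (with $\bp$ held fixed, Jacobian $t$) converts the inner $L^2$ norm into $\sqrt{t}$ times the Gagliardo seminorm of $f'$, which by (4) is a constant multiple of $\|f'\|_{\Hhalf}$. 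Since $\int_0^1\sqrt{t}\,\diff t$ is finite, this yields (5). The key subtlety is choosing to hit the quantity with Taylor to second order (rather than first order, which would produce $\|f''\|_{L^2}$) so that the resulting difference quotient of $f'$ lands precisely on the $\Hhalf$ Gagliardo seminorm; once this observation is made, the estimate is routine.
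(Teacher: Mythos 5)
Your proposal is correct. The paper itself gives no argument for this proposition: it simply says the estimates are found in the appendix of \cite{Ag20a}, so there is no in-text proof to compare against; what you have done is supply the standard self-contained arguments that the paper outsources. Your individual steps check out: in (2) the change of variables gives the Jacobian factor $(1-t)^{-1/2}$, which is integrable; in (3) the interval $[\ap,\bp]$ of length $r=|\ap-\bp|$ sits inside $[\ap-r,\ap+r]$, so the averaged integral of $|f'|$ is indeed at most $2(Mf')(\ap)$ and the $L^2$ bound for the maximal function finishes it; in (4), with the paper's normalization $\hat f(\xi)=\tfrac{1}{\sqrt{2\pi}}\int e^{-ix\xi}f(x)\,dx$, Plancherel plus $\int_{\Rsp}\frac{1-\cos(\xi h)}{h^2}\,dh=\pi|\xi|$ produces exactly the constant $\frac{1}{2\pi}$ claimed in the identity; and in (5) your second-order Taylor rewriting, Minkowski in $t$, and the substitution $\gamma=\bp+t(\ap-\bp)$ correctly yield the factor $\sqrt{t}$ (namely $t^2$ from $|\ap-\bp|^{-2}$ times $t^{-1}$ from $d\ap$, then a square root), so the $t$-integral converges and (4) converts the Gagliardo seminorm of $f'$ into $\norm[\Hhalf]{f'}$. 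The one point you rightly flag as the only nontrivial observation — expanding to second order in (5) so as to land on the half-derivative of $f'$ rather than on $\norm[2]{f''}$ — is exactly what makes the estimate close at the stated regularity.
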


\begin{proof}
This proposition is directly found in the Appendix section of \cite{Ag20a}.
\end{proof}

\begin{prop}\label{prop:commutator}
Let $f,g \in \mathcal{S}(\Rsp)$ with $s,a\in \Rsp$ and $m,n \in \Zsp$. Then we have the following estimates
\begin{enumerate}
\item $\norm*[\big][2]{\papabs^s\sqbrac{f,\Hil}(\papabs^{a} g )} \lesssim  \norm*[\big][BMO]{\papabs^{s+a}f}\norm[2]{g}$ \quad  for $s,a \geq 0$

\item $\norm*[\big][2]{\papabs^s\sqbrac{f,\Hil}(\papabs^{a} g )} \lesssim  \norm*[\big][2]{\papabs^{s+a}f}\norm[BMO]{g}$ \quad  for $s\geq 0$ and $a>0$

\item $\norm*[\big][2]{\sqbrac*[\big]{f,\papabs^\half}g } \lesssim \norm*[\big][BMO]{\papabs^\half f}\norm[2]{g}$

\item $\norm*[\big][2]{\sqbrac*[\big]{f,\papabs^\half}(\papabs^\half g) } \lesssim \norm*[\big][BMO]{\papabs f}\norm[2]{g}$

\item $\norm[\Linfty\cap\Hhalf]{\pap^m\sqbrac{f,\Hil}\pap^n g} \lesssim \norm*[\big][2]{\pap^{(m+n+1)}f}\norm[2]{g}$ \quad  for $m,n \geq 0$

\item $\norm[2]{\partial_{\ap}^m\sqbrac{f,\Hil}\partial_{\ap}^n g} \lesssim \norm*[\infty]{\partial_\ap^{(m+n)} f}\norm[2]{g}$ \quad  for $m,n \geq 0$

\item $\norm[2]{\partial_{\ap}^m\sqbrac{f,\Hil}\partial_{\ap}^n g} \lesssim \norm*[2]{\partial_\ap^{(m+n)} f}\norm[\infty]{g}$ \quad for $m\geq 0$ and $n\geq 1$

\item $\norm[2]{\sqbrac{f,\Hil}g} \lesssim \norm[2]{f'}\norm[1]{g}$
\end{enumerate}
\end{prop}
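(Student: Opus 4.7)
The eight inequalities split into three strata of increasing difficulty, and the plan is to reduce each stratum to a single underlying tool. Items~6--8 are the classical Calder\'on commutator estimates and follow directly from Proposition~\ref{prop:Coifman} with $H(x)=x$. Item~5 upgrades the $\Ltwo$ bound of item~6 to an $\Linfty\cap\Hhalf$ bound via Sobolev embedding and an interpolation inequality. Items~1--4 are fractional-order commutators of Kato--Ponce--Vega type, and these are the true technical core; they are handled by a Littlewood--Paley paraproduct decomposition, together with pointwise representations of $\papabs^{\half}$.

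\textbf{Baseline estimates (items 6, 7, 8) and item 5.} First I would prove item~8 by writing $\sqbrac{f,\Hil}g(\ap) = \frac{1}{i\pi}\int \frac{f(\ap)-f(\bp)}{\ap-\bp}\,g(\bp)\diff\bp$ and using the identity $\frac{f(\ap)-f(\bp)}{\ap-\bp}=\int_0^1 f'(s\ap+(1-s)\bp)\diff s$; Minkowski's inequality and a linear change of variables then give $\norm[2]{\sqbrac{f,\Hil}g}\lesssim \norm[2]{f'}\norm[1]{g}$. For items~6 and~7, the fact that $\Hil$ commutes with $\pap$ yields the Leibniz expansion
\begin{align*}
\pap^m\sqbrac{f,\Hil}\pap^n g \;=\; \sum_{k=0}^{m}\binom{m}{k}\sqbrac{\pap^k f,\Hil}\pap^{m+n-k}g,
\end{align*}
and every summand with $m+n-k\geq 1$ is of the form $\sqbrac{F,\Hil}\pap G$ to which Proposition~\ref{prop:Coifman} applies (taking $H(x)=x$, $m=1$, $A_1=F$, and either the $\Linfty$- or the $\Ltwo$-weighted version). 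The residual summand (which only occurs when $n=0$, $k=m$) is handled by item~8. For item~5, the $\Linfty$ bound follows from Sobolev embedding $H^1\hookrightarrow \Linfty$ applied to one extra-derivative version of item~6, and the $\Hhalf$ bound follows from the classical interpolation $\norm[\Hhalf]{u}^2 \leq \norm[2]{u}\,\norm[2]{\pap u}$ (a direct Cauchy--Schwarz on the Fourier side).

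\textbf{Fractional commutators (items 1--4) and the main obstacle.} For items~3 and~4 the plan is to start from the pointwise formula $\papabs^{\half}f(\ap) = c\,\text{p.v.}\int\frac{f(\ap)-f(\bp)}{|\ap-\bp|^{3/2}}\diff\bp$, which converts $\sqbrac{f,\papabs^{\half}}g$ into an explicit bilinear fractional integral. Bounding the H\"older-type difference $|f(\ap)-f(\bp)|\lesssim |\ap-\bp|^{\half}\,\norm[BMO]{\papabs^{\half}f}$ via a fractional John--Nirenberg inequality, together with $\Ltwo$-boundedness of the resulting Riesz-type operator, produces item~3; item~4 then follows by writing $\papabs^{\half}(\papabs^{\half}g)=\papabs g = i\Hil\pap g$ and distributing the $\papabs^{\half}$ factors across the commutator, which reduces to item~3 plus a CMM-type term already treated above. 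Items~1 and~2 are obtained by a standard paraproduct decomposition $f=\sum_j P_j f$, $g=\sum_k P_k g$: the off-diagonal regimes $|j-k|\gg 1$ collapse by frequency localisation (the commutator with $\Hil$ is essentially trivial when the two frequencies are well-separated), while the diagonal regime is closed by the $H^1$--BMO duality, placing $\papabs^{s+a}f$ in BMO and $g$ in $\Ltwo$. The hard part will be the sharp BMO endpoint in items~1 and~2: a naive pointwise bound requires an $\Linfty$ or $C^{s+a}$ norm on $\papabs^{s+a}f$, and pushing this down to BMO requires the full Coifman--Rochberg--Weiss/$T(1)$ machinery (or an equivalent paraproduct argument of Kenig--Ponce--Vega), which is the only nontrivial technical step in the appendix and can otherwise be imported verbatim from the analogous appendix in \cite{Ag20a}.
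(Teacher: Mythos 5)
The paper does not prove Proposition~\ref{prop:commutator}: it cites the appendix of \cite{Ag20a}, which is essentially what you do for items~1--4. Your treatment of item~8 (the $\frac{f(\ap)-f(\bp)}{\ap-\bp}=\int_0^1 f'\,\diff s$ representation, Minkowski, change of variables) is correct. However, your argument for items~6 and~7 has a genuine gap. The Leibniz identity $\pap^m[f,\Hil]\pap^n g=\sum_{k=0}^m\binom{m}{k}[\pap^k f,\Hil]\pap^{m+n-k}g$ is fine, but treating a summand with $m+n-k\geq 2$ as $[F,\Hil]\pap G$ with $F=\pap^k f$ and $G=\pap^{m+n-k-1}g$ and invoking Proposition~\ref{prop:Coifman} gives $\norm[\infty]{\pap^{k+1}f}\,\norm[2]{\pap^{m+n-k-1}g}$: the extra derivatives land on $g$, not on $f$, so this does not reduce to $\norm[\infty]{\pap^{m+n}f}\,\norm[2]{g}$. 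The obstruction is real: if you instead integrate by parts $m+n-k$ times in the commutator kernel to put all derivatives on $f$, you obtain $\pap^{m+n+1}f$ (one derivative too many), or, after one integration by parts in the homotopy parameter, a remainder kernel that is bounded by $\norm[\infty]{\pap^{m+n}f}/|\ap-\bp|$ but has no further finite-difference structure to feed into Proposition~\ref{prop:Coifman}. Items~6--7 really are Coifman--Meyer-type multiplier estimates (the symbol $(\operatorname{sgn}\xi-\operatorname{sgn}\eta)(i\eta)^n$ is supported where $|\eta|\lesssim|\xi-\eta|$, so the derivative count ``transfers'' to $f$ only after a Littlewood--Paley decomposition), and cannot be obtained from a single Calder\'on commutator plus Leibniz.

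The same issue propagates to item~5. Applying $H^1\hookrightarrow\Linfty$ to the ``extra-derivative'' version of item~6 yields a bound of order $(\norm[\infty]{\pap^{m+n}f}\norm[\infty]{\pap^{m+n+1}f})^{1/2}\norm[2]{g}$, which is not $\norm[2]{\pap^{m+n+1}f}\norm[2]{g}$; there is no way to convert the $\Linfty$ norms appearing in items~6/7 into the $\Ltwo$ norm claimed in item~5. Similarly, the interpolation $\norm[\Hhalf]{u}^2\leq\norm[2]{u}\norm[2]{\pap u}$ you invoke requires $\Ltwo$ bounds on $u$ and $\pap u$ with the right-hand side $\norm[2]{\pap^{m+n+1}f}\norm[2]{g}$, which is again not what items~6/7 supply. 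The $\Linfty$ half of item~5 can actually be proved directly (for each Leibniz summand, integrate by parts $m+n-k$ times to obtain $\int_0^1(1-s)^{m+n-k}\pap^{m+n+1}f(s\ap+(1-s)\bp)\,\diff s$ and use Cauchy--Schwarz in $\bp$ plus the change of variables $\bp\mapsto s\ap+(1-s)\bp$), but the $\Hhalf$ half needs a separate argument — e.g.\ the double-integral characterisation of $\Hhalf$ together with Hardy's inequality (Proposition~\ref{prop:Hardy}) — rather than the interpolation you describe. Finally, for item~4 note that $[f,\papabs^{1/2}](\papabs^{1/2}g)=[f,\papabs]g-\papabs^{1/2}[f,\papabs^{1/2}]g$, so reducing to item~3 requires an $\Hhalf$ (not merely $\Ltwo$) bound on $[f,\papabs^{1/2}]g$, which item~3 does not state; this step needs to be spelled out.
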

\begin{proof}
The above estimates are shown in the Appendix section of \cite{Ag20a}.
\end{proof}

\begin{prop}\label{prop:Leibniz}
Let $f,g,h \in \mathcal{S}(\Rsp)$ with $s,a\in \Rsp$ and $m,n \in \Zsp$. Then we have the following estimates
\begin{enumerate}
\item $\norm[2]{\papabs^s (fg)} \lesssim  \norm[2]{\papabs^s f}\norm[\infty]{g} + \norm[\infty]{f}\norm[2]{\papabs^s g}$ \quad for $s > 0$
\item $\norm[\Hhalf]{fg} \lesssim \norm[\Hhalf]{f}\norm[\infty]{g} + \norm[\infty]{f}\norm[\Hhalf]{g}$
\item $\norm[\Hhalf]{fg} \lesssim \norm[2]{f'}\norm[2]{g} + \norm[\infty]{f}\norm[\Hhalf]{g}$
\end{enumerate}
\end{prop}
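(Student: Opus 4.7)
\medskip

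\noindent\textbf{Proof proposal for Proposition \ref{prop:Leibniz}.}

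The plan is to prove part (1) directly as the main technical estimate, then obtain part (2) as the case $s=\tfrac12$, and prove part (3) by exploiting the integral representation of the $\Hhalf$ seminorm that is already recorded in Proposition \ref{prop:Hardy}.

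For part (1), I would use a Littlewood--Paley/paraproduct decomposition. Writing $f=\sum_j \Delta_j f$, $g=\sum_k \Delta_k g$, split the product as $fg = T_f g + T_g f + R(f,g)$ where $T_f g = \sum_k S_{k-3}f\cdot \Delta_k g$ is the low-times-high paraproduct and $R$ is the high-times-high remainder. For $T_f g$ the frequency support of $\Delta_l(S_{k-3}f\cdot\Delta_k g)$ forces $l\sim k$, so applying $\papabs^s$ one may put the derivative on the $\Delta_k g$ factor, estimate $\norm[\infty]{S_{k-3}f}\lesssim \norm[\infty]{f}$, and sum in $k$ via Plancherel / the square-function characterization of $L^2$. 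This yields the $\norm[\infty]{f}\norm[2]{\papabs^s g}$ term. The symmetric term $T_g f$ yields $\norm[\infty]{g}\norm[2]{\papabs^s f}$ in the same way. The remainder $R(f,g)$ is handled by noting that each dyadic piece has frequency support contained in a ball of radius $\lesssim 2^k$, so after applying $\papabs^s$ one can dualize against an $L^2$ function $h$ and use Cauchy--Schwarz together with the Littlewood--Paley square function bound to obtain a bound of the same form. Alternatively, an entirely elementary derivation is available for $0<s<1$ via the pointwise formula $\papabs^s u(x)=c_s\int (u(x)-u(y))|x-y|^{-1-s}dy$ followed by the splitting $u=f(x)(g(x)-g(y))+g(y)(f(x)-f(y))$ and Hardy/Schur-type integral bounds; this is the route I would take if I wished to avoid Littlewood--Paley entirely.

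Part (2) is an immediate specialization of part (1) with $s=\tfrac12$, using the definition $\norm[\Hhalf]{\cdot} = \norm[2]{\papabs^{1/2}\cdot}$.

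For part (3), I would use the double-integral formula from Proposition \ref{prop:Hardy}(4),
\[
\norm[\Hhalf]{fg}^2 \;=\; \frac{1}{2\pi}\int\!\!\int \abs*{\frac{(fg)(\ap)-(fg)(\bp)}{\ap-\bp}}^2 d\ap\, d\bp,
\]
together with the identity
\[
(fg)(\ap)-(fg)(\bp) \;=\; f(\ap)\bigl(g(\ap)-g(\bp)\bigr)+g(\bp)\bigl(f(\ap)-f(\bp)\bigr).
\]
The contribution of the first summand is controlled by $\norm[\infty]{f}^2 \norm[\Hhalf]{g}^2$ by pulling out $|f(\ap)|^2$ and using the same double-integral formula for $g$. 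For the second summand one integrates in $\ap$ first with $\bp$ fixed and applies Proposition \ref{prop:Hardy}(2) (the one-sided Hardy bound $\int |(f(\ap)-f(\bp))/(\ap-\bp)|^2\, d\ap \lesssim \norm[2]{f'}^2$, uniformly in $\bp$ by translation invariance), then integrates the remaining $|g(\bp)|^2$ in $\bp$, producing $\norm[2]{f'}^2\norm[2]{g}^2$. Taking square roots and combining gives the claimed bound.

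The main obstacle is part (1): producing a clean proof of the Kato--Ponce-type bound that treats the endpoint/low regularity cases correctly. Everything else reduces to symmetric splittings and the Hardy-type estimate already recorded in Proposition \ref{prop:Hardy}.
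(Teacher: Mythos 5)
Your proposal is correct. The paper itself does not give a proof of this proposition: it is cited verbatim as coming from the appendix of \cite{Ag20a}, so there is no in-paper argument to compare against step by step. Your sketch is the standard one and it is sound. Part (1) is the Kato--Ponce fractional Leibniz rule; either the paraproduct route or the pointwise kernel representation for $0<s<1$ works, and you correctly identify that the remainder piece is where the assumption $s>0$ is used (to sum the geometric tail $\sum_{k\geq l}2^{(l-k)s}$). Part (2) is indeed just $s=\tfrac12$ in part (1). Part (3) is the part most worth spelling out, and your route through Proposition~\ref{prop:Hardy}(4) is clean and self-contained: the splitting
\[
(fg)(\ap)-(fg)(\bp)=f(\ap)\bigl(g(\ap)-g(\bp)\bigr)+g(\bp)\bigl(f(\ap)-f(\bp)\bigr)
\]
feeds the first term into $\norm[\infty]{f}^2\norm[\Hhalf]{g}^2$ via the same double-integral identity for $g$, and the second into $\norm[2]{f'}^2\norm[2]{g}^2$ after swapping the order of integration and applying Proposition~\ref{prop:Hardy}(2) (uniformly in $\bp$, as you note). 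Since the asymmetric bound (3) is precisely what cannot be obtained by simply specializing (1) or (2), your direct argument there is the most valuable part of the write-up; the rest matches the standard literature proof that the citation points to.
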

\begin{proof}
The above estimates are shown in the Appendix section of \cite{Ag20a}.
\end{proof}

\begin{lemma}\label{lem:interpolationmult}
Let $k,n \in \Nsp$ and $f_1,f_2, \cdots, f_k \in \Scalsp(\Rsp)$. Let $r_1,r_2\cdots, r_k \in \Zsp$ with $r_1 + \cdots + r_k = n$ and $ r_i\geq 0$ for all $1\leq i\leq k$ and. Let $r = \max\cbrac{r_1,r_2,\cdots, r_k} \geq 1$. Then 
\begin{enumerate}
\item $\norm*[\big][2]{f_1^{(r_1)}\cdots f_k^{(r_k)} } \leq C(K)\cbrac{\norm[H^s]{f_1'} + \cdots + \norm[H^s]{f_k'} }$ \quad for $s = \max\cbrac{r-1,n-2}$
\item $\norm*[\big][\Hhalf]{f_1^{(r_1)}\cdots f_k^{(r_k)} } \leq C(K)\cbrac{\norm[H^s]{f_1'} + \cdots + \norm[H^s]{f_k'} }$  \quad for $s = \max\cbrac{r-\half,n-2}$
\end{enumerate}
with $K = (\norm[\infty]{f_1} + \norm[H^1]{f_1'}) + \cdots +  (\norm[\infty]{f_k} + \norm[H^1]{f_k'})$ and $C(K)$ is a constant depending only on $K$.
\end{lemma}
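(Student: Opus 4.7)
The plan is to isolate the factor with the most derivatives, put it in $L^2$, and control the remaining factors in $L^\infty$ via Gagliardo--Nirenberg interpolation, then close with Young's inequality to convert the resulting product into a sum. Without loss of generality I reorder so that $r_1=r$. Hölder's inequality gives
\[
\|f_1^{(r_1)}\cdots f_k^{(r_k)}\|_{L^2}\le \|f_1^{(r)}\|_{L^2}\prod_{i=2}^k\|f_i^{(r_i)}\|_{L^\infty},
\]
and since $s\ge r-1$ the first factor satisfies $\|f_1^{(r)}\|_{L^2}=\|f_1'\|_{\dot H^{r-1}}\le \|f_1'\|_{H^s}$, which will provide the single ``full-strength'' norm on the right hand side.

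For the remaining $L^\infty$ factors I would split on the size of $r_i$. When $r_i\le 1$, the one-dimensional Sobolev embedding $H^1\hookrightarrow L^\infty$ gives $\|f_i^{(r_i)}\|_{L^\infty}\lesssim \|f_i\|_{L^\infty}+\|f_i'\|_{H^1}\le CK$ with no appeal to any higher norm. When $r_i\ge 2$, I apply the Gagliardo--Nirenberg inequality
\[
\|f_i^{(r_i)}\|_{L^\infty}\lesssim \|f_i\|_{L^\infty}^{1-\theta_i}\|f_i'\|_{H^s}^{\theta_i},\qquad \theta_i=\frac{r_i}{s+\tfrac12},
\]
which is valid once $r_i\le s+\tfrac12$. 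A short case check based on $s=\max(r-1,n-2)$ confirms this in every configuration: if $s=r-1$ then $r\ge n-1$, which forces $r_i\le 1$ for every $i\ne 1$, so GN is never actually invoked; if instead $s=n-2$ then $s\ge r$ and hence $r_i\le r\le s$, so GN applies even when there are ties at the maximum. The only genuinely degenerate case is $n=2$, $r=1$, and it falls directly under the $r_i\le 1$ branch.

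Collecting the estimates one arrives at a bound of the shape $C(K)\|f_1'\|_{H^s}\prod_{i\ne 1,\,r_i\ge 2}\|f_i'\|_{H^s}^{\theta_i}$; a direct calculation from $s=\max(r-1,n-2)$ shows that $\sum_{i\ne 1,\,r_i\ge 2}\theta_i\le 1$ in all remaining configurations, and the weighted AM--GM (Young's) inequality then converts the product into the desired linear combination $C(K)\sum_j\|f_j'\|_{H^s}$, with the inoffensive constants absorbed into $C(K)$ using $\|f_i\|_{L^\infty}+\|f_i'\|_{H^1}\le K$.

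The $\dot H^{1/2}$ estimate follows the same outline: one replaces $\|f_1^{(r)}\|_{L^2}$ by $\|f_1^{(r)}\|_{\dot H^{1/2}}=\|f_1'\|_{\dot H^{r-1/2}}\le \|f_1'\|_{H^s}$ (legitimate since in part~(2) we have $s\ge r-\tfrac12$), and replaces the Hölder step by the fractional Leibniz/Kato--Ponce estimate $\|fg\|_{\dot H^{1/2}}\lesssim \|f\|_{L^\infty}\|g\|_{\dot H^{1/2}}+\|f\|_{\dot H^{1/2}}\|g\|_{L^\infty}$, iterated over the $k$ factors; the $L^\infty$ pieces are then controlled by the same Gagliardo--Nirenberg bounds. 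The hard part of the whole argument is the combinatorial bookkeeping that shows the sharp choice $s=\max(r-1,n-2)$ (respectively $s=\max(r-\tfrac12,n-2)$ for $\dot H^{1/2}$) is exactly what makes the Gagliardo--Nirenberg exponents sum to at most $1$, so that a single application of Young's inequality produces the claimed \emph{linear} dependence on the norms $\|f_j'\|_{H^s}$; verifying this in the tie-at-max situation and in the small-$n$ degeneracies is where the analysis is most delicate.
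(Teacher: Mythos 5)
Your intermediate inequalities are fine, but the closing Young's/AM--GM step is not, and this is a genuine gap. You arrive at a bound of the shape $C(K)\norm[H^s]{f_1'}\prod_{i\neq 1,\,r_i\geq 2}\norm[H^s]{f_i'}^{\theta_i}$ and observe that $\sum_{i\neq 1,\,r_i\geq 2}\theta_i\leq 1$, but the \emph{total} exponent carried by the $H^s$ norms is $1+\sum_{i\neq 1,\,r_i\geq 2}\theta_i$, which strictly exceeds $1$ in every configuration in which Gagliardo--Nirenberg is actually invoked. Weighted AM--GM turns $\prod_j x_j^{p_j}$ into $\sum_j p_j x_j$ only when $\sum_j p_j=1$, and the excess cannot be absorbed into $C(K)$: the constant $K$ controls only $\norm[H^1]{f_j'}$ and $\norm[\infty]{f_j}$, and for $s>1$ one can take $\widehat{f_j'}$ concentrated near a large frequency $N$ with amplitude $N^{-1}$ so that $K\lesssim 1$ while $\norm[H^s]{f_j'}\sim N^{s-1}\to\infty$. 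Concretely, with $k=2$, $r_1=r_2=2$ (so $n=4$, $s=2$, $\theta_2=4/5$), your intermediate bound is $C(K)\norm[H^2]{f_1'}\,\norm[H^2]{f_2'}^{4/5}\sim N^{9/5}$ for such data, whereas the claimed right-hand side $C(K)\big(\norm[H^2]{f_1'}+\norm[H^2]{f_2'}\big)\sim N$. The lemma itself is not in danger for these data (one checks $\norm[2]{f_1''f_2''}\sim 1$ there); it is your intermediate bound that loses too much.

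The missing idea is that the single available power of $\norm[H^s]{\cdot}$ must be shared among \emph{all} $k$ factors, including the one placed in $L^2$: rather than spending a full unit on $\norm[2]{f_1^{(r)}}\leq\norm[H^s]{f_1'}$, one should interpolate $\norm[\dot{H}^{r-1}]{f_1'}$ between $\norm[H^1]{f_1'}\leq K$ and $\norm[H^s]{f_1'}$, and likewise soften the $L^\infty$ bounds, choosing all the interpolation exponents $p_j$ so that $\sum_j p_j\leq 1$. Even then, when $\sum_j p_j<1$, AM--GM introduces an unabsorbable additive constant $1-\sum_j p_j$, so a further case split is required --- for instance ordering the factors by the size of $\norm[H^s]{f_j'}$ rather than by $r_j$ and applying the crude Sobolev bound $\norm[\infty]{f_j^{(r_j)}}\lesssim\norm[H^s]{f_j'}$ (valid for $1\leq r_j\leq s$) to a factor realizing the minimum. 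As written, the argument does not establish the claimed linear dependence on $\sum_j\norm[H^s]{f_j'}$.
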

\begin{proof}
These estimates are shown in the Appendix section of \cite{Ag20a}.
\end{proof}

\begin{lem}[Gronwall's Inequality]\label{lem:Gronwall}
Say $v(t) \geq 0$, $w(t)$ non-decreasing and $u(t)$ satisfies
\begin{align*}
u(t) \leq w(t) + \int_{0}^{t}u(s)v(s) \diff s.
\end{align*}
Then, $u(t)$ satisfies the bound
\begin{align*}
u(t) \leq w(t)\exp\brac{\int_{0}^{t}v(s)\diff s}.
\end{align*}
\end{lem}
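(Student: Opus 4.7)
The plan is to reduce the integral inequality to a linear first-order ODE (inequality) for the auxiliary quantity $A(t) = \int_0^t u(s) v(s)\diff s$ and then solve it by an integrating factor. The monotonicity of $w$ enters only at the very end to absorb $w(s)$ into $w(t)$.

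First, I would set $A(t) = \int_0^t u(s) v(s)\diff s$, so that $A(0) = 0$ and $A$ is absolutely continuous with $A'(t) = u(t) v(t)$ a.e. The hypothesis $u(t) \leq w(t) + A(t)$ combined with $v(t) \geq 0$ then gives the differential inequality
\begin{align*}
A'(t) - v(t) A(t) \;\leq\; v(t)\, w(t).
\end{align*}
Next I would introduce the integrating factor $e^{-V(t)}$ with $V(t) \eqdef \int_0^t v(s)\diff s$, which is valid since $v\geq 0$ makes $V$ well defined and non-decreasing. Multiplying through yields
\begin{align*}
\brac{e^{-V(t)} A(t)}' \;\leq\; e^{-V(t)}\, v(t)\, w(t),
\end{align*}
and integrating from $0$ to $t$ (using $A(0)=0$) gives
\begin{align*}
e^{-V(t)} A(t) \;\leq\; \int_0^t e^{-V(s)}\, v(s)\, w(s)\diff s.
\end{align*}

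Now I would invoke the monotonicity of $w$: since $w(s)\leq w(t)$ for $s\leq t$, I can pull $w(t)$ outside the integral and compute the remaining integral exactly via the substitution $\sigma = V(s)$, which gives $\int_0^t e^{-V(s)} v(s)\diff s = 1 - e^{-V(t)}$. Therefore
\begin{align*}
A(t) \;\leq\; w(t)\brac{e^{V(t)} - 1},
\end{align*}
and finally $u(t) \leq w(t) + A(t) \leq w(t) e^{V(t)}$, which is the desired bound.

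There is essentially no hard step here; the only delicate point is that the conclusion requires $w$ to be non-decreasing (otherwise one would have to leave $\int_0^t e^{-V(s)} v(s) w(s)\diff s$ in the statement, yielding the more general but weaker Gronwall form). If one wanted to avoid any regularity assumption on $u$, the alternative would be to iterate the inequality $u(t) \leq w(t) + \int_0^t v(s) u(s)\diff s$ into itself $n$ times, obtaining partial sums of the exponential series times $w(t)$, and pass to the limit as $n\to\infty$; this gives a proof requiring only measurability of $u$, but for the applications in this paper (where $u$ is continuous) the integrating-factor argument above is cleaner.
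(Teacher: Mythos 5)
Your proof is correct and complete. The paper itself does not supply a proof of this lemma; it simply states that the result is "an easy corollary to the standard integral version of Gronwall's inequality, e.g.\ as found in \cite{Dr03}," so there is no argument in the paper to compare against. Your integrating-factor derivation is the standard self-contained approach: introducing $A(t)=\int_0^t u(s)v(s)\,\diff s$ and $V(t)=\int_0^t v(s)\,\diff s$, deriving the differential inequality $A'(t)-v(t)A(t)\leq v(t)w(t)$, integrating against $e^{-V}$, and then using the monotonicity of $w$ to evaluate $\int_0^t e^{-V(s)}v(s)\,\diff s = 1-e^{-V(t)}$ exactly, yielding $u(t)\leq w(t)e^{V(t)}$. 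Each step checks out, and your closing remark correctly identifies that monotonicity of $w$ is exactly what converts the general Gronwall bound $u(t)\leq w(t)+\int_0^t e^{V(t)-V(s)}v(s)w(s)\,\diff s$ into the cleaner multiplicative form stated in the lemma. The only minor omission is an explicit regularity assumption (e.g.\ that $u,v$ are locally integrable so that $A$ is absolutely continuous), but you flag the iteration alternative for the low-regularity case, and in any event the paper applies this lemma only to continuous quantities.
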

\begin{proof}
The above inequality follows as an easy corollary to the standard integral version of Gronwall's inequality, e.g. as found in \cite{Dr03}.
\end{proof}

\begin{prop}\label{prop:chainrule}
Let $g\in\mathcal{S}(\Rsp)$. Then
\begin{align*}
    \norm[\Hhalf]{e^g - 1} \lesssim_{\norm[\infty]{g}} \norm[\Hhalf]{g}
\end{align*}
\end{prop}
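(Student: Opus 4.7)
The plan is to exploit the Gagliardo characterization of the $\dot{H}^{1/2}$ norm from item (4) of \propref{prop:Hardy}, which lets me reduce the inequality to a pointwise estimate on the difference quotients of $e^g - 1$. Note first that $(e^{g(\ap)}-1) - (e^{g(\bp)}-1) = e^{g(\ap)} - e^{g(\bp)}$, so the nontrivial task is bounding $|e^{g(\ap)} - e^{g(\bp)}|^2/|\ap-\bp|^2$ pointwise by $e^{2\norm[\infty]{g}}\,|g(\ap)-g(\bp)|^2/|\ap-\bp|^2$.

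The pointwise step is exactly the fundamental theorem of calculus: writing
\begin{align*}
e^{g(\ap)} - e^{g(\bp)} = (g(\ap) - g(\bp))\int_0^1 e^{g(\bp) + t(g(\ap) - g(\bp))}\diff t,
\end{align*}
we get $|e^{g(\ap)} - e^{g(\bp)}| \leq e^{\norm[\infty]{g}}|g(\ap) - g(\bp)|$ (using $|e^z| = e^{\Real z} \leq e^{|z|}$ if $g$ is complex-valued; the real-valued case is immediate). Squaring, dividing by $|\ap-\bp|^2$, and integrating against $\diff \ap \diff \bp$, Proposition \ref{prop:Hardy}(4) yields
\begin{align*}
\norm[\Hhalf]{e^g - 1}^2 = \frac{1}{2\pi}\iint \abs*[\bigg]{\frac{e^{g(\ap)} - e^{g(\bp)}}{\ap - \bp}}^2 \diff \bp \diff \ap \leq e^{2\norm[\infty]{g}}\norm[\Hhalf]{g}^2,
\end{align*}
which is exactly the claimed bound with explicit constant $e^{\norm[\infty]{g}}$.

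There is no main obstacle here; the argument is a one-line consequence of the Gagliardo formula and Lipschitz continuity of the exponential on bounded sets. The reason this suffices (despite $e^g - 1$ not being in $L^2$ a priori under the sole assumption $g \in \Scalsp$ — though of course here it is) is that the homogeneous $\dot H^{1/2}$ seminorm only sees the difference quotients, and those are controlled directly by those of $g$ with the exponential Lipschitz constant.
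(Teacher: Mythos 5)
Your argument is correct, and it takes a genuinely different route from the paper. The paper proves this by expanding $e^g-1=\sum_{n\geq 1}g^n/n!$, estimating each power via repeated application of the $\Hhalf$ product rule (Proposition \ref{prop:Leibniz}) to get $\norm[\Hhalf]{g^n}\lesssim nC^{n-1}\norm[\infty]{g}^{n-1}\norm[\Hhalf]{g}$, and summing the series, which yields a constant of the form $e^{C\norm[\infty]{g}}$ with $C$ the universal constant from the Leibniz estimate. You instead invoke the Gagliardo double-integral characterization of the $\dot H^{1/2}$ seminorm (Proposition \ref{prop:Hardy}(4)) and the pointwise Lipschitz bound $\abs{e^{g(\ap)}-e^{g(\bp)}}\leq e^{\norm[\infty]{g}}\abs{g(\ap)-g(\bp)}$ obtained from the fundamental theorem of calculus; since the seminorm only sees difference quotients, the subtraction of $1$ is harmless and the estimate follows at once with the sharper explicit constant $e^{\norm[\infty]{g}}$. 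Your approach is more elementary (no product-rule machinery, no series summation), handles the complex-valued case needed elsewhere in the paper just as you note via $\abs{e^z}=e^{\Real z}$, and in fact generalizes to $F(g)-F(0)$ for any $F$ Lipschitz on the range of $g$; what the paper's route buys is that it stays entirely within the Fourier-side toolbox (Leibniz-type estimates) already assembled in the appendix, so it does not rely on the equivalence between the multiplier definition of $\Hhalf$ and the Gagliardo seminorm beyond the statement recorded in Proposition \ref{prop:Hardy}(4). Since that equivalence is indeed recorded there, your proof is complete as written.
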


\begin{proof}
We first observe that from repeated application of \propref{prop:Leibniz}, there exists a universal constant $C$ such that for any $n \in \Nsp$ we have
\begin{align*}
    \norm[\Hhalf]{g^n} \lesssim nC^{n-1}\norm[\infty]{g}^{n-1}\norm[\Hhalf]{g} 
\end{align*}
Now since $g\in L^{\infty}$,  we can use the Taylor expansion of $e^{x}-1$ to obtain
\begin{align*}
\norm[\Hhalf]{e^{g}-1} &\leq \sum_{n=1}^{\infty}\frac{1}{n!}\norm[\Hhalf]{g^{n}} \\
& \lesssim \sum_{n=1}^{\infty}\frac{1}{(n-1)!}C^{n-1}\norm[\infty]{g}^{n-1}\norm[\Hhalf]{g} \\
& \lesssim e^{C\norm[\infty]{g}}\norm[\Hhalf]{g}
\end{align*}
thereby proving the lemma. 
\end{proof}
 
\begin{prop}\label{prop:triple}
Let $f,g,h \in \mathcal{S}(\Rsp)$ . Then we have the following estimates
\begin{enumerate}

\item $\norm[2]{\sqbrac{f,g;h}} \lesssim \norm[2]{f'}\norm[2]{g'}\norm[2]{h}$

\item $\norm[2]{\pap\sqbrac{f,\sqbrac{g,\Hil}}h} \lesssim \norm[2]{f'}\norm[2]{g'}\norm[2]{h}$

\item $\norm[2]{\sqbrac{f,g; h'}} \lesssim \norm[\infty]{f'}\norm[\infty]{g'}\norm[2]{h}$

\item $\norm[\Hhalf]{\sqbrac{f,g; h'}} \lesssim \norm[\infty]{f'}\norm[\infty]{g'}\norm[\Hhalf]{h}$

\item $\norm[\Linfty\cap\Hhalf]{\sqbrac{f,g;h}} \lesssim \norm[\infty]{f'}\norm[2]{g'}\norm[2]{h}$

\end{enumerate}
\end{prop}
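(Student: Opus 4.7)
The plan is to prove the five estimates by exploiting the pointwise structure of $\sqbrac{f,g;h}$ together with the Hardy-type estimates (\propref{prop:Hardy}) and the Coifman multilinear theorem (\propref{prop:Coifman}), handling (1), (2), (5) by elementary manipulation and identities, and (3), (4) by an integration-by-parts reduction to Calderon commutators.

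\textbf{Items (1) and (2).} For (1), I would estimate pointwise using Cauchy–Schwarz in $\bp$:
\begin{align*}
\abs{[f,g;h](\ap)} \leq \frac{1}{\pi}\sup_{\bp}\abs*[\bigg]{\frac{f(\ap)-f(\bp)}{\ap-\bp}}\cdot \norm*[\bigg][\Ltwo(\diff\bp)]{\frac{g(\ap)-g(\bp)}{\ap-\bp}}\norm[2]{h}.
\end{align*}
By \propref{prop:Hardy}(2) the middle factor is bounded by $\norm[2]{g'}$, and then by \propref{prop:Hardy}(3) the outer $\Ltwo(\diff \ap)$ of the sup-factor is $\lesssim \norm[2]{f'}$, giving (1). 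For (2), a direct computation shows that $[f,[g,\Hil]]h(\ap) = \frac{1}{i\pi}\int \frac{(f(\ap)-f(\bp))(g(\ap)-g(\bp))}{\ap-\bp}h(\bp)\diff\bp$, and differentiating under the integral yields the algebraic identity
\begin{align*}
\pap [f,[g,\Hil]]h \;=\; f'\cdot [g,\Hil]h \,+\, g'\cdot [f,\Hil]h \,-\, [f,g;h].
\end{align*}
The first two terms on the right can be controlled in $L^2$ by pulling out $\norm[2]{f'}$ (resp.\ $\norm[2]{g'}$) and using the pointwise bound $\norm[\infty]{[g,\Hil]h} \lesssim \norm[2]{g'}\norm[2]{h}$ (Cauchy–Schwarz plus \propref{prop:Hardy}(2)); the last term is controlled by (1). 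Hence (2) follows.

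\textbf{Items (3) and (4).} For (3), I would integrate by parts in $\bp$ to move the derivative off $h$. A direct computation of $\pbp\left[\frac{(f(\ap)-f(\bp))(g(\ap)-g(\bp))}{(\ap-\bp)^2}\right]$ decomposes $[f,g;h']$ into three pieces:
\begin{align*}
[f,g;h'] = J_1 + J_2 + J_3,
\end{align*}
where
\begin{align*}
J_1 &= \frac{1}{i\pi}\int \frac{g(\ap)-g(\bp)}{(\ap-\bp)^2}\,f'(\bp)h(\bp)\diff\bp,\\
J_2 &= \frac{1}{i\pi}\int \frac{f(\ap)-f(\bp)}{(\ap-\bp)^2}\,g'(\bp)h(\bp)\diff\bp,\\
J_3 &= -\frac{2}{i\pi}\int \frac{(f(\ap)-f(\bp))(g(\ap)-g(\bp))}{(\ap-\bp)^3}\,h(\bp)\diff\bp.
\end{align*}
Each piece is a Coifman-type kernel of the form handled by \propref{prop:Coifman} (with $H(x)=x$, $k=0$, and $m=1$ for $J_1,J_2$, $m=2$ for $J_3$). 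Applying the first estimate in that proposition immediately yields the desired $L^2$ bound with the $L^\infty$ norms of $f'$ and $g'$, proving (3). Item (4) will be obtained by upgrading each Coifman-type piece from $L^2$ to $\Hhalf$: I would use the standard fact that these Calderon commutators are $C$-$Z$ operators, which I would realize concretely either by interpolating the $L^2$ bound against a matching $\dot{H}^1$ bound (obtained by differentiating the kernels and applying the same Coifman machinery), or by directly estimating the double integral $\iint |J_i(\ap)-J_i(\bp)|^2/(\ap-\bp)^2$ via the Hardy characterization \propref{prop:Hardy}(4).

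\textbf{Item (5) and the main obstacle.} The $L^\infty$ bound in (5) follows instantly from pointwise Cauchy–Schwarz: $\abs{[f,g;h](\ap)}\leq\frac{1}{\pi}\norm[\infty]{f'}\norm[\Ltwo(\diff\bp)]{\tfrac{g(\ap)-g(\bp)}{\ap-\bp}}\norm[2]{h}\lesssim \norm[\infty]{f'}\norm[2]{g'}\norm[2]{h}$. The $\Hhalf$ bound requires more care; my plan is to use the Hardy integral characterization \propref{prop:Hardy}(4) and, after splitting $[f,g;h](\ap) - [f,g;h](\bp)$ across the two integrals, to control the resulting double integral by factoring an $L^\infty$ norm of $f'$ out and reducing to Hardy-type double-integrals in $g$ and $h$ (using \propref{prop:Hardy}(2),(5)). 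The main obstacle of the whole proof is the $\Hhalf$ estimate in (4): here the Coifman kernels $J_1, J_2, J_3$ are classical Calderon commutators, and while they are known to be bounded on all Sobolev spaces, extracting the $\Hhalf$-bound solely from the $L^2$ statement in \propref{prop:Coifman} is non-trivial and will likely require either a companion $\dot H^1$ estimate (handling genuinely third-order terms that arise on differentiating the kernels) followed by complex interpolation, or a direct double-integral estimate exploiting the precise cancellation in each $J_i$.
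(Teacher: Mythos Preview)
The paper does not actually prove this proposition: its entire proof reads ``These estimates are shown in the Appendix section of \cite{Ag20a}.'' So there is no in-paper argument to compare against, and your direct attack is already more than the paper offers here.

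Your arguments for (1), (2), (3), and the $L^\infty$ half of (5) are correct and clean; the identity $\pap[f,[g,\Hil]]h = f'[g,\Hil]h + g'[f,\Hil]h - [f,g;h]$ and the integration-by-parts decomposition $[f,g;h']=J_1+J_2+J_3$ are exactly the right moves, and each piece lands squarely inside \propref{prop:Coifman} or \propref{prop:Hardy}.

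The only soft spots are the ones you flagged yourself: the $\Hhalf$ bounds in (4) and (5). For (4), your interpolation plan runs into a genuine obstacle: differentiating $J_3$ in $\ap$ produces a term $f'(\ap)\int\frac{g(\ap)-g(\bp)}{(\ap-\bp)^3}h(\bp)\diff\bp$, and this kernel has denominator degree $3$ with only one difference in the numerator, so it does \emph{not} fit the $m+1$ homogeneity of \propref{prop:Coifman}. You would need to integrate by parts in $\bp$ once more (pushing a derivative onto $h$, which is fine since you are aiming for $\norm[\Hhalf]{h}$ on the right) before the Coifman machinery applies. Alternatively, the direct double-integral route via \propref{prop:Hardy}(4),(5) works but requires a careful symmetrization of $[f,g;h'](\ap)-[f,g;h'](\bp)$; the cancellation is there but takes a page to unwind. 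For the $\Hhalf$ part of (5), the same double-integral approach is viable: after pulling out $\norm[\infty]{f'}$ pointwise, the remaining kernel in $g$ and $h$ can be handled by \propref{prop:Hardy}(4) in $h$ and \propref{prop:Hardy}(2) in $g$, but again the splitting of the difference quotient needs to be done with care. These are honest technicalities rather than conceptual gaps, and your overall strategy is sound.
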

\begin{proof}
These estimates are shown in the Appendix section of \cite{Ag20a}.
\end{proof}

\begin{prop} \label{prop:LinftyHhalf}
Let $f \in \mathcal{S}(\Rsp)$ and let $w$ be a smooth non-zero weight with $w,\frac{1}{w} \in \Linfty(\Rsp) $ and $w' \in \Ltwo(\Rsp)$. Then 
\begin{enumerate}
\item $\norm[\infty]{f}^2 \lesssim \norm[2]{\frac{f}{w}}\norm[2]{w(f')}$
\item $\norm[\Linfty\cap\Hhalf]{f}^2 \lesssim \norm[2]{\frac{f}{w}}\norm[2]{(wf)'} +  \norm[2]{\frac{f}{w}}^2\norm[2]{w'}^2$
\end{enumerate}
\end{prop}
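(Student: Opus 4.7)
\textbf{Proof proposal for Proposition \ref{prop:LinftyHhalf}.}

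The plan is to treat (1) first by a weighted variant of the standard fundamental theorem of calculus argument, and then to obtain (2) by combining (1) with a duality-plus-commutator estimate for the $\Hhalf$ seminorm. For (1), I would start from
\[
f(\ap)^2 = 2\int_{-\infty}^{\ap} f(\bp)\,f'(\bp)\,\diff\bp
\]
and insert the weight as $f\,f' = (f/w)(w f')$. Then Cauchy--Schwarz immediately yields
\[
\norm[\infty]{f}^2 \leq 2\norm[2]{f/w}\,\norm[2]{w f'},
\]
which is (1). (For complex-valued $f$, the same trick applies to $|f|^2 = 2\Real\int_{-\infty}^{\ap}\bar f f'\,\diff\bp$.)

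For (2), the new ingredient is control of $\norm[\Hhalf]{f}$. The plan is to write
\[
\norm[\Hhalf]{f}^2 = \int \bar f\,\papabs f\,\diff\ap = \int \frac{\bar f}{\bar w}\brac{\bar w\,\papabs f}\,\diff\ap,
\]
and then pull $\bar w$ inside $\papabs$ at the cost of a commutator:
\[
\bar w\,\papabs f = \papabs(\bar w f) - \sqbrac{\papabs,\bar w}f.
\]
The main term is harmless since $\papabs = i\Hil\pap$ and $\Hil$ is $\Ltwo$-bounded, giving
\[
\abs*{\int \frac{\bar f}{\bar w}\papabs(\bar w f)\,\diff\ap} \lesssim \norm[2]{f/w}\,\norm[2]{(wf)'}.
\]
For the commutator, expand $\sqbrac{\papabs,\bar w}f = i\Hil(\bar w' f) + i\sqbrac{\Hil,\bar w}f'$, bound the first piece by $\norm[2]{w'}\norm[\infty]{f}$ directly, and bound the second piece by items (6)--(7) of Proposition \ref{prop:commutator}, which give $\norm[2]{\sqbrac{\Hil,\bar w}f'} \lesssim \norm[2]{w'}\norm[\infty]{f}$. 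Hence
\[
\norm[\Hhalf]{f}^2 \lesssim \norm[2]{f/w}\,\norm[2]{(wf)'} + \norm[2]{f/w}\,\norm[2]{w'}\,\norm[\infty]{f}.
\]

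Finally, from (1) together with $\norm[2]{wf'} \leq \norm[2]{(wf)'} + \norm[2]{w'f} \leq \norm[2]{(wf)'} + \norm[2]{w'}\norm[\infty]{f}$, we obtain
\[
\norm[\infty]{f}^2 \lesssim \norm[2]{f/w}\,\norm[2]{(wf)'} + \norm[2]{f/w}\,\norm[2]{w'}\,\norm[\infty]{f}.
\]
Both estimates contain the mixed term $\norm[2]{f/w}\norm[2]{w'}\norm[\infty]{f}$, which Young's inequality converts into $\tfrac{1}{2}\norm[\infty]{f}^2 + C\norm[2]{f/w}^2\norm[2]{w'}^2$. Absorbing the $\norm[\infty]{f}^2$ term on the left yields (2). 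The only substantive step is the commutator estimate in the middle paragraph; everything else is bookkeeping via Cauchy--Schwarz and Young's inequality.
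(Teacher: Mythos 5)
Your argument is correct, and it is worth noting that the paper itself gives no proof here: it simply cites the appendix of \cite{Ag20a}. Your proposal is therefore a self-contained substitute. Part (1) is the standard weighted fundamental-theorem-of-calculus/Cauchy--Schwarz argument (for complex $f$ via $\abs{f}^2 = 2\Real\int_{-\infty}^{\ap}\bar f f'$), and part (2) rests on the pairing $\norm[\Hhalf]{f}^2 = \int \bar f\,\papabs f\,\diff\ap$, the commutator expansion $[\papabs,\bar w]f = i\Hil(\bar w' f) + i[\Hil,\bar w]f'$ (consistent with the paper's convention $\papabs = i\Hil\pap$), the Calder\'on-type bound from Proposition \ref{prop:commutator}(7), and a Young-inequality absorption that is legitimate since $\norm[\infty]{f}<\infty$ for Schwartz $f$. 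This is exactly the kind of weighted-commutator mechanism the cited reference uses for such estimates, so the route is not substantively different, just made explicit. One harmless caveat: Proposition \ref{prop:commutator} is stated for Schwartz data, whereas $\bar w$ is only smooth and bounded with $w'\in\Ltwo$; the estimate used depends only on $\norm[2]{w'}$, and the paper applies these commutator bounds to such weights (e.g. $1/\Zapbar$) throughout, so this is a standard approximation point, not a gap.

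One small bookkeeping fix: since $w$ is allowed to be complex (as in the paper's applications, $w = 1/\Zap$), your main-term bound should read $\abs*{\int \frac{\bar f}{\bar w}\papabs(\bar w f)\,\diff\ap} \lesssim \norm[2]{f/w}\,\norm[2]{(\bar w f)'}$, and $(\bar w f)'$ is not literally $(wf)'$. However,
\begin{align*}
\norm[2]{(\bar w f)'} \leq \norm[2]{\bar w' f} + \norm[2]{\bar w f'} \leq \norm[2]{(wf)'} + 2\norm[2]{w'}\norm[\infty]{f},
\end{align*}
so the discrepancy is exactly of the mixed form $\norm[2]{f/w}\norm[2]{w'}\norm[\infty]{f}$ that you already absorb by Young's inequality. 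With that one line added, the proof is complete as written.
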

\begin{proof}
These inequalities are shown in the Appendix section of \cite{Ag20a}.
\end{proof}

\begin{prop}\label{prop:Hhalfweight}
Let $f,g \in \mathcal{S}(\Rsp)$ and let $w,h \in \Linfty(\Rsp)$ be smooth functions with $w',h' \in \Ltwo(\Rsp)$. Then 
\begin{align*}
\norm[\Hhalf]{fwh} \lesssim \norm[\Hhalf]{fw}\norm[\infty]{h} + \norm[2]{f}\norm[2]{(wh)'} + \norm[2]{f}\norm[2]{w'}\norm[\infty]{h}
\end{align*}
If in addition we assume that $w$ is real valued then
\begin{align*}
\norm[2]{fgw} \lesssim \norm[\Hhalf]{fw}\norm[2]{g} + \norm[\Hhalf]{gw}\norm[2]{f} + \norm[2]{f}\norm[2]{g}\norm[2]{w'} 
\end{align*}
\end{prop}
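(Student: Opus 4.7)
The plan is to deduce both inequalities from the Gagliardo integral representation of $\norm[\Hhalf]{\cdot}$ (Proposition~\ref{prop:Hardy}(4)) combined with the Hardy bound
\[
\int \abs{\frac{u(\ap)-u(\bp)}{\ap-\bp}}^2 \diff \ap \lesssim \norm[2]{u'}^2
\]
from Proposition~\ref{prop:Hardy}(2), together with telescoping decompositions of product differences. The reality of $w$ will be essential for the second inequality.

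For the first inequality, I will decompose
\begin{align*}
(fwh)(\ap)-(fwh)(\bp) = [(fw)(\ap)-(fw)(\bp)]\,h(\ap) + (fw)(\bp)\,[h(\ap)-h(\bp)].
\end{align*}
After dividing by $\ap-\bp$ and taking the $L^2$ norm in $(\ap,\bp)$, the first summand contributes $\norm[\Hhalf]{fw}\,\norm[\infty]{h}$ directly. For the second summand I further write
\begin{align*}
w(\bp)[h(\ap)-h(\bp)] = [(wh)(\ap)-(wh)(\bp)] - h(\ap)[w(\ap)-w(\bp)],
\end{align*}
then use Fubini together with Proposition~\ref{prop:Hardy}(2) applied in the $\ap$-variable (with the $L^\infty$ bound on $h$) to extract the remaining contributions $\norm[2]{f}\,\norm[2]{(wh)'}$ and $\norm[\infty]{h}\,\norm[2]{f}\,\norm[2]{w'}$.

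For the second inequality, the reality of $w$ yields the symmetric identity $2fgw = (fw)g + f(gw)$, so that by $L^2$-duality
\[
\norm[2]{fgw} = \sup_{\norm[2]{\phi}\le 1}\frac{1}{2}\abs{\int \bigl[(fw)(g\bar\phi) + f(gw\bar\phi)\bigr]\,\diff \ap}.
\]
I will represent each piece, say $\int (fw)(g\bar\phi)\,\diff \ap$, through a Gagliardo double integral in $(\ap,\bp)$ and redistribute the half-derivative between $fw$ and $g\bar\phi$. The commutator $[\papabs^{1/2}, w]$, controlled via Proposition~\ref{prop:commutator}, is precisely what produces the corrective term $\norm[2]{w'}\,\norm[2]{f}\,\norm[2]{g}$, while a symmetric treatment of the $f(gw\bar\phi)$ piece yields the term $\norm[\Hhalf]{gw}\,\norm[2]{f}$.

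The hard part will be the second inequality: because $\Hhalf$ does not embed into $L^\infty$ in one dimension, one cannot directly H\"older a product of an $\Hhalf$ function with an $L^2$ function. The essential structural inputs are the symmetric decomposition $2fgw = (fw)g + f(gw)$ (which requires $w$ real) together with the commutator estimate for $[\papabs^{1/2},w]$ from Proposition~\ref{prop:commutator}, which together convert the borderline triple-product estimate into a manageable sum with the $\norm[2]{w'}$ factor absorbing the commutator error.
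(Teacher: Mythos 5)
Your handling of the first inequality is correct and complete in outline: telescoping $(fwh)(\ap)-(fwh)(\bp)$ into $[(fw)(\ap)-(fw)(\bp)]h(\ap)$ plus $f(\bp)\{[(wh)(\ap)-(wh)(\bp)]-h(\ap)[w(\ap)-w(\bp)]\}$, dividing by $\ap-\bp$, and using parts (2) and (4) of Proposition~\ref{prop:Hardy} produces exactly the three stated terms. (The paper itself does not prove this proposition but defers to \cite{Ag20a}, so I judge your argument on its own terms; for this half it is the natural argument.)

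The second inequality is where there is a genuine gap. The ``symmetric identity'' $2fgw=(fw)g+f(gw)$ is vacuous: both summands are pointwise equal to $fgw$ for any $w$, real or complex, so it carries no cancellation, is not a structural input, and in particular is not where the reality of $w$ can enter. After grouping the dual pairing as $\int (fw)(g\bar\phi)\,\diff\ap$, ``redistributing the half-derivative'' amounts to bounding $\norm[2]{\papabs^{-1/2}(g\bar\phi)}$ by $\norm[2]{g}\norm[2]{\phi}$, i.e.\ to the embedding $L^1(\Rsp)\hookrightarrow \dot H^{-1/2}(\Rsp)$, which is false (it is dual to the false embedding $\dot H^{1/2}\hookrightarrow L^\infty$) --- exactly the endpoint obstruction you flag yourself. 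The appeal to $[\papabs^{1/2},w]$ does not repair this: once the weight has been absorbed into $fw$ (resp.\ $gw$) there is no free factor of $w$ left to commute past $\papabs^{1/2}$, and you never exhibit a decomposition of $\int fgw\bar\phi\,\diff\ap$ on which Proposition~\ref{prop:commutator} actually acts and whose remaining pieces avoid the false embedding; merely asserting that the commutator ``produces the corrective term'' is a statement of intent, not a proof. Note also that the obvious detour --- writing $|fgw|=(|f|\,|w|^{1/2})(|g|\,|w|^{1/2})$ and using $\norm[4]{u}^2\lesssim\norm[2]{u}\norm[\Hhalf]{u}$ --- is unavailable, since $w$ may vanish and $|w|^{1/2}$ need not have an $L^2$ derivative; this is precisely why the estimate is delicate and why the weight must stay attached to whole factors $fw$, $gw$. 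A proof needs a concrete mechanism (for instance a decomposition, in the spirit of your first part or a paraproduct splitting, in which the difference quotient of $fw$ is paired against $g$, that of $gw$ against $f$, and the leftover pieces carry the difference quotient of $w$, generating $\norm[2]{f}\norm[2]{g}\norm[2]{w'}$); as written, your second half does not supply one.
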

\begin{proof}
These estimates are shown in the Appendix section of \cite{Ag20a}.
\end{proof}
Recall from \eqref{eq:Hcal} that
\begin{align*}
(\Hcal f)(\ap) &= \frac{1}{i\pi} p.v. \int \frac{\htilbp(\bp)}{\htil(\ap) - \htil(\bp)}f(\bp) \diff \bp \\
 (\Hcaltil f)(\ap) &=\frac{1}{i\pi} p.v. \int \frac{1}{\htil(\ap) - \htil(\bp)}f(\bp) \diff \bp
\end{align*}
We now prove some estimates related to these operators.

\begin{prop}\label{prop:HilHtilcaldiff}
Let $f,f_1,f_2,f_3,g \in \Scalsp(\Rsp)$ and let $\htil:\Rsp \to \Rsp$ be a homeomorphism such that there exists a constant $L\geq 1$ for which
\begin{align*}
\frac{1}{L} \leq \abs{\frac{\htil(x) - \htil(y)}{x-y}} \leq L \quad \tx{ for all } x\neq y
\end{align*}
Let $\Hil$ be the Hilbert transform \eqref{eq:HilbertP} and let $\Hcal, \Htil$ be as defined as in \eqref{eq:Hcal}. We will suppress the dependence of $L$ i.e. we write $a\lesssim b$ instead of $a\leq C(L)b$. With this notation we have the following estimates:
\begin{enumerate}
\item $\norm[2]{\Hcal(f)} \lesssim \norm[2]{f}$ and $\norm*[2]{\Htil(f)} \lesssim \norm[2]{f}$. 
\item  $\norm[\Hhalf]{\Hcal(f)} \lesssim  \norm[\Hhalf]{f}$
\item $\norm[2]{(\Hil - \Hcal) f} \lesssim \norm*[\big][2]{\htilap -1}\norm[\infty]{f}$ 
\item $\norm*[2]{\sqbrac*{f,\Hil - \Htil}g} \lesssim \norm[2]{f'}\norm*[\big][2]{\htilap - 1}\norm[2]{g}$
\item  $\norm*[2]{\sqbrac*{f,\Hil - \Hcal}g} \lesssim \norm[2]{f'}\norm*[\big][2]{\htilap - 1}\norm[2]{g}$
\end{enumerate}
\end{prop}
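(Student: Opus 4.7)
The unifying tool is the identity
\begin{align*}
\Hcal f = \Hil(f \circ \htil^{-1}) \circ \htil,
\end{align*}
which follows at once from the change of variables $v = \htil(\bp)$ in the definition of $\Hcal$. From this identity, together with $\Htil f = \Hcal(f/\htilap)$, item (1) reduces to $L^2$-boundedness of $\Hil$ combined with the bi-Lipschitz change of variables, which costs only constants depending on $L$. For item (2), I would use the double-integral characterization $\|u\|_\Hhalf^2 = (2\pi)^{-1}\iint|(u(\ap)-u(\bp))/(\ap-\bp)|^2\,d\ap\,d\bp$ of Proposition \ref{prop:Hardy}(4); writing $\Hcal f = \Hil(f\circ\htil^{-1})\circ\htil$, the bi-Lipschitz inequalities $|\ap-\bp|/L \leq |\htil(\ap)-\htil(\bp)| \leq L|\ap-\bp|$ and $\htilap\in[1/L,L]$ allow all occurrences of $\ap,\bp$ in the double integral to be replaced by $u = \htil(\ap)$, $v = \htil(\bp)$ up to $L$-powers, reducing everything to $\|\Hil(f\circ\htil^{-1})\|_\Hhalf \lesssim \|f\circ\htil^{-1}\|_\Hhalf \lesssim \|f\|_\Hhalf$.

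For item (3), the core algebraic computation is that, writing $\phi = \htilap - 1$ and $\Phi = \htil - \mathrm{id}$ (so $\Phi' = \phi$),
\begin{align*}
\frac{1}{\ap-\bp} - \frac{\htilbp(\bp)}{\htil(\ap)-\htil(\bp)} = \frac{\Phi(\ap)-\Phi(\bp)}{(\ap-\bp)(\htil(\ap)-\htil(\bp))} - \frac{\phi(\bp)}{\htil(\ap)-\htil(\bp)}.
\end{align*}
Testing $f$ against the first term produces exactly a $T(A,f)$-operator of the form in Proposition \ref{prop:Coifman} with $m=1$, $k=1$, $A_1 = \Phi$, $H = \htil$; case (2) of that proposition gives the bound $\lesssim \|\Phi'\|_2\|f\|_\infty = \|\htilap - 1\|_2\|f\|_\infty$. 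The second term is $\Htil(\phi f)$, and item (1) with Cauchy--Schwarz give $\|\Htil(\phi f)\|_2 \lesssim \|\phi\|_2 \|f\|_\infty$.

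For items (4) and (5), I would use the same decomposition multiplied by the extra factor $f(\ap)-f(\bp)$. After checking that $\Hil - \Hcal = (\Hil - \Htil) - \Htil(\phi\,\cdot)$, this reduces (5) to (4) plus a bound on $[f,\Htil](\phi g)$. The kernel of $[f,\Hil - \Htil]g$ factors as $\bigl(\tfrac{f(\ap)-f(\bp)}{\ap-\bp}\bigr)\bigl(\tfrac{\Phi(\ap)-\Phi(\bp)}{\htil(\ap)-\htil(\bp)}\bigr)$. Using the bi-Lipschitz bound to convert $\htil(\ap)-\htil(\bp)$ to $\ap-\bp$ in the denominator, then pulling the Hardy maximal function $M_f(\ap) = \sup_\bp|(f(\ap)-f(\bp))/(\ap-\bp)|$ outside the integral, I would bound the remaining integral by Cauchy--Schwarz together with Proposition \ref{prop:Hardy}(2) applied to $\Phi$, obtaining the pointwise estimate $|[f,\Hil-\Htil]g(\ap)| \lesssim M_f(\ap)\|\phi\|_2 \|g\|_2$. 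Taking $L^2$ in $\ap$ and invoking Proposition \ref{prop:Hardy}(3) gives $\|M_f\|_2 \lesssim \|f'\|_2$, which yields item (4). The term $[f,\Htil](\phi g)$ is handled analogously: the bi-Lipschitz bound and $M_f$ extraction give a pointwise estimate $\lesssim M_f(\ap)\|\phi g\|_1 \leq M_f(\ap)\|\phi\|_2\|g\|_2$, and item (5) follows.

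The main obstacle is items (4) and (5): none of the standard Calder\'on multilinear estimates in Proposition \ref{prop:Coifman} directly yield a trilinear bound of the form $\|f'\|_2\|\phi\|_2\|g\|_2$, since those propositions always require at least one $L^\infty$ factor among the coefficient derivatives or the input. The route through Proposition \ref{prop:Hardy}(2)--(3), which uses the Hardy maximal function in $\bp$ followed by Hardy--Littlewood type $L^2$ control in $\ap$, is essential to distribute the $L^2$ weight across all three factors and is the key technical step in closing the estimate.
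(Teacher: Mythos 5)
Your proposal is correct and follows essentially the same route as the paper: the same splitting $\Hil-\Hcal=(\Hil-\Htil)+(\Htil-\Hcal)$ with $(\Htil-\Hcal)f=\Htil((1-\htilap)f)$, Proposition \ref{prop:Coifman} applied to the kernel of $\Hil-\Htil$ for item (3), and the Hardy-type inequalities of Proposition \ref{prop:Hardy} to close the trilinear bounds (4)--(5). The only differences are minor: you prove (1)--(2) directly by conjugating $\Hil$ with the bi-Lipschitz change of variables rather than invoking Lemma \ref{lem:quantM}, and in (4)--(5) you extract the maximal quantity of Proposition \ref{prop:Hardy}(3) pointwise, whereas the paper keeps $\frac{f(\ap)-f(\bp)}{\ap-\bp}$ paired with $g$ via Cauchy--Schwarz, Fubini and Proposition \ref{prop:Hardy}(2), and handles the term $\sqbrac{f,\Htil}((1-\htilap)g)$ through the commutator estimate $\norm[2]{\sqbrac{f,\Hil}g}\lesssim\norm[2]{f'}\norm[1]{g}$ together with part (4).
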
 
\begin{proof}
To simplify the calculations we define
\[ 
\begin{array}{*2{>{\displaystyle}c}}
\lpar \qquad \dis F(a,b) = \frac{f(a)-f(b)}{a-b}  & \quad  F_{h}(a,b) = \frac{f(a)-f(b)}{\htil(a)-\htil(b)} \\
H(a,b) = \frac{(\htil(a)-a)-(\htil(b)-b)}{a-b} & \quad  H_h(a,b) = \frac{(\htil(a)-a)-(\htil(b)-b)}{\htil(a)-\htil(b)} 
\end{array}
\]

\begin{enumerate}[leftmargin =*, align=left]
    \item This has already been shown as part of \lemref{lem:quantM}. 
    \item This has already been shown as part of \lemref{lem:quantM}. 
    \item We write $(\Hil - \Hcal) = (\Hil - \Htil) + (\Htil - \Hcal)$. Observe that
\begin{align*}
(\Htil - \Hcal) f = \Htil ((1-\htilap)f)
\end{align*}
Hence as $\Htil $ is bounded on $\Ltwo$ we have $\norm*[2]{(\Htil-\Hcal)f} \lesssim \norm*[2]{\htilap-1}\norm[\infty]{f} $. Now we have
\begin{align*}
((\Hil - \Htil)f)(\ap) = \frac{1}{i\pi} \int \brac{\frac{1}{\ap-\bp} - \frac{1}{\htil(\ap)-\htil(\bp)} }f(\bp) \diff\bp = \frac{1}{i\pi} \int \frac{H_h(\ap,\bp)}{\ap-\bp} f(\bp) \diff \bp
\end{align*} 
Now using  \propref{prop:Coifman} we see that $\norm*[2]{(\Hil-\Htil)f} \lesssim \norm*[2]{\htil'-1}\norm[\infty]{f} $. Hence the required estimate follows.
\item Note that
\begin{align*}
(\sqbrac*{f,\Hil - \Htil}g)(\ap) = \frac{1}{i\pi} \int F(\ap,\bp)H_h(\ap,\bp) g(\bp) \diff \bp
\end{align*}
and hence by Cauchy Schwarz inequality and Hardy's inequality \propref{prop:Hardy} we have
\begin{align*}
\abs*{\sqbrac*{f,\Hil - \Htil}g}(\ap) \lesssim \norm*[2]{\htilap -1}\brac{\int \abs{F(\ap,\bp)}^2\abs{g(\bp)}^2 \diff \bp}^\half
\end{align*}
The estimate now follows from another application of Hardy's inequality  \propref{prop:Hardy}.
\item We have
\begin{align}
\sqbrac{f, \Htil - \Hcal}g &= f\brac{\Htil - \Hcal}g - \brac{\Htil - \Hcal}\brac{fg} \nonumber\\
&= f\Htil\brac{\brac{1-\htilap}g} - \Htil\brac{\brac{1-\htilap}\brac{fg}} \nonumber\\
&= \sqbrac{f, \Htil}\brac{\brac{1-\htilap}g} \nonumber\\
&=-\sqbrac{f, \Hil}\brac{\brac{1-\htilap}g} -\sqbrac{f, \Hil - \Htil}\brac{\brac{1-\htilap}g}. \label{eq:lastline610}
\end{align}
For the first term in \eqref{eq:lastline610}, we use the final inequality in Proposition \ref{prop:commutator} and for the second term, we use the previous inequality of Proposition \ref{prop:HilHtilcaldiff}. This concludes the proof of this estimate.
\end{enumerate}

\end{proof}

\bibliographystyle{amsplain}
\bibliography{References.bib}

\end{document}